\newtheorem{theorem}{Theorem}[section]
\newtheorem{definition}{Definition}[section]
\newtheorem{lemma}{Lemma}[section]
\newtheorem{remark}{Remark}[section]
\newtheorem{corollary}{Corollary}[section]
\numberwithin{equation}{section}
\numberwithin{figure}{section}
\makeatletter \@addtoreset{equation}{section} \makeatother
\def\tilde{\widetilde}
\newcommand{\fr}{\frac}
\newcommand{\ef}{\eqref}
\newcommand{\beq}{\begin{equation}}
\newcommand{\eeq}{\end{equation}}
\newcommand{\eps}{\varepsilon}
\newcommand{\dv}{{\mathrm {div}}}
\begin{document}

\title[Degenerate compressible Navier-Stokes equations]{Well-posedness    of regular solutions for     3-D full   compressible Navier-Stokes equations with  degenerate viscosities and  heat conductivity }

\author{Qin Duan}
\address[Q. Duan]{School     of Mathematical Sciences, Shenzhen  University, Shenzhen 518060, China.
}
\email{\tt qduan@szu.edu.cn}

\author{Zhouping Xin}
\address[Z.P. Xin]{The Institute of Mathematical Sciences and
Department of Mathematics, The Chinese University of Hong Kong, Shatin, N.T., Hong Kong.}
\email{\tt zpxin@ims.cuhk.edu.hk}

\author{Shengguo Zhu}
\address[S.G.  Zhu]{School of Mathematical Sciences, CMA-Shanghai and MOE-LSC,  Shanghai Jiao Tong University, Shanghai 200240, China.}
\email{\tt zhushengguo@sjtu.edu.cn}

\keywords{Compressible Navier-Stokes equations, three-dimensions, degenerate viscosities and heat conductivity, far field vacuum, well-posedness,  asymptotic behavior}

\subjclass[2010]{35Q30, 35A09, 35A01, 35B44, 35B40,  76N10.}
\date{\today}

\begin{abstract}
For the degenerate viscous and heat conductive compressible fluids,  the momentum  equations and the energy equation  are    degenerate both in the time evolution and spatial  dissipation  when vacuum appears,  and then the physical entropy $S$ behaves singularly,
which make it challenging to study the  corresponding  well-posedness of regular solutions with   high order regularities of  $S$ near the vacuum.
In this paper,   for the physically important case that    the   coefficients of viscosities and heat  conductivity  depend on  the absolute  temperature $\theta$ in a  power  law ($\theta^\nu$ with $\nu>0$)  of Chapman-Enskog,    we identify a class of initial data admitting a   local-in-time regular solution with far field vacuum to  the   Cauchy problem of the three-dimensional (3-D) full  compressible  Navier-Stokes equations  (\textbf{CNS}). 
Moreover, it is shown that  within its life span,   such a  solution possesses   the uniformly high order regularities for  $S$ near the vacuum,
 i.e., $S-\bar{S}\in L^6(\mathbb{R}^3)\cap \dot{H}^3(\mathbb{R}^3)$ for some constant $\bar{S}$. 
The key idea for   the analysis here  is to  study the  vacuum problem  in terms of the mass density $\rho$, velocity $u$ and  $S$ instead of $(\rho, u,\theta)$, which makes it possible to  compare  the orders of the degeneracy of the time evolution and the spatial  dissipations  near the vacuum in terms of  the powers of $\rho$. However, for   heat conductive fluids, both a degenerate spatial dissipation  and a source term  related to $\triangle \rho^{\gamma-1}$ with   the adiabatic exponent $\gamma$, will appear  in the time evolution equation for  $S$, which makes it formidable to study the 
propagation  of  regularities of $S$. Fortunately,
based on  some elaborate analysis of the intrinsic degenerate-singular structures of the 3-D full  \textbf{CNS}, 
we can  choose proper weights to control the behaviors of $(\rho, u,S)$ by   introducing     an   enlarged reformulated system, which includes a   singular parabolic system  for $u$, and 
  one degenerate-singular parabolic equation   for  $S$.   Then  one can  carry out a series of  singular or degenerate weighted energy estimates carefully designed  for this reformulated system, which  provides  an effective propagation  mechanism for $S's$  high order regularities near the vacuum    along with the time.

\end{abstract}
\maketitle

\tableofcontents

\section{Introduction}

The motion of a  compressible viscous, heat-conductive, and Newtonian polytropic fluid occupying a spatial domain $\Omega \subset \mathbb{R}^3$  is governed by the following full \textbf{CNS}:
\begin{equation}\label{1}
\left\{\begin{aligned}
&\rho_t+\text{div}(\rho u)=0,\\[2pt]
&(\rho u)_t+\text{div}(\rho u\otimes u)+\nabla P=\text{div}\mathbb{T},\\[2pt]
&(\rho \mathcal{E})_t+\text{div}(\rho \mathcal{E} u+Pu)=\text{div}(u\mathbb{T})+\text{div}(\kappa\nabla\theta).
\end{aligned}\right.
\end{equation}
Here and throughout, $\rho\geq 0$ denotes the mass density, $u=(u^{(1)},u^{(2)},u^{(3)})^{\top}$ the fluid velocity, $P$ the pressure of the fluid,  $\theta$ the absolute temperature, $\mathcal{E}=e+\frac{1}{2}|u|^2$  the specific total energy,  $e$ the specific internal energy, $x=(x_1,x_2,x_3)^\top\in \Omega$  the Eulerian  spatial coordinate  and finally $t\geq 0$  the time coordinate. The  equation of state for polytropic fluids satisfies 
\begin{equation}\label{2}
    P=R\rho\theta=(\gamma-1)\rho e=Ae^{S/c_v}\rho^\gamma,\hspace{2mm} e=c_v\theta,\hspace{2mm}c_v=\frac{R}{\gamma-1},
\end{equation}
where $R$ is the gas constant, $A$ is a   positive constant, $c_v$ is  the specific heat at constant volume,  $\gamma>1$ is the adiabatic exponent and $S$ is the specific entropy. $\mathbb{T}$ is the viscous stress tensor given by
\begin{equation}\label{3}
\mathbb{T}=2\mu D(u)+\lambda \text{div}u \mathbb{I}_3,
\end{equation}
where $D(u)=\frac{\nabla u+(\nabla u)^\top}{2}$ is the deformation tensor, $\mathbb{I}_3$ is the $3\times 3$ identity matrix, $\mu$ is the shear viscosity coefficient, and $\lambda+\frac{2}{3}\mu$ is the bulk viscosity coefficient. $\kappa$ denotes the   coefficient of the heat  conductivity.

For rarefied gases,  the compressible Navier-Stokes equations can be derived from the Boltzmann equations through the Chapman-Enskog expansion  \cites{chap, tlt}. Under some proper physical assumptions,   $(\mu,\lambda)$ and $\kappa$ are not constants but functions of the absolute temperature $\theta$ such as:
\begin{equation}
\label{eq:1.5g}
\begin{split}
\mu(\theta)=&r_1 \theta^{\frac{1}{2}}F(\theta),\quad  \lambda(\theta)=r_2 \theta^{\frac{1}{2}}F(\theta), \quad \kappa(\theta)=r_3 \theta^{\frac{1}{2}}F(\theta)
\end{split}
\end{equation}
for some   constants $r_i$ $(i=1,2,3)$.
  Actually for the cut-off inverse power force models, if the intermolecular potential varies as $r^{-\Upsilon}$,
 where $ r$ is intermolecular distance,  then 
 \begin{equation}
\label{eq:1.6g}
F(\theta)=\theta^{\varpi}\quad  \text{with}\quad   \varpi=\frac{2}{\Upsilon} \in [0,\infty)
\end{equation}
in \ef{eq:1.5g}.   In particular, for Maxwellian molecules,
$\Upsilon = 4$  and $\varpi=\frac{1}{2}$;
 for rigid elastic spherical molecules,
$\Upsilon=\infty$ and  $\varpi=0$; while for  ionized gas, $\Upsilon=1$ and $\varpi=2$.  

In the current  paper, we will consider the following case:
\begin{equation}\label{4}
\mu(\theta)=\alpha\theta^\nu,\hspace{2mm}\lambda(\theta)=\beta\theta^\nu,\hspace{2mm}\kappa(\theta)=\daleth\theta^\nu,
\end{equation}
where $(\alpha,\beta,\daleth,\nu)$ are all constants satisfying
\begin{equation}\label{5}
\alpha>0,\quad  2\alpha+3\beta\geq 0,\quad \daleth>0 \quad \text{and} \quad 0<\delta=(\gamma-1)\nu <1.
\end{equation}
 In terms of $(\rho,u,S)$, it follows from \eqref{2}-\ef{3} and \eqref{4} that   \eqref{1} can be rewritten as the following  system which does not explicitly contain negative powers of $\rho$:
\begin{equation}\label{8}
\left\{\begin{aligned}
\displaystyle
&\rho_t+\text{div}(\rho u)=0,\\
\displaystyle
& \underbrace{\rho(u_t+u\cdot \nabla u)}_{\circledast}+\nabla P= \underbrace{A^\nu R^{-\nu}\text{div}(\rho^{\delta}e^{\frac{S}{c_v}\nu}Q(u))}_{\Diamond},\\
\displaystyle
&\underbrace{P \big(S_t+u\cdot \nabla S\big)}_{\circledast}-\underbrace{\digamma \rho^{\delta+\gamma-1}e^{\frac{S}{c_v}\nu}\triangle e^{\frac{S}{c_v}}}_{\Diamond}\\
=& A^{\nu}R^{1-\nu}\rho^{\delta}e^{\frac{S}{c_v}\nu}H(u)+\underbrace{\digamma \rho^{\delta}e^{\frac{S}{c_v}(\nu+1)}\triangle \rho^{\gamma-1}}_{\star}+\Lambda (\rho, S),
\end{aligned}\right.
\end{equation}
where  $\digamma=R \daleth (AR^{-1})^{\nu+1}$, $\circledast$ denotes the degenerate time evolution, $\Diamond$ the degenerate dissipation,  $\star$ the  source term  involving second order   derivatives of  $\rho$, and 
\begin{equation}\label{QH}
\begin{split}
Q(u)=&\alpha(\nabla u+(\nabla u)^\top)+\beta\text{div}u\mathbb{I}_3,\ \   H(u)=\text{div}(uQ(u))-u\cdot \text{div}Q(u),\\[3pt]
\Lambda (\rho,S)=& 2\digamma \rho^{\delta}e^{\frac{S}{c_v}\nu}\nabla \rho^{\gamma-1} \cdot \nabla e^{\frac{S}{c_v}}+\digamma \nabla ( \rho^{\delta}e^{\frac{S}{c_v}\nu})\cdot \nabla ( e^{\frac{S}{c_v}}\rho^{\gamma-1}).
\end{split}
\end{equation}
Set $\Omega=\mathbb{R}^3$. We  will study  the local well-posedness of regular solutions $(\rho,u,S)$ with finite total mass and   energy to the Cauchy problem    $\eqref{8}$  with \eqref{2}, \eqref{5}, \eqref{QH},  and  the following initial data and far field behavior for some constant $\bar{S}$:
\begin{align}
(\rho,u,S)|_{t=0}=(\rho_0(x)>0,u_0(x),S_0(x))\quad  &\text{for} \quad x\in\mathbb{R}^3, \label{6}\\
(\rho,u,S)(t,x)\rightarrow(0,0,\bar{S})\quad  \text{as} \quad |x|\rightarrow\infty\quad &\text{for}\quad t\geq 0.\label{7}
\end{align}

For simplicity, throughout this paper,
for any function space $X$, unless otherwise specified, $X=X(\mathbb{R}^3)$, and 
the following conventions are used:
\begin{equation*}\begin{split}
 & \|f\|_s=\|f\|_{H^s(\mathbb{R}^3)},\quad |f|_p=\|f\|_{L^p(\mathbb{R}^3)},\quad \|f\|_{m,p}=\|f\|_{W^{m,p}(\mathbb{R}^3)},\\
 &|f|_{C^k}=\|f\|_{C^k(\mathbb{R}^3)},  \quad \|f\|_{X_1 \cap X_2}=\|f\|_{X_1}+\|f\|_{X_2},\quad \int  f   =\int_{\mathbb{R}^3}  f \text{d}x, \\
 & D^{k,r}=\{f\in L^1_{loc}(\mathbb{R}^3): |f|_{D^{k,r}}=|\nabla^kf|_{r}<\infty\},\quad  |f|_{D^{k,r}}=\|f\|_{D^{k,r}(\mathbb{R}^3)},\\
&D^{1}_*=\{f\in L^6(\mathbb{R}^3):  |f|_{D^1_*}= |\nabla f|_{2}<\infty\},\quad  D^k=D^{k,2},   \quad  |f|_{D^{1}_*}=\|f\|_{D^{1}_*(\mathbb{R}^3)}.
\end{split}
\end{equation*}

In the case  that    $(\mu,\lambda,\kappa)$ are  constants,  there is a huge literature  on the well-posedness theory for the full  \textbf{CNS} \eqref{1}.
 When  the initial data is away from the vacuum, the local well-posedness of classical solutions to the Cauchy problem of  \eqref{1} follows from the standard symmetric hyperbolic-parabolic structure, cf. \cites{itaya1, itaya2, KA, nash, serrin,tani} and the references therein.  However, such an approach fails  in the presence of the vacuum due to some new difficulties, for example,  the degeneracy of the time evolution in the energy equation:
\begin{equation}\label{dege}
\displaystyle
 \underbrace{\rho (\mathcal{E}_t+u\cdot \nabla \mathcal{E})}_{\circledast}+\text{div}(Pu)=\text{div}(u\mathbb{T})+\text{div}(\kappa\nabla\theta).
\end{equation}
  One of the  main issues  is   to understand the dynamics  of $(u,\theta,S)$ near the vacuum.  Note that near the vacuum,   in the sense that the corresponding equations do not explicitly contain negative powers of $\rho$, the equation $\eqref{1}_3$ for $\theta$  degenerates only in the time evolution, while  the equation $\ef{8}_3$ for $S$
  degenerates  both in the time evolution and spatial  dissipation  even for the case $\nu=0$, which  makes the behavior of  $S$  more singular than that of $\theta$,  and the  study on  the   regularities of  $S$  challenging. Thus, most
of the well-posedness theories  on the full  \textbf{CNS} with vacuum state   developed in the existing literatures  are 
regardless of $S$.  It is worth pointing out that, in the presence of  vacuum, the full  \textbf{CNS} formulated in terms of $(\rho, u,\theta)$ is not \\  equivalent to the one formulated in terms of $(\rho, u,S)$, since the regularities of $(\rho, \theta)$ do not yield any information  for $S$ near the vacuum.   Actually, for general initial data containing vacuum, the local well-posedness of strong solutions to the Cauchy problem  of the 3-D full  \textbf{CNS} was  obtained by Cho-Kim  \cite{CK}  in terms of $(\rho, u,\theta)$,  and the corresponding    global well-posedness theories   with small total energy have been established   by Huang-Li \cite{huangli} with non-vanishing $(\rho, \theta)$ at far fields, and Wen-Zhu \cite{wenzhu} with vanishing $(\rho, \theta)$ at far fields by extending  the corresponding studies on the isentropic case by Huang-Li-Xin \cite{HX1}. 
 It should be noticed that the solutions obtained in \cites{CK, huangli,wenzhu}  are in some homogeneous  space,  that is,  $\sqrt{\rho}u$ rather than $u$ itself  has the $ L^\infty ([0,T]; L^2)$ regularity.
In fact, one can not expect that the strong solutions to the full \textbf{CNS} lie in the inhomogeneous Sobolev spaces, if $\rho_0$ has compact support or even decays to zero in  far fields rapidly, see Li–Wang–Xin \cite{ins} and Li-Xin \cite{lx4}. Moreover,  it follows from   Xin–Yan
\cite{zxy}  that the global solutions  in \cite{huangli,wenzhu} must have unbounded
$S$ if initially there is an isolated mass group surrounded by the vacuum region. However, when  $\rho_0$ vanishes only at far fields with a slow decay rate, recently in Li-Xin \cite{lx2,lx3}, for the Cauchy problem of the full \textbf{CNS}, it is shown that the uniform boundedness of $S$  and the $L^2$ regularity of $u$   can be propagated  within the solution's life span.   For specific pressure laws excluding \ef{2}, the
global  existence of so-called “variational” solutions with vacuum  in dimension $d\geq 2$
has been  established  by Feireisl in \cite{fu2,fu3} (see also Poul \cites{poul1} for  the \textbf{CNS}-Poisson system), where $\theta$  satisfies
  an inequality.  
We also  refer the readers to \cites{CH, 	danchin,    hoff,  jensen, lions, MPI, song,  KA2,  mat,  zx} and   the references therein  for some  related  progress.

In contrast to the fruitful development in  the classical case $\nu=0$ in \eqref{4}, the coresponding progress for the degenerate case $\nu>0$ in \eqref{4} is very limited due to the strong  degeneracy and  nonlinearity both in viscosity and heat  conductivity besides the degeneracy in the time evolution near the vacuum. 
Recently,  the degenearte  isentropic \textbf{CNS} (\textbf{DICNS}),  i.e.,   $\eqref{8}_1$-$\eqref{8}_2$  with $S(t,x)$  being constant and $\eqref{8}_3$ ignored, has received extensive attentions, in which  the viscosity vanishes  at vacuum: 
\begin{equation}\label{dengshang}
\displaystyle
 \underbrace{\rho(u_t+u\cdot \nabla u)}_{\circledast}+\nabla P= \underbrace{\text{div}(\rho^{\delta}Q(u))}_{\Diamond}.
\end{equation}
Via making use   of the B-D entropy  in  \cite{bd3,bd2}, some significant achievements   on    weak  solutions with vacuum for the \textbf{DICNS} and related models  have been obtained, cf. \cite{ bd, vassu, bd6,bvy, zhenhua, lz,vayu}. 
On the other hand, there are only a few results available for strong  solutions with finite energy. In particular,  
when  $0<\delta <1$, by introducing    an elaborate  elliptic approach on the singularly weighted regularity estimates for  $u$ 
and  a symmetric hyperbolic system with singularities for some   quantities involving $\rho$ and its derivatives, Xin-Zhu \cite{zz2} identifies a class of initial data admitting one unique 3-D local regular solution with far field vacuum to the  Cauchy problem of $\eqref{8}_1$ and \eqref{dengshang} in some inhomogeneous Sobolev  spaces, which has been extended to be  global-in-time ones  with large data in $\mathbb{R}$   by Cao-Li-Zhu \cite{clz}. The related progress for the cases $\delta\geq 1$ on  smooth solutions with vacuum can also be found in  \cite{zhu,sz3, sz333,zz}.   Since the coefficients of the time evolution and $Q(u)$
are  powers of $\rho$, it is easy  to compare  the order  of the degeneracy  of  these two operators near the vacuum, which enable one  to  select the dominant operator to control the behavior of  $u$  and lead  to the ``hyperbolic-strong singular elliptic" structure in \cite{clz, zz2} and  the  `` quasi-symmetric hyperbolic"--``degenerate elliptic"  structure in \cite{zhu,sz333, zz}. Some other related progress  can also  be found in \cites{ GG,  germain, hailiang,taiping}  and the references therein.

Since $e$, $\theta$ and $S$ are all fundamental states for viscous compressible fluids, it is of great importance to study their dynamics  for the full  \textbf{CNS}, which is a subtle and difficult problem in the presence of vacuum. 
Indeed, in the studies for the well-posedenss of classical solutions with vacuum to the full  \textbf{CNS} \eqref{1}-\eqref{3} with degenerate viscosities and heat conductivity of the form  \eqref{eq:1.5g}-\eqref{eq:1.6g}, the structures of the coefficients for the time evolution  and the spatial  dissipation operators are different, and $S$ plays important roles but behaves more singularly  than $\theta$ near the vacuum, which cause substantial difficulties in the analysis and make it difficult to adapt the approaches for the isentropic case in \cite{clz,zhu, sz3,sz333,zz2}. It should be pointed out that due to the physical requirements on $\theta$ and $S$ near the vacuum, it is of more advantages to formulate the \textbf{CNS} \eqref{1}-\eqref{3} in terms of $(\rho, u,S)$ instead of $(\rho, u,\theta)$ in contrast to \cite{CK,huangli, wenzhu} as  illustrated below.
 Since $\theta=AR^{-1}\rho^{\gamma-1} e^{S/c_v}$
for $\rho>0$, one may rewrite  $\eqref{1}_2$-$\eqref{1}_3$ as $\eqref{8}_2$-$\eqref{8}_3$ which do  not  contain explicitly negative powers of $\rho$.
Thus, if  $S$ has  uniform boundedness and high enough regularities, then it is still possible to  compare  the orders of the degeneracy of the time evolution and the spatial  dissipation operators near the vacuum by the powers of $\rho$, and then to  choose proper weights to control the behaviors of the physical quantities. 
However, no matter for the case $\nu=0$ or the case $\nu>0$, due to the  degeneracy  in both the time evolution and the  spatial dissipation in  $\eqref{8}_3$, the physical entropy for  polytropic  gases behaves singularly near the  vacuum, and it is thus a challgenge to study its regularities.    Indeed, even  for the case of constant viscosities and heat conductivity, i.e., $\nu=0$ in \eqref{4},  only the boundedness of $S$  has been achieved in Li-Xin \cite{lx2,lx3},  and yet the higher regularities  of $S$ near the vacuum have not been established in the existing literatures. Furthermore, it seems difficult to adapt the approach for  $\nu=0$ in \cite{lx2,lx3} to the case $\nu>0$ due to the stronger degeneracy in spatial dissipations. Recently,  for the case $\nu>0$ and  $\daleth=0$ in $\eqref{8}_3$,      we have shown  in  \cite{dxz} that  the following equation:
\begin{equation}\label{transport}
S_t+u\cdot \nabla S=A^{\nu-1}R^{1-\nu}\rho^{\delta-\gamma}e^{\frac{S}{c_v}(\nu-1)}H(u)
\end{equation}
can provide an effective propagation  mechanism for   regularities of $S$ in $D^1_*\cap D^3$ in 
short time, and the corresponding analysis  depends essentially  on the transport
structure of \eqref{transport}.
However, for the case  $\nu>0$ and $\daleth>0$, although the time evolution equation $\ef{8}_3$ for $S$ is a  degenerate parabolic equation rather  than a transport one, yet it contains nonlinear, degenerate and possibly singular  diffusion term $\digamma \rho^{\delta+\gamma-1}e^{\frac{S}{c_v}\nu}\triangle e^{\frac{S}{c_v}}$, and a complicated strongly  nonlinear and possibly singular  source term $\digamma \rho^{\delta}e^{\frac{S}{c_v}(\nu+1)}\triangle \rho^{\gamma-1}$, which makes it extremely difficult to adapt  the standard  techniques  for  
propagation  of  high order regularities  for parabolic equations 
to modify the techniques for  $\daleth=0$ in \cite{dxz}. Indeed, the argument for establishing  the uniformly high order regularities  of $S$ near the vacuum in short time  used in \cite{dxz} for the case $\daleth=0$ fails here also.

In order to overcome these difficulties, under the assumptions   \eqref{4}-\eqref{5},  we reformulate  the  equations   $\eqref{8}_2$-$\eqref{8}_3$  as
\begin{equation}\label{qiyi}
\left\{\begin{aligned}
&\quad u_t+u\cdot\nabla u +\frac{A\gamma}{\gamma-1}e^{\frac{S}{c_v}}\nabla\rho^{\gamma-1}+A\rho^{\gamma-1} \nabla e^{\frac{S}{c_v}}+\underbrace{A^\nu R^{-\nu}\rho^{\delta-1}e^{\frac{S}{c_v}\nu} Lu}_{\Box}\\
=&A^\nu R^{-\nu}\frac{\delta}{\delta-1}\nabla\rho^{\delta-1}\cdot Q(u)e^{\frac{S}{c_v}\nu} + \underbrace{A^\nu R^{-\nu}\rho^{\delta-1}\nabla e^{\frac{S}{c_v}\nu} \cdot Q(u)}_{\backsim}\\
& \quad \underbrace{\rho^{\frac{1-\delta}{2}}(S_t+u\cdot \nabla S)}_{\circledast}-\underbrace{\digamma A^{-1}\rho^{\frac{\delta-1}{2}}e^{\frac{S}{c_v}(\nu-1)}\triangle e^{\frac{S}{c_v}}}_{\Box}\\
=&A^{\nu-1}R^{1-\nu}\rho^{\frac{1+\delta-2\gamma}{2}}e^{\frac{S}{c_v}(\nu-1)}H(u)\\
&+\underbrace{\digamma A^{-1}\rho^{\frac{1+\delta-2\gamma}{2}}e^{\frac{S}{c_v}\nu}\triangle \rho^{\gamma-1}}_{\backsim}+A^{-1}\rho^{\frac{1-\delta-2\gamma}{2}}e^{-\frac{S}{c_v}}\Lambda(\rho,S),
\end{aligned}\right.
\end{equation}
where $\Box$ denotes the singular dissipation, $\backsim$  the strong singular source term, and  $L$   the Lam\'e operator defined by
$$Lu \triangleq -\alpha\triangle u-(\alpha+\beta)\nabla \mathtt{div}u.$$
It should be emphasized here that the key   in the reformulation  \eqref{qiyi} is the choice  of the degenerate weight $\rho^{\frac{1-\delta}{2}}$ in front of  $S_t+u\cdot \nabla S$ in the equation for the entropy, which is  inspired by the competition of different terms in the system \eqref{8} for weights in singular weighted energy estimates. In fact,  if $\rho$ decays to zero in the far field,   the right hand side of $\eqref{qiyi}_1$ contains a singularity $ \rho^{\delta-1}\nabla e^{\frac{S}{c_v}\nu} \cdot Q(u)$, which is expected to be in $L^2$  for the basic $L^2$ estimate on $u$. Then $\nabla e^{\frac{S}{c_v}\nu}$ should decay very fast in the far field  such that $\rho^{-\delta_*}\nabla e^{\frac{S}{c_v}\nu}\in L^{p_*}$ for some $\delta_*\in (0,1-\delta)$ and $p_*>1$, which can be obtained by dividing the entropy equation $\eqref{8}_3$ by $\rho^{\delta_o}$ for some $\delta_o>0$ (determined by $(\delta_*,p_*)$) on both sides and then carrying out  singular weighted energy estimates on $e^{\frac{S}{c_v}\nu}$. It can be checked that to estimate  the velocity   $u$, it is better to choose  $\delta_*$ or $\delta_o$  larger.  On the other hand, the  source term   $\digamma \rho^{\delta}e^{\frac{S}{c_v}(\nu+1)}\triangle \rho^{\gamma-1}$ in the entropy equation $\eqref{8}_3$ makes it difficult to choose large $\delta_o$. Indeed, if  $\delta_o$ is large enough, then $\rho^{-\delta_o}\digamma \rho^{\delta}e^{\frac{S}{c_v}(\nu+1)}\triangle \rho^{\gamma-1}\notin L^2$. In order to balance these opposite requirements, we finally choose $\delta_o=\frac{2\gamma+\delta-1}{2}$, which turns out to be the desired choice as will be seen later.
While  for the case $\daleth=0$ in $\eqref{8}_3$,  since the source  term   $\digamma \rho^{\delta}e^{\frac{S}{c_v}(\nu+1)}\triangle \rho^{\gamma-1}$ vanishes, then there is no such kind of 
competition for weights mentioned above. One can directly divide $P$ on both sides of $\eqref{8}_3$, and then get  the time evolution equation \eqref{transport} of $S$ when $\daleth=0$, thus  the uniform high order 
 regularities  of $S$ can be obtained by the classical arguments for  transport equations (see \cite{dxz} for details).

Based on   \eqref{qiyi}, to establish the existences of smooth solutions with far field vacuum to \eqref{8}, one still  would  encounter some  notable difficulties  as follows:
\begin{enumerate} 

\item  first,   in contrast to \eqref{transport} for the case $\daleth=0$,   the time evolution for  $S$ is  still degenerate here, and naturally  the corresponding estimates for $(S_t,S_{tt})$  are not as strong as the ones  in \cite{dxz}, which   will increase the difficulty to  obtain the desired  energy estimates on $u$.

\item second, even  in the case that the uniform boundedness of  $S$ can be obtained, the coefficient $\digamma A^{-1}\rho^{\frac{\delta-1}{2}}e^{\frac{S}{c_v}(\nu-1)}$ in front of the Laplace operator $ \triangle$ in  $\eqref{qiyi}_2$ will tend to $\infty$ as $\rho\rightarrow 0$ in the far filed, which makes it  highly  non-trivial   to show that  $\rho^{\frac{\delta-1}{2}}e^{\frac{S}{c_v}(\nu-1)}\triangle e^{\frac{S}{c_v}}$ is well defined in some Sobolev space. Moreover, how to utilize the smoothing effect of this singular elliptic operator when $\rho$ loses its  strictly positive lower bound is also a tricky issue. It is obvious that there is no such kind of  singular dissipation operators  in the time evolution equation \eqref{transport} of $S$ when $\daleth=0$.

\item at last but more importantly, due to  the effect of  heat conductivity here,   the time evolution equation $\eqref{qiyi}_2$  for  $S$  contains some strong singularities such   as:
\begin{equation}\label{troubleterms}
\mathcal{S}_1=\rho^{\frac{1-\delta-2\gamma}{2}}e^{-\frac{S}{c_v}}\Lambda(\rho,S) \quad \text{and}\quad \mathcal{S}_2=\rho^{\frac{1+\delta-2\gamma}{2}}e^{\frac{S}{c_v}\nu}\triangle \rho^{\gamma-1}. 
\end{equation}
It is worth pointing out that the appearance of $\mathcal{S}_2$   makes it difficult to show     $S\in D^4$ for $t>0$. In fact, it follows from  $\eqref{qiyi}_2$ and the regularity theory of elliptic equations that  $| S|_{D^4}$ can   be controlled by  $|\mathcal{S}_2|_{D^2}$, which seems impossible  in the current $H^3$ framework. Then such kind of  singularities  becomes some of  the main obstacles to get  the  uniform   high order regularities for $S$, thus whose analysis  becomes extremely crucial. The dissipative term $\rho^{\frac{\delta-1}{2}}e^{\frac{S}{c_v}(\nu-1)}\triangle e^{\frac{S}{c_v}}$ does not provide a substantial contribution to the treatment  of $\mathcal{S}_2$. It needs to conduct a very detailed analysis of some quantities  related with the  derivatives of $\rho$. Here it should be noted that $\mathcal{S}_1$ and $\mathcal{S}_2$ do not appear in the time evolution equation \eqref{transport} of $S$ when $\daleth=0$.
\end{enumerate}

Therefore, the following quantities will play significant roles in our analysis: $$(\rho^{\gamma-1},\ \nabla \rho^{\delta-1},\ \rho^{\delta-1} Lu, \ e^{\frac{S}{c_v}},\ \rho^{\frac{\delta-1}{2}}\triangle e^{\frac{S}{c_v}} ).$$
 Due to  this observation, we first introduce a  class of solutions called regular solutions to  the Cauchy problem \eqref{8} with \eqref{2} and \eqref{QH}-\eqref{7} as follows.
\begin{definition} \label{def21} Let $T>0$  be a finite constant. The triple $(\rho,u,S)$  is called a regular solution to the Cauchy problem \eqref{8} with \eqref{2} and \eqref{QH}-\eqref{7}   in $[0,T]\times\mathbb{R}^3$  if $(\rho,u,S)$ solves this problem in the sense of distributions and{\rm:}
\begin{enumerate}
\item
$\rho>0,\hspace{2mm}\rho^{\gamma-1}\in C([0,T]; D^1_*\cap D^3),\hspace{2mm}\nabla\rho^{\delta-1}\in C([0,T]; L^\infty \cap D^{1,3}\cap D^2)$;\\[1pt]
$\nabla \rho^{\frac{3(\delta-1)}{4}}\in C([0,T]; D^1_*),\hspace{2mm}\nabla\rho^{\frac{3(\delta-1)}{8}}\in C([0,T]; L^4)$;
 \item $ u\in C([0,T];H^3)\cap L^2([0,T];D^4),\quad  \rho^{\frac{\delta-1}{2}}\nabla u \in C([0,T];L^2),$\\[1pt]
\quad $    \rho^{\delta-1}\nabla^2 u\in  L^\infty([0,T];H^1)\cap L^2([0,T];D^2)  $;
\item $S-\bar{S}\in C([0,T];D^1_*\cap D^3),\quad  e^{\frac{S}{c_v}}-e^{\frac{\bar{S}}{c_v}}\in C([0,T];D^1_*\cap D^3)$,\\[1pt]
\quad $\rho^{\frac{\delta-1}{4}}\nabla e^{\frac{S}{c_v}} \in L^\infty([0,T];L^2)$, \quad  $\rho^{\frac{\delta-1}{2}}\nabla^2 e^{\frac{S}{c_v}} \in L^\infty([0,T];H^1)$,\\[1pt]
\quad $\rho^{\delta+\gamma-1} e^{\frac{S}{c_v}(\nu+1)} \in L^2([0,T];D^1\cap D^4)$.
\end{enumerate}

\end{definition}
\begin{remark}\label{rz1}
First, it follows from  Definition \ref{def21}  that $\nabla \rho^{\delta-1}\in L^\infty$, which implies  that the vacuum can occur  only   in the far field.

Second, denote by $m(t)$, $\mathbb{P}(t)$, $E_k(t)$, $E_p(t)$, and  $E(t)=E_k(t)+E_p(t)$ the total mass, momentum, total kinetic energy, the potential energy, and the total energy respectively. It then can be checked easily (see Lemma \ref{lemmak})
that regular solutions defined in  Definition \ref{def21}  satisfy the conservation of   $m(t)$, $\mathbb{P}(t)$ and $E(t)$, which  is not  clear   for strong solutions   in the case of  constant viscosities and heat conductivity  obtained in \cites{CK, huangli,  wenzhu}, cf.  \cite{dxz, rozanova,zz2}.
In this sense, the definition of regular solutions above  is consistent with the physical background of the \textbf{CNS}.
\end{remark}

The regular solutions select  $(\rho, u,S)$  in a physically reasonable way when far field vacuum appears. To find  a regular solution to  \eqref{8}, we will solve  an enlarged system consisting of (up to leading order): a  transport equation for $\rho^{\gamma-1}$,  a singular parabolic  system  for $u$, a degenerate-singular parabolic equation for $e^{\frac{S}{c_v}}$, and a symmetric hyperbolic system  for $\nabla \rho^{\delta-1}$, which makes the  original problem  trackable. The first main result  can be stated as follows.
\begin{theorem} \label{th21}Let parameters $(\gamma,\delta=\nu(\gamma-1),\alpha,\beta, \daleth)$ satisfy
\begin{equation}\label{can1}
\gamma>1,\quad 0<\delta<1,\quad \gamma+\delta \leq 2,\quad\alpha>0,\quad2\alpha+3\beta\geq 0 \quad \text{and} \quad \daleth>0.
\end{equation}
Assume that  the initial data $(\rho_0,u_0,S_0)$ satisfy
\begin{equation}\label{2.7}\begin{aligned}
&\rho_0>0,\quad\rho_0^{\gamma-1}\in  D^1_*\cap D^3,\quad \nabla\rho_0^{\delta-1}\in L^q\cap D^{1,3},\ \ \rho_0^{\frac{\delta-1}{4}}\nabla^3 \rho_0^{\delta-1}\in L^2,\\ 
& \nabla\rho_0^{\frac{3(\delta-1)}{4}}\in D^1_*,\quad \nabla\rho_0^{\frac{3(\delta-1)}{8}}\in L^4,\quad u_0\in H^3, \quad S_0-\bar{S}\in D^1_*\cap D^3,
\end{aligned}
\end{equation}
for some $q\in (3,\infty)$,
and the following  initial  compatibility conditions:
\begin{equation}\label{2.8}\begin{aligned}
&\nabla u_0=\rho_0^{\frac{1-\delta}{2}}g_1,\quad Lu_0=\rho_0^{1-\delta}g_2,\quad \nabla(\rho_0^{\delta-1}Lu_0)=\rho_0^{\frac{1-\delta}{2}}g_3,\\
& \nabla e^{\frac{S_0}{c_v}}=\rho_0^\frac{1-\delta}{4}g_4,\quad \triangle e^{\frac{S_0}{c_v}}=\rho_0^\frac{3(1-\delta)}{4}g_5,\quad \nabla(\rho_0^{\frac{\delta-1}{2}}\triangle e^{\frac{S_0}{c_v}})=\rho_0^{\frac{3(1-\delta)}{4}}g_6,
\end{aligned}\end{equation}
for some  $(g_1,g_2,g_3,g_4,g_5,g_6)\in L^2$. Then there exist a time $T_*>0$ and a unique regular solution $(\rho,u,S)$ in $[0,T_*]\times \mathbb{R}^3$  to the Cauchy problem \eqref{8} with \eqref{2} and \eqref{QH}-\eqref{7} satisfying:
\begin{equation*}
\label{2.9}
\begin{aligned}
\displaystyle
& \nabla \rho^{\delta-1}\in  C([0,T_*];L^q),\quad   \rho^{\gamma-1}_t \in C([0,T_*];H^2),  \quad \rho^{\gamma-1}_{tt} \in C([0,T_*];L^2), \\[1pt]
\displaystyle
&(\nabla\rho^{\delta-1}_t,u_t)\in C([0,T_*]; H^1), \quad   
   t^{\frac{1}{2}}u_t\in  L^\infty([0,T_*];D^2)\cap L^2([0,T_*];D^3),\\[1pt]
   \displaystyle
& \big(\nabla\rho^{\delta-1}_{tt},\nabla\rho^{\gamma-1}_{tt},\rho^{\delta-1}\nabla^2 u_t, t^{\frac{1}{2}}\rho^{\delta-1}\nabla^3 u_t,u_{tt},t^{\frac{1}{2}} \rho^{\frac{\delta-1}{2}} \nabla u_{tt}\big) \in L^2([0,T_*];L^2),\\[1pt]
\displaystyle
& t^{\frac{1}{2}}u_{tt}\in L^\infty([0,T_*];L^2)\cap L^2([0,T_*];D^1_*),\\[1pt]
\displaystyle
&\big(t^{\frac{1}{2}}\rho^{\delta-1} \nabla^4 u,\rho^{\frac{\delta-1}{2}}\nabla u_t,t^{\frac{1}{2}} \rho^{\delta-1}\nabla^2 u_t,\rho^{\frac{\delta-1}{4}}\nabla S\big) \in L^\infty([0,T_*];L^2),\\[1pt]
\displaystyle
&\big(\rho^{\frac{\delta-1}{2}}\nabla^2 S,\rho^{\frac{\delta-1}{2}}\nabla^3 S,\rho^{\frac{\delta-1}{4}}S_{t},t^{\frac{1}{2}} \rho^{\frac{\delta-1}{2}}\nabla^2 S_t, t^{\frac{1}{2}}\rho^{\frac{1-\delta}{4}}S_{tt}\big) \in L^\infty([0,T_*];L^2),\\[1pt]
\displaystyle
&  S_t\in C([0,T_*];D^1_*)\cap L^2([0,T_*];D^2),\quad   \rho^{\frac{\delta-1}{4}}S_{t}\in L^\infty([0,T_*];D^1_*),\\[1pt]
\displaystyle
&\rho^{\frac{\delta-1}{2}} S_t\in L^2([0,T_*];D^2),\quad (\rho^{\frac{1-\delta}{4}}S_{tt}, t^{\frac{1}{2}} \rho^{\frac{\delta-1}{4}} \nabla S_{tt}) \in L^2([0,T_*];L^2).
\end{aligned}\end{equation*}
Moreover, it holds that 
\begin{enumerate}

\item $(\rho,u,S)$ preserves  the  total mass, momentum and total energy provided that    $m(0)<\infty$ is assumed additionally;
 \smallskip
 \item the corresponding $(\rho,u,\theta=AR^{-1}\rho^{\gamma-1} e^{S/c_v})$ is a classical solution to   the problem   \eqref{1}-\eqref{3} with \eqref{4} and \eqref{6}-\eqref{7} in $(0,T_*]\times \mathbb{R}^3$.

\end{enumerate}

\end{theorem}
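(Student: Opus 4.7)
My plan is to prove Theorem \ref{th21} by first constructing solutions to an enlarged reformulated system via a linearized iteration, then closing singular/degenerate weighted energy estimates uniform in the iteration, and finally passing to the limit, proving uniqueness, and verifying the conservation laws and the equivalence with \eqref{1}--\eqref{3}. The enlarged system I would work with consists of the transport equation for $\rho^{\gamma-1}$ coming from $\eqref{8}_1$, the singular parabolic system $\eqref{qiyi}_1$ for $u$ with principal part $A^{\nu}R^{-\nu}\rho^{\delta-1}e^{\frac{S\nu}{c_v}}L$, the degenerate-singular parabolic equation $\eqref{qiyi}_2$ for $e^{S/c_v}$, and auxiliary symmetric hyperbolic systems for $\nabla\rho^{\delta-1}$ and its variants $\nabla\rho^{\frac{3(\delta-1)}{4}},\ \nabla\rho^{\frac{3(\delta-1)}{8}}$ obtained by differentiating the corresponding transport equations. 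The point of this enlargement is to promote the formally singular quantities such as $\nabla\rho^{\delta-1}$, $\rho^{\delta-1}Lu$ and $\rho^{(\delta-1)/2}\triangle e^{S/c_v}$ to independent unknowns satisfying their own closed equations, which is what makes the vacuum problem tractable in an inhomogeneous Sobolev framework.

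The iteration freezes the nonlinear coefficients at a previous iterate $(\widetilde\rho,\widetilde u,\widetilde S)$ lying in the class of Theorem \ref{th21} and then solves in turn the transport equation for $\rho^{\gamma-1}$, the linear singular parabolic system for $u$, and the linear degenerate-singular parabolic equation for $e^{S/c_v}$. The compatibility conditions \eqref{2.8} are exactly what is needed to place $u_t|_{t=0}$ and $S_t|_{t=0}$ in the appropriate weighted $L^2$ spaces, which in turn drives the time-differentiated hierarchy. The a priori estimates come in layers: a basic $L^2$ bound for $u$ with $\rho^{(\delta-1)/2}\nabla u$ as natural dissipation measure, and a basic weighted estimate for $e^{S/c_v}$ with $\rho^{(\delta-1)/4}\nabla e^{S/c_v}$ as the weakest weight compatible with the singular principal part; then spatial derivatives up to order three and time derivatives up to order two, tested against weights of the form $\rho^{a(\delta-1)}$ or $t^{1/2}\rho^{a(\delta-1)}$ matched to the principal part, with the $t$-weight absorbing the lack of initial compatibility at second time differentiation.

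The decisive difficulty, which I expect to absorb the bulk of the work, is closing the high-order estimates for $S$ against the strongly singular sources $\mathcal{S}_1,\mathcal{S}_2$ in \eqref{troubleterms}. A naive attempt to control $|S|_{D^4}$ by applying elliptic regularity to $\eqref{qiyi}_2$ would force a bound on $|\mathcal{S}_2|_{D^2}$, which is simply unavailable in the present $H^3$ framework. Instead I plan to treat $\triangle\rho^{\gamma-1}$ and its derivatives as transported quantities whose bounds flow from the auxiliary hyperbolic system for $\nabla\rho^{\delta-1}$, via the algebraic identities between $\rho^{\gamma-1}$, $\rho^{\delta-1}$ and their derivatives, and from the extra weighted regularities on $\nabla\rho^{\frac{3(\delta-1)}{4}}$ and $\nabla\rho^{\frac{3(\delta-1)}{8}}$ imposed in \eqref{2.7}. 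These seemingly ad hoc weights are calibrated so that, when the entropy equation is tested against the matching $\rho$-power of its own spatial or temporal derivative, the $\mathcal{S}_1,\mathcal{S}_2$ contributions become absorbable by the degenerate dissipation $\rho^{(\delta-1)/2}e^{(\nu-1)S/c_v}\triangle e^{S/c_v}$, up to terms already controllable through $u\cdot\nabla S$ and $H(u)$ by the velocity estimates. The ``test weight'' $\rho^{\delta_o}$ with $\delta_o=(2\gamma+\delta-1)/2$ discussed after \eqref{qiyi} is the heart of this balance, and I expect to have to iterate this choice at every order of differentiation in $x$ and $t$.

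Once uniform bounds on a common interval $[0,T_*]$ are in hand, standard Aubin--Lions compactness allows passage to the limit in all nonlinear terms of the enlarged system, producing a solution of \eqref{qiyi} and hence of \eqref{8} in the class of Definition \ref{def21}. Uniqueness follows by subtracting the equations for two regular solutions and running a weighted $L^2$ Gronwall argument in norms matched to the a priori estimates; here an auxiliary comparison for the transported quantities $\nabla\rho^{\delta-1}$ is needed to verify that, in the limit, these quantities really agree with the derivatives of the limiting density. Conservation of $m(t),\mathbb{P}(t),E(t)$ (item (1)) follows by integrating \eqref{1} once the decay at infinity guaranteed by Definition \ref{def21} is in place, and the equivalence with \eqref{1}--\eqref{3} on $(0,T_*]$ (item (2)) follows because $\rho>0$ there allows the nonlinear change of variables between $(\rho,u,S)$ and $(\rho,u,\theta)$ via $\theta=AR^{-1}\rho^{\gamma-1}e^{S/c_v}$ to be performed classically.
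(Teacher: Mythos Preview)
Your overall strategy matches the paper's: reformulate in the variables $(\phi,u,l,\psi)$ of \eqref{2.1}, treat the enlarged system \eqref{2.3} as transport / singular parabolic / degenerate--singular parabolic / symmetric hyperbolic, iterate with frozen coefficients, close uniform weighted estimates, and pass to limits. You also correctly identify the central obstruction, namely that elliptic regularity applied to $\eqref{qiyi}_2$ cannot reach $|S|_{D^4}$ because of $\mathcal S_2$, and that the needed control must instead come from the hyperbolic equation for $\psi$ together with the auxiliary weights on $\nabla\rho^{3(\delta-1)/4}$ and $\nabla\rho^{3(\delta-1)/8}$.

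The gap is in how the approximate solutions are actually constructed. The paper does \emph{not} iterate directly on data with far-field vacuum. It first regularizes by $\phi_0^\eta=\phi_0+\eta$, so that $h=\phi^{2\iota}$ is bounded above and below, and it further replaces the singular coefficients $h$ and $\sqrt h$ in the $u$- and $l$-equations by $\sqrt{h^2+\epsilon^2}$ and $(h^2+\epsilon^2)^{1/4}$; see \eqref{ln}. The $\eta$ is needed because with genuine far-field vacuum the diffusion coefficient $\rho^{\delta-1}$ in front of $Lu$ is unbounded, so the standard linear parabolic existence result (Lemma \ref{ls}) does not apply. The $\epsilon$ is needed because, in the linearization, the transport equation for $h$ is driven by the previous iterate's $g$ rather than by $h$ itself; this breaks the relation $h=\phi^{2\iota}$, and the positive lower bound on $h$ is not available until \emph{after} the a priori estimates have been closed (Lemma \ref{gh}). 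Your plan to freeze coefficients at a previous iterate ``lying in the class of Theorem \ref{th21}'' and then solve the linear problems ``in turn'' would, at face value, require existence for a parabolic equation whose principal coefficient blows up at infinity, for which you supply no mechanism. The paper's route is: establish estimates uniform in $(\epsilon,\eta)$ on the classical solution furnished by Lemma \ref{ls}; send $\epsilon\to0$ (Lemma \ref{epsilon0}); run the nonlinear iteration at fixed $\eta>0$, where a Cauchy-in-lower-norm argument (not Aubin--Lions) gives convergence of the full sequence (Theorem \ref{nltm}); and only at the end send $\eta\to0$, using the locally uniform positivity of $\phi^\eta$ (Lemma \ref{phieta}) to justify the local strong convergence needed to identify the limit.
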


\begin{remark} \label{kuaiman}$\eqref{2.7}$-$\eqref{2.8}$ identify a class of admissible initial data that provide unique solvability to   \eqref{8} with \eqref{2} and \eqref{QH}-\eqref{7}. Such initial  data include
\begin{equation}\label{example}
\rho_0(x)=\frac{1}{(1+|x|^2)^{\varkappa}},\quad u_0(x)\in C^3_0(\mathbb{R}^3),\quad S_0=\bar{S}+f(x),
\end{equation}
for any  $f(x)\in D^1_*\cap D^3$, where 
\begin{equation}\label{exampledege}
\hspace{2mm}\frac{1}{4(\gamma-1)}<\varkappa<\min\left\{\frac{1-3/q}{2(1-\delta)},\frac{1}{3(1-\delta)}\right\}\quad \text{and} \quad \frac{7}{4}+\frac{\delta}{4}<\gamma+\delta\leq 2.
\end{equation}

\end{remark}

\begin{remark}\label{rxiangrong}
 The compatibility conditions \eqref{2.8} are important  for the existence of  regular  solutions $(\rho, u,S)$ obtained in Theorem \ref{th21}.
Indeed,
\begin{itemize}
          \item $\nabla  u_0=\rho_0^{\frac{1-\delta}{2}}g_1$ \big(\rm{resp.}, $\nabla e^{\frac{S_0}{c_v}}=\rho_0^\frac{1-\delta}{4}g_4$\big) plays a key role in the derivation of $\rho^{\frac{\delta-1}{2}} \nabla u \in L^\infty([0,T_*];L^2)$ \big(\rm{resp.}, $\rho^{\frac{\delta-1}{4}} \nabla S \in L^\infty([0,T_*];L^2)$\big); \\
        \item  $Lu_0=\rho_0^{1-\delta}g_2$ \big(\rm{resp.}, $\triangle e^{\frac{S_0}{c_v}}=\rho_0^\frac{3(1-\delta)}{4}g_5$\big) is crucial  in the derivation of  $u_t \in L^\infty([0,T_*];L^2)$ \big(\rm{resp.}, $S_t \in L^\infty([0,T_*];L^2)$),  which will be used in the estimate for  $|u|_{D^2}$ \big(\rm{resp.}, $|S|_{D^2}$);  \\
        \item and  $\nabla(\rho_0^{\delta-1}Lu_0)=\rho_0^{\frac{1-\delta}{2}}g_3$  \big(\rm{resp.},  $\nabla(\rho_0^{\frac{\delta-1}{2}}\triangle e^{\frac{S_0}{c_v}})=\rho_0^{\frac{3(1-\delta)}{4}}g_6$)
        is used  in the derivation of $\rho^{\frac{\delta-1}{2}}\nabla u_t \in  L^\infty([0,T_*];L^2)$  \big(\rm{resp.}, $\rho^{\frac{\delta-1}{4}}\nabla S_t \in  L^\infty([0,T_*];L^2)$),  which leads to some desired   estimate for  $|u|_{D^3}$  \big(\rm{resp.}, $|S|_{D^3}$).
\end{itemize}
\end{remark}

\begin{remark}
It should be pointed out  that due to the requirement $\gamma+\delta\leq 2$ on  $(\gamma, \delta=(\gamma-1)\nu)$ in \eqref{can1},  Theorem \ref{th21} applies, in particular  to   the monatomic gas, for which, $(\gamma, \nu)=(\frac{5}{3},\frac{1}{2})$. 
\end{remark}

\begin{remark}\label{zhunbei2} Note that for the regular solution $(\rho,u,S)$ obtained in Theorem \ref{th21}, $u$  stays in the inhomogeneous Sobolev space $H^3$ instead of the homogenous one $D^1_*\cap D^2$ in \cites{CK,wenzhu}  for   flows with constant viscosity and heat conductivity coefficients.

In \cite{lx4}, it is shown that for the case of constant viscosities and heat conductivity, the specific entropy becomes not uniformly bounded immediately after the initial time, as long as the initial density decays to zero in the far field rapidly. 
 Compared with  the conclusions obtained in Theorem \ref{th21} and the discussion in Remark \ref{kuaiman},  there is a natural question   whether the conclusion mentioned above holds for  the  degenerate  system considered here.
Due to strong degeneracy near the vacuum in $\eqref{1}_2$-$\eqref{1}_3$,   such  questions are not easy and  will be discussed in our  future work.

\end{remark}

\begin{remark}
It is worth pointing out  that in the current  $H^3$ framework,  although  the  solution $(\rho,u,S)$ obtained in Theorem \ref{th21} is not a classical one to the Cauchy problem \eqref{8} with \eqref{2} and \eqref{QH}-\eqref{7} due to the appearance of the second order source term $\digamma \rho^{\delta}e^{\frac{S}{c_v}(\nu+1)}\triangle \rho^{\gamma-1}$, yet the corresponding $(\rho,u,\theta=AR^{-1}\rho^{\gamma-1} e^{S/c_v})$ solves    the problem   \eqref{1}-\eqref{3} with \eqref{4} and \eqref{6}-\eqref{7} classically.
\end{remark}


A natural  question is   whether the local   solution obtained  in Theorem \ref{th21} can be extended globally in time. In contrast to the classical theory  \cites{KA2,  mat,  wenzhu}, we show the following somewhat surprising phenomenon that such an extension is impossible if $u$  decays to zero as $t\rightarrow \infty$,  the laws of  conservation of  $m(t)$ and  $\mathbb{P}(t)$ are both satisfied,  and $\mathbb{P}(0)$ is non-zero. To this end, we need the following definition.
\begin{definition}\label{d2}
Let $T>0$ be a positive time.  For   the Cauchy problem  \eqref{1}-\eqref{3} with \eqref{4} and \eqref{6}-\eqref{7},  a classical solution     $(\rho,u,\theta)$  in $(0,T]\times \mathbb{R}^3$ is said to be in $ D(T)$ if $(\rho,u,\theta)$ satisfies the following conditions:
\begin{itemize}
 \item  $m(t)$,  $\mathbb{P}(t)$ and $E_k(t)$    all belong to  $L^\infty([0,T])$;
 \smallskip

 \item The total mass is conserved, i.e.,  $\frac{d}{dt}m(t)=0$ for  any $ t\in [0,T]$;
 \smallskip
 \item  The momentum is conserved, i.e.,   $\frac{d}{dt}\mathbb{P}(t)=0$  for  any $ t\in [0,T]$.

\end{itemize}
\end{definition}

Then one has:
\begin{theorem}\label{th25}
Assume that $m(0)>0$, $|\mathbb{P}(0)|>0$, and  $(\gamma, \mu,\lambda,\kappa)$ satisfy
\begin{equation}\label{canshu-22}
\gamma \geq  1, \quad \mu\geq 0, \quad 2\mu+3\lambda\geq 0,\quad \kappa\geq 0.
\end{equation}  Then for   the Cauchy problem   \eqref{1}-\eqref{3} with \eqref{4} and \eqref{6}-\eqref{7}, there is no classical solution $(\rho,u,\theta)\in D(\infty)$  with
\begin{equation}\label{eq:2.15}
\limsup_{t\rightarrow \infty} |u(t,\cdot)|_{\infty}=0.
\end{equation}

\end {theorem}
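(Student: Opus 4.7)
The plan is to derive a contradiction by combining the two conservation laws built into the class $D(\infty)$ (mass and momentum) with the hypothesized $L^\infty$ decay of $u$. The scheme is completely soft: only Hölder's inequality and the definitions enter, no PDE structure beyond the conserved quantities.

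First, I would record what the hypothesis $(\rho,u,\theta)\in D(\infty)$ gives us. Since the conservation identities $\frac{d}{dt}m(t)=0$ and $\frac{d}{dt}\mathbb{P}(t)=0$ hold on $[0,T]$ for every finite $T$, we have
\begin{equation*}
m(t)=m(0)\in(0,\infty),\qquad \mathbb{P}(t)=\mathbb{P}(0)\quad \text{for all }t\geq 0.
\end{equation*}
The boundedness $m(t),\mathbb{P}(t),E_k(t)\in L^\infty([0,T])$ ensures all the integrals below are finite and may be manipulated without any regularization.

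Second, I would estimate the conserved momentum by the pointwise bound on $u$. For any fixed $t\geq 0$,
\begin{equation*}
|\mathbb{P}(0)|=|\mathbb{P}(t)|=\left|\int_{\mathbb{R}^3}\rho(t,x)\,u(t,x)\,\mathrm{d}x\right|\leq |u(t,\cdot)|_{\infty}\int_{\mathbb{R}^3}\rho(t,x)\,\mathrm{d}x=|u(t,\cdot)|_{\infty}\,m(0).
\end{equation*}
Since $m(0)$ is a fixed finite positive number, passing to the $\limsup$ as $t\to\infty$ and using the hypothesis \eqref{eq:2.15} yields
\begin{equation*}
|\mathbb{P}(0)|\leq m(0)\cdot \limsup_{t\to\infty}|u(t,\cdot)|_{\infty}=0,
\end{equation*}
which contradicts the standing assumption $|\mathbb{P}(0)|>0$. (An essentially equivalent route is the Cauchy--Schwarz bound $|\mathbb{P}(t)|^{2}\leq 2m(t)E_k(t)$ combined with $E_k(t)\leq \tfrac{1}{2}|u(t,\cdot)|_{\infty}^{2}m(t)$, but the one-step Hölder bound above is sharper and uses only $m(0)<\infty$.)

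There is no genuine obstacle in this argument; the only point worth articulating carefully is that one bounds $|\mathbb{P}(t)|$ at each fixed $t$ before letting $t\to\infty$, so that $|\mathbb{P}(0)|$ appears on the left as a constant independent of $t$ and the decay of $|u(t,\cdot)|_{\infty}$ can be read off directly from \eqref{eq:2.15}. The hypothesis \eqref{canshu-22} is not actually used in this argument, and correspondingly the statement is robust: it applies equally to the classical case, the partially degenerate case, and the fully degenerate case. The conclusion is therefore a purely kinematic obstruction — any nonzero initial momentum prevents $L^\infty$-decay of $u$ once mass and momentum are genuinely conserved.
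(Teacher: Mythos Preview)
Your proof is correct and essentially coincides with the paper's argument. The paper routes the estimate through the kinetic energy via Cauchy--Schwarz, obtaining $|\mathbb{P}(0)|^2\leq 2m(0)E_k(t)\leq m(0)^2|u(t)|_\infty^2$, which is exactly the alternative you mention; both yield the uniform lower bound $|u(t)|_\infty\geq |\mathbb{P}(0)|/m(0)$ and the contradiction with \eqref{eq:2.15}.
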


An immediate consequence  of   Theorems \ref{th21}-\ref{th25} is

\begin{corollary}\label{co23}
For   the Cauchy problem \eqref{8} with \eqref{2} and \eqref{QH}-\eqref{7}, if one assumes   $0<m(0)<\infty$ and $|\mathbb{P}(0)|>0$ additionally, then  there is no global  regular  solution $(\rho,u,S)$ with the regularities  in Theorem \ref{th21}   satisfying \eqref{eq:2.15},which is similar to the corresponding result in \cite{dxz}.

\end {corollary}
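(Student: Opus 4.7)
The plan is to argue by contradiction by combining the solvability statement of Theorem \ref{th21} with the non-existence statement of Theorem \ref{th25}. I would begin by supposing that, under the extra hypotheses $0<m(0)<\infty$ and $|\mathbb{P}(0)|>0$, there exists a global regular solution $(\rho, u, S)$ on $[0,\infty)\times\mathbb{R}^3$ to the Cauchy problem \eqref{8} with \eqref{2} and \eqref{QH}-\eqref{7}, enjoying all the regularities listed in Theorem \ref{th21}, and additionally satisfying $\limsup_{t\to\infty}|u(t,\cdot)|_{\infty}=0$. The target is to show that the associated $(\rho, u, \theta)$, with $\theta=AR^{-1}\rho^{\gamma-1}e^{S/c_v}$, yields a classical solution in the class $D(\infty)$, which is then ruled out by Theorem \ref{th25}.

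The first step would invoke conclusion (2) of Theorem \ref{th21} to assert that $(\rho, u, \theta)$ is a classical solution to \eqref{1}-\eqref{3} with \eqref{4} and \eqref{6}-\eqref{7} on $(0,\infty)\times\mathbb{R}^3$. I would then verify that this classical solution lies in $D(\infty)$ in the sense of Definition \ref{d2}. By conclusion (1) of Theorem \ref{th21} together with $m(0)<\infty$, the conservation laws for mass, momentum, and total energy hold, so that $m(t)\equiv m(0)$ and $\mathbb{P}(t)\equiv\mathbb{P}(0)$ are in $L^\infty([0,\infty))$. For the uniform bound on the kinetic energy $E_k(t)$, I would use the pointwise positivity $\theta>0$ (a consequence of $\rho>0$ and the real-valuedness of $S$) to obtain $E_p(t)=c_v\int\rho\theta\,\dd x\geq 0$, and then combine this with the conservation $E_k(t)+E_p(t)\equiv E(0)$ to conclude $E_k(t)\leq E(0)$. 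The finiteness of $E(0)$ itself is inherited from \eqref{2.7} and $m(0)<\infty$: $u_0\in H^3\hookrightarrow L^\infty$ yields $E_k(0)\leq \tfrac{1}{2}|u_0|_{\infty}^2 m(0)<\infty$, and the $L^\infty$-bound on $\theta_0$ coming from $\rho_0^{\gamma-1}\in D^1_*\cap D^3\hookrightarrow L^\infty$ and $S_0-\bar{S}\in D^1_*\cap D^3\hookrightarrow L^\infty$ gives $E_p(0)\leq c_v|\theta_0|_{\infty}m(0)<\infty$. All three bullets of Definition \ref{d2} are then satisfied, i.e., $(\rho,u,\theta)\in D(\infty)$.

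To conclude, I would observe that the parameter restrictions \eqref{can1} imply the weaker conditions \eqref{canshu-22}, so the hypotheses of Theorem \ref{th25} are met together with $m(0)>0$ and $|\mathbb{P}(0)|>0$. Theorem \ref{th25} then precludes the existence of any classical solution in $D(\infty)$ for which \eqref{eq:2.15} holds, contradicting the decay assumption on $u$ and completing the argument. The only mildly delicate step is the bookkeeping in the second paragraph, namely verifying that the regular solution from Theorem \ref{th21} indeed upgrades to membership in $D(\infty)$; this reduces to propagating the finite-energy structure of the initial data \eqref{2.7} through the conservation laws, and uses nothing beyond the Sobolev embeddings already built into the functional setting of Theorem \ref{th21}.
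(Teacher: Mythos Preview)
Your proposal is correct and follows essentially the same approach as the paper: use Theorem \ref{th21} to pass from the regular solution $(\rho,u,S)$ to a classical solution $(\rho,u,\theta)$ satisfying the conservation laws, verify membership in $D(\infty)$, and then invoke Theorem \ref{th25} to derive a contradiction. Your version is in fact more careful than the paper's, which simply asserts $(\rho,u,\theta)\in D(T)$ without spelling out the bound $E_k(t)\le E(0)$ via $E_p(t)\ge 0$ or the finiteness of $E(0)$ from the initial data; these details are implicit in the paper (cf.\ Lemmas \ref{lemmak-1}--\ref{lemmak}) but you have made them explicit.
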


\begin{remark}
Though  the main strategy for studying the regularities of the velocity  is  similar to that of the  non-heat conductive case \cite{dxz}, yet the presence of heat conductivity here causes essential difficulties for studying the 
propagation  of  regularities of the entropy as discussed after \eqref{qiyi}. Thus new formulations and observations are needed to accomplish the required estimates. 

\end{remark}

\begin{remark}
 The  framework   in this paper is applicable to other physical dimensions with some minor modifications. 

\end{remark}

The rest of this paper is   organised as follows.   In \S 2, we outline the  main strategy of our  proof. 
\S 3 is devoted to proving  the  local-in-time well-posedness  theory stated in  Theorem \ref{th21}, which can be  achieved in five steps: 
\begin{enumerate}

 \item construct  approximate solutions away from the vacuum for
 specially designed linearized problems with an artificial viscosity
$\sqrt{\rho^{2\delta-2}+\epsilon^2}e^{\frac{S}{c_v}\nu} Lu$
in the  momentum equations, an artificial heat conductivity
$(\rho^{2\delta-2}+\epsilon^2)^{\frac{1}{4}}e^{\frac{S}{c_v}\nu}\triangle e^{\frac{S}{c_v}} $
in the  entropy equation, and 
$\inf_{x\in \mathbb{R}^3}\rho^{\gamma-1}_0=\frac{\gamma-1}{A\gamma}\eta$
for some positive constants $\epsilon>0$ and  $\eta>0$;
\item establish the uniform estimates independent of  $(\epsilon,\eta)$ for the above solutions;

\item  then pass to the limit $\epsilon\rightarrow 0$ to recover the solution
of the corresponding  linearized problem  away from the vacuum with only physical viscosities;

\item  prove the unique solvability away from the vacuum  of the reformulated nonlinear
problem through a standard iteration process;

\item  finally take  the limit $\eta\rightarrow 0$ to recover the solution
of the reformulated nonlinear problem  with  physical viscosities and far field vacuum.
\end{enumerate}
The global non-existence results stated in  Theorem \ref{th25} and Corollary \ref{co23} 
 are proved in \S 4.
  Finally, for convenience of readers, we  list some basic facts which have been  used frequently in this paper in the appendix.


\section{Main strategy}\label{s2}

This section is devoted to  sketching  the  main strategies of our analysis. 
The first  key idea  for  proving  our well-posedenss theory   is to  study the  vacuum problem  in terms of  $(\rho,u,S)$, i.e., the system \eqref{8},  instead of $(\rho, u,\theta)$, which makes it possible to  compare  the orders of the degeneracy of the time evolution and the spatial  dissipation  near the vacuum in terms of  the powers of $\rho$. However, note the time evolution equation $\eqref{8}_3$ of $S$,
the double degenerate structure with the strong singular source terms related to $\triangle \rho^{\gamma-1}$  makes the vacuum problem we considered intricate such that it is still   hard to be solved. Thus, in order to investigate the propagation  mechanism for $S's$  high order regularities,  some elaborate analysis of the intrinsic degenerate-singular structures of \eqref{8} has been introduced here.

\subsection{An enlarged   reformulation} 
We first reformulate   \eqref{8}
into an enlarged  trackable system, and such a new reformulation includes several  crucial points, which will be   explained as follows.

\subsubsection{The choice  of the degenerate weight for the energy space of $S$}
 On the one hand,  since the density $\rho$ of the fluid vanishes in the far field,   it is necessary to find  proper weights to control the behaviors of $S$. Inspired by  the competition of different terms in the system \eqref{8} for weights in  weighted energy estimates, i.e.,
$$
\rho^{\delta}\nabla e^{\frac{S}{c_v}\input{}}\cdot Q(u) \quad \text{and} \quad \rho^{\delta}e^{\frac{S}{c_v}(\nu+1)}\triangle \rho^{\gamma-1},$$
we choose the degenerate weight $\rho^{\frac{1-\delta}{2}}$ in front of  $S_t+u\cdot \nabla S$ in the equation for $S$,
\begin{equation}\label{qiyi2s}
\begin{split}
\displaystyle
& \quad \underbrace{\rho^{\frac{1-\delta}{2}}(S_t+u\cdot \nabla S)}_{\circledast}-\underbrace{\digamma A^{-1}\rho^{\frac{\delta-1}{2}}e^{\frac{S}{c_v}(\nu-1)}\triangle e^{\frac{S}{c_v}}}_{\Box}\\
=&A^{\nu-1}R^{1-\nu}\rho^{\frac{1+\delta-2\gamma}{2}}e^{\frac{S}{c_v}(\nu-1)}H(u)\\
&+\underbrace{\digamma A^{-1}\rho^{\frac{1+\delta-2\gamma}{2}}e^{\frac{S}{c_v}\nu}\triangle \rho^{\gamma-1}}_{\backsim}+A^{-1}\rho^{\frac{1-\delta-2\gamma}{2}}e^{-\frac{S}{c_v}}\Lambda(\rho,S),
\end{split}
\end{equation}
which turns out to be the desired choice as will be verified  later. The main ideas on   such a weight  have been discussed  in the paragraph below \eqref{qiyi}.

\subsubsection{The   reformulation of the source term related to $\triangle \rho^{\gamma-1}$}
On the other hand, it is observed that there exist three kinds of singular  source terms related to the derivatives of $\rho$ in \eqref{qiyi}, i.e., $\nabla\rho^{\delta-1}$, $\rho^{\frac{1+\delta-2\gamma}{2}}\triangle \rho^{\gamma-1}$ and $\rho^{\frac{1-\delta-2\gamma}{2}}\Lambda(\rho,S)$.
Denote $\psi=\frac{\delta}{\delta-1}\nabla\rho^{\delta-1}$.   Then by  the  equation $\eqref{8}_1$, $\psi$ solves the following system,
\begin{equation}\label{qiyi3s}
\psi_t+\nabla (u\cdot \psi)+(\delta-1)\psi\text{div} u +\delta\rho^{\delta-1}\nabla \text{div} u=0.
\end{equation}
We expect that the desired information on $\rho^{\frac{1+\delta-2\gamma}{2}}\triangle \rho^{\gamma-1}$ can be provided  by  the estimates for $\psi$ that are given by \eqref{qiyi3s} and the singular weighted estimates for $u$. Then for this purpose,  we rewrite $\rho^{\frac{1+\delta-2\gamma}{2}}\triangle \rho^{\gamma-1}$
 as
\begin{equation}\label{qiyi4s}
\rho^{\frac{1+\delta-2\gamma}{2}}\triangle \rho^{\gamma-1}=\frac{\gamma-1}{\delta-1}\rho^{\frac{1-\delta}{2}}\text{div}\nabla \rho^{\delta-1}+\frac{(\gamma-1)(\gamma-\delta)}{(\delta-1)^2}\rho^{\frac{3}{2}(1-\delta)}\nabla \rho^{\delta-1}\cdot \nabla \rho^{\delta-1}.
\end{equation}
Similarly, the derivatives of $\rho$ in the singular source term $\rho^{\frac{1-\delta-2\gamma}{2}}e^{-\frac{S}{c_v}}\Lambda(\rho,S)$ which appears in \eqref{qiyi2s} can also be rewritten  in terms of $\psi$ and the positive powers of $\rho$.

\subsubsection{An  enlarged Cauchy   problem}
 Now, based on \eqref{qiyi2s}-\eqref{qiyi4s}, one can  reformulate   \eqref{8}
into an enlarged   system. Set  $\delta=(\gamma-1)\nu$. In terms of 
\begin{equation}\label{2.1}\begin{aligned}
&\phi=\frac{A\gamma}{\gamma-1}\rho^{\gamma-1},\quad u,\quad  l=e^{\frac{S}{c_v}},\quad \psi=\frac{\delta}{\delta-1}\nabla\rho^{\delta-1},\quad n=\rho^{2-\delta-\gamma},\\[4pt]
\end{aligned}\end{equation}
the problem  \eqref{8} with \eqref{2} and \eqref{QH}-\eqref{7} implies that 
\begin{equation}\label{2.3}\left\{\begin{aligned}
\displaystyle
&\  \phi_t+u\cdot\nabla\phi+(\gamma-1)\phi \text{div}u=0,\\[4pt]
\displaystyle
&\  u_t+u\cdot \nabla u+a_1\phi\nabla l+l\nabla\phi+a_2l^\nu\phi^{2\iota} Lu\\
\displaystyle
=&a_2\phi^{2\iota}\nabla l^\nu\cdot Q(u)+a_3l^\nu\psi  \cdot  Q(u),\\[4pt]
\displaystyle
&\  \phi^{-\iota}(l_t+u\cdot\nabla l)-a_4\phi^{\iota}l^\nu \triangle l\\[4pt]
= & a_5l^\nu n\phi^{3\iota}H(u)+a_6l^{\nu+1} \phi^{-\iota}\text{div} \psi+\Theta(\phi,l,\psi),\\[4pt]
\displaystyle
&\ \psi_t+\sum_{k=1}^3 A_k(u) \partial_k\psi+B(u)\psi+\delta a\phi^{2\iota}\nabla \text{div} u=0,
\end{aligned}\right.\end{equation}
where
\begin{equation}\label{2.2nnn}
\begin{split}
\Theta(\phi,l,\psi)=&a_{7}l^{\nu+1} \phi^{-3\iota}\psi\cdot \psi+a_8l^\nu  \phi^{-\iota}\nabla l\cdot   \psi+a_9l^{\nu-1} \phi^{\iota}\nabla l\cdot \nabla l,
\end{split}
\end{equation}
and 
\begin{equation}\label{2.2}
\begin{split}
a_1=&\frac{\gamma-1}{\gamma},\quad a_2=a\Big(\frac{A}{R}\Big)^\nu,\quad a_3=\Big(\fr{A}{R}\Big)^\nu,\quad a_4=\digamma \frac{a}{Ac_v},\\
a_5=&\fr{A^{\nu-1}a^{2}(\gamma-1)}{R^\nu},\quad a_6=\digamma \frac{(\gamma-1)}{A  c_v \delta},\quad a_7=\digamma \frac{ \gamma(\gamma-1)}{a Ac_v \delta^2},\\
a_8=&2\digamma \frac{1+\nu}{A  c_v \nu},\quad a_9=\digamma \frac{a \nu}{Ac_v},\quad 
\iota= \fr{\delta-1}{2(\gamma-1)},\quad a=\Big(\fr{A\gamma}{\gamma-1}\Big)^{\fr{1-\delta}{\gamma-1}},
\end{split}
\end{equation}
 $A_k(u)=(a^k_{ij})_{3\times 3}$ for  $i$, $j$, $k=1$, $2$, $3$,
are symmetric  with
$$a^k_{ij}=u^{(k)}\quad \text{for}\ i=j;\quad \text{otherwise}\  a^k_{ij}=0, $$
 and $B(u)=(\nabla u)^\top+(\delta-1)\text{div}u\mathbb{I}_3$.

The initial data for \eqref{2.3} are given by
\begin{equation}\label{2.4}
\begin{split}
&(\phi,u,l,\psi)|_{t=0}=(\phi_0,u_0,l_0,\psi_0)\\
=&\Big(\fr{A\gamma}{\gamma-1}\rho_0^{\gamma-1}(x),u_0(x),e^{S_0(x)/c_v},\fr{\delta}{\delta-1}\nabla\rho_0^{\delta-1}(x)\Big)\quad  \text{for} \quad x\in\mathbb{R}^3.
\end{split}
\end{equation}
$(\phi,u,l,\psi)$ are required  to satisfy the following  far filed behavior:
\begin{equation}\label{2.5}
(\phi,u,l,\psi)\rightarrow (0,0,\bar{l},0) \hspace{2mm} \text{as} \hspace{2mm}|x|\rightarrow \infty \hspace{2mm} \text{for} \quad t\geq 0,
\end{equation}
with  $\bar{l}>0$ being a  constant.

Note that  the enlarged system \eqref{2.3}   consists of (up to leading order) 
\begin{itemize}
 \item  one {\it scalar transport} equation $\eqref{2.3}_1$ for $\phi$;
 \smallskip
 \item one  {\it singular parabolic}  system  $\eqref{2.3}_2$ for the velocity $u$;
 \smallskip
 \item  one {\it degenerate (time evolution operator)-singular (elliptic operator) parabolic}  equation   $\eqref{2.3}_3$ with several singular source terms  for $l$;
 \smallskip
 \item  one {\it symmetric hyperbolic} system  $\eqref{2.3}_4$ but with several singular source terms for $\psi$,
\end{itemize}
such a  structure will enable us to  establish  the following main  theorem.

\begin{theorem}\label{3.1} Let $\ef{can1}$ hold. Assume that  the initial data $(\phi_0,u_0,l_0,\psi_0)$ satisfy:
\begin{equation}\label{a}\begin{aligned}
&\phi_0>0,\quad \phi_0\in D^1_*\cap D^3,\quad \nabla\phi_0^{\frac{3}{2}\iota}\in D^1_*,\quad \nabla\phi_0^{\frac{3}{4}\iota}\in L^4, \quad u_0\in H^3,\\[3pt]
&l_0-\bar{l}\in D^1_*\cap D^3,\quad \inf_{x\in \mathbb{R}^3} l_0>0, \quad \psi_0\in L^q \cap D^{1,3}, \ \ \phi_0^{\frac{1}{2}\iota}\nabla^2 \psi_0\in L^2,
\end{aligned}\end{equation}
for some $q\in (3,\infty)$, and the following  compatibility conditions:
\begin{equation}
\label{2.8*}\begin{aligned}
&\nabla u_0=\phi_0^{-\iota}g_1,\quad Lu_0=\phi_0^{-2\iota}g_2,\quad 
\nabla(\phi_0^{2\iota}Lu_0)=\phi_0^{-\iota}g_3,\\[3pt]
& \nabla l_0=\phi_0^{-\frac{\iota}{2}}g_4,\quad \triangle l_0=\phi_0^{-\frac{3}{2}\iota}g_5, \quad \nabla(\phi_0^{\iota} \triangle l_0)=\phi_0^{-\frac{3}{2}\iota}g_6,
\end{aligned}
\end{equation}
for some $(g_1,g_2,g_3,g_4,g_5,g_6)\in L^2$. Then there exist a time $T_*>0$ and a unique strong solution $(\phi,u,l,\psi=\fr{a\delta}{\delta-1}\nabla\phi^{2\iota})$  in $[0,T_*]\times \mathbb{R}^3$   to the Cauchy problem \ef{2.3}-\ef{2.5}, such that $\phi(t,x)>0$ in $[0,T_*]\times \mathbb{R}^3$, $\inf\limits_{(t,x)\in [0,T_*]\times \mathbb{R}^3} l>0$ and 
\begin{equation}
\begin{split}\label{b}
\displaystyle
&\phi\in C([0,T_*];D^1_*\cap D^3),\quad \nabla \phi^{\frac{3}{2}\iota}\in C([0,T_*];D^1_*),\quad \nabla \phi^{\frac{3}{4}\iota}\in C([0,T_*];L^4),\\[4pt]
\displaystyle
&\psi\in  C([0,T_*];L^q \cap D^{1,3}\cap D^2),\quad \phi_t \in C([0,T_*];H^2), \quad \psi_t\in C([0,T_*]; H^1),   \\[4pt]
\displaystyle
&\phi_{tt} \in C([0,T_*];L^2)\cap L^2([0,T_*];D^1_*), \quad  u\in C([0,T_*];H^3)\cap L^2([0,T_*];H^4), \\[4pt]
\displaystyle
&   u_t\in  C([0,T_*];H^1),\quad  \phi^{2\iota}\nabla^2 u\in L^\infty([0,T_*];H^1)\cap L^2([0,T_*];D^2),\\[4pt]
\displaystyle
&(\phi^\iota\nabla u, t^{\fr{1}{2}}\phi^{2\iota} \nabla^4 u,\phi^\iota\nabla u_t,t^{\frac{1}{2}} \phi^{2\iota}\nabla^2 u_t)\in L^\infty([0,T_*];L^2),\\[4pt]
\displaystyle
& (\psi_{tt},\phi^{2\iota}\nabla^2u_t,t^{\frac{1}{2}}\phi^{2\iota}\nabla^3 u_t,  u_{tt},t^{\frac{1}{2}} \phi^{\iota} \nabla u_{tt})\in L^2([0,T_*];L^2),\\[4pt]
\displaystyle
& t^{\frac{1}{2}}u_{tt}\in L^\infty([0,T_*];L^2)\cap L^2([0,T_*];D^1_*),\quad l-\bar{l}\in  C([0,T_*];D^1_* \cap D^3),\\[4pt]
\displaystyle
&(\phi^{\frac{\iota}{2}}\nabla l,\phi^{\iota}\nabla^2 l,\phi^{\iota}\nabla^3 l,\phi^{-\frac{\iota}{2}}l_{t}, t^{\frac{1}{2}} \phi^{\iota}\nabla^2 l_t,t^{\frac{1}{2}}\phi^{-\frac{\iota}{2}}l_{tt}) \in L^\infty([0,T_*];L^2),\\[4pt] 
\displaystyle
&  l_t\in C([0,T^*];D^1_*)\cap L^2([0,T_*];D^2),\quad   \phi^{\frac{\iota}{2}}l_{t}\in L^\infty([0,T_*];D^1_*),\\[4pt]
\displaystyle
&\phi^{\iota} l_t\in L^2([0,T_*];D^2),\quad    (\phi^{-\frac{\iota}{2}}l_{tt},t^{\frac{1}{2}} \phi^{\frac{\iota}{2}} \nabla l_{tt})\in L^2([0,T_*];L^2).
\end{split}
\end{equation}

\end{theorem}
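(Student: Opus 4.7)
I would follow the five-step scheme outlined at the end of \S1. Step 1 is to linearize \eqref{2.3} by freezing the nonlinear coefficients at a known iterate $(v,w,m,\xi)$ enjoying the target regularity \eqref{b}, to regularize by replacing the degenerate-singular principal weights $\phi^{2\iota}$ and $\phi^{\iota}$ by $\sqrt{\phi^{4\iota}+\epsilon^2}$ and $(\phi^{4\iota}+\epsilon^2)^{1/4}$ respectively, and to assume a strictly positive lower bound $\inf_x \phi_0\geq\eta>0$, which together make the resulting linear system uniformly parabolic and classically solvable. The substantive work is at step 2: deriving uniform-in-$(\epsilon,\eta)$ bounds on exactly the norms appearing in \eqref{b}. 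Once these are in hand, step 3 ($\epsilon\to 0$), step 4 (Picard contraction in a lower-order norm, closing the nonlinearity at fixed $\eta>0$), and step 5 ($\eta\to 0$) are structurally routine weak-compactness arguments.

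\textbf{Structure of the subsystems and role of the compatibility conditions.} At the linearized level the enlarged system \eqref{2.3} decouples into four pieces that I would treat in order. The scalar transport equation $\eqref{2.3}_1$ propagates $\phi\in D^1_*\cap D^3$ and, by differentiating along characteristics, the auxiliary regularities $\nabla\phi^{3\iota/2}\in D^1_*$ and $\nabla\phi^{3\iota/4}\in L^4$. The symmetric hyperbolic system $\eqref{2.3}_4$ for $\psi$ yields $\psi\in L^\infty_t(L^q\cap D^{1,3}\cap D^2)$ once the source $\delta a\phi^{2\iota}\nabla\mathrm{div}\,v$ is controlled by the weighted bounds already available for $v$. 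The linear singular-elliptic parabolic system $\eqref{2.3}_2$ is then handled by testing against $u_t$, differentiating in $t$ and testing against $u_{tt}$, and applying elliptic regularity to $a_2 l^\nu\phi^{2\iota}Lu=\text{RHS}$, which recovers $\phi^{\iota}\nabla u,\ \phi^{\iota}\nabla u_t\in L^\infty_tL^2$ and $\phi^{2\iota}\nabla^2 u\in L^\infty_tH^1$. The analogous but much more delicate scheme on the degenerate-singular parabolic equation $\eqref{2.3}_3$ delivers $\phi^{\iota/2}\nabla l,\ \phi^{\iota}\nabla^2 l,\ \phi^{\iota}\nabla^3 l\in L^\infty_tL^2$ and $\phi^{-\iota/2}l_t\in L^\infty_tL^2$. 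The compatibility conditions \eqref{2.8*} are precisely what is needed to read off $u_t,\,l_t,\,\nabla u_t,\,\nabla l_t$ at $t=0$ in $L^2$, unlocking the time-derivative bootstrap; the third compatibility on $\nabla(\phi_0^{2\iota}Lu_0)$ and $\nabla(\phi_0^{\iota}\Delta l_0)$ is what yields the $t^{1/2}$-weighted higher regularity recorded in \eqref{b}.

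\textbf{Main obstacle and limit passage.} The genuinely new difficulty, absent from the non-heat-conductive analysis of \cite{dxz}, is the control of the right-hand side of $\eqref{2.3}_3$, which carries the very singular source $a_5 l^\nu n\phi^{3\iota}H(u)$ (the image of $\rho^\delta H(u)/P$), the degenerate but second-order source $a_6 l^{\nu+1}\phi^{-\iota}\mathrm{div}\,\psi$ (the image of $\triangle\rho^{\gamma-1}/P$), and the quadratic terms in $\Theta(\phi,l,\psi)$ including $\phi^{-3\iota}\psi\cdot\psi$ and $\phi^{\iota}|\nabla l|^2$. The degenerate dissipation $a_4\phi^{\iota}l^\nu\Delta l$ is too weak to absorb these by itself, so the strategy is three-fold: use the singular weighted bound $\phi^{\iota}\nabla u\in L^\infty_tL^2$ from the velocity equation to dominate $H(u)\sim|\nabla u|^2$ tested against the $\phi^{-\iota/2}l_t$-type entropy norm; gain one spatial derivative on $\psi$ from the hyperbolic estimate for $\eqref{2.3}_4$ and pair $\mathrm{div}\,\psi$ with the degenerate weight $\phi^{-\iota}$, the powers of $\phi$ balancing precisely because $\gamma+\delta\leq 2$; and control the terms in $\Theta$ by interpolating the transport estimate on $\psi$ with the weighted $D^3$-bound on $l$ via Gagliardo--Nirenberg, using the auxiliary weighted norms in \eqref{a}--\eqref{b} on $\nabla\phi^{3\iota/2}$ and $\nabla\phi^{3\iota/4}$. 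The single hardest step I expect is the simultaneous $\eta$-independent control of $\phi^{-\iota/2}l_t$ and $\phi^{\iota}\nabla^3 l$ in $L^\infty_tL^2$, since the corresponding energy identity has to close against all of the singular sources at once; this is exactly where the specific choice of the weight $\phi^{-\iota}$ on $l_t+u\cdot\nabla l$ in the reformulation \eqref{qiyi} pays off. Once this estimate is secured, passing $\eta\to 0$ preserves the lower bound $\inf l>0$ by a maximum principle applied to $\eqref{2.3}_3$ divided by $l$ (all remaining coefficients are integrable in time by the uniform bounds), and it preserves the identity $\psi=\tfrac{a\delta}{\delta-1}\nabla\phi^{2\iota}$ because $\psi-\tfrac{a\delta}{\delta-1}\nabla\phi^{2\iota}$ solves a homogeneous linear symmetric hyperbolic system with zero initial data, completing the proof.
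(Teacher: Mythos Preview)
Your five-step scheme, the identification of the singular sources in $\eqref{2.3}_3$ as the crux, and the reading of the compatibility conditions all match the paper's approach. Two points need adjustment.

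First, a minor one: in the paper's linearization \eqref{ln} the factor $l^\nu$ multiplying $Lu$ in the velocity equation is left \emph{unfrozen}, so the a~priori estimates are carried out in the order $(\phi,\psi,h)$-quantities $\to l \to u$ (\S3.2.1--\S3.2.3), the reverse of what you propose; the $u$-estimates in Lemmas~\ref{lu}--\ref{hu2} genuinely consume $|l^{-\nu}|_\infty$ and $|l_t|_{D^2}$ from the preceding $l$-lemmas. With a different linearization (freezing $l$ in $\eqref{2.3}_2$ as well) your order could be made to work, but then you must check that the iteration still contracts.

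Second, and more substantively, your maximum-principle route to $\inf l>0$ does not close. Dividing $\eqref{2.3}_3$ by $l$ and seeking a lower barrier requires the source $a_6\,l^{\nu+1}\phi^{-\iota}\mathrm{div}\,\psi$ to lie in $L^1_tL^\infty_x$, but the uniform bounds give only $\nabla\psi\in L^\infty_t(L^3\cap H^1)$, which in three dimensions is exactly the critical case and does not embed into $L^\infty_x$; the other sources ($H(u)\ge 0$, $|\psi|^2\ge 0$) have the good sign, but $\mathrm{div}\,\psi$ does not. The paper avoids this entirely: the positivity of $l$ is obtained only \emph{after} the time-weighted estimates of Lemma~\ref{shang5}. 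Combining $|\nabla l_t|_2\le M(c_0)$ (Lemma~\ref{l3}) with $t^{1/2}|\nabla^2 l_t|_2\le M(c_0)c_1^{\nu/2}$ and Gagliardo--Nirenberg gives $|l_t(s)|_\infty\le Cs^{-1/4}$, hence
\[
l(t,x)\;\ge\; l_0(x)-\int_0^t|l_s|_\infty\,ds\;\ge\; c_0^{-1}-Ct^{3/4}\;\ge\;\tfrac12 c_0^{-1}
\]
on a short interval. So the lower bound on $l$ is a \emph{consequence} of the full hierarchy of weighted energy estimates (in particular of $g_5,g_6\in L^2$ through the bootstrap to $t^{1/2}\phi^\iota\nabla^2 l_t\in L^\infty_tL^2$), not an input obtained separately by a comparison argument.
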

\begin{remark}\label{strongsolution}
In Theorem \ref{3.1}, $(\phi,u,l,\psi=\fr{a\delta}{\delta-1}\nabla\phi^{2\iota})$  in $[0,T_*]\times \mathbb{R}^3$ is called a strong solution   to the Cauchy problem \ef{2.3}-\ef{2.5} if it satisfies \ef{2.3}-\ef{2.5} in the sense of distributions and  satisfies the  equations \eqref{2.3}-\ef{2.2} a.e. $(t,x)\in (0,T_*]\times \mathbb{R}^3$.
\end{remark}

\subsection{Energy estimates for the degenerate-singular structure}
Now we sketch the  main strategy on   how to obtain  closed energy estimates based on the degenerate-singular structure  \eqref{2.3} described above.

Note first that $\phi$ satisfies a   {\it scalar transport} equation $\eqref{2.3}_1$. Then  $\phi$  can be estimated  by  classical arguments. Second,   $u$  is governed  by a   {\it singular parabolic}   system  $\eqref{2.3}_2$. The argument on how to establish the a prior estimates in $H^3$ space for such a structure  has been introduced   in \cite{zz2} for degenerate isentropic CNS.

Next we show how to treat    $l$.
Note that    $l$  can be controlled by the following {\it degenerate-singular  parabolic}  equation:
\begin{equation*}
    \begin{split}
      &  \underbrace{\phi^{-\iota}(l_t+u\cdot\nabla l)}_{\circledast}-\underbrace{a_4\phi^{\iota}l^\nu \triangle l}_{\Box}\\
= & \underbrace{a_5l^\nu n\phi^{3\iota}H(u)+\Theta(\phi,l,\psi)}_{\backsim_1}+\underbrace{a_6l^{\nu+1} \phi^{-\iota}\text{div} \psi}_{\backsim_2},
    \end{split}
\end{equation*}
where  $\backsim_i$ ($i=1,2$) denotes the source term   with $i$-th order  derivatives of $\rho$, which may be   singular near the vacuum. It follows  from $\eqref{2.3}_3$ that
\begin{equation*}\begin{split}
-a_4\triangle (\phi^{\iota} (l-\bar{l}))
=&l^{-\nu}\mathcal{E}(\phi,u,l,\psi)-a_4F(\nabla \phi^{\iota},l-\bar{l})=V,
\end{split}
\end{equation*}
where
\begin{equation*}\begin{split}
\mathcal{E}(\phi,u,l,\psi)=&-\phi^{-\iota}( l_t+u\cdot\nabla l)
+a_5l^\nu n\phi^{3\iota}H(u)
+a_6l^{\nu+1} \phi^{-\iota} \text{div} \psi\\[4pt]
&+\Theta(\phi,l,\psi),\\[4pt]
F(\nabla \phi^{\iota},l-\bar{l})=&( l-\bar{l})\triangle \phi^{\iota}+2\nabla \phi^{\iota}\cdot\nabla l.
\end{split}
\end{equation*}
Then the standard elliptic regularity theory yields 
\begin{equation}\label{lll12}
\begin{split}
|\phi^{\iota}( l-\bar{l})|_{D^2}\leq  C|V|_{2}\quad \text{and}\quad |\phi^{\iota}( l-\bar{l})|_{D^3}\leq C|V|_{D^1},
\end{split}
\end{equation}
for some constant  $C>0$ independent of the lower bound of $\phi$ provided that
$$
\phi^{\iota}( l-\bar{l})\rightarrow 0\qquad \text{as} \qquad |x| \rightarrow \infty,
$$
which can be verified  by   a non-vacuum approximation. Based on \eqref{lll12}, one has 
\begin{equation}\label{lll13}
\begin{split}
|\phi^{\iota}\nabla^2l|_{2}
\leq & C(|V|_{2}+|\nabla\phi^{\iota}  |_\infty|\nabla l|_2+|l-\bar{l}|_\infty | \nabla^2 \phi^{\iota} |_2),\\[4pt]
|\phi^{\iota}\nabla^3l|_2\leq & 
 C(|V|_{D^1}+|\nabla\phi^{\iota}  |_\infty|\nabla^2 l|_2+|\nabla^2\phi^{\iota}  |_2|\nabla l|_\infty+|\nabla^3 \phi^{\iota} |_2|l -\bar{l} |_\infty).
\end{split}
\end{equation}
It should be noted here that this analysis does not yield    $l\in L^2([0,T_*];D^4)$ due to  the appearance of the term $a_6l^{\nu+1} \phi^{-\iota} \text{div} \psi$ in $\ef{2.3}_3$ or $\mathcal{E}$. $|l|_{D^4}$ can be  controlled by  $|l^{\nu+1} \phi^{-\iota} \text{div} \psi|_{D^2}$, which seems  impossible in the current $H^3$ framework. What we can show is that 
$\theta^{\nu+1}\in L^2([0,T_*];D^4)$, which is  enough to show that the solution  obtained here is just a classical one to the original system \eqref{1}.  

In order to close the estimates for $l$,  one still needs to deal with several singular source terms such as   
$ \phi^{-\iota} \text{div} \psi$,  $n\phi^{3\iota}H(u)$ and $\Theta(\phi,l,\psi)$. Note that  $\ef{2.3}_4$
implies that the subtle  term $\psi$  solves a  symmetric hyperbolic  system with a  singular source term $\delta a\phi^{2\iota}\nabla \text{div} u$, then $\psi$  can be controlled by this structure and   the corresponding singular weighted estimates for $u$. However, due to the high non-linearity and singularity,   only the estimates for $\psi$ itself  is far from enough. In order to  deal with these terms, some more delicate estimates are introduced here,  i.e.,
\begin{equation}
\nabla\phi^{\frac{3}{2}\iota}\in D^1_*,\quad \nabla\phi^{\frac{3}{4}\iota}\in L^4, \quad  \phi^{\frac{1}{2}\iota}\nabla^2 \psi\in L^2,
\end{equation}
which, along with some singularly weighted interpolation inequalities such as
\begin{equation*} 
|\phi^{\fr{3}{2}\iota}\nabla u_t|_3\leq C|\phi^{\iota}\nabla u_t|_2^{\fr{1}{2}}|\phi^{2\iota}\nabla u_t|_6^{\fr{1}{2}},
\end{equation*}
 provide effective ways to distribute the powers of the  singular weights  reasonably. Then we can finally establish the desired uniform high order regularity for $S$. 

\section{Local-in-time well-posedness with far field vacuum}
In this section,  the proofs for    Theorems \ref{th21} and \ref{3.1}  will be given.
\subsection{Linearization away from the vacuum  with artificial dissipations}

Let $T$ be some positive time. To solve the nonlinear problem  $\ef{2.3}$-$\ef{2.5}$, we start with the following  linearized problem for  $(\phi^{\epsilon,\eta}, u^{\epsilon,\eta},  l^{\epsilon,\eta}, h^{\epsilon,\eta})$  in $[0,T]\times \mathbb{R}^3$:
\begin{equation}\label{ln}\left\{\begin{aligned}
&\phi^{\epsilon,\eta}_t+v\cdot\nabla\phi^{\epsilon,\eta}+(\gamma-1)\phi^{\epsilon,\eta} \text{div}v=0,\\
&u^{\epsilon,\eta}_t+v\cdot \nabla v+a_1\phi^{\epsilon,\eta}\nabla l^{\epsilon,\eta}+l^{\epsilon,\eta}\nabla\phi^{\epsilon,\eta}+a_2(l^{\epsilon,\eta})^\nu \sqrt {(h^{\epsilon,\eta})^2+\epsilon^2} Lu^{\epsilon,\eta}\\
=&a_2g\nabla (l^{\epsilon,\eta})^\nu\cdot Q(v)+a_3(l^{\epsilon,\eta})^\nu \psi^{\epsilon,\eta} \cdot Q(v),\\
& (h^{\epsilon,\eta})^{-\frac{1}{2}}( l^{\epsilon,\eta}_t+v\cdot\nabla l^{\epsilon,\eta})-a_4 w^\nu ((h^{\epsilon,\eta})^2+\epsilon^2)^{\frac{1}{4}} \triangle l^{\epsilon,\eta}\\
= & a_5 w^\nu n^{\epsilon,\eta} g^{\frac{3}{2}}H(v)+a_6w^{\nu+1}(h^{\epsilon,\eta})^{-\frac{1}{2}} \text{div} \psi^{\epsilon,\eta}+\Pi(l^{\epsilon,\eta},h^{\epsilon,\eta},w,g),\\[2pt]
&h^{\epsilon,\eta}_t+v\cdot \nabla h^{\epsilon,\eta}+(\delta-1)g\text{div}v=0,\\
&(\phi^{\epsilon,\eta},u^{\epsilon,\eta},l^{\epsilon,\eta},h^{\epsilon,\eta})|_{t=0}=(\phi^\eta_0,u^\eta_0,l^\eta_0,h^\eta_0)\\[1pt]
=&(\phi_0+\eta,u_0,l_0,(\phi_0+\eta)^{2\iota})\quad  \text{for} \quad x\in\mathbb{R}^3,\\
&(\phi^{\epsilon,\eta},u^{\epsilon,\eta},l^{\epsilon,\eta},h^{\epsilon,\eta})\rightarrow (\eta,0,\bar{l},\eta^{2\iota}) \quad \text{as} \hspace{2mm}|x|\rightarrow \infty \quad \rm for\quad t\geq 0,
\end{aligned}\right.\end{equation}
where $\epsilon$ and $\eta$ are any given  positive constants, 
\begin{equation}\label{2.11}
\begin{split}
& \psi^{\epsilon,\eta}=\fr{a\delta}{\delta-1}\nabla h^{\epsilon,\eta},\quad n^{\epsilon,\eta}=(ah^{\epsilon,\eta})^b,\quad b=\frac{2-\delta-\gamma}{\delta-1}\leq 0,\\
&\Pi(l^{\epsilon,\eta},h^{\epsilon,\eta},w,g)=a_7w^{\nu+1} (h^{\epsilon,\eta})^{-\frac{3}{2}}\psi^{\epsilon,\eta}\cdot \psi^{\epsilon,\eta}+a_8w^\nu(h^{\epsilon,\eta})^{-\frac{1}{2}} \nabla l^{\epsilon,\eta}\cdot   \psi^{\epsilon,\eta}\\
&\qquad  \qquad \qquad\qquad \  +a_9w^{\nu-1} \sqrt{g}\nabla w\cdot \nabla w,
 \end{split}
\end{equation}
$v=(v^{(1)},v^{(2)},v^{(3)})^\top \in\mathbb{R}^3$ is a given vector, $g$ and $w$ are given  real functions satisfying $w> 0$, $(v(0,x),g(0,x),w(0,x))=(u_0(x),h_0(x)=(\phi^\eta_0)^{2\iota}(x),l_0(x))$ and:
\begin{equation}\label{4.1*}
\begin{split}
&g\in L^\infty\cap C([0,T]\times \mathbb{R}^3),\quad \nabla g\in C([0,T];L^q\cap D^{1,3}\cap D^2),\\[3pt]
&\nabla g^{\frac{3}{4}}\in C([0,T]; D^1_*),\hspace{2mm}\nabla g^{\frac{3}{8}}\in C([0,T]; L^4),\quad  g_t\in C([0,T];H^2),\\
&(\nabla g_{tt},v_{tt},w_{tt})\in L^2([0,T];L^2),\quad  v\in C([0,T];H^3)\cap L^2([0,T];H^4),\\
& t^{\fr{1}{2}}v\in L^\infty([0,T];D^4),\quad v_t\in C([0,T];H^1)\cap L^2([0,T];D^2),\\
&   t^{\fr{1}{2}}v_t\in L^\infty([0,T];D^2)\cap L^2([0,T];D^3),\quad 
 w-\bar{l}\in  C([0,T];D^1_* \cap D^3), \\
&t^{\fr{1}{2}}(v_{tt},w_{tt})\in L^\infty([0,T];L^2)\cap L^2([0,T];D^1_*), \quad \inf_{(t,x)\in [0,T]\times \mathbb{R}^3} w>0,\\ 
&w_t\in C([0,T];D^1_*)\cap L^2([0,T];D^2),\quad   t^{ \frac{1}{2} } w_t\in L^\infty([0,T];D^2).
\end{split}
\end{equation}

It follows from  the standard theory \cite{oar} that the problem  $\ef{ln}$ has a  global classical solution as follows.
\begin{lemma}\label{ls}
	 Assume that $\eta$ and $\epsilon$ are given positive constants, $\ef{can1}$ holds, and the initial data $(\phi_0,u_0,l_0,h_0)$ satisfy 
	\eqref{a}-\eqref{2.8*}. Then for any time $T>0$,  there exists a unique classical solution $(\phi^{\epsilon,\eta},u^{\epsilon,\eta},l^{\epsilon,\eta},h^{\epsilon,\eta})$ to $\ef{ln}$  in $[0,T]\times \mathbb{R}^3$  such that
	\begin{equation}\label{2.13}\begin{aligned}
	&(\phi^{\epsilon,\eta}-\eta,l^{\epsilon,\eta}-\bar{l})\in C([0,T];D^1_*\cap D^3), \quad  (\phi^{\epsilon,\eta}_t,\nabla h^{\epsilon,\eta},h^{\epsilon,\eta}_t)\in C([0,T];H^2), \\[3pt]
	& h^{\epsilon,\eta}\in L^\infty\cap C([0,T]\times \mathbb{R}^3),\quad   	u^{\epsilon,\eta}\in C([0,T];H^3)\cap L^2([0,T];H^4),\\[3pt]
	&( u^{\epsilon,\eta}_t,l^{\epsilon,\eta}_t)\in C([0,T];H^1)\cap L^2([0,T];D^2),\quad  (u^{\epsilon,\eta}_{tt},l^{\epsilon,\eta}_{tt})\in L^2([0,T];L^2),\\[3pt]
	& t^{\fr{1}{2}}u^{\epsilon,\eta}\in L^\infty([0,T];D^4), \quad t^{\fr{1}{2}}u^{\epsilon,\eta}_t\in L^\infty([0,T];D^2)\cap L^2([0,T];D^3),\\[3pt]
 & t^{\fr{1}{2}}(u^{\epsilon,\eta}_{tt}, l^{\epsilon,\eta}_{tt})\in L^\infty([0,T];L^2)\cap L^2([0,T];D^1_*),\quad   t^{ \frac{1}{2} } l^{\epsilon,\eta}_t\in L^\infty([0,T];D^2).
	\end{aligned}\end{equation}
\end{lemma}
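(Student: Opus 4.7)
The system $\eqref{ln}$ is sequential: the transport equations $\eqref{ln}_1$ and $\eqref{ln}_4$ involve only the prescribed fields $(v,g)$; once $(\phi^{\epsilon,\eta},h^{\epsilon,\eta})$ are known, $\psi^{\epsilon,\eta}=\frac{a\delta}{\delta-1}\nabla h^{\epsilon,\eta}$ and $n^{\epsilon,\eta}=(ah^{\epsilon,\eta})^b$ are determined by $\eqref{2.11}$, so $\eqref{ln}_3$ becomes a linear scalar parabolic equation for $l^{\epsilon,\eta}$, and finally $\eqref{ln}_2$ is a linear parabolic system for $u^{\epsilon,\eta}$. The plan is to solve these four subproblems in order by classical linear theory, exploiting the artificial parameters $\epsilon$ and $\eta$ to force uniform ellipticity in the parabolic steps and a strictly positive lower bound for the transported densities, and then to extract the full list of regularities in $\eqref{2.13}$ by standard energy estimates.

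\textbf{The transport step.} Since $v\in C([0,T];H^3)\hookrightarrow L^\infty$ by $\eqref{4.1*}$, the characteristic flow $X(\cdot;x)$ of $v$ is a $C^1$ diffeomorphism of $\mathbb{R}^3$ for each $t\in[0,T]$; integrating along characteristics yields
\begin{align*}
\phi^{\epsilon,\eta}(t,X(t;x))&=(\phi_0+\eta)(x)\exp\Bigl(-(\gamma-1)\int_0^t(\text{div}\,v)(s,X(s;x))\,ds\Bigr),\\
h^{\epsilon,\eta}(t,X(t;x))&=(\phi_0+\eta)^{2\iota}(x)-(\delta-1)\int_0^t(g\,\text{div}\,v)(s,X(s;x))\,ds.
\end{align*}
From $\eqref{a}$ one has $\phi_0\in L^\infty$, so $\eta\leq\phi_0+\eta\leq|\phi_0|_\infty+\eta$ and both $\phi^{\epsilon,\eta}$ and $h_0^\eta=(\phi_0+\eta)^{2\iota}$ are bounded above and below by strictly positive constants. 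The $L^\infty_{t,x}$ bounds on $v,\,\text{div}\,v,\,g$ from $\eqref{4.1*}$ then keep $\phi^{\epsilon,\eta}$ in a fixed positive interval on $[0,T]\times\mathbb{R}^3$ and keep $h^{\epsilon,\eta}$ strictly positive after, if necessary, a shrinkage of $T$ depending on $(\epsilon,\eta,v,g)$. Differentiating the transport equations in $x$ and $t$ and using Gronwall's inequality at each Sobolev level produces $(\phi^{\epsilon,\eta}-\eta)\in C([0,T];D^1_*\cap D^3)$, $\phi^{\epsilon,\eta}_t\in C([0,T];H^2)$, $(\nabla h^{\epsilon,\eta},h^{\epsilon,\eta}_t)\in C([0,T];H^2)$, and $h^{\epsilon,\eta}\in L^\infty\cap C([0,T]\times\mathbb{R}^3)$.

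\textbf{The parabolic steps.} Multiplying $\eqref{ln}_3$ by $(h^{\epsilon,\eta})^{1/2}$, which is positive by the previous step, one obtains
\[
l_t+v\cdot\nabla l-a_4w^\nu(h^{\epsilon,\eta})^{1/2}\bigl((h^{\epsilon,\eta})^2+\epsilon^2\bigr)^{1/4}\triangle l=\mathcal{R},
\]
a scalar uniformly parabolic equation with smooth coefficients, leading coefficient bounded below by a positive constant depending on $(\epsilon,\eta,T,\inf w)$, and source $\mathcal{R}$ assembled from already-controlled quantities. Classical linear parabolic theory yields a unique $l^{\epsilon,\eta}$ with $l^{\epsilon,\eta}-\bar{l}\in C([0,T];D^1_*\cap D^3)$, $l^{\epsilon,\eta}_t\in C([0,T];H^1)\cap L^2([0,T];D^2)$, $l^{\epsilon,\eta}_{tt}\in L^2([0,T];L^2)$; the time-weighted upgrades $t^{1/2}l^{\epsilon,\eta}_t\in L^\infty([0,T];D^2)$ and $t^{1/2}l^{\epsilon,\eta}_{tt}\in L^\infty([0,T];L^2)\cap L^2([0,T];D^1_*)$ follow by differentiating once more in $t$, testing with $t$-weighted multipliers, and applying elliptic regularity of $-\triangle$. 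The system $\eqref{ln}_2$ has the same structure: $u_t+a_2(l^{\epsilon,\eta})^\nu\sqrt{(h^{\epsilon,\eta})^2+\epsilon^2}\,Lu=\mathcal{F}$, with $L$ strongly elliptic under $\eqref{can1}$ and coefficient bounded below by $a_2(\inf l^{\epsilon,\eta})^\nu\epsilon>0$, so linear parabolic systems theory combined with the same $t$-weighted bootstrapping provides all the claimed regularities for $u^{\epsilon,\eta}$ in $\eqref{2.13}$.

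\textbf{Main obstacle.} The only substantive subtlety is maintaining a strictly positive lower bound for $h^{\epsilon,\eta}$ throughout $[0,T]\times\mathbb{R}^3$, since the singular factors $(h^{\epsilon,\eta})^{-1/2}$ in $\eqref{ln}_3$ and $(h^{\epsilon,\eta})^{-3/2}$ in $\Pi$ would otherwise be ill-defined; this is exactly where the shift by $\eta$ in the initial datum (giving $\inf_x h_0^\eta>0$) and the $L^\infty_{t,x}$ control of $g\,\text{div}\,v$ from $\eqref{4.1*}$ combine through the explicit transport representation above, as in the transport step. Uniqueness at each level is immediate from the corresponding energy identity applied to the difference of two solutions, each subproblem being linear.
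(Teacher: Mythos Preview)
Your decoupling into sequential transport and uniformly parabolic subproblems is exactly the intended argument; the paper's own proof of this lemma is the single sentence ``It follows from the standard theory \cite{oar} that the problem $\eqref{ln}$ has a global classical solution,'' so you have supplied the details it omits. Your caveat about possibly shrinking $T$ to preserve $h^{\epsilon,\eta}>0$ is in fact more careful than the lemma's literal phrasing ``for any time $T>0$'': the source $-(\delta-1)g\,\text{div}\,v$ in $\eqref{ln}_4$ is not proportional to $h$, so positivity on an arbitrary interval is not automatic, and indeed the paper itself only secures $h^{\epsilon,\eta}>\tfrac{1}{2c_0}$ on the restricted interval $[0,T_1]$ in Lemma~\ref{gh}; since only that short-time version is ever used downstream, this is a slight looseness in the lemma's statement rather than a gap in your argument.
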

 
The next key step is to derive  the uniform a priori estimates independent of $(\epsilon,\eta)$ for the  solution $(\phi^{\epsilon,\eta},u^{\epsilon,\eta},l^{\epsilon,\eta},h^{\epsilon,\eta})$ to   $\ef{ln}$   in Lemma \ref{ls}.

\subsection{Uniform a  priori estimates}

Note that for any fixed $\eta\in(0,1]$,  
$$(\phi^\eta_0,u^\eta_0,l^\eta_0,h^\eta_0)=(\phi_0+\eta,u_0,l_0,(\phi_0+\eta)^{2\iota}),$$ with $(\phi_0,u_0,l_0,h_0)$  satisfying  
	\eqref{a}-\eqref{2.8*}  and $\psi_0=\frac{a\delta}{\delta-1}\nabla \phi^{2\iota}_0$, there exists a constant $c_0>0$ independent of $\eta$ such that
\begin{equation}\label{2.14}\begin{aligned}
2+\eta+\bar{l}+\|\phi^\eta_0-\eta\|_{D^1_*\cap D^3}+\|u^\eta_0\|_{3}+\|\nabla h^\eta_0\|_{L^q\cap D^{1,3}\cap D^2}&\\
+|(h^\eta_0)^{\frac{1}{4}}\nabla^3h^\eta_0|_2
+|\nabla (h^\eta_0)^{\frac{3}{4}}|_{D^1_*}
+|\nabla (h^\eta_0)^{\frac{3}{8}}|_{4}+|(h^\eta_0)^{-1}|_\infty+|g^\eta_1|_2+|g^\eta_2|_2&\\
+|g^\eta_3|_2
+|g^\eta_4|_2+|g^\eta_5|_2
+|g^\eta_6|_2+\|l^\eta_0-\bar{l}\|_{D^1_*\cap D^3}+|(l^\eta_0)^{-1}|_\infty&\leq  c_0,
\end{aligned}\end{equation}
where 
\begin{equation*}
\begin{split}
&g^\eta_1=(\phi^\eta_0)^{\iota}\nabla u^\eta_0,\quad g^\eta_2=(\phi^\eta_0)^{2\iota}Lu^\eta_0,\quad 
g^\eta_3=(\phi^\eta_0)^{\iota}\nabla((\phi^\eta_0)^{2\iota}Lu^\eta_0),\\ &g_4^\eta=(\phi^\eta_0)^{\frac{\iota}{2}}\nabla l^\eta_0,\quad g_5^\eta=(\phi^\eta_0)^{\frac{3}{2}\iota}\triangle l^\eta_0,\quad g_6^\eta=(\phi^\eta_0)^{\frac{3}{2}\iota}\nabla ((\phi^\eta_0)^{\iota}\triangle l^\eta_0).
\end{split}
\end{equation*}

\begin{remark}\label{r1}
First, it follows from  the definition of $g^\eta_2$ and $\phi^\eta_0>\eta$  that 
	\begin{equation}\label{incom}\left\{\begin{aligned}
	&L((\phi^\eta_0)^{2\iota}u^\eta_0)=g^\eta_2-\fr{\delta-1}{a\delta}G(\psi^\eta_0,u^\eta_0),\\
&(\phi^\eta_0)^{2\iota}u^\eta_0\longrightarrow 0 \ \ \text{as}\ \ |x|\longrightarrow \infty,\\
	\end{aligned}\right.\end{equation}
	where $\psi^\eta_0=\frac{a\delta}{\delta-1}\nabla (\phi^\eta_0)^{2\iota}=\frac{a\delta}{\delta-1}\nabla h^\eta_0$ and 
\begin{equation}\label{Gdingyi}
G(\psi,u)=\alpha\psi^\eta\cdot\nabla u^\eta+\alpha \text{div}(u^\eta\otimes\psi^\eta)+(\alpha+\beta)(\psi^\eta\text{div} u^\eta+\psi^\eta\cdot\nabla u^\eta+u^\eta\cdot\nabla\psi^\eta).
\end{equation}
Then  the standard elliptic theory and $\ef{2.14}$ yield  that
\begin{equation}\label{incc}
\begin{split}
|(\phi^\eta_0)^{2\iota}u^\eta_0|_{D^2}\leq & C(|g^\eta_2|_2+|G(\psi^\eta_0,u^\eta_0)|_2)\leq C_1,\\
|(\phi^\eta_0)^{2\iota}\nabla^2u^\eta_0|_2\leq & C(|(\phi^\eta_0)^{2\iota}u^\eta_0|_{D^2}+|\nabla\psi^\eta_0|_3|u^\eta_0|_6+|\psi^\eta_0|_\infty|\nabla u^\eta_0|_2)\leq C_1,
\end{split}
\end{equation}
where $C$ and $ C_1$ are   generic positive   constants independent of $(\epsilon,\eta)$. Due to $\nabla^2\phi^{2\iota}_0\in L^3$ and $\ef{incc}$, it holds that 
\begin{equation}\label{incc1}
|(\phi^\eta_0)^\iota\nabla^2\phi^\eta_0|_2+|(\phi^\eta_0)^\iota\nabla(\psi^\eta_0\cdot Q(u^\eta_0))|_2\leq C_1,
\end{equation}
where one has used the fact that
\begin{equation*}
\begin{split}
|\phi_0^\iota\nabla^2\phi_0 |_2\leq & C_1(|\phi_0|_6|\phi_0|^{-\iota}_\infty|\nabla^2\phi^{2\iota}_0|_3+|\nabla \phi^\iota_0|_6|\nabla \phi_0|_3)\leq  C_1,\\
|(\phi^\eta_0)^\iota\nabla^2\phi^\eta_0|_2=& \Big|\phi_0^\iota\nabla^2\phi_0 \frac{\phi_0^{-\iota}}{(\phi_0+\eta)^{-\iota}}\Big|_2\leq  |\phi_0^\iota\nabla^2\phi_0 |_2\leq C_1.
\end{split}
\end{equation*}

Second,  the initial  compatibility condition 
$$\nabla((\phi^\eta_0)^{2\iota}Lu^\eta_0)=(\phi^\eta_0)^{-\iota}g^\eta_3\in L^2$$ 
implies  formally that 
\begin{equation}\label{incom1}\left\{\begin{aligned}
	&L((\phi^\eta_0)^{2\iota}u^\eta_0)=\triangle^{-1}\text{div}((\phi^\eta_0)^{-\iota}g^\eta_3)-\fr{\delta-1}{a\delta}G(\psi^\eta_0,u^\eta_0),\\
&(\phi^\eta_0)^{2\iota}u^\eta_0\longrightarrow 0 \ \ \text{as}\ \ |x|\longrightarrow \infty.\\
	\end{aligned}\right.\end{equation}
Thus the standard  elliptic theory yields
\begin{equation}\label{incc*}
\begin{split}
|(\phi^\eta_0)^{2\iota}u^\eta_0|_{D^3}\leq & C(|\phi^\eta_0)^{-\iota}g^\eta_3|_2+|G(\psi^\eta_0,u^\eta_0)|_{D^1})\leq C_1<\infty,\\
|(\phi^\eta_0)^{2\iota}\nabla^3u^\eta_0|_2\leq & C(|(\phi^\eta_0)^{2\iota}u^\eta_0|_{D^3}+|\nabla\psi^\eta_0|_3|\nabla u^\eta_0|_6\\
&+|\psi^\eta_0|_\infty|\nabla^2 u^\eta_0|_2+|\nabla^2\psi_0^\eta|_2|u_0^\eta|_\infty)\leq C_1.
\end{split}
\end{equation}

Similarly,   the definition of $g^\eta_5$ and  $\phi^\eta_0>\eta$ imply  that 
\begin{equation}\label{shangchuzhi1}\left\{\begin{aligned}
&\triangle ((\phi^\eta_0)^{\frac{3}{2}\iota}(l^\eta_0-\bar{l}))=g^\eta_5+2\nabla (\phi^\eta_0)^{\frac{3}{2}\iota} \cdot \nabla l^\eta_0+(l^\eta_0-\bar{l})\triangle (\phi^\eta_0)^{\frac{3}{2}\iota},\\
&(\phi^\eta_0)^{\frac{3}{2}\iota}(l^\eta_0-\bar{l})\longrightarrow 0 \ \ \text{as}\ \ |x|\longrightarrow \infty,
	\end{aligned}\right.\end{equation}
which, together with  $\ef{2.14}$, yields that 
\begin{equation}\label{shangchuzhi2}
\begin{split}
|(\phi^\eta_0)^{\frac{3}{2}\iota}(l^\eta_0-\bar{l})|_{D^2}\leq & C(|g^\eta_5|_2+|(\phi^\eta_0)^{-\frac{1}{2}\iota}|_\infty|\psi^\eta_0|_\infty|\nabla l^\eta_0|_2\\
&+|l^\eta_0-\bar{l}|_\infty|\nabla^2 (h^\eta_0)^{\frac{3}{4}}|_2)\leq C_1<\infty,\\
|(\phi^\eta_0)^{\frac{3}{2}\iota}\nabla^2 l^\eta_0|_2\leq & C(|(\phi^\eta_0)^{\frac{3}{2}\iota}(l^\eta_0-\bar{l})|_{D^2}+|(\phi^\eta_0)^{-\frac{1}{2}\iota}|_\infty|\psi^\eta_0|_\infty|\nabla l^\eta_0|_2\\
&+|l^\eta_0-\bar{l}|_\infty|\nabla^2 (h^\eta_0)^{\frac{3}{4}}|_2)\leq C_1<\infty.
\end{split}
\end{equation}

At last,   the initial  compatibility condition 
$$\nabla ((\phi^\eta_0)^{\iota}\triangle l^\eta_0)=(\phi^\eta_0)^{-\frac{3}{2}\iota}g^\eta_6\in L^2$$ 
implies  formally that 
\begin{equation}\label{shangchuzhi33}\left\{\begin{aligned}
	&\triangle((\phi^\eta_0)^{\frac{5}{2}\iota}(l^\eta_0-\bar{l}))=\triangle^{-1}\text{div}\big(g^\eta_6+\nabla (\phi^\eta_0)^{\frac{3}{2}\iota}\cdot (\phi^\eta_0)^{\iota}\triangle l^\eta_0\big)\\
 &\qquad +2\nabla (\phi^\eta_0)^{\frac{5}{2}\iota }\cdot \nabla l^\eta_0+(l^\eta_0-\bar{l})\triangle (\phi^\eta_0)^{\frac{5}{2}\iota},\\
&(\phi^\eta_0)^{\frac{5}{2}\iota}(l^\eta_0-\bar{l})\longrightarrow 0 \ \ \text{as}\ \ |x|\longrightarrow \infty,\\
	\end{aligned}\right.\end{equation}
which  yields
\begin{equation}\label{shangchuzhi44}
\begin{split}
|(\phi^\eta_0)^{\frac{5}{2}\iota}(l^\eta_0-\bar{l}))|_{D^3}\leq & C(|g^\eta_6|_2+|\psi^\eta_0|_\infty|(h^\eta_0)^{-1}|^{\frac{1}{2}}_\infty|g^\eta_5|_2+\aleph)\leq C_1,\\
|(\phi^\eta_0)^{\frac{5}{2}\iota}\nabla^3l^\eta_0|_2\leq & C(|(\phi^\eta_0)^{\frac{5}{2}\iota}(l^\eta_0-\bar{l}))|_{D^3}+\aleph)\leq C_1,
\end{split}
\end{equation}
where 
\begin{equation*}
\begin{split}
\aleph=&|\psi^\eta_0|_\infty|(\phi^\eta_0)^{\frac{1}{2}\iota}\nabla^2 l^\eta_0|_2
+|\nabla l^\eta_0|_3(|\nabla (\phi^\eta_0)^{\frac{7}{4}\iota}|_\infty|\nabla (h^\eta_0)^{\frac{3}{8}}|_6+|(h^\eta_0)^{\frac{1}{4}}\nabla^2h^\eta_0|_6)\\
&+|l^\eta_0-\bar{l}|_\infty(|(h^\eta_0)^{\frac{1}{4}}\nabla^3h^\eta_0|_2+|\nabla (\phi^\eta_0)^{\frac{1}{2}\iota}|_6|\nabla^2 (\phi^\eta_0)^{2\iota}|_3+|\psi^\eta_0|_\infty|\nabla (\phi^\eta_0)^{\frac{1}{4}\iota}|^2_4).
\end{split}
\end{equation*}

The  rigorous verifications of  \ef{incom1}and \ef{shangchuzhi33} can be obtained by a standard smoothing process of the initial data, and details are    omitted here. 
	\end{remark}
Now let $T$ be a positive fixed constant, and  assume that there exist some time $T^*\in(0,T]$ and constants $c_i$ $(i=1,\cdots,5)$ such that
\begin{equation}\label{2.15}
1<c_0\leq c_1\leq c_2\leq c_3\leq c_4\leq c_5,
\end{equation}
and
\begin{equation}\label{2.16}\begin{aligned}
\sup_{0\leq t\leq T^*}(\|\nabla g\|^2_{L^\infty\cap L^q\cap D^{1,3}\cap D^2}+|\nabla g^{\frac{3}{4}}|^2_{ D^1_*}+|\nabla g^{\frac{3}{8}}|^2_4)(t)\leq & c_1^2,\\
  \inf_{[0,T^*]\times \mathbb{R}^3} w(t,x)\geq c^{-1}_1,\quad \inf_{[0,T^*]\times \mathbb{R}^3} g(t,x)\geq & c^{-1}_1,\\
    \sup_{0\leq t\leq T^*}(|w|^2_\infty+|v|^2_\infty+|\sqrt{g}\nabla v|^2_2+\|v\|^2_{1})(t)+\int^{T^*}_0(|v|^2_{D^2}+|v_t|^2_2) \text{d}t \leq & c_1^2,\\
\sup_{0\leq t\leq T^*}|g^{\frac{1}{4}}\nabla w(t)|^2_2+\int^{T^*}_0(|g^{-\frac{1}{4}}w_t|^2_2+|\sqrt{g}\nabla^2 w|^2_{2}) \text{d}t \leq & c_2^2,\\
\sup_{0\leq t\leq T^*}(|g^{-\frac{1}{4}}w_t|^2_2+|\sqrt{g}\nabla^2 w|^2_2)(t)+\int^{T^*}_0(|g^{\frac{1}{4}}\nabla w_t|^2_2+|\sqrt{g}\nabla^3 w|^2_2) \text{d}t \leq & c_2^2,\\
\sup_{0\leq t\leq T^*}(|g^{\frac{1}{4}}\nabla w_t|^2_2+|\sqrt{g}\nabla^3 w|^2_2)(t)+\int^{T^*}_0(|g^{-\frac{1}{4}} w_{tt}|^2_2+|\sqrt{g}\nabla^2 w_t|^2_2) \text{d}t \leq & c_2^2,\\
\text{ess}\sup_{0\leq t\leq T^*}t(|\sqrt{g} \nabla^2w_t|^2_2+|g^{-\frac{1}{4}}w_{tt}|^2_2)(t) +
	\int^{T^*}_0t|g^{\frac{1}{4}} w_{tt}|_{D^1_*}^2\text{d}t  \leq & c^2_2,\\
\sup_{0\leq t\leq T^*}(|v|^2_{D^2}+|v_t|^2_2+|g\nabla^2v|^2_2)(t)+\int^{T^*}_0(|v|_{D^3}^2+|v_t|^2_{D^1_*}) \text{d}t \leq & c_3^2,\\
\sup_{0\leq t\leq T^*}(|v|^2_{D^3}+|\sqrt{g}\nabla v_t|^2_2+|g_t|^2_{D^1_*})(t)+\int^{T^*}_0(|v|^2_{D^4}+|v_t|^2_{D^2}+|v_{tt}|^2_2)\text{d}t  \leq & c_4^2,\\
\sup_{0\leq t\leq T^*}(|g\nabla^2v|^2_{D^1_*}+|g_t|^2_\infty)(t)+\int^{T^*}_0(|(g\nabla^2v)_t|^2_2+|g\nabla^2v|^2_{D^2})\text{d}t\leq & c_4^2,\\
\text{ess}\sup_{0\leq t\leq T^*}t(|v|^2_{D^4}+|g\nabla^2v_t|^2_2)(t)+\int^{T^*}_0|g_{tt}|^2_{D^1_*}\text{d}t\leq & c^2_5,\\ 
\text{ess}\sup_{0\leq t\leq T^*}t|v_{tt}(t)|^2_2+\int^{T^*}_0t(|v_{tt}|^2_{D^1_*}+|\sqrt{g}v_{tt}|_{D^1_*}^2+|v_t|^2_{D^3})\text{d}t\leq & c_5^2.
\end{aligned}
\end{equation}
$T^*$ and $c_i$ $(i=1,\cdots,5)$ will be determined later, and depend only on $c_0$ and the fixed constants $(A, R, c_v, \alpha,\beta,\gamma,\delta, T)$.
In the rest of  \S 3.2,  $ M(c)\geq 1$ will denote  a generic continuous  and increasing function on $[0,\infty)$, and $C\geq 1$ will denote  a generic positive constant. Both $M(c)$ and $C$  depend only on fixed constants $(A, R, c_v, \alpha, \beta, \gamma, \delta, T)$, and  may be different from  line to line. 
Moreover, in  the rest of \S  3.2, without  ambiguity,
we simply  drop the superscripts $\epsilon$ and $\eta$ in 
$(\phi^\eta_0,u^\eta_0,l^\eta_0,h^\eta_0,\psi^\eta_0)$, 
$(\phi^{\epsilon,\eta},u^{\epsilon,\eta},l^{\epsilon,\eta},h^{\epsilon,\eta},\psi^{\epsilon,\eta})$, and $(g_1^\eta,g_2^\eta,g_3^\eta,g_4^\eta,g_5^\eta,g_6^\eta)$.

\subsubsection{A priori estimates for quantities related to $\rho$.} 

In the rest of \S 3.2, let  $(\phi,u,l, h)$ be the  solution to $\ef{ln}$ in $[0,T]\times\mathbb{R}^3$ in Lemma \ref{ln}, and set 
 $\varphi=h^{-1}$, $n=(ah)^b=a^bh^{\frac{2-\delta-\gamma}{\delta-1}}$,
$ 
\xi=\nabla h^{\frac{3}{4}}$ and $ \zeta= \nabla h^{\frac{3}{8}}
$.
First, by  the standard energy estimates  and characteristic method  for hyperbolic equations,  one  can obtain the following   estimates on  $(\phi,\psi,\varphi)$  similar to      Lemmas 3.2-3.4 in  \cite{dxz,zz2}  and Lemma 3.6 in \cite{dxz}.

\begin{lemma}\label{phiphi}
 For $T_1=\min\{T^*,(1+Cc_4)^{-6}\}$ and  $ t\in [0,T_1]$,   	it holds that 
\begin{equation}\label{phi1}\begin{aligned}
	&\|\phi(t)-\eta\|_{D^1_*\cap D^3}\leq Cc_0,\quad|\phi_t(t)|_2\leq Cc_0c_1,\quad|\phi_t(t)|_{D^1_*}\leq Cc_0c_3,\\
	&|\phi_t(t)|_{D^2}\leq Cc_0c_4,\quad|\phi_{tt}(t)|_2\leq Cc_4^3,\quad\int^t_0\|\phi_{ss}\|^2_1 {\rm d}s\leq Cc_0^2c_4^2.
	\end{aligned}\end{equation}
\end{lemma}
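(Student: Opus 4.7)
The plan is to treat $\eqref{ln}_1$ as a linear transport equation for $\phi$ whose coefficients and forcing are determined entirely by the prescribed velocity $v$, and to use the bounds on $v$ already encoded in \eqref{2.16}. Setting $\tilde\phi=\phi-\eta$, the equation becomes
$$\tilde\phi_t+v\cdot\nabla\tilde\phi+(\gamma-1)\tilde\phi\,\text{div}\,v=-(\gamma-1)\eta\,\text{div}\,v,$$
to which I would apply $\nabla^k$ for $k=1,2,3$ and carry out standard $L^2$ energy estimates. After commuting $\nabla^k$ with the transport operator $v\cdot\nabla$ and expanding $\nabla^k(\tilde\phi\,\text{div}\,v)$ by Leibniz, one ends up with a differential inequality of the form
$$\tfrac{d}{dt}\|\tilde\phi\|_{D^1_*\cap D^3}^2\leq C\bigl(\|v\|_{W^{1,\infty}}+\|v\|_{H^4}\bigr)\bigl(\|\tilde\phi\|_{D^1_*\cap D^3}^2+\eta^2\bigr).$$
Sobolev embedding and the bounds in \eqref{2.16} control the integrating factor in $L^1_t$ by $M(c_4)$, so Gronwall combined with the smallness condition $T_1\leq(1+Cc_4)^{-6}$ absorbs this factor and delivers the first assertion on $[0,T_1]$.

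The temporal estimates then come directly from the equation itself. Writing $\phi_t=-v\cdot\nabla\phi-(\gamma-1)\phi\,\text{div}\,v$ and taking $L^2$, $D^1_*$ and $D^2$ norms, the product rule together with Sobolev embedding gives each bound in terms of the corresponding norm of $v$ and of $\phi$ already established above; this yields $|\phi_t|_2\leq Cc_0c_1$, $|\phi_t|_{D^1_*}\leq Cc_0c_3$ and $|\phi_t|_{D^2}\leq Cc_0c_4$, where the improvement in the constant from $c_1$ to $c_3$ to $c_4$ matches the improving spatial regularity of $v$ prescribed by \eqref{2.16}. Differentiating $\eqref{ln}_1$ once more in time gives
$$\phi_{tt}=-v_t\cdot\nabla\phi-v\cdot\nabla\phi_t-(\gamma-1)\phi_t\,\text{div}\,v-(\gamma-1)\phi\,\text{div}\,v_t,$$
and an $L^2$ estimate using $|v_t|_2$, $|\nabla v_t|_2$ together with the prior bounds on $\phi_t$ produces $|\phi_{tt}|_2\leq Cc_4^3$. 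The time-integrated $H^1$ bound $\int_0^t\|\phi_{ss}\|_1^2\,\mathrm{d}s\leq Cc_0^2c_4^2$ follows from the analogous $H^1$-in-space estimate of $\phi_{tt}$ after integration in $t$, using $\int_0^{T^*}(|v_t|_{D^2}^2+|v_{tt}|_2^2)\,\mathrm{d}t\leq Cc_4^2$ from \eqref{2.16}.

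I expect the main technical nuisance to be the $D^3$ energy estimate for $\tilde\phi$: after applying $\nabla^3$, the commutator $[\nabla^3,v\cdot\nabla]\tilde\phi$ and the term $\nabla^3(\tilde\phi\,\text{div}\,v)$ contain expressions like $\nabla^3v\cdot\nabla\tilde\phi$ and $\tilde\phi\,\nabla^4v$ which cannot be controlled by the time-uniform norms of $v$ but only by $|v|_{D^4}$, which lies merely in $L^2_t$. Absorbing such contributions requires splitting them by Cauchy--Schwarz and then using Gronwall in integrated form, which is precisely why the shortness condition $T_1\leq(1+Cc_4)^{-6}$ is imposed. Beyond this, the argument is standard transport-equation bookkeeping, so this lemma is the most routine link in the subsequent chain of a priori estimates and parallels the corresponding step in \cite{zz2,dxz}.
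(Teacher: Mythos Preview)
Your proposal is correct and matches the paper's approach: the paper does not give a detailed proof but simply notes that the estimates follow from ``the standard energy estimates and characteristic method for hyperbolic equations'' as in Lemmas 3.2--3.4 of \cite{dxz,zz2}, which is precisely the transport-equation energy scheme you describe. Your identification of the $D^3$ commutator term $\tilde\phi\,\nabla^4 v$ as the reason for the $L^2_t$ control on $|v|_{D^4}$ and the smallness of $T_1$ is exactly the point.
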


\begin{lemma}\label{psi} 	\rm {For} $t\in [0, T_1]$ and  $q>3$, it holds that 
	\begin{equation}\label{2.24}\begin{split}
	&|\psi(t)|_\infty^2+\|\psi(t)\|^2_{L^q\cap D^{1,3}\cap D^2}\leq Cc_0^2,\hspace{2mm}|\psi_t(t)|_2\leq Cc_3^2,\\
	&|h_t(t)|^2_\infty\leq Cc_3^3c_4,\quad |\psi_t(t)|^2_{D^1_*}+\int^t_0(|\psi_{ss}|^2_2+|h_{ss}|^2_6){\rm{\text{d} } }s\leq Cc_4^4.
	\end{split}\end{equation}

\end{lemma}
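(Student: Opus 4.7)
The plan is to exploit the symmetric hyperbolic structure for $\psi$ obtained by taking the gradient of the transport equation $\eqref{ln}_4$. Since $\psi=\frac{a\delta}{\delta-1}\nabla h$, differentiating $\eqref{ln}_4$ in $x$ gives
\[
\psi_t+\sum_{k=1}^3 A_k(v)\partial_k\psi+(\nabla v)^\top\psi+a\delta\bigl(g\nabla\mathrm{div}\,v+\nabla g\,\mathrm{div}\,v\bigr)=0,
\]
which is the appropriate analogue in the linearized problem of $\eqref{2.3}_4$ (with the crucial extra source $\nabla g\,\mathrm{div}\,v$ reflecting that the constitutive relation $h=\phi^{2\iota}$ is broken by linearization). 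All a priori bounds in $\eqref{2.16}$ and the estimates on $(\phi,\phi_t)$ from Lemma \ref{phiphi} will be invoked.

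For the $L^q\cap D^{1,3}\cap D^2$ bound on $\psi$ at the top of \eqref{2.24}: apply $\nabla^k$ ($k=0,1,2$) to the $\psi$-equation, test with $|\nabla^k\psi|^{p-2}\nabla^k\psi$ for $p=q,3,2$ respectively, and integrate by parts. The transport terms produce factors of $|\nabla v|_\infty\leq C\|v\|_3\leq Cc_3$, while the source terms $g\nabla\mathrm{div}\,v,\nabla g\,\mathrm{div}\,v$ and their derivatives are handled by Hölder and Sobolev embeddings using $\eqref{2.16}$ and the bounds on $\|v\|_3$ and $\|g\|_{W^{1,q}\cap D^{1,3}\cap D^2}$ (the highest-order term $g\nabla^2\nabla\mathrm{div}\,v$ needs $|v|_{D^4}$ in $L^2_t$, bounded by $c_4$). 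This yields a Grönwall inequality of the form
\[
\frac{d}{dt}\|\psi\|_{L^q\cap D^{1,3}\cap D^2}^2\leq M(c_4)\bigl(1+\|\psi\|_{L^q\cap D^{1,3}\cap D^2}^2\bigr),
\]
which, upon integration over $[0,T_1]$ with $T_1\leq(1+Cc_4)^{-6}$, collapses to $\leq Cc_0^2$; the $L^\infty$ bound follows from $W^{1,q}\hookrightarrow L^\infty$ for $q>3$.

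For the time-derivative estimates: $|\psi_t|_2$ is read off directly from the equation,
\[
|\psi_t|_2\leq |v|_\infty|\nabla\psi|_2+|\nabla v|_\infty|\psi|_2+C(|g|_\infty|\nabla^2 v|_2+|\nabla g|_3|\nabla v|_6)\leq Cc_3^2,
\]
using the bounds just obtained on $\psi$. Similarly, $h_t=-v\cdot\nabla h-(\delta-1)g\,\mathrm{div}\,v$ combined with the embedding $\|\cdot\|_\infty\leq C\|\cdot\|_{W^{1,q}}$ yields $|h_t|_\infty^2\leq Cc_3^3c_4$ after distributing powers. For $|\psi_t|_{D^1_*}$, apply $\nabla$ to the $\psi_t$-equation and estimate in $L^2$, where the delicate term $g\nabla^2\mathrm{div}\,v$ is controlled via $|v|_{D^3}\leq c_4$.

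For $\int_0^t(|\psi_{ss}|^2_2+|h_{ss}|^2_6)\,\mathrm{d}s$, differentiate the equation in time:
\[
\psi_{tt}+(v\cdot\nabla\psi)_t+((\nabla v)^\top\psi)_t+a\delta(g\nabla\mathrm{div}\,v)_t+a\delta(\nabla g\,\mathrm{div}\,v)_t=0,
\]
take $L^2$-norm and integrate over $[0,t]$; then differentiate $\eqref{ln}_4$ once in time and use $D^1_*\hookrightarrow L^6$ for $|h_{ss}|_6$. The critical contributions are $g\nabla\mathrm{div}\,v_t$ and $g_t\nabla\mathrm{div}\,v$, which are only $L^2$-in-time via $\int_0^{T^*}(|v_t|^2_{D^2}+|(g\nabla^2 v)_t|^2_2)\,\mathrm{d}s\leq Cc_4^2$; together with $|g|_\infty\leq Cc_0$, $|g_t|_\infty\leq c_4$, and the earlier bounds, Cauchy–Schwarz produces the final bound $Cc_4^4$.

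The main obstacle is the appearance of highest-order derivatives of $v$ (namely $v_t\in D^2$ and $v_{tt}\in L^2$) in the source terms for $\psi_t$ in $D^1_*$ and for $\psi_{tt}$ in $L^2$: these quantities are controlled only in $L^2_t$ by $\eqref{2.16}$, which forces the use of the time-smallness of $T_1$ to absorb them and pins down the precise powers of $c_4$ in the estimates; the absence of the relation $h=\phi^{2\iota}$ at the linearized level means that $\nabla g,g_t$ must be handled by the hypotheses on $g$ in $\eqref{4.1*}$ rather than by inherited estimates on $\phi$.
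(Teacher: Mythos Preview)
Your proposal is correct and follows essentially the same route the paper takes: the paper does not write out a proof for this lemma but simply refers to the analogous Lemmas 3.2--3.4 in \cite{dxz,zz2} and Lemma 3.6 in \cite{dxz}, whose content is precisely the symmetric-hyperbolic energy method you describe (derive the $\psi$-equation from $\eqref{ln}_4$, run $L^q$, $D^{1,3}$, $D^2$ energy estimates with Gr\"onwall, then read off $\psi_t$, $h_t$, $\psi_{tt}$, $h_{tt}$ directly from the equation and its time-derivative).

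Two small points of bookkeeping worth tightening. First, in your $|\psi_t|_2$ estimate you write $|v|_\infty|\nabla\psi|_2$, but $\nabla\psi$ is only controlled in $L^3$ (from $\psi\in D^{1,3}$) and $L^6$ (from $\nabla^2\psi\in L^2$), not in $L^2$; the correct pairing is $|v|_6|\nabla\psi|_3$ or $|v|_3|\nabla\psi|_6$. Second, for $|h_t|_\infty$ the paper actually uses the interpolation $|g\,\mathrm{div}\,v|_\infty\le C|g\,\mathrm{div}\,v|_{D^1}^{1/2}|g\,\mathrm{div}\,v|_{D^2}^{1/2}\le Cc_3^{3/2}c_4^{1/2}$ (see \eqref{2.26d}) rather than a $W^{1,q}\hookrightarrow L^\infty$ embedding; your route would require $|g\nabla^2 v|_q$ for possibly large $q$, which is available but less direct. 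Neither of these affects the validity of the argument.
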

\begin{lemma}\label{gh}
It holds that for $(t,x)\in [0,T_1]\times \mathbb{R}^3$,
\begin{equation}\label{g/h}
\begin{split}
\quad\fr{2}{3}\eta^{-2\iota}<\varphi(t,x)\leq  2c_0,\quad 
	h(t,x)>\fr{1}{2c_0},\quad \tilde{C}^{-1}\leq gh^{-1}(t,x)\leq  & \tilde{C},	\end{split}
\end{equation}
 where $\tilde{C}$ is a suitable constant independent of $(\epsilon,\eta)$ and $c_i$ $(i=1,2,...,5)$.
\end{lemma}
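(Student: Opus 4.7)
The proof will proceed by the method of characteristics applied to the scalar transport equation $\ef{ln}_4$ for $h$, exploiting the smallness of $T_1=\min\{T^*,(1+Cc_4)^{-6}\}$ to ensure that $h$ varies little along characteristics.

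First I would extract the initial bounds: since $\iota=\fr{\delta-1}{2(\gamma-1)}<0$ and $\phi_0\geq 0$, the definition $h_0=(\phi_0+\eta)^{2\iota}$ forces $h_0(x)\leq \eta^{2\iota}$ pointwise, while the assumption $|h_0^{-1}|_\infty\leq c_0$ from \ef{2.14} gives $h_0(x)\geq c_0^{-1}$. In particular, at $t=0$, $\varphi(0,x)=h_0(x)^{-1}$ satisfies $\eta^{-2\iota}\leq \varphi(0,x)\leq c_0$. Let $X(t;x)$ solve $\dot X=v(t,X)$, $X(0)=x$. Rewriting $\ef{ln}_4$ as $h_t+v\cdot\nabla h=(1-\delta)g\,\mathrm{div}\,v$ and integrating along the flow yields
\begin{equation*}
h(t,X(t))=h_0(x)+(1-\delta)\int_0^t (g\,\mathrm{div}\,v)(s,X(s))\,\mathrm{d}s.
\end{equation*}

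Next I would bound the integrand in $L^\infty$. Gagliardo--Nirenberg/Sobolev interpolation together with the a priori bounds in \ef{2.16} give $|\nabla v|_\infty \leq C\|v\|_{H^3}\leq Cc_4$. For $|g|_\infty$, I would use $g(0,\cdot)=h_0$ together with the time regularity $g_t\in C([0,T];H^2)$ from \ef{4.1*}, writing $g(t,x)=h_0(x)+\int_0^t g_s(s,x)\,\mathrm{d}s$ and applying Sobolev to bound $|g_t|_\infty\leq C\|g_t\|_{H^2}$; this gives $|g(t)|_\infty\leq M(c_0)+Ct^{1/2}(\int_0^t|g_s|_\infty^2\,\mathrm{d}s)^{1/2}\leq M(c_0)$ for $t\leq T_1$. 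Consequently
\begin{equation*}
\int_0^t |g\,\mathrm{div}\,v|_\infty\,\mathrm{d}s \leq T_1\cdot M(c_0)\cdot c_4,
\end{equation*}
and by the choice $T_1\leq (1+Cc_4)^{-6}$ this can be made smaller than $\min\{\tfrac{1}{2c_0},\tfrac{1}{2}\eta^{2\iota}\}$ (noting $\eta^{2\iota}\geq 1$ for $\eta\leq 1$). Substituting into the characteristic formula gives $h(t,X(t))\in[\tfrac{1}{2c_0},\tfrac{3}{2}\eta^{2\iota}]$, and since the flow $X(t;\cdot)$ is a diffeomorphism of $\mathbb{R}^3$, the same bounds hold for all $(t,x)\in[0,T_1]\times\mathbb{R}^3$, yielding both $h(t,x)>\tfrac{1}{2c_0}$ and $\tfrac{2}{3}\eta^{-2\iota}<\varphi(t,x)\leq 2c_0$.

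For the ratio bound, I would use that $g(0,x)=h_0(x)=h(0,x)$, so both $g-h_0$ and $h-h_0$ are small in $L^\infty$ over $[0,T_1]$: the estimate above controls $|h-h_0|_\infty$, while $|g-h_0|_\infty\leq T_1^{1/2}\|g_t\|_{L^2_tL^\infty_x}$ is small by the time-regularity of $g$. Since $h_0\geq c_0^{-1}$, making both deviations smaller than $\tfrac{1}{4c_0}$ (which is achievable by shrinking the constant in $T_1$ if needed, uniformly in $c_i$) gives $|g/h-1|\leq \frac{|g-h_0|+|h-h_0|}{h}\leq \frac{1}{2}$, so $\tilde{C}^{-1}\leq g/h\leq \tilde{C}$ with $\tilde{C}=\tilde{C}(c_0)$ independent of $(\epsilon,\eta,c_1,\dots,c_5)$.

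The main obstacle is the bookkeeping for the $\tilde{C}$-independence claim: the smallness required for $g/h$ to stay near $1$ is additive, but $h_0$ itself can be as small as $c_0^{-1}$, so one must show the time-integral of $g\,\mathrm{div}\,v$ is small \emph{uniformly in $c_i$}, which is exactly what the negative power of $c_4$ in the definition of $T_1$ achieves (it absorbs the $c_4$ factor from $|\nabla v|_\infty$ while the remaining prefactor depends only on $c_0$).
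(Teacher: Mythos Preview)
Your approach via characteristics for $h$ is natural, but there is a genuine gap in the step where you claim $|g(t)|_\infty\leq M(c_0)$. The initial value $g(0,\cdot)=h_0=(\phi_0+\eta)^{2\iota}$ satisfies $|h_0|_\infty=\eta^{2\iota}$ (since $\phi_0\to 0$ at infinity and $2\iota<0$), and this quantity blows up as $\eta\to 0$; it is \emph{not} controlled by $c_0$, which by construction is independent of $\eta$. Consequently the factorization $|g\,\mathrm{div}\,v|_\infty\leq |g|_\infty|\nabla v|_\infty$ cannot give an $\eta$-uniform bound, and in particular the lower bound $h>\tfrac{1}{2c_0}$ fails by this route: the additive error along characteristics is of order $T_1\,\eta^{2\iota}c_4$, not $T_1\,M(c_0)\,c_4$.

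The fix --- and this is the key point in the paper's argument --- is to bound the product $g\,\mathrm{div}\,v$ directly without ever estimating $|g|_\infty$. Using the interpolation $|f|_\infty\leq C|f|_{D^1}^{1/2}|f|_{D^2}^{1/2}$ applied to $f=g\,\mathrm{div}\,v$, one obtains
\[
|g\,\mathrm{div}\,v|_\infty \leq C\bigl(|\nabla g|_\infty|\nabla v|_2+|g\nabla^2 v|_2\bigr)^{1/2}\bigl(|\nabla^2 g|_2|\nabla v|_\infty+|\nabla g|_\infty|\nabla^2 v|_2+|g\nabla^2 v|_{D^1_*}\bigr)^{1/2}\leq Cc_3^{3/2}c_4^{1/2},
\]
where every factor on the right is supplied by \ef{2.16}; crucially, the quantities $|g\nabla^2 v|_2$ and $|g\nabla^2 v|_{D^1_*}$ appear there as composite objects, so $|g|_\infty$ never enters. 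With this estimate in hand, either your additive formula for $h$ or the paper's multiplicative formula for $\varphi=h^{-1}$ (obtained by solving the Riccati ODE $\frac{d}{dt}\varphi=(\delta-1)g\varphi^2\,\mathrm{div}\,v$ along characteristics) closes the pointwise bounds. For the ratio $g/h$, the paper sets $y=gh^{-1}$, derives $y_t+yh^{-1}h_t=g_t\varphi$, and solves this linear ODE explicitly; your perturbative argument also works once the $h$-bounds are repaired, though note that the characteristic formula gives $h(t,X(t;x))-h_0(x)$ rather than $h(t,x)-h_0(x)$, so you would still need a Lipschitz estimate on $h_0$ (available via $|\psi_0|_\infty\leq Cc_0$) to compare at the same spatial point.
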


Second, due to the presence of heat conductivity, only the above estimates  in Lemmas \ref{phiphi}-\ref{gh} are  far from  enough for   dealing with    terms such as 
$$
a_6w^{\nu+1}h^{-\frac{1}{2}} \text{div} \psi+\Pi(l,h,w,g),
$$
some more  estimates related to $\rho$  are carried out  as follows. 
\begin{lemma}\label{varphi}\rm {For} $t\in [0, T_1]$ and  $q>3$, it holds that 
	\begin{equation}\label{2.34}\begin{aligned}
|g\text{div} v|_\infty\leq Cc^{\frac{3}{2}}_3c^{\frac{1}{2}}_4,\quad |\xi(t)|_{D^1_*}+|\zeta(t)|_4+|h^{-\frac{1}{4}}\nabla^2 h(t)|_2\leq& M(c_0),\\
  \|n(t)\|_{ L^\infty\cap D^{1,q}\cap D^{1,4}\cap D^{1,6} \cap D^{2} \cap D^3}\leq  M(c_0),\quad |n_t(t)|_2\leq & M(c_0)c_1,\\
|n_t(t)|_\infty+	|\nabla n_t(t)|_2+ |\nabla n_t(t)|_6\leq  M(c_0)c_4^2,\ \  |n_{tt}(t)|_2\leq& M(c_0)c_4^3.	
	\end{aligned}\end{equation}
\end{lemma}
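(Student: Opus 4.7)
The plan is to exploit the transport equation
\[
h_t + v\cdot\nabla h + (\delta-1)\,g\,\mathrm{div}\,v = 0
\]
inherited from the linearization $\eqref{ln}_4$, together with the estimates already established in Lemmas \ref{phiphi}--\ref{gh}. Solving this equation for $(\delta-1)g\,\mathrm{div}\,v$ immediately gives
\[
|g\,\mathrm{div}\,v|_\infty \leq C\bigl(|h_t|_\infty + |v|_\infty\,|\nabla h|_\infty\bigr) \leq Cc_3^{3/2}c_4^{1/2},
\]
via $|h_t|^2_\infty \leq Cc_3^3 c_4$ and $|\nabla h|_\infty = \tfrac{1-\delta}{a\delta}|\psi|_\infty \leq Cc_0$ from Lemma \ref{psi}, together with \eqref{2.16} and the ordering \eqref{2.15}. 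This also yields the key consequence $|\nabla v|_\infty \leq Cc_3^{3/2}c_4^{1/2}/\inf g$, which is needed later.

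For the bounds on $\xi=\nabla h^{3/4}$, $\zeta=\nabla h^{3/8}$ and $h^{-1/4}\nabla^2 h$, I would apply the chain rule to the $h$-equation to get, for each $\mu\in\{3/4,3/8\}$,
\[
(h^\mu)_t + v\cdot\nabla h^\mu = -\mu(\delta-1)\bigl(gh^{-1}\bigr)\,h^\mu\,\mathrm{div}\,v,
\]
where Lemma \ref{gh} gives $gh^{-1}\leq\tilde C$, so the source is essentially a bounded multiple of $h^\mu\,\mathrm{div}\,v$. Differentiating and doing an $L^2$-energy estimate on $\nabla\xi=\nabla^2 h^{3/4}$, and an $L^4$-energy estimate on $\zeta$, handling the transport part by integration by parts and the source via the $|\nabla v|_\infty$, $|v|_{D^2}$, $|v|_{D^3}$ bounds from \eqref{2.16}, and finally applying Gr\"onwall, produces the desired $M(c_0)$ bounds starting from the initial data \eqref{2.14}. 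The $|h^{-1/4}\nabla^2 h|_2$ estimate is then read off from the algebraic identity
\[
\nabla^2 h^{3/4} = \tfrac{3}{4} h^{-1/4}\nabla^2 h - \tfrac{3}{16} h^{-5/4}\nabla h\otimes\nabla h,
\]
using $|h^{-1}|_\infty\leq 2c_0$ and $|\nabla h|_\infty\leq Cc_0$ to control the quadratic correction.

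For $n=(ah)^b$ with $b=(2-\delta-\gamma)/(\delta-1)\leq 0$, differentiating in $t$ and applying the $h$-equation yields the scalar transport equation
\[
n_t + v\cdot\nabla n + b(\delta-1)\bigl(gh^{-1}\bigr)\,n\,\mathrm{div}\,v = 0.
\]
Since $2\iota b = (2-\delta-\gamma)/(\gamma-1)\geq 0$ under \eqref{can1}, the initial value $n_0=a^b(\phi_0+\eta)^{2\iota b}$ is bounded in $L^\infty$ uniformly in $\eta$, with $|n_0|_\infty\leq M(c_0)$, using $\phi_0\in L^\infty$ obtained from $\phi_0\in D^1_*\cap D^3$ via the Gagliardo--Nirenberg inequality $|\phi_0|_\infty\leq C|\phi_0|_6^{1/2}|\nabla^2\phi_0|_2^{1/2}$. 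Integrating along the characteristics of $v$ and using the just-proved $|g\,\mathrm{div}\,v|_\infty$ bound with Gr\"onwall then yields $|n(t)|_\infty\leq M(c_0)$. The $D^{1,p}$ bounds for $p=q,4,6$ follow from the algebraic identity $\nabla n = (bn/h)\nabla h$ together with Lemma \ref{psi} (or, alternatively, by an $L^p$-energy estimate on the transport equation satisfied by $\nabla n$). The $D^2$ and $D^3$ estimates require more care and are obtained by combining the previous fractional-power bounds $h^{-1/4}\nabla^2 h\in L^2$ and $\xi\in D^1_*$ with $\nabla^2 h\in L^3$, $\nabla^3 h\in L^2$ from Lemma \ref{psi}, distributed across the various terms in the expansions of $\nabla^2 n$ and $\nabla^3 n$. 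The time-derivative bounds for $n_t$, $\nabla n_t$, $n_{tt}$ then follow by direct differentiation of the transport equation above and substitution, using the $\phi_t,\phi_{tt}$-estimates of Lemma \ref{phiphi}, which transport directly to $h_t, h_{tt}$ since $h$ satisfies a transport equation of the same form as $\phi$.

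The main obstacle I expect is precisely the $D^2$--$D^3$ regularity of $n$: as $\eta\to 0$ the factor $h$ has no uniform upper bound near the far field (where $h_0=(\phi_0+\eta)^{2\iota}$ with $\iota<0$), so a naive expansion of $\nabla^3 n$ produces terms with high negative powers of $h$ multiplied by $\nabla h,\nabla^2 h,\nabla^3 h$ that do not close in $L^2$ without the auxiliary weighted estimates just proved. This is precisely the reason the fractional-power quantities $\xi$, $\zeta$ and $h^{-1/4}\nabla^2 h$ are introduced and controlled first within the same lemma, and every bound must be traced carefully through the interpolations to remain independent of both $\epsilon$ and $\eta$.
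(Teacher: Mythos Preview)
Your approach is essentially the same as the paper's: derive transport equations for the fractional powers $h^{3/4}$ and $h^{3/8}$, run energy estimates ($D^1_*$ on $\xi$, $L^4$ on $\zeta$), extract $h^{-1/4}\nabla^2 h$ algebraically, then handle $n=(ah)^b$ via its own transport equation and the chain rule. Two small points are worth noting.

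First, for $|g\,\mathrm{div}\,v|_\infty$ the paper does \emph{not} go through $|h_t|_\infty$; it applies the Gagliardo--Nirenberg inequality $|g\,\mathrm{div}\,v|_\infty\le C|g\,\mathrm{div}\,v|_{D^1}^{1/2}|g\,\mathrm{div}\,v|_{D^2}^{1/2}$ directly to the a priori bounds in \eqref{2.16} (this is \eqref{2.26d}). Your shortcut is fine as a citation of Lemma~\ref{psi}, but be aware that the specific exponents $c_3^{3/2}c_4^{1/2}$ in $|h_t|_\infty$ there most likely \emph{come from} exactly this Gagliardo--Nirenberg estimate on $g\,\mathrm{div}\,v$, so you are not really giving an independent derivation.

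Second, your stated control of the quadratic correction $h^{-5/4}\nabla h\otimes\nabla h$ via ``$|h^{-1}|_\infty$ and $|\nabla h|_\infty$'' does not close: that would leave a factor of $|\nabla h|_2$, and $\nabla h\sim\psi$ is only in $L^q$ with $q>3$, not in $L^2$. The correct observation is the pointwise identity $h^{-5/4}|\nabla h|^2 = \tfrac{64}{9}|\zeta|^2$, so that $|h^{-5/4}|\nabla h|^2|_2 = C|\zeta|_4^2$; this is precisely why $\zeta$ is estimated in $L^4$ first, and it is exactly how the paper combines the two pieces in \eqref{2.bbbn}.
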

\begin{proof} 
We start with estimate on $\xi$.  $\ef{ln}_4$ implies  that 
\begin{equation}\label{2.23c}
\xi_t+\sum_{k=1}^3 A_k(v) \partial_k\xi+B^*(v)\xi+\fr{3}{4}(\delta-1)\nabla (h^{-\fr{1}{4}}g\text{div}v)=0,
\end{equation}
where  $B^*(v)=(\nabla v)^\top$ and $A_k(v)$ is defined in \eqref{2.3}.

Set $\varsigma=(\varsigma_1,\varsigma_2,\varsigma_3)^\top$ ($ |\varsigma|=1$ and $\varsigma_i=0,1$). Applying  $\partial_{x}^{\varsigma} $ to $\ef{2.23c}$, multiplying by $2\partial_{x}^{\varsigma} \xi$ and then integrating over $\mathbb{R}^3$, one can get
\begin{equation*}\begin{split}
\frac{d}{dt}|\partial_{x}^{\varsigma}  \xi|^2_2
\leq& C(|\nabla v|_\infty+|g\text{div} v|_\infty|\varphi|_\infty)|\partial_{x}^{\varsigma}\xi|^2_2+C|\nabla^2 v|_3|\xi|_6|\partial_{x}^{\varsigma}\xi|_2\\
&+C\big(|\varphi|^{\fr{5}{4}}_\infty(|gh^{-1}|_\infty|\psi|^2_\infty|\nabla v|_2+|\nabla g|_\infty|\nabla v|_2|\psi|_\infty+|g\nabla^2 v|_2|\psi|_\infty) \\
& +|\varphi|^{\fr{1}{4}}_\infty(|\nabla^2g|_3|\nabla v|_6+|\nabla g|_\infty|\nabla^2v|_2+|g\nabla^3v|_2)\big)|\partial_{x}^{\varsigma}\xi|_2,
\end{split}
\end{equation*}
which, along with  $\ef{2.16}$,  the fact that 
\begin{equation}\label{2.26d}\begin{split}
|g\text{div} v|_\infty\leq & C|g\text{div} v|^{\frac{1}{2}}_{D^1}|g\text{div} v|^{\frac{1}{2}}_{D^2}
\leq  C\big(|\nabla g|_\infty|\nabla v|_2+|g\nabla^2 v|_2\big)^{\frac{1}{2}}\\
&\cdot \big(|\nabla^2 g|_2|\nabla v|_\infty+|\nabla g|_\infty|\nabla^2 v|_2+|g\nabla^2v|_{D^1_*}\big)^{\frac{1}{2}}
\leq  Cc^{\frac{3}{2}}_3c^{\frac{1}{2}}_4,
\end{split}
\end{equation}
Lemmas \ref{psi}-\ref{gh}  and Gronwall's inequality, yields that  
\begin{equation}\label{2.23e2}
|\nabla \xi(t) |_2\leq Cc_0\quad \text{for} \quad 0\leq t\leq T_1.
\end{equation}

Similarly,   $\ef{ln}_4$ implies  that 
\begin{equation}\label{2.23c1}
\zeta_t+\sum_{k=1}^3 A_k(v) \partial_k\zeta+B^*(v)\zeta+\fr{3}{8}(\delta-1)\nabla (h^{-\fr{5}{8}}g\text{div}v)=0.
\end{equation}
Then multiplying $\ef{2.23c1}$ by $4|\zeta|^2\zeta$ and integrating with respect to $x$ over $\mathbb{R}^3$ yield
\begin{equation*}
\begin{split}
\fr{d}{dt}|\zeta|^4_4 \leq &C|\nabla v|_\infty|\zeta|^4_4+C\big(|g\nabla^2v|_4+|\nabla g|_\infty|\nabla v|_4 +|gh^{-1}|_\infty|\nabla v|_4|\psi|_\infty\big)|\varphi|^{\fr{5}{8}}_\infty|\zeta|^3_4,
\end{split}
\end{equation*}
which, along with  $\ef{2.16}$,  Lemmas \ref{psi}-\ref{gh}  and Gronwall's inequality, yields that 
\begin{equation}\label{2.23e1}
|\zeta(t)|_4\leq Cc_0\quad \text{for} \quad 0\leq t\leq T_1.
\end{equation}

Combining \eqref{2.23e2} with  \eqref{2.23e1} yields  that 
\begin{equation}\label{2.bbbn}
|h^{-\frac{1}{4}}\nabla^2 h(t)|_2\leq C(|\nabla \xi(t) |_2+|\zeta(t)|^2_4) \leq  M(c_0)\quad \text{for} \quad 0\leq t\leq T_1.
\end{equation}

Finally, note that  
$n=(ah)^b$ satisfies  
\begin{equation}\label{neq}
n_t+v\cdot\nabla n+(2-\delta-\gamma)a^b h^{b-1}g\dv v=0,
\end{equation}
then the desired estimates follow from  Lemmas \ref{phiphi}-\ref{gh}, \eqref{2.23e2} and  \eqref{2.23e1}-\eqref{2.bbbn}.


The proof of Lemma \ref{varphi} is complete.
\end{proof}

\subsubsection{A priori estimates for $l$}Based on the estimates in \S 3.2.1, now we can  estimate  $l$  by making full use of  the {\it degenerate-singular parabolic}  structure introduced in $\eqref{qiyi}_2$, which   is much more involved  than the corresponding   estimates for $S$  via the transport equation \eqref{transport} when $\daleth=0$. Recall that 
\begin{equation*}
\begin{split}
H(v)=2\alpha \sum_{i=1}^{3} (\partial_{i}v^{i})^{2}+\beta(\text{div}v)^2+\alpha \sum_{i\neq j}^{3} (\partial_{i}v^{j})^2+2\alpha\sum_{i>j} (\partial_{i}v^{j})(\partial_{j}v^{i}).
\end{split}
\end{equation*}
\begin{lemma}\label{l}For  $T_2=\min\{T_1,(1+Cc_4)^{-12-2\nu}\}$ and $t\in [0,T_2]$,  it holds that  
\begin{equation}\label{lt}
	\begin{aligned}
	|\nabla l(t)|^2_2+|h^{\fr{1}{4}}\nabla l(t)|^2_2+\int^t_0|w^{-\fr{\nu}{2}}h^{-\fr{1}{4}}l_s|_2^2\text{d}s\leq & M(c_0),\\
\int^t_0(|h^{-\fr{1}{4}}l_s|_2^2+|\sqrt{h} \nabla^2l|^2_2+|\nabla^2l|^2_2)\text{d}s\leq & M(c_0)c^{3\nu}_1.
	\end{aligned}
\end{equation}
\end{lemma}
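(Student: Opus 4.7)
The strategy is to exploit the degenerate-singular parabolic structure of $\eqref{ln}_3$ via a carefully chosen multiplier. I would multiply the equation by $w^{-\nu} l_t$ and integrate over $\mathbb{R}^3$. The factor $w^{-\nu}$ is designed to cancel the $w^\nu$ in front of the elliptic operator, producing after integration by parts the two positive quantities one wants,
\begin{equation*}
\int w^{-\nu} h^{-1} l_t^2 \,\mathrm{d}x = |w^{-\nu/2} h^{-1/4} l_t|_2^2, \qquad \frac{a_4}{2}\frac{d}{dt}\int (h^2+\epsilon^2)^{1/4}|\nabla l|^2 \,\mathrm{d}x,
\end{equation*}
the latter being comparable to $\frac{d}{dt}|h^{1/4}\nabla l|_2^2$ thanks to the uniform lower bound $h \geq (2c_0)^{-1}$ in Lemma \ref{gh}. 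The convective contribution $\int w^{-\nu} h^{-1/2} (v\cdot\nabla l)\, l_t$ is absorbed by Cauchy--Schwarz into a small multiple of $|w^{-\nu/2} h^{-1/4} l_t|_2^2$, while the commutator from the integration by parts and the term $-\frac{a_4}{2}\int[(h^2+\epsilon^2)^{1/4}]_t|\nabla l|^2$ are controlled using the bounds on $\nabla h, \psi, h_t$ from Lemmas \ref{phiphi}--\ref{varphi} together with $\eqref{2.16}$.

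Next, the four source terms on the right of $\eqref{ln}_3$ are handled individually after pairing against $w^{-\nu} l_t$: the term $a_5 w^\nu n g^{3/2} H(v)$ using the bounds on $n$ from Lemma \ref{varphi} and on $(|v|_\infty, |\sqrt{g}\nabla v|_2)$ from $\eqref{2.16}$; the singular source $a_6 w^{\nu+1} h^{-1/2}\operatorname{div}\psi$ using the uniform positivity of $h$ (Lemma \ref{gh}) combined with $|\nabla \psi|_2$ from Lemma \ref{psi}; and the three contributions in $\Pi(l,h,w,g)$ similarly, the most delicate being $a_7 w^{\nu+1} h^{-3/2} \psi\cdot\psi$, whose $h^{-3/2}$ singularity is harmless because $h = \phi^{2\iota}$ with $\iota<0$ is in fact \emph{large} near the vacuum, not small. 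After absorbing a small fraction of the dissipation into the left-hand side and integrating on $[0,T_2]$ with the choice $T_2 = \min\{T_1,(1+Cc_4)^{-12-2\nu}\}$, all polynomial-in-$c_i$ factors become small enough to yield the first estimate with right-hand side $M(c_0)$. The unweighted bound $|\nabla l(t)|_2^2 \leq M(c_0)$ is then immediate from $h \geq (2c_0)^{-1}$.

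For the second line of estimates I would solve algebraically from $\eqref{ln}_3$ for the elliptic term,
\begin{equation*}
a_4 w^\nu (h^2+\epsilon^2)^{1/4}\triangle l = h^{-1/2}(l_t + v\cdot\nabla l) - a_5 w^\nu n g^{3/2} H(v) - a_6 w^{\nu+1} h^{-1/2}\operatorname{div}\psi - \Pi,
\end{equation*}
take the $L^2$ norm, divide by $a_4 w^\nu$ (which costs a factor $c_1^\nu$), square and integrate in time. Combining with $\int_0^t |h^{-1/4} l_s|_2^2 \,\mathrm{d}s \leq c_1^{\nu}\int_0^t |w^{-\nu/2} h^{-1/4} l_s|_2^2 \,\mathrm{d}s$ from Step 1 yields $\int_0^t |(h^2+\epsilon^2)^{1/4}\triangle l|_2^2 \,\mathrm{d}s \leq M(c_0)\,c_1^{3\nu}$, the power $c_1^{3\nu}$ arising from $c_1^{2\nu}$ (two multiplications by $w^{-\nu}$) times $c_1^{\nu}$ (the reweighting of $l_t$). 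Classical elliptic regularity applied to $\sqrt{h}(l-\bar l)$ via $\triangle(\sqrt{h}(l-\bar l)) = \sqrt{h}\triangle l + 2\nabla\sqrt{h}\cdot\nabla l + (l-\bar l)\triangle\sqrt{h}$ then promotes the $|\sqrt{h}\triangle l|_2$ bound to $|\sqrt{h}\nabla^2 l|_2$, and the positivity of $h$ delivers $|\nabla^2 l|_2$.

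The main obstacle is the careful bookkeeping of the singular weights in both the commutators and the source terms, in particular ensuring that no estimate requires control of $\psi$ beyond $|\nabla\psi|_3$ and $|\nabla^2\psi|_2$ (Lemma \ref{psi}), nor of $\nabla l$ beyond $L^2$; it is precisely the uniform positivity of $h$ from Lemma \ref{gh}, valid even as $\eta \to 0$, that keeps every negative power of $h$ tractable and allows the degenerate-singular equation to be handled by otherwise classical parabolic weighted-energy techniques at the linearized level.
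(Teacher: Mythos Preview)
Your approach is the same as the paper's: multiply $\eqref{ln}_3$ by $w^{-\nu}l_t$, integrate by parts to produce the dissipation $|w^{-\nu/2}h^{-1/4}l_t|_2^2$ and the energy $\frac{d}{dt}|(h^2+\epsilon^2)^{1/8}\nabla l|_2^2$, estimate sources, close by Gronwall, and then recover $|\sqrt{h}\nabla^2 l|_2$ from the elliptic identity for $(h^2+\epsilon^2)^{1/4}(l-\bar l)$.

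One concrete correction is needed in your bookkeeping. Lemma~\ref{psi} does \emph{not} provide $|\nabla\psi|_2$; it gives only $|\psi|_\infty$, $|\psi|_q$, $|\nabla\psi|_3$, and $|\nabla^2\psi|_2$, and none of these interpolate down to $L^2$. Hence your treatment of the $a_6$ source via ``positivity of $h$ plus $|\nabla\psi|_2$'' fails as written. The paper handles $a_6 w h^{-1/2}\mathrm{div}\,\psi\cdot l_t$ by splitting off $h^{-1/4}l_t$ and bounding the remaining factor by $|h^{-1/4}\nabla^2 h|_2\le M(c_0)$, which is precisely the estimate supplied by Lemma~\ref{varphi} (note $\mathrm{div}\,\psi=C\Delta h$). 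The same lemma also furnishes $|\nabla h^{3/8}|_4\le M(c_0)$, which is what controls the $a_7$ term since $h^{-5/4}|\psi|^2\sim|\nabla h^{3/8}|^2$. For the $a_5$ term you will also need the weighted interpolation $|g^{3/4}\nabla v|_3\le C|\sqrt{g}\nabla v|_2^{1/2}|g\nabla v|_6^{1/2}$ rather than $|\sqrt g\,\nabla v|_2$ alone. With these substitutions your outline goes through exactly as the paper's proof.
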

\begin{proof} 
It follows from  $\ef{ln}_3$ that
\begin{equation}\label{lt0}\begin{split}
&w^{-\nu}h^{-\frac{1}{2}}( l_t+v\cdot\nabla l)-a_4  (h^2+\epsilon^2)^{\frac{1}{4}} \triangle l\\
= & a_5 ng^{\frac{3}{2}}H(v)+a_6w h^{-\frac{1}{2}} \text{div} \psi+w^{-\nu}\Pi(l,h,w,g).
\end{split}
\end{equation}
Multiplying \ef{lt0} by $l_t$ and integrating over $\mathbb{R}^3$, one can obtain by   integration by parts, \text{H\"older's} inequality, \eqref{2.16}, Lemmas \ref{psi}-\ref{varphi} and  $\text{Young's}$ inequality that
\begin{equation*}\begin{split}
&\fr{a_4}{2}\frac{d}{dt}|(h^2+\epsilon^2)^{\frac{1}{8}}\nabla l|^2_2+|w^{-\fr{\nu}{2}}h^{-\fr{1}{4}}l_t|^2_2\\
=&-a_4\int\nabla(h^2+\epsilon^2)^{\frac{1}{4}}\cdot \nabla ll_t-\int w^{-\nu}h^{-\frac{1}{2}}v\cdot\nabla ll_t+a_5\int n g^{\fr{3}{2}}H(v) l_t\\
&+a_6\int w h^{-\frac{1}{2}}\text{div}\psi l_t+\int w^{-\nu}\Pi(l,h,w,g)l_t+\fr{1}{4}\int a_4\fr{h h_t}{(h^2+\epsilon^2)^{\frac{3}{4}}}|\nabla l |^2\\
\leq &C(|w^{\fr{\nu}{2}}|_\infty|\psi|_\infty
+|w^{-\fr{\nu}{2}}|_\infty|v|_\infty)|\varphi|^{\fr{1}{2}}_\infty
|h^{\frac{1}{4}}\nabla l|_2|w^{-\fr{\nu}{2}}h^{-\fr{1}{4}}l_t|_2
\end{split}
\end{equation*}
\begin{equation*}\begin{split}
&+C\big(|w^{\fr{\nu}{2}}|_\infty|n|_\infty|g^{\fr{3}{4}}\nabla v|_3|g\nabla v|_6|hg^{-1}|_{\infty}^{\fr{1}{4}}+|w^{1+\fr{\nu}{2}}|_\infty(|h^{-\fr{1}{4}}\nabla^2h|_2+|\nabla h^{\fr{3}{8}}|_4^2)\\
&+|w^{-1+\fr{\nu}{2}}|_\infty|hg^{-1}|^{\fr{1}{4}}_\infty|g^{\fr{1}{4}}\nabla w|_3|\sqrt{g}\nabla w|_6\big)|w^{-\fr{\nu}{2}}h^{-\fr{1}{4}}l_t|_2\\
&+C|\varphi|_\infty|h_t|_\infty|h^{\frac{1}{4}}\nabla l|^2_2\\
\leq& M(c_0)c_1^{\nu}c_4^2|h^{\fr{1}{4}}\nabla l|^2_2+M(c_0)c_1^{2+\nu}c_4^{10}+\fr{1}{2}|w^{-\fr{\nu}{2}}h^{-\fr{1}{4}}l_t|_2^2,
\end{split}
\end{equation*}
which, along with \ef{2.14}, 
\begin{equation}\label{keyvbvbnm}
\begin{split}
|g^{\fr{3}{4}}\nabla v|_3\leq C|\sqrt{g}\nabla v|_2^{\fr{1}{2}}|g\nabla v|_6^{\fr{1}{2}},
\end{split}
\end{equation}
and Gronwall's inequality, yields that for $0\leq t\leq \min\{T_1,(1+Cc_4)^{-12-\nu}\}$,
\begin{equation}\label{lt2}
\begin{split}
|h^{\fr{1}{4}}\nabla l|^2_2+\int^t_0|w^{-\fr{\nu}{2}}h^{-\fr{1}{4}}l_s|_2^2\text{d}s\leq & M(c_0).
\end{split}
\end{equation}
This, together with \eqref{2.16} and  Lemma \ref{gh},  leads to  
\begin{equation}\label{lt2nn}
\begin{split}
|\nabla l|^2_2\leq  M(c_0), \quad  \int^t_0|h^{-\fr{1}{4}}l_s|_2^2\text{d}s\leq  M(c_0)c^\nu_1.
\end{split}
\end{equation}

On the other hand, $\ef{lt0}$ implies  that
\begin{equation}\label{le2}\begin{split}
-a_4\triangle ((h^2+\epsilon^2)^{\frac{1}{4}} (l-\bar{l}))&=-a_4(h^2+\epsilon^2)^{\frac{1}{4}} \triangle l-a_4F(\nabla(h^2+\epsilon^2)^{\frac{1}{4}},l-\bar{l})\\
&=w^{-\nu}\mathcal{E}-a_4F(\nabla(h^2+\epsilon^2)^{\frac{1}{4}},l-\bar{l}),\\
\end{split}
\end{equation}
where
\begin{equation}\begin{split}
\mathcal{E}=&=\mathcal{E}(h,v,l,\psi,n,g,w)=-h^{-\frac{1}{2}}( l_t+v\cdot\nabla l)
+a_5w^\nu ng^{\frac{3}{2}}H(v)
\\
&+a_6w^{\nu+1} h^{-\frac{1}{2}} \text{div} \psi+\Pi(l,h,w,g),\\
F=&F(\nabla(h^2+\epsilon^2)^{\frac{1}{4}},l-\bar{l})=(l-\bar{l})\triangle (h^2+\epsilon^2)^{\frac{1}{4}} +2\nabla(h^2+\epsilon^2)^{\frac{1}{4}}\cdot\nabla l.
\end{split}
\end{equation}
To derive  the $L^2$ estimate of $\nabla^2 l$ from \ef{le2}, one starts with  the $L^2$ estimates of
\begin{equation*}
 (\mathcal{E},F=F(\nabla(h^2+\epsilon^2)^{\frac{1}{4}},l-\bar{l}))   .
\end{equation*}
It follows from \ef{2.16}, \ef{2.26d}, \ef{lt2nn} and  Lemmas \ref{psi}-\ref{varphi} that 
\begin{equation}\label{nablal}\begin{split}
|\mathcal{E}|_2\leq& C(|\varphi|^{\fr{1}{4}}_\infty|h^{-\fr{1}{4}}l_t|_2+|\varphi|^{\fr{1}{2}}_\infty|v|_\infty|\nabla l|_2+|w^\nu|_\infty|n|_\infty|g^{\fr{3}{2}}\nabla v\cdot \nabla v|_2\\
&+|w^{\nu+1}|_\infty|\varphi|^{\fr{1}{4}}_\infty|h^{-\fr{1}{4}}\nabla^2h|_2+|w^{\nu+1}|_\infty|\varphi|^{\fr{1}{4}}_\infty|\nabla h^{\fr{3}{8}}|_4^2\\
&+|w^\nu|_\infty|\varphi|^{\fr{1}{2}}_\infty|\psi|_\infty|\nabla l|_2+|w^{\nu-1}|_\infty|\sqrt{g}\nabla w\cdot \nabla w|_2)\\
\leq &M(c_0)(c^{\nu+1}_1+|h^{-\fr{1}{4}}l_t|_2),\\
|F|_2\leq &C \big(|\nabla^2 (h^2+\epsilon^2)^{\frac{1}{4}}|_3|l-\bar{l}|_6
+|\varphi|^{\fr{1}{2}}_\infty|\psi|_\infty|\nabla l|_2\big)
\leq M(c_0),
\end{split}
\end{equation}
where one has used \ef{2.14} and the facts that  for $0\leq t\leq \min\{T_1,(1+Cc_4)^{-12-\nu}\}$,
\begin{equation}\label{jichuxinxi}
\begin{split}
\|v\|_2
\leq & \|u_0\|_2+t^{\fr{1}{2}}\Big(\int^t_0\|v_s\|_2^2\text{d}s\Big)^{\fr{1}{2}}
\leq M(c_0)(1+c_4t^{\frac{1}{2}})
\leq M(c_0),\\
    \end{split}
\end{equation}

\begin{align}
    \label{jichuxinxivbnm}
 |g^{\fr{3}{2}}\nabla v\cdot \nabla v|_2
\leq& |\sqrt{g}(0,x)\nabla u_0|_3|g(0,x)\nabla u_0|_6+t^{\fr{1}{2}}\Big(\int^t_0|(g^{\fr{3}{2}}\nabla v\cdot \nabla v)_s|_2^2\text{d}s\Big)^{\fr{1}{2}}\nonumber\\
\leq&  |\sqrt{g}(0,x)\nabla u_0|_3|g(0,x)\nabla u_0|_6\nonumber\\
&+Ct^{\fr{1}{2}}\Big(\int^t_0\big(|g_s|^2_\infty|\sqrt{g}\nabla v|^2_2|\nabla v|^2_\infty+|\sqrt{g}\nabla v_s|^2_2|g\nabla v|^2_\infty\big)\text{d}s\Big)^{\fr{1}{2}}\nonumber\\
\leq &M(c_0)(1+c_4^{3}t)
\leq M(c_0),\nonumber\\   
  |g^{\frac{1}{2}}\nabla w\cdot \nabla w|_2
\leq& |\sqrt{g}(0,x)\nabla l_0|_6|\nabla l_0|_3+t^{\fr{1}{2}}\Big(\int^t_0|(\sqrt{g}\nabla w\cdot \nabla w)_s|_2^2\text{d}s\Big)^{\fr{1}{2}}\\
\leq& |\sqrt{h_0}\nabla l_0|_6|\nabla l_0|_3\nonumber\\  
&+Ct^{\fr{1}{2}}\Big(\int^t_0\big(|(\sqrt{g})_s|^2_\infty|\nabla w|^2_6|\nabla w|^2_3+|g^{\frac{1}{4}}\nabla w_s|^2_2|g^{\frac{1}{4}}\nabla w|^2_\infty\big)\text{d}s\Big)^{\fr{1}{2}}\nonumber\\
\leq & M(c_0)(1+c_4^{5}t)
\leq M(c_0),\nonumber\\
|\nabla^2 (h^2+\epsilon^2)^{\frac{1}{4}}|_3\leq & C(|\varphi|^{\frac{1}{2}}_\infty|\nabla\psi |_3+|\varphi|_\infty|\nabla h^{\fr{3}{4}}|_6^2)\leq M(c_0).\nonumber
    \end{align}

Then it follows from \eqref{le2}-\eqref{nablal},  Lemma \ref{zhenok} and  Lemmas \ref{psi}-\ref{varphi}  that for 
$0\leq t\leq \min\{T_1,(1+Cc_4)^{-12-\nu}\}$,
\begin{equation}\label{nabla2hl}\begin{split}
|(h^2+\epsilon^2)^{\frac{1}{4}} (l-\bar{l})|_{D^2}\leq& C(|w^{-\nu}\mathcal{E}|_2+|F(\nabla(h^2+\epsilon^2)^{\frac{1}{4}},l-\bar{l})|_2)\\
\leq &M(c_0)(c^{2\nu+1}_1+c^{\nu}_1|h^{-\fr{1}{4}}l_t|_2),\\
|(h^2+\epsilon^2)^{\frac{1}{4}} \nabla^2l|_2\leq &C(|(h^2+\epsilon^2)^{\frac{1}{4}} (l-\bar{l})|_{D^2}+|\nabla^2 (h^2+\epsilon^2)^{\frac{1}{4}}(l-\bar{l})|_2\\
&+|\nabla l\cdot \nabla (h^2+\epsilon^2)^{\frac{1}{4}} |_2)\\
\leq &C|(h^2+\epsilon^2)^{\frac{1}{4}} (l-\bar{l})|_{D^2}
+|\nabla^2 (h^2+\epsilon^2)^{\frac{1}{4}}|_3|l-\bar{l}|_6\\
&+|\varphi|^{\fr{1}{2}}_\infty|\psi|_\infty|\nabla l|_2)
\leq M(c_0)(c^{2\nu+1}_1+c^{\nu}_1|h^{-\fr{1}{4}}l_t|_2).
\end{split}
\end{equation}

Consequently, this, together with  $\ef{lt2nn}$ and Lemma \ref{varphi}, shows   that $\eqref{lt}_2$ holds
for $0\leq t\leq T_2=\min\{T_1,(1+Cc_4)^{-12-2\nu}\}$.
\end{proof}

\begin{lemma}\label{l2}For $T_3=\min\{T_2,(1+Cc_4)^{-20-8\nu}\}$ and $t\in [0,T_3]$,  it holds that   	\begin{equation}\label{ltt0}\begin{split}
   |w^{-\fr{\nu}{2}}h^{-\frac{1}{4}}l_{t}(t)|^2_2+\int^t_0(|h^{\frac{1}{4}} \nabla l_s|^2_2+|\nabla l_s|^2_2)\text{d}s
   \leq &M(c_0),\\
 |h^{-\frac{1}{4}}l_{t}(t)|_2+|\sqrt{h}\nabla^2l(t)|_2+|l(t)|_{D^2}
\leq& M(c_0)c_1^{2\nu+1},\\
	\int^t_0(|\sqrt{h}\nabla^3l |_2^2+|\sqrt{h}\nabla^2 l|^2_{D^1_*}+|l|^2_{D^3})\text{d}s
\leq& M(c_0)c_1^{2\nu+2}.
	\end{split}\end{equation}
\end{lemma}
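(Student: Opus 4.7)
The plan is to derive an $L^\infty_t$ bound on $l_t$ in the singularly weighted norm $|w^{-\nu/2}h^{-1/4}\cdot|_2$ by doing a weighted energy estimate on the time-differentiated equation, and then to use the elliptic reformulation \eqref{le2} to upgrade this information to the $L^\infty_t$ bounds on $\sqrt{h}\nabla^2 l$ and $|l|_{D^2}$ and the $L^2_t$ bounds on $\sqrt{h}\nabla^3 l$ and $|l|_{D^3}$. First, I would differentiate \eqref{lt0} in $t$ to obtain
\begin{equation*}
w^{-\nu}h^{-1/2}(l_{tt}+v\cdot\nabla l_t) - a_4(h^2+\epsilon^2)^{1/4}\triangle l_t = \mathcal{R}_t + \mathcal{I},
\end{equation*}
where $\mathcal{R}$ is the right-hand side of \eqref{lt0}, and $\mathcal{I}$ collects the commutator contributions produced by $(w^{-\nu}h^{-1/2})_t(l_t+v\cdot\nabla l)$, $w^{-\nu}h^{-1/2}v_t\cdot\nabla l$ and $\partial_t((h^2+\epsilon^2)^{1/4})\triangle l$. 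Multiplying by $l_t$, integrating over $\mathbb{R}^3$, and integrating by parts on the elliptic term yields
\begin{equation*}
\tfrac{1}{2}\tfrac{d}{dt}|w^{-\nu/2}h^{-1/4}l_t|_2^2 + a_4|(h^2+\epsilon^2)^{1/8}\nabla l_t|_2^2 = \int\mathcal{R}_t\,l_t + \int\mathcal{I}\,l_t + (\text{commutator remainders}).
\end{equation*}
The right-hand side is controlled by $\eqref{2.16}$, Lemmas \ref{phiphi}--\ref{varphi} and Lemma \ref{l}; the remainders $-a_4\int\nabla(h^2+\epsilon^2)^{1/4}\cdot\nabla l_t\,l_t$ and $\tfrac12\int\mathrm{div}(w^{-\nu}h^{-1/2}v)|l_t|^2$ are absorbed into the dissipation by Cauchy--Schwarz.

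Second, I would check the initial datum $|w^{-\nu/2}h^{-1/4}l_t|_{t=0}|_2$ by substituting \eqref{lt0} at $t=0$: every term becomes $(\epsilon,\eta)$-independently finite thanks to the compatibility conditions $\nabla l_0=\phi_0^{-\iota/2}g_4$ and $\triangle l_0=\phi_0^{-3\iota/2}g_5$ combined with $h_0=\phi_0^{2\iota}$ (recall $\iota<0$) and the bounds on $\mathcal{R}|_{t=0}$ inherited from \eqref{2.14}. Applying Gronwall's inequality on $[0,T_3]$ with $T_3=\min\{T_2,(1+Cc_4)^{-20-8\nu}\}$ chosen small enough so that the time-integrated coefficients of the linear Gronwall term stay bounded gives the first line of \eqref{ltt0}; the $L^2_t$ bound on $|\nabla l_s|_2$ then follows from $|h^{1/4}\nabla l_s|_2\in L^2_t$ and the uniform lower bound $h\geq(2c_0)^{-1}$ from Lemma \ref{gh}.

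Third, lines 2 and 3 of \eqref{ltt0} follow by revisiting the elliptic reformulation \eqref{le2} with the new $l_t$-bound in hand. The $L^\infty_t$ control of $|h^{-1/4}l_t|_2$ upgrades \eqref{nablal} to $|\mathcal{E}|_2\leq M(c_0)c_1^{2\nu+1}$ uniformly in $t$, and \eqref{nabla2hl} then yields $|\sqrt h\nabla^2 l|_2\leq M(c_0)c_1^{2\nu+1}$ and $|l|_{D^2}\leq M(c_0)c_1^{2\nu+1}$ using Lemma \ref{gh}. For the third line I would apply $\partial_j$ to \eqref{le2} to produce an elliptic equation of the form $-a_4\triangle(\partial_j((h^2+\epsilon^2)^{1/4}(l-\bar l)))=\partial_j(w^{-\nu}\mathcal{E})-a_4\,\partial_j F$, and combine the $L^2_t$-control of $|\nabla l_t|_2$ from line 1 with the bounds on $|\nabla\,\mathrm{div}\,\psi|_2$, $|\nabla H(v)|_2$ and $|\nabla\Pi|_2$ provided by Lemmas \ref{psi}--\ref{varphi} and \eqref{2.16} to estimate $|(h^2+\epsilon^2)^{1/4}(l-\bar l)|_{D^3}$; a commutator analysis analogous to \eqref{nabla2hl} converts this to the asserted $L^2_t$ bounds on $\sqrt h\nabla^3 l$, $|\sqrt h\nabla^2 l|_{D^1_*}$ and $|l|_{D^3}$.

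The main obstacle is the careful bookkeeping in $\mathcal{R}_t$. In particular, $\partial_t(a_6 w^{\nu+1}h^{-1/2}\mathrm{div}\,\psi)$ forces the appearance of $\mathrm{div}\,\psi_t$, which is only controlled in $L^2_t$ at the price of a $c_4^4$ factor from Lemma \ref{psi}, while $\partial_t(a_9 w^{\nu-1}\sqrt g|\nabla w|^2)$ generates $w^{\nu-1}\sqrt g\nabla w\cdot\nabla w_t$, which is handled via the $L^2_t$-bound on $|g^{1/4}\nabla w_t|_2$ from \eqref{2.16} and is the source of the $c_1^{2\nu+2}$ factor on line 3. The weighted interpolations from Lemma \ref{varphi} involving $|h^{-1/4}\nabla^2 h|_2$, $|\nabla h^{3/8}|_4$ and $|\nabla h^{3/4}|_{D^1_*}$ are essential to redistribute the singular weights in such a way that every singular source contribution can ultimately be absorbed into the dissipative term $|h^{1/4}\nabla l_t|_2^2$.
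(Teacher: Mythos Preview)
Your proposal is correct and follows essentially the same route as the paper: time-differentiate the entropy equation, test against $w^{-\nu}l_t$ (equivalently, against $l_t$ after you have already divided by $w^\nu$), control the resulting source terms (the paper groups them as $J_1$--$J_6$) using \eqref{2.16} and Lemmas \ref{psi}--\ref{l}, verify the initial bound via the compatibility condition $\triangle l_0=\phi_0^{-3\iota/2}g_5$, apply Gronwall for line 1, and then feed the $L^\infty_t$ bound on $|h^{-1/4}l_t|_2$ back into \eqref{nabla2hl} and its $D^1$ analogue for lines 2 and 3. Two minor bookkeeping corrections: $|\nabla\psi_t|_2$ is actually in $L^\infty_t$ (not merely $L^2_t$) by Lemma \ref{psi}, and the factor $c_1^{2\nu+2}$ on line 3 arises from squaring the $c_1^{\nu+1}|\nabla l_t|_2$ contribution in the $D^3$ elliptic estimate rather than from the $\nabla w_t$ term.
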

\begin{proof}
Applying $\partial_t$ to $\ef{ln}_3$ yields
\begin{equation}\label{ltt}\begin{split}
&h^{-\frac{1}{2}}l_{tt}-a_4w^{\nu}  (h^2+\epsilon^2)^{\frac{1}{4}} \triangle l_t\\
=&
-(h^{-\frac{1}{2}})_tl_t-(h^{-\frac{1}{2}}v\cdot\nabla l)_t
+a_4w^{\nu} _t(h^2+\epsilon^2)^{\frac{1}{4}} \triangle l+a_4w^{\nu} (h^2+\epsilon^2)^{\frac{1}{4}}_t \triangle l\\
&+a_5( w^{\nu} ng^{\frac{3}{2}}H(v))_t
+a_6(w^{\nu+1} h^{-\frac{1}{2}} \text{div} \psi)_t+\Pi(l,h,w,g)_t.
\end{split}
\end{equation}
Multiplying \ef{ltt} by $w^{-\nu}l_t$, integrating over $\mathbb{R}^3$ and integration by parts lead to 
\begin{equation}\label{ltt1}\begin{split}
&\fr{1}{2}\fr{d}{dt}|w^{-\fr{\nu}{2}}h^{-\frac{1}{4}}l_{t}|^2_2+a_4|(h^2+\epsilon^2)^{\frac{1}{8}} \nabla l_t|^2_2\\
=&
-\int\big((h^{-\frac{1}{2}})_tl_t+(h^{-\frac{1}{2}}v\cdot\nabla l)_t\big)w^{-\nu}l_t\\
&+a_4\int(w^{\nu} _t(h^2+\epsilon^2)^{\frac{1}{4}} \triangle l+
w^{\nu} (h^2+\epsilon^2)^{\frac{1}{4}}_t \triangle l)w^{-\nu}l_t\\
&+\int\big(a_5( w^{\nu} ng^{\frac{3}{2}}H(v))_t+a_6(w^{\nu+1} h^{-\frac{1}{2}} \text{div} \psi)_t+\Pi(l,h,w,g)_t\big)w^{-\nu}l_t\\
&-a_4\int\nabla(h^2+\epsilon^2)^{\frac{1}{4}} \nabla l_t l_t+\fr{1}{2}\int(w^{-\nu}h^{-\frac{1}{2}})_t|l_{t}|^2=\sum^{6}_{i=1}J_i,
\end{split}
\end{equation}
where $J_i$, $i=1,2,\cdots,6$, are given and estimated as follows:
\begin{equation}\label{ltt2}\begin{split}
J_1=&-\int\big((h^{-\frac{1}{2}})_tl_t+(h^{-\frac{1}{2}}v\cdot\nabla l)_t\big)w^{-\nu}l_t\\
\leq &C|\varphi|_\infty|h_t|_\infty|w^{-\fr{\nu}{2}}h^{-\frac{1}{4}}l_{t}|^2_2+C|w^{-\fr{\nu}{2}}|_\infty(|\varphi|_\infty^{\fr{5}{4}}|h_t|_\infty|v|_\infty|\nabla l|_2\\
&+|\varphi|_\infty^{\fr{1}{4}}|v_t|_3|\nabla l|_6+|\varphi|_\infty^{\fr{1}{2}}|v|_\infty|h^{\fr{1}{4}}\nabla l_t|_2)|w^{-\fr{\nu}{2}}h^{-\frac{1}{4}}l_{t}|_2,\\
J_2=&a_4\int(w^{\nu} _t(h^2+\epsilon^2)^{\frac{1}{4}} \triangle l+
w^{\nu} (h^2+\epsilon^2)^{\frac{1}{4}}_t \triangle l)w^{-\nu}l_t\\
\leq& C(|w^{-1+\fr{\nu}{2}}|_\infty|hg^{-1}|^\fr{1}{4}_\infty|g^{\frac{1}{4}} w_t|_\infty|(h^2+\epsilon^2)^{\frac{1}{4}} \nabla^2 l|_2\\
&+|w^{\fr{\nu}{2}}|_\infty|\varphi|_\infty^{\fr{3}{4}}|h_t|_\infty|\sqrt{h}\nabla^2 l|_2)|w^{-\fr{\nu}{2}}h^{-\frac{1}{4}}l_{t}|_2,\\
J_3=&a_5\int( w^{\nu} ng^{\frac{3}{2}}H(v))_tw^{-\nu}l_t\\
\leq& C|hg^{-1}|_{\infty}^{\fr{1}{4}}\Big((|w^{-1+\fr{\nu}{2}}|_\infty|n|_\infty|w_t|_6|g\nabla v|_\infty+|w^{\fr{\nu}{2}}|_\infty|n_t|_\infty|g\nabla v|_6)|g^{\fr{3}{4}}\nabla v|_3\\
&+|w^{\fr{\nu}{2}}|_\infty|n|_\infty(|g_t|_6|g^{\fr{3}{4}}\nabla v|_3|\nabla v|_\infty+|g\nabla v|_6|g^{\fr{3}{4}}\nabla v_t|_3)\Big)|w^{-\fr{\nu}{2}}h^{-\frac{1}{4}}l_{t}|_2,\\
J_4=&a_6\int(w^{\nu+1} h^{-\frac{1}{2}} \text{div} \psi)_tw^{-\nu}l_t\\
\leq & C\big(|w^{\fr{\nu}{2}}|_\infty|\varphi|_\infty^{\fr{1}{4}}|\nabla\psi|_3|w_t|_6+|w^{1+\fr{\nu}{2}}|_\infty(|\varphi|_\infty|h^{-\fr{1}{4}}\nabla^2 h|_2|h_t|_\infty\\
&+|\varphi|_\infty^{\fr{1}{4}}|\nabla\psi_t|_2)\big)|w^{-\fr{\nu}{2}}h^{-\frac{1}{4}}l_{t}|_2,\\
J_5=&\int \Pi(l,h,w,g)_t w^{-\nu}l_t\\
\leq& C\big(|w^{\fr{\nu}{2}}|_\infty|\varphi|_\infty|hg^{-1}|_\infty^{\fr{1}{4}}|g^{\fr{1}{4}}w_t|_6|\nabla h^{\fr{3}{4}}|_6^2\\
&+|w^{1+\fr{\nu}{2}}|_\infty|\varphi|_\infty(|h_t|_\infty|\nabla h^{\fr{3}{8}}|_4^2+|\nabla h^{\fr{3}{4}}|_6|\psi_t|_3)\big)|w^{-\fr{\nu}{2}}h^{-\frac{1}{4}}l_{t}|_2\\
&+C\big(|w^{-1+\fr{\nu}{2}}|_\infty|\nabla l|_6|w_t|_6|\nabla h^{\fr{3}{4}}|_6+|w^{\fr{\nu}{2}}|_\infty(|\varphi|_\infty^{\fr{3}{2}}|\psi|_\infty|h^{\fr{1}{4}}\nabla l|_2|h_t|_\infty\\
&+|\varphi|_\infty^{\fr{1}{2}}|\psi|_\infty|h^{\fr{1}{4}}\nabla l_t|_2
+|\varphi|_\infty^{\fr{3}{4}}|\psi_t|_3|\sqrt{h}\nabla l|_6)\\
\end{split}
\end{equation}
\begin{equation}\label{ltt2vbnm}\begin{split}
&+|w^{-2+\fr{\nu}{2}}|_\infty|hg^{-1}|_\infty^{\fr{1}{4}}|g^{\fr{1}{4}}\nabla w|_3|\sqrt{g}\nabla w|_\infty|w_t|_6\\
&+|w^{-1+\fr{\nu}{2}}|_\infty|hg^{-1}|_\infty^{\fr{1}{4}}(|g^{-1}|_\infty|g_t|_\infty |\sqrt{g}\nabla w|_\infty|g^{\fr{1}{4}}\nabla w|_2\\
&+|\sqrt{g}\nabla w|_\infty|g^{\fr{1}{4}}\nabla w_t|_2)\big)|w^{-\fr{\nu}{2}}h^{-\frac{1}{4}}l_{t}|_2,\\
J_6=&-a_4\int\nabla(h^2+\epsilon^2)^{\frac{1}{4}}\cdot \nabla l_t l_t+\fr{1}{2}\int(w^{-\nu}h^{-\frac{1}{2}})_t|l_{t}|^2\\
\leq &C(|w^{\fr{\nu}{2}}|_\infty|\varphi|_\infty^{\fr{1}{2}}|\psi|_\infty|h^{\fr{1}{4}}\nabla l_t|_2+|w^{-\fr{\nu}{2}-1}|_\infty|\varphi|^{\fr{1}{2}}_\infty|w_t|_3|h^{\fr{1}{4}}l_t|_6)|w^{-\fr{\nu}{2}}h^{-\frac{1}{4}}l_{t}|_2\\
&+C|\varphi|_\infty|h_t|_\infty|w^{-\fr{\nu}{2}}h^{-\frac{1}{4}}l_{t}|^2_2.
\end{split}
\end{equation}
Integrating \ef{ltt1} over $(\tau,t)$ ($\tau\in (0,t)$), using   \eqref{2.16}, Lemmas \ref{psi}-\ref{l},  \eqref{keyvbvbnm}, \eqref{nabla2hl} and 
\begin{equation} \label{34vt}
|g^{\fr{3}{4}}\nabla v_t|_3\leq C|\sqrt{g}\nabla v_t|_2^{\fr{1}{2}}|g\nabla v_t|_6^{\fr{1}{2}},
\end{equation}
  one can obtain that  for $0\leq t\leq T_2$,
\begin{equation}\label{ltt3}\begin{split}
   & |w^{-\fr{\nu}{2}}h^{-\frac{1}{4}}l_{t}(t)|^2_2+\int^t_\tau|(h^2+\epsilon^2)^{\frac{1}{8}} \nabla l_s|^2_2\text{d}s\\
   \leq & |w^{-\fr{\nu}{2}}h^{-\frac{1}{4}}l_{t}(\tau)|^2_2+M(c_0)c_4^{4\nu+8}\int^t_\tau(|\sqrt{g}\nabla^2 w_s|^{\fr{1}{2}}_2+1)|w^{-\fr{\nu}{2}}h^{-\frac{1}{4}}l_{s}|^2_2\text{d}s\\
   &+M(c_0)c_4^{10+\nu}(t+t^{\fr{1}{2}}).
\end{split}
\end{equation}

Due to $\ef{ln}_3$, one gets
\begin{equation}\label{ltt4}\begin{split}
 |w^{-\fr{\nu}{2}}h^{-\frac{1}{4}}l_{t}(\tau)|_2\leq&\big(| w^{-\fr{\nu}{2}}h^{\frac{1}{4}}(-h^{-\fr{1}{2}}v\cdot \nabla l+a_4 w^{\nu} (h^2+\epsilon^2)^{\frac{1}{4}} \triangle l\\
&+ a_5w^{\nu} n g^{\frac{3}{2}}H(v)+a_6w^{\nu+1} h^{-\frac{1}{2}} \text{div} \psi+\Pi(l,h,w,g))|_2\big)(\tau),
\end{split}
\end{equation}
which, together with   Lemma \ref{ls}, \ef{2.14} and Remark  \ref{r1}, implies  that
\begin{equation*}\label{ltt5}\begin{split}
 &\limsup_{\tau\rightarrow 0}|w^{-\fr{\nu}{2}}h^{-\frac{1}{4}}l_t(\tau)|_2^2 \\
 \leq &C\big(|l^{-\fr{\nu}{2}}_0h^{-\fr{1}{4}}_0 u_0\cdot \nabla l_0|_2+ |l^{\fr{\nu}{2}}_0h^{\fr{1}{4}}_0(h^2_0+\epsilon^2)^{\frac{1}{4}} \triangle l_0|_2\\
 &+|l^{\fr{\nu}{2}}_0h^{\fr{1}{4}}_0h_0^b h^{\frac{3}{2}}_0H(u_0)|_2+|l^{1+\fr{\nu}{2}}_0h^{-\frac{1}{4}}_0 \text{div} \psi_0|_2+|l^{-\fr{\nu}{2}}_0h^{\fr{1}{4}}_0\Pi(l_0,h_0,w_0,g_0)|_2\big)\\
 \leq &C|l^{\fr{\nu}{2}}_0|_\infty(|l^{-\nu}_0|_\infty|\phi_0^{-\fr{\iota}{2}}|_\infty|u_0|_\infty|\nabla l_0|_2+|g_5|_2+\epsilon^{\fr{1}{2}}|\phi_0^{-\iota}|_\infty|g_5|_2\\
 &+|\phi_0^{2b\iota }|_\infty|\phi_0^{\fr{3}{2}\iota}\nabla u_0|_3|\phi_0^{2\iota}\nabla u_0|_6)+C|l^{1+\fr{\nu}{2}}_0|_\infty(|\nabla^2 \phi_0^{\fr{3}{2}\iota} |_2+|\nabla\phi_0^{\fr{3}{4}\iota}|_4^2)\\
&+C|l^{\fr{\nu}{2}}_0|_\infty|\nabla\phi_0^{\fr{3}{2}\iota}|_6|\nabla l_0|_3+C|l^{\fr{\nu}{2}-1}_0|_\infty|\phi_0^{\fr{3}{2}\iota}\nabla l_0|_6|\nabla l_0|_3
 \leq M(c_0).
 \end{split}
\end{equation*}

Letting $\tau\rightarrow 0$ in $\ef{ltt3}$ and using Gronwall's inequality give that for $0\leq t\leq \min\{T_2,(1+Cc_4)^{-20-4\nu}\}$,
\begin{equation}\label{ltt6}\begin{split}
   & |w^{-\fr{\nu}{2}}h^{-\frac{1}{4}}l_{t}(t)|^2_2+\int^t_0(|h^{\frac{1}{4}} \nabla l_s|^2_2+|\nabla l_s|^2_2)\text{d}s
   \leq M(c_0),
\end{split}
\end{equation}
which,  along with \ef{nabla2hl}, yields that for $0\leq t\leq \min\{T_2,(1+Cc_4)^{-20-4\nu}\}$,
\begin{equation}\label{ltt7*}
|(h^2+\epsilon^2)^{\fr{1}{4}}l(t)|_{D^2}+|\sqrt{h}\nabla^2l(t)|_2+|l(t)|_{D^2}\leq M(c_0)c_1^{2\nu+1}.
\end{equation}

Next, to derive  the $L^2$-estimates of $\nabla^3 l$, one considers the  $L^2$-estimates of
\begin{equation*}
\big(\nabla\mathcal{E},\nabla\tilde{F}=\nabla F(\nabla(h^2+\epsilon^2)^{\fr{1}{4}},l-\bar{l})\big).
\end{equation*}
Using  Lemmas \ref{psi}-\ref{l}, \ef{2.16}  and \ef{jichuxinxi}-\ef{jichuxinxivbnm},
one can get that 
\begin{equation}\label{ltt7}\begin{split}
  |\mathcal{E}|_{D^1_*}\leq & C\big(|\varphi|_\infty^{\fr{1}{2}}|\nabla l_t|_2+|\varphi|_\infty^{\fr{5}{4}}|\psi|_\infty|h^{-\fr{1}{4}}l_t|_2+|\varphi|_\infty^{\fr{3}{2}}|\psi|_\infty|v|_3|\nabla l|_6\\
  &+|\varphi|_\infty^{\fr{1}{2}}(|v|_\infty|\nabla^2 l|_2+|\nabla v|_3|\nabla l|_6)+|w^{\nu-1}|_\infty|n|_\infty|\nabla w|_6|g^{\fr{3}{2}}\nabla v\cdot \nabla v|_3\\
  &+|w^\nu|_\infty(|\nabla n|_\infty|g^{\fr{3}{2}}\nabla v\cdot \nabla v|_2+|n|_\infty|\nabla g|_\infty|\varphi|_\infty^{\fr{1}{2}}|\nabla v|_3|g\nabla v|_6\\
  &+|n|_\infty|g^{\frac{3}{2}}\nabla v\cdot \nabla^2v|_2+|\varphi|_\infty^{\fr{1}{2}}|\nabla w|_6|\nabla\psi|_3)\\
&+|w^{\nu+1}|_\infty(|\varphi|_\infty^{\fr{5}{4}}|\nabla h^{\fr{3}{4}}|_6|\nabla\psi|_3
  +|\varphi|_\infty^{\fr{1}{2}}|\nabla^2\psi|_2)\\
&+|w^\nu|_\infty|\varphi|_\infty^{\fr{3}{2}}|\psi|_\infty^2|\nabla w|_2+|w^{\nu+1}|_\infty(|\varphi|_\infty^{\fr{5}{4}}|\psi|_\infty|\nabla  h^{\fr{3}{8}}|_4^2\\
  &+|\varphi|_\infty^{\fr{5}{4}}|\psi|_\infty|h^{-\fr{1}{4}}\nabla^2h|_2)+|w^{\nu-1}|_\infty|\varphi|_\infty^{\fr{1}{2}}|\psi|_\infty|\nabla l|_6|\nabla w|_3\\
&+|w^\nu|_\infty(|\varphi|_\infty^{\fr{3}{2}}|\psi|_\infty^2|\nabla l|_2+|\varphi|_\infty^{\fr{1}{2}}|\psi|_\infty|\nabla^2 l|_2\\
  &+|\varphi|_\infty^{\fr{1}{2}}|\nabla \psi|_3|\nabla l|_6)+|w^{\nu-2}|_\infty|\sqrt{g}\nabla w|_6|\nabla w|_6^2\\
  &+|w^{\nu-1}|_\infty(|g^{-\fr{1}{2}}|_\infty|\nabla g|_\infty|\nabla w|_4^2+|\sqrt{g}\nabla w\cdot \nabla^2 w|_2)\big)\\
  &\leq M(c_0)(|\nabla l_t|_2+c_1^{3\nu+2}),\\
|\tilde{F}|_{D^1_*}\leq & C(|\nabla(h^2+\epsilon^2)^{\fr{1}{4}}|_\infty|\nabla^2l|_2+|\nabla^2(h^2+\epsilon^2)^{\fr{1}{4}}|_3|\nabla l|_6\\
&+|\nabla^3(h^2+\epsilon^2)^{\fr{1}{4}}|_2|l-\bar{l}|_\infty)
\leq M(c_0)c_1^{2\nu+1},
\end{split}
\end{equation}
where one has used  \ef{2.14} and  the facts that, for $0\leq  t\leq \min\{T_2,(1+Cc_4)^{-20-4\nu}\}$,
\begin{equation}\label{ltt9vbnmm}\begin{split}
|g\nabla v|_6
\leq & |h_0\nabla u_0|_6+t^{\fr{1}{2}}\Big(\int^t_0|(g\nabla v)_s|_6^2\text{d}s\Big)^{\fr{1}{2}}\\
\leq& C(|h_0\nabla^2 u_0|_2+|\psi_0|_\infty|\nabla u_0|_2)\\
&+t^{\fr{1}{2}}\Big(\int^t_0(|g_s|_\infty|\nabla v|_6+|g \nabla^2 v_s|_2+|\nabla g|_\infty|\nabla v_s|_2)^2\text{d}s\Big)^{\fr{1}{2}}\\
\leq &M(c_0)(1+c_4^2(t+t^{\frac{1}{2}}))
\leq M(c_0),\\
|g^{\fr{3}{2}}\nabla v\cdot \nabla^2 v|_2
\leq& |\sqrt{h_0}\nabla u_0|_3|h_0\nabla^2 u_0|_6+t^{\fr{1}{2}}\Big(\int^t_0|(g^{\fr{3}{2}}\nabla v\cdot \nabla^2 v)_s|_2^2\text{d}s\Big)^{\fr{1}{2}}\\
\leq&  |\sqrt{h_0}\nabla u_0|_3|h_0\nabla^2 u_0|_6+Ct^{\fr{1}{2}}\Big(\int^t_0\big(|g_s|^2_\infty|\sqrt{g}\nabla v|^2_\infty|\nabla^2 v|^2_2\\
&+|g\nabla v_s|^2_6|\sqrt{g}\nabla^2 v|^2_3+|g\nabla^2 v_s|^2_2|\sqrt{g}\nabla v|^2_\infty\big)\text{d}s\Big)^{\fr{1}{2}}\\
\leq &M(c_0)(1+c_4^{4}(t+t^{\frac{1}{2}}))
\leq M(c_0),\\
\end{split}
\end{equation}

\begin{equation}\label{ltt9}\begin{split}                                                       
|\sqrt{g}\nabla w|_6
\leq & |\sqrt{h_0}\nabla l_0|_6+t^{\fr{1}{2}}\Big(\int^t_0|(\sqrt{g}\nabla w)_s|_6^2\text{d}s\Big)^{\fr{1}{2}}\\
\leq& C(|\sqrt{h_0}\nabla^2 l_0|_2+|(h_0)^{-1}|^{\frac{1}{2}}_\infty|\psi_0|_\infty|\nabla l_0|_2)\\
&+t^{\fr{1}{2}}\Big(\int^t_0(|g^{-1}|^{\frac{1}{2}}_\infty|g_s|_\infty|\nabla w|_6\\
&+|\sqrt{g} \nabla^2 w_s|_2+|g^{-1}|^{\frac{1}{2}}_\infty|\nabla g|_\infty|\nabla w_s|_2)^2\text{d}s\Big)^{\fr{1}{2}}\\
\leq &M(c_0)(1+c_4^3(t+t^{\frac{1}{2}}))
\leq M(c_0),\\
|g^{\fr{3}{2}}\nabla v\cdot \nabla v|_3\leq & C|g^{\fr{3}{2}}\nabla v\cdot \nabla v|^{\fr{1}{2}}_2|g^{\fr{3}{2}}\nabla v\cdot \nabla v|^{\fr{1}{2}}_6\\
\leq & M(c_0)(|g^{\fr{3}{2}}\nabla v\cdot \nabla v|_2+|\nabla g|_\infty|\varphi|^{\fr{1}{2}}_\infty|\nabla v|_3|g\nabla v|_6\\
&+|g^{\fr{3}{2}}\nabla v\cdot \nabla^2 v|_2)^{\fr{1}{2}}\leq M(c_0)c_1,\\
|\sqrt{g}\nabla w\cdot \nabla^2 w|_2
\leq& |\nabla l_0|_\infty|\sqrt{h_0}\nabla^2 l_0|_2+t^{\fr{1}{2}}\Big(\int^t_0|(\sqrt{g}\nabla w\cdot \nabla^2 w)_s|_2^2\text{d}s\Big)^{\fr{1}{2}}\\
\leq&  |\nabla l_0|_\infty|\sqrt{h_0}\nabla^2 l_0|_2+Ct^{\fr{1}{2}}\Big(\int^t_0\big(|g_s|^2_\infty|g^{-1}|_\infty|\nabla w|^2_\infty|\nabla^2 w|^2_2\\
&+|\sqrt{g}\nabla w_s|^2_6|\nabla^2 w|^2_3+|\sqrt{g}\nabla^2 w_s|^2_2|\nabla w|^2_\infty\big)\text{d}s\Big)^{\fr{1}{2}}\\
\leq &M(c_0)(1+c_4^{4}(t+t^{\frac{1}{2}}))
\leq M(c_0),\\
\|w\|_{D^1\cap D^2}
\leq & \|l_0\|_{D^1\cap D^2}+t^{\fr{1}{2}}\Big(\int^t_0\|w_s\|^2_{D^1\cap D^2}\text{d}s\Big)^{\fr{1}{2}}\\
\leq &M(c_0)(1+c_4^2(t+t^{\frac{1}{2}}))
\leq M(c_0),\\
|\nabla^3(h^2+\epsilon^2)^{\fr{1}{4}}|_2\leq & C(|\varphi|_\infty^{\fr{1}{2}}|\nabla^3h|_2+|\varphi|_\infty^{\fr{5}{4}}|\psi|_\infty|h^{-\fr{1}{4}}\nabla^2 h|_2+|\varphi|_\infty^{\fr{7}{4}}|\nabla h^{\fr{3}{4}}|_6^3)\\
\leq & M(c_0).
\end{split}
\end{equation}

It follows from \ef{le2}, \ef{ltt7*}-\ef{ltt7},  Lemma \ref{zhenok} and Lemmas \ref{psi}-\ref{l} that
\begin{equation}\label{ltt10}\begin{split}
|(h^2+\epsilon^2)^{\fr{1}{4}} (l-\bar{l})(t)|_{D^3}\leq & C(|w^{-\nu}\mathcal{E}|_{D^1_*}+|F(\nabla(h^2+\epsilon^2)^{\fr{1}{4}},l-\bar{l})|_{D^1_*})\\
\leq & C (|w^{-\nu}|_\infty|\mathcal{E}|_{D^1_*}+|\nabla w^{-\nu}|_3|\mathcal{E}|_{6}+|\tilde{F}|_{D^1_*})\\
\leq &M(c_0)(c^{\nu+1}_1|\nabla l_t|_2+c_1^{4\nu+3}),\\
|(h^2+\epsilon^2)^{\fr{1}{4}}\nabla^3 l(t)|_2\leq & C(|(h^2+\epsilon^2)^{\fr{1}{4}} (l(t)-\bar{l})|_{D^3}+|\varphi|_\infty^2(|\nabla^2\psi|_2\\
&+|\nabla\psi|_3|\nabla h^{\fr{3}{4}}|_6
+|\psi|_\infty^2+|\nabla h^{\fr{3}{4}}|_6^3)(1+\|\nabla l\|_1))\\
\leq &M(c_0)(c^{\nu+1}_1|\nabla l_t|_2+c_1^{4\nu+3}).
\end{split}
\end{equation}

Finally, one gets  from \ef{ltt6}, \ef{ltt10} and Lemma \ref{gh} that
\begin{equation}\label{ltt11}\begin{split}
\int^t_0(|\sqrt{h}\nabla^3 l|_2^2+|\sqrt{h}\nabla^2 l|^2_{D^1_*}+|l|^2_{D^3})\text{d}s
\leq M(c_0)c_1^{2\nu+2},
\end{split}
\end{equation}
for $0\leq t\leq T_3=\min\{T_2,(1+Cc_4)^{-20-8\nu}\}$.

The proof of Lemma \ref{l2} is completed.
 \end{proof}

\begin{lemma}\label{l3}For $T_4=\min\{T_3,(1+Cc_4)^{-40-10\nu}\}$ and $t\in [0,T_4]$,  it holds that   
\begin{equation}\label{ltt2}\begin{split}
	|h^{\fr{1}{4}}\nabla l_t(t)|^2_2+|\nabla l_{t}(t) |_2^2+\int^t_0|w^{-\fr{\nu}{2}}h^{-\fr{1}{4}} l_{ss}|_2^2ds\leq & M(c_0),\\
  |\sqrt{h}\nabla^3l (t)|_2+|\sqrt{h}\nabla^2 l(t)|_{D^1_*}+|l(t)|_{D^3}\leq &M(c_0)c_1^{4\nu+3},\\
  \int^t_0(|\sqrt{h}\nabla^2 l_s|_2^2+|\nabla^2 l_s|_2^2)\text{d}s
\leq & M(c_0)c^{3\nu}_1.\\
\end{split}\end{equation}
\end{lemma}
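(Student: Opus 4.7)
The plan is to push the energy estimate of Lemma \ref{l2} one level higher by using the multiplier $w^{-\nu}l_{tt}$ (rather than $w^{-\nu}l_t$) on the time-differentiated equation \eqref{ltt}. After integration by parts and shifting the time derivative off $(h^2+\epsilon^2)^{1/4}|\nabla l_t|^2$, the principal part of the identity reads
\[
\frac{a_4}{2}\frac{d}{dt}\bigl|(h^2+\epsilon^2)^{1/8}\nabla l_t\bigr|_2^2+\bigl|w^{-\nu/2}h^{-1/4}l_{tt}\bigr|_2^2=\mathcal{R}(t),
\]
where $\mathcal{R}(t)$ contains the time derivatives of the six source groups on the right of \eqref{ltt} tested against $w^{-\nu}l_{tt}$, together with the commutator terms $\frac{a_4}{2}\int((h^2+\epsilon^2)^{1/4})_t|\nabla l_t|^2$ and $\frac{1}{2}\int(w^{-\nu}h^{-1/2})_t|l_{tt}|^2$. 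Each factor in $\mathcal{R}(t)$ is bounded by combining the estimates of Lemmas \ref{phiphi}--\ref{l2}, the a priori hypotheses \eqref{2.16} on $(v,g,w)$, and the weighted interpolation inequalities (in particular $|g^{3/4}\nabla v_t|_3\leq C|\sqrt g\,\nabla v_t|_2^{1/2}|g\nabla v_t|_6^{1/2}$) already employed in the proof of Lemma \ref{l2}. After splitting each piece as $\sigma|w^{-\nu/2}h^{-1/4}l_{tt}|_2^2+C_\sigma M(c_0)P(c_4)(|h^{1/4}\nabla l_t|_2^2+1)$ with $\sigma>0$ small and absorbing the first piece into the left-hand side, one is in position to close by Gronwall.

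The initial value $|h_0^{1/4}\nabla l_t(0)|_2$ is the new ingredient. Evaluating $\eqref{ln}_3$ at $t=0$ and solving for $l_t(0)$ gives $l_t(0)=\sqrt{h_0}\bigl[a_4 w_0^\nu(h_0^2+\epsilon^2)^{1/4}\triangle l_0+\mathrm{RHS}(0)\bigr]-u_0\cdot\nabla l_0$, so the most singular contribution to $\nabla l_t(0)$ is $\nabla(\sqrt{h_0}(h_0^2+\epsilon^2)^{1/4}\triangle l_0)$, which behaves like $\nabla(\phi_0^{2\iota}\triangle l_0)$. The third compatibility condition $\nabla(\phi_0^\iota\triangle l_0)=\phi_0^{-3\iota/2}g_6$, combined with $\nabla\phi_0^\iota=\tfrac{\delta-1}{2a\delta}\phi_0^{-\iota}\psi_0$ and the bounds collected in Remark \ref{r1}, yields $|h_0^{1/4}\nabla l_t(0)|_2\leq M(c_0)$ uniformly in $\epsilon$; the lower-order pieces are handled using \eqref{a} and \eqref{2.8*}. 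Applying Gronwall on $[0,T_4]$ with $T_4=\min\{T_3,(1+Cc_4)^{-40-10\nu}\}$ then delivers the first line of the claim.

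With $|\nabla l_t|_2$ now bounded in $L^\infty_t$, the second line $|\sqrt{h}\nabla^3 l|_2+|\sqrt{h}\nabla^2 l|_{D^1_*}+|l|_{D^3}\leq M(c_0)c_1^{4\nu+3}$ follows by re-reading the elliptic chain \eqref{ltt10}: the bound $|\mathcal{E}|_{D^1_*}\leq M(c_0)(|\nabla l_t|_2+c_1^{3\nu+2})$ of \eqref{ltt7} becomes an $L^\infty_t$ estimate. For the third line, I view \eqref{ltt} as a singularly-weighted elliptic equation $-a_4 w^\nu(h^2+\epsilon^2)^{1/4}\triangle l_t=\mathcal{F}$ for $l_t$ at each fixed $t$, with $\mathcal{F}$ the right-hand side of \eqref{ltt} minus $h^{-1/2}l_{tt}$, and invoke Lemma \ref{zhenok} together with the $L^2_tL^2$ control of $l_{tt}$ just obtained to conclude $\int_0^t|\sqrt{h}\nabla^2 l_s|_2^2\,ds\leq M(c_0)c_1^{3\nu}$; the unweighted version follows via the uniform lower bound on $h$ from Lemma \ref{gh}.

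The principal technical obstacle is the uniform-in-$\epsilon$ control of $|h_0^{1/4}\nabla l_t(0)|_2$ described above, and, within $\mathcal{R}(t)$, the delicate matching between the singular powers of $\varphi=h^{-1}$ generated by differentiating $\Pi$ and $w^{\nu+1}h^{-1/2}\mathrm{div}\,\psi$ and the weighted factors $|h^{-1/4}\nabla^2 h|_2$, $|\nabla h^{3/8}|_4$ and $|h^{1/4}\nabla l|_2$ supplied by Lemmas \ref{varphi} and \ref{l}. It is precisely to make these powers cancel that the degenerate weight $\rho^{(1-\delta)/2}$ was built into the reformulated entropy equation \eqref{qiyi}.
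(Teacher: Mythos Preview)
Your proposal is correct and follows essentially the same route as the paper: multiply \eqref{ltt} by $w^{-\nu}l_{tt}$, close the resulting energy identity for $|(h^2+\epsilon^2)^{1/8}\nabla l_t|_2^2$ by Gronwall after handling the initial trace via the compatibility data $(g_5,g_6)$ and Remark~\ref{r1}, then read off the second and third lines from the elliptic chains \eqref{ltt10} and \eqref{rfvnm}--\eqref{mathcalBFnnnm}. One small slip: with this multiplier the dissipative term is $|w^{-\nu/2}h^{-1/4}l_{tt}|_2^2$ and the stored energy is $|(h^2+\epsilon^2)^{1/8}\nabla l_t|_2^2$, so the only commutator is $\tfrac{a_4}{2}\int((h^2+\epsilon^2)^{1/4})_t|\nabla l_t|^2$ (paired with $-a_4\int\nabla(h^2+\epsilon^2)^{1/4}\cdot\nabla l_t\,l_{tt}$); the term $\tfrac12\int(w^{-\nu}h^{-1/2})_t|l_{tt}|^2$ you list does not arise here.
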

\begin{proof} Multiplying \ef{ltt} by $w^{-\nu}l_{tt}$ and integrating over $\mathbb{R}^3$ yield  
\begin{equation}\label{nabla3l}\begin{split}
&\fr{a_4}{2}\fr{d}{dt}|(h^2+\epsilon^2)^{\frac{1}{8}} \nabla l_t|^2_2+|w^{-\fr{\nu}{2}}h^{-\frac{1}{4}}l_{tt}|^2_2
=\sum^{12}_{i=7}J_i,
\end{split}
\end{equation}
where $J_i$, $i=7,8,\cdots,12$, are given and estimated as follows:
\begin{equation}\label{nabla3l2}\begin{split}
J_7=&-\int\big((h^{-\frac{1}{2}})_tl_t+(h^{-\frac{1}{2}}v\cdot\nabla l)_t\big)w^{-\nu}l_{tt}\\
\leq &C|w^{-\fr{\nu}{2}}|_\infty(|\varphi|_\infty|h_t|_\infty|h^{-\frac{1}{4}}l_{t}|_2+|\varphi|_\infty^{\fr{3}{2}}|h_t|_\infty|v|_\infty|h^{\fr{1}{4}}\nabla l|_2\\
&+|\varphi|_\infty^{\fr{1}{4}}|v_t|_3|\nabla l|_6+|\varphi|_\infty^{\fr{1}{2}}|v|_\infty|h^{\fr{1}{4}}\nabla l_t|_2)|w^{-\fr{\nu}{2}}h^{-\frac{1}{4}}l_{tt}|_2,\\
J_8=&a_4\int(w^{\nu} _t(h^2+\epsilon^2)^{\frac{1}{4}} \triangle l+
w^{\nu} (h^2+\epsilon^2)^{\frac{1}{4}}_t \triangle l)w^{-\nu}l_{tt}\\
\leq& C|hg^{-1}|_\infty^{\fr{1}{4}}(|w^{-1+\fr{\nu}{2}}|_\infty|g^{\frac{1}{4}} w_t|_\infty|(h^2+\epsilon^2)^{\frac{1}{4}} \nabla^2 l|_2\\
&+|w^{\fr{\nu}{2}}|_\infty|\varphi|_\infty^{\fr{3}{4}}|h_t|_\infty|\sqrt{h}\nabla^2 l|_2)|w^{-\fr{\nu}{2}}h^{-\frac{1}{4}}l_{tt}|_2,\\
J_9=&a_5\int( w^{\nu} ng^{\frac{3}{2}}H(v))_tw^{-\nu}l_{tt}\\
\leq& C|hg^{-1}|_{\infty}^{\fr{1}{4}}\Big((|w^{-1+\fr{\nu}{2}}|_\infty|n|_\infty|w_t|_6+|w^{\fr{\nu}{2}}|_\infty|n_t|_6)|g\nabla v|_\infty|g^{\fr{3}{4}}\nabla v|_3\\
&+|w^{\fr{\nu}{2}}|_\infty|n|_\infty(|\nabla v|_\infty|g_t|_6|g^{\fr{3}{4}}\nabla v|_3+|g\nabla v|_6|g^{\fr{3}{4}}\nabla v_t|_3)\Big)|w^{-\fr{\nu}{2}}h^{-\frac{1}{4}}l_{tt}|_2,\\
J_{10}=&a_6\int(w^{\nu+1} h^{-\frac{1}{2}} \text{div} \psi)_tw^{-\nu}l_{tt}\\
\leq & C(|w^{\fr{\nu}{2}}|_\infty|\varphi|_\infty^{\fr{1}{4}}|\nabla\psi|_3|w_t|_6+|w^{1+\fr{\nu}{2}}|_\infty|\varphi|_\infty|h^{-\fr{1}{4}}\nabla^2 h|_2|h_t|_\infty\\
&+|w^{1+\fr{\nu}{2}}|_\infty|\varphi|_\infty^{\fr{1}{4}}|\nabla\psi_t|_2)|w^{-\fr{\nu}{2}}h^{-\frac{1}{4}}l_{tt}|_2,\\
J_{11}=&\int \Pi(l,h,w,g)_t w^{-\nu}l_{tt}\\
\leq& C\big(|w^{\fr{\nu}{2}}|_\infty|\varphi|_\infty|hg^{-1}|_\infty^{\fr{1}{4}}|g^{\fr{1}{4}}w_t|_6|\nabla h^{\fr{3}{4}}|_6^2\\
&+|w^{1+\fr{\nu}{2}}|_\infty|\varphi|_\infty(|h_t|_\infty|\nabla h^{\fr{3}{8}}|_4^2+|\nabla h^{\fr{3}{4}}|_6|\psi_t|_3)\\
&+|w^{-1+\fr{\nu}{2}}|_\infty|\nabla l|_6|w_t|_6|\nabla h^{\fr{3}{4}}|_6+|w^{\fr{\nu}{2}}|_\infty(|\varphi|_\infty^{\fr{3}{2}}|\psi|_\infty|h^{\fr{1}{4}}\nabla l|_2|h_t|_\infty\\
\end{split}
\end{equation}
\begin{equation}\label{nabla3l2vbnm}\begin{split}
&+|\varphi|_\infty^{\fr{1}{2}}|\psi|_\infty|h^{\fr{1}{4}}\nabla l_t|_2
+|\varphi|_\infty^{\fr{1}{4}}|\psi_t|_3|\nabla l|_6)\\
&+|w^{-2+\fr{\nu}{2}}|_\infty|hg^{-1}|_\infty^{\fr{1}{4}}|g^{\fr{1}{4}}\nabla w|_3|\sqrt{g}\nabla w|_\infty|w_t|_6\\
&+|w^{-1+\fr{\nu}{2}}|_\infty|hg^{-1}|_\infty^{\fr{1}{4}}(|g^{-1}|_\infty|g_t|_\infty |\sqrt{g}\nabla w|_\infty|g^{\fr{1}{4}}\nabla w|_2\\
&+|\sqrt{g}\nabla w|_\infty|g^{\fr{1}{4}}\nabla w_t|_2)\big)|w^{-\fr{\nu}{2}}h^{-\frac{1}{4}}l_{tt}|_2,\\
J_{12}=&-a_4\int\nabla(h^2+\epsilon^2)^{\frac{1}{4}} \cdot\nabla l_t l_{tt}+\fr{a_4}{2}\int((h^2+\epsilon^2)^{\frac{1}{4}})_t|\nabla l_{t}|^2\\
\leq &C|w^{\fr{\nu}{2}}|_\infty|\varphi|_\infty^{\fr{1}{2}}|\psi|_\infty|h^{\fr{1}{4}}\nabla l_{t}|_2
|w^{-\fr{\nu}{2}}h^{-\frac{1}{4}}l_{tt}|_2+C|\varphi|_\infty|h_t|_\infty|h^{\fr{1}{4}}\nabla l_{t}|^2_2.
\end{split}
\end{equation}
Integrating \ef{nabla3l} over $(\tau,t)$ and combining \ef{nabla3l2}-\ef{nabla3l2vbnm} yield  that for $0\leq t\leq T_3$,
\begin{equation}\label{nabla3l3}\begin{split}
&|h^{\fr{1}{4}}\nabla l_{t} (t)|^2_2+\int^t_\tau|w^{-\fr{\nu}{2}}h^{-\frac{1}{4}}l_{ss}|_2^2\text{d}s\\
\leq & C|(h^2+\epsilon^2)^{\fr{1}{8}}\nabla l_{t} (\tau)|^2_2+M(c_0)c_4^{\nu+4}\int^t_0|h^{\fr{1}{4}}\nabla l_{s} |^2_2\text{d}s
+M(c_0)c_4^{5\nu+20}(t+t^{\fr{1}{2}}).
\end{split}
\end{equation}
Due to $\ef{ln}_3$, one gets
\begin{equation}\label{nabla3l4}\begin{split}
|h^{\fr{1}{4}}\nabla l_{t} (\tau)|_2\leq &|h^{\fr{1}{4}}\nabla\big(-v\cdot\nabla l+h^{\fr{1}{2}}\big(a_4w^\nu(h^2+\epsilon^2)^{\frac{1}{4}}\triangle l\\
&+a_5w^\nu ng^{\fr{3}{2}}H(v)+a_6w^{\nu+1}h^{-\fr{1}{2}}\text{div}\psi+\Pi\big)\big)|_2(\tau).
\end{split}
\end{equation}
It follows from \ef{4.1*}, \ef{2.14}, Lemma \ref{ls} and  Remark \ref{r1} that
\begin{equation}\label{iniallt}\begin{split}
&\lim\sup_{\tau\rightarrow 0}|h^{\fr{1}{4}}\nabla l_{t} (\tau)|_2\\
\leq &C(|h_0^{\fr{1}{4}}\nabla(u_0\cdot\nabla l_0)|_2+|h_0^{\fr{1}{4}}\nabla(l^\nu_0\sqrt{h_0}(h_0^2+\epsilon^2)^{\frac{1}{4}}\triangle l_0)|_2\\
&+|h_0^{\fr{1}{4}}\nabla(l^\nu_0h_0^bh_0^2H(u_0)|_2+|h_0^{\fr{1}{4}}\nabla(l^{\nu+1}_0\text{div}\psi_0)|_2+|h_0^{\fr{1}{4}}\nabla(h_0^{\fr{1}{2}}\Pi_0)|_2)\\
\leq &C\big(|u_0|_\infty|\phi_0^{\fr{\iota}{2}}\nabla^2l_0|_2+|\nabla u_0|_
\infty|\phi_0^{\fr{\iota}{2}}\nabla l_0|_2+|l_0^\nu|_\infty|\phi_0^{\fr{5}{2}\iota}\nabla^3 l_0|_2\\
&+|l_0^\nu|_\infty|\phi_0^{-\iota}|_\infty|\phi_0^{\fr{5}{2}\iota}\nabla^3 l_0|_2+|\nabla l_0^\nu|_3|\phi_0^{\fr{5}{2}\iota}\nabla^2 l_0|_6+|\nabla l_0^\nu|_\infty|\phi_0^{\fr{3}{2}\iota}\nabla^2 l_0|_2\\
&+|l_0^\nu|_\infty|\psi_0|_\infty(|\phi_0^{-\iota}|_\infty|\phi_0^{\fr{3}{2}\iota}\nabla^2 l_0|_2+|\phi_0^{-\fr{\iota}{2}}|_\infty|\nabla^2 l_0|_2)\\
&+|\phi_0^{2b\iota }|_\infty(|l_0^{\nu-1}|_\infty|\phi_0^{\fr{\iota}{2}}\nabla l_0|_2|\phi_0^{2\iota}\nabla u_0|_\infty^2+|l_0^\nu|_\infty|\phi_0^{\fr{3}{2}\iota}\nabla u_0|_3|\phi_0^{3\iota}\nabla^2u_0|_6)\\
&+|l_0^\nu|_\infty(|\phi_0^{(2b-1)\iota}|_\infty|\nabla \phi_0^{\fr{3}{2}\iota}|_6|\phi_0^{2\iota}\nabla u_0|_6^2+|\nabla\psi_0|_3|\phi_0^{\fr{\iota}{2}}\nabla l_0|_6)\\
&+|l_0^{\nu+1}|_\infty|h_0^\fr{1}{4}\nabla^3h_0|_2+|l_0^\nu|_\infty|\phi_0^{-2\iota}|_\infty|\psi_0|_\infty^2|\phi_0^{\fr{\iota}{2}}\nabla l_0|_2\\
&+|l_0^{\nu+1}|_\infty(|\phi_0^{-2\iota}|_\infty|\nabla\phi_0^{\fr{3}{2}\iota}|_6^3+|\nabla\phi_0^{\fr{3}{2}\iota}|_6|\nabla\psi_0|_3|\phi_0^{-\iota}|_\infty)\\
&+|l_0^{\nu-1}|_\infty|\phi_0^{\fr{\iota}{2}}\nabla l_0|_2|\nabla l_0|_\infty|\psi_0|_\infty+|l_0^{\nu-2}|_\infty|\phi_0^{-2\iota}|_\infty|\phi_0^{\fr{3}{2}\iota}\nabla l_0|_6^3\\
&+|l_0^\nu|_\infty(|\psi_0|_\infty|\phi_0^{\fr{\iota}{2}}\nabla^2 l_0|_2+|\nabla\psi_0|_3|\phi_0^{\fr{\iota}{2}}\nabla l_0|_6)\\
&+|l_0^{\nu-1}|_\infty(|\psi_0|_\infty|\phi_0^{\fr{\iota}{2}}\nabla l_0|_2|\nabla l_0|_\infty+|\nabla l_0|_3|\phi_0^{\fr{5}{2}\iota}\nabla^2l_0|_6)\big)
\leq M(c_0),\\
 & \lim\sup_{\tau\rightarrow 0}|\epsilon^{\fr{1}{4}}\nabla l_{t} (\tau)|_2  \leq  \lim\sup_{\tau\rightarrow 0}\epsilon^{\fr{1}{4}}|\varphi|^{\fr{1}{4}}_\infty|h^{\fr{1}{4}}\nabla l_{t} (\tau)|_2\leq M(c_0).
  \end{split}
\end{equation}
Letting $\tau\rightarrow 0$, one gets from \ef{nabla3l3} and Gronwall's inequality that for $0\leq t\leq T_4=\min\{T_3,(1+Cc_4)^{-10\nu-40}\}$,
\begin{equation}\label{nabla3l5}
  \begin{split}
&|h^{\fr{1}{4}}\nabla l_{t}(t) |_2^2+|\nabla l_{t}(t) |_2^2+\int^t_0|w^{-\fr{\nu}{2}}h^{-\frac{1}{4}}l_{ss}|_2^2\text{d}s\\
\leq & M(c_0)(1+c_4^{5\nu+20}(t+t^{\fr{1}{2}}))\exp(M(c_0)c_4^{\nu+4}t)\leq M(c_0),
  \end{split}
\end{equation}
which, along with \ef{ltt10}, yields
\begin{equation}\label{nabla3l6}
|(h^2+\eps^2)^{\fr{1}{4}}(l-\bar{l})|_{D^3}+|\sqrt{h}\nabla^3l|_2+|\sqrt{h}\nabla^2 l|_{D^1_*}+|\nabla^3 l |_2
\leq M(c_0)c_1^{4\nu+3}.
\end{equation}
Note that \ef{ltt} gives 
\begin{equation}\label{rfvnm}\begin{split}
-a_4\triangle\big((h^2+\epsilon^2)^{\fr{1}{4}}l_t\big) &= -a_4(h^2+\epsilon^2)^{\fr{1}{4}}\triangle l_t-a_4F(\nabla (h^2+\epsilon^2)^{\fr{1}{4}},l_t)\\
&=w^{-\nu}\mathcal{B}-a_4F(\nabla (h^2+\epsilon^2)^{\fr{1}{4}},l_t),\\
\end{split}
\end{equation}
with
\begin{equation}\label{mathcalB}\begin{split}
\mathcal{B}=&-h^{-\frac{1}{2}}l_{tt}-(h^{-\frac{1}{2}})_tl_t-(h^{-\frac{1}{2}}v\cdot\nabla l)_t
+a_4(w^{\nu} (h^2+\epsilon^2)^{\frac{1}{4}})_t \triangle l
\\
&+a_5( w^{\nu} ng^{\frac{3}{2}}H(v))_t
+a_6(w^{\nu+1} h^{-\frac{1}{2}} \text{div} \psi)_t+\Pi(l,h,w,g)_t.
  \end{split}  
\end{equation}
Next, to derive  the $L^2$-estimates of $\nabla^2 l_t$, one first deals with  the  $L^2$-estimates of 
\begin{equation*}
\big(\mathcal{B},\hat{F}=F(\nabla (h^2+\epsilon^2)^{\fr{1}{4}},l_t)\big)
\end{equation*}
by using  \eqref{2.16} and Lemmas \ref{psi}-\ref{l2} as follows:
\begin{equation*}
    \begin{split}
   |\mathcal B|_2\leq &C\big(|\varphi|_\infty^{\fr{1}{4}}|h^{-\frac{1}{4}}l_{tt}|_2+|\varphi|_\infty^{\fr{5}{4}}|h_t|_\infty|h^{-\frac{1}{4}}l_{t}|_2 \\
   &+\|\nabla l\|_1(|\varphi|_\infty^{\fr{3}{2}}|h_t|_\infty|v|_\infty+|\varphi|_\infty^{\fr{1}{2}}|v_t|_3)+|\varphi|_\infty^{\fr{3}{4}}|v|_\infty|h^{\fr{1}{4}}\nabla l_t|_2\\
   &+|(w^\nu)_t|_6|(h^2+\epsilon^2)^{\fr{1}{4}}\triangle l|_3+|w^\nu|_\infty|((h^2+\epsilon^2)^{\fr{1}{4}})_t\triangle l|_2\\
&+|w^\nu|_\infty|n_t|_\infty|\sqrt{g}\nabla v|_2|g\nabla v|_\infty+|w^{\nu-1}|_\infty|n|_\infty|w_t|_6|g\nabla v|_\infty|\sqrt{g}\nabla v|_3\\
&+|w^{\nu}|_\infty|n|_\infty(|\sqrt{g}\nabla v|_2|g_t|_\infty|\nabla v|_\infty+|g\nabla v|_\infty|\sqrt{g}\nabla v_t|_2)\\
   &+|w^{\nu}|_\infty  |\varphi|^{\frac{1}{2}}_\infty |w_t|_6|\nabla \psi|_3+|w^{\nu+1}|_\infty(|\varphi|^{\frac{3}{2}}_\infty |h_t|_6|\nabla \psi|_3+|\varphi|^{\frac{1}{2}}_\infty |\nabla \psi_t|_2)\\
&+|w^{\nu}|_\infty|\varphi|_\infty|w_t|_6|\nabla h^{\fr{3}{4}}|_6^2+|w^{1+\nu}|_\infty(|\varphi|^{\frac{5}{4}}_\infty|h_t|_\infty|\nabla h^{\fr{3}{8}}|_4^2\\
   &+|\varphi|^{\frac{3}{2}}_\infty|\psi|_\infty|\psi_t|_2)+|w^{-1+\nu}|_\infty|\nabla l|_3|w_t|_6|\varphi|^{\frac{1}{2}}_\infty|\psi|_\infty\\
&+|w^{\nu}|_\infty(|\varphi|_\infty^{\fr{3}{2}}|\psi|_\infty|\nabla l|_2|h_t|_\infty+|\varphi|_\infty^{\fr{1}{2}}|\psi|_\infty|\nabla l_t|_2
+|\varphi|_\infty^{\fr{1}{2}}|\psi_t|_2|\nabla l|_\infty)\\
&+ |\sqrt{g}\nabla w|_\infty(|w^{-2+\nu}|_\infty|\nabla w|_3|w_t|_6+|w^{-1+\nu}|_\infty|g^{-1}|_\infty|g_t|_\infty|\nabla w|_2)\\
&+|w^{-1+\nu}|_\infty|g^{-1}|_\infty^{\fr{1}{4}}|g^{\fr{1}{4}}\nabla w_t|_2|\sqrt{g}\nabla w|_\infty \big),\\
|\hat{F}|_2\leq &C\big(
|\varphi|^{\frac{5}{4}}_\infty|\psi|^2_\infty| h^{-\fr{1}{4}}l_t|_2+|\varphi|^{\frac{1}{2}}_\infty(|l_t|_6|\nabla \psi|_3+|\psi|_\infty|\nabla l_t|_2)\big),
    \end{split}
\end{equation*}
which, along with  \ef{ltt}, \eqref{nabla3l5}-\ef{rfvnm},  Lemma \ref{zhenok} and Lemmas \ref{psi}-\ref{l2}, implies  that
\begin{equation}\label{mathcalBFnnnm}
    \begin{split}
|(h^2+\epsilon^2)^{\fr{1}{4}}l_t|_{D^2}\leq & M(c_0)(c_1^\nu|h^{-\frac{1}{4}}l_{tt}|_2+c_4^{5\nu+10}),\\
|(h^2+\epsilon^2)^{\fr{1}{4}}\nabla^2 l_t|_{2}\leq & M(c_0)(|(h^2+\epsilon^2)^{\fr{1}{4}}l_t|_{D^2}
+|\varphi|^{\frac{5}{4}}_\infty|\psi|^2_\infty| h^{-\fr{1}{4}}l_t|_2\\
&+|\varphi|^{\frac{1}{2}}_\infty|l_t|_6|\nabla \psi|_3+|\varphi|^{\frac{1}{2}}_\infty|\psi|_\infty|\nabla l_t|_2\\
\leq & M(c_0)(c_1^\nu|h^{-\frac{1}{4}}l_{tt}|_2+c_4^{5\nu+10}).
    \end{split}
\end{equation}
Then it follows from \eqref{nabla3l5} and \eqref{mathcalBFnnnm} that for $0\leq t\leq T_4$, $\eqref{ltt2}_3$ holds.

The proof of Lemma \ref{l3} is complete.
\end{proof}

Finally, we derive the time weighted estimates for  $l$, which will be used  to show  that the regular solution is actually a classical one. For simplicity, set
\begin{equation*}
\begin{aligned}
H^t(v)=&4\alpha \sum_{i=1}^{3} \partial_{i}v_{i}\partial_{itt}v_{i}+2\beta\text{div}v\text{div}v_{tt}+2\alpha \sum_{i\neq j}^{3} \partial_{i}v_{j}\partial_{itt}v_{j}\\
&+2\alpha\sum_{i>j} (\partial_{itt}v_{j}\partial_{j}v_{i}+\partial_{i}v_{j}\partial_{jtt}v_{i}).
\end{aligned}
\end{equation*}
\begin{lemma}\label{shang5}For $T_5=\min\{T_4
,(1+M(c_0)c_5)^{-40-10\nu}\}$ and $t\in [0,T_5]$,  it holds that \begin{equation}\label{2.70SSS}
	\begin{split}
	t^{\frac{1}{2}}|l_t(t)|_{D^2}+t^{\frac{1}{2}}|\sqrt{h} \nabla^2l_t(t)|_2+t^{\frac{1}{2}}|h^{-\frac{1}{4}}l_{tt}(t)|_2 \leq  M(c_0)c_1^{\fr{\nu}{2}},&\\ \int^t_0s(|l_{ss}|_{D^1_*}^2+|h^{\frac{1}{4}} l_{ss}|_{D^1_*}^2)\text{d}s  \leq  M(c_0),&\\
 \fr{1}{2}c_0^{-1}\leq l(t,x)\leq \fr{3}{2}c_0 \quad \text{for} \quad (t,x)\in [0,T_5]\times \mathbb{R}^3.&
	\end{split}
	\end{equation}
\end{lemma}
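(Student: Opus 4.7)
The plan is to derive the time-weighted bounds by differentiating equation \ef{ltt} one more time in $t$, run a $t$-weighted parabolic energy estimate, and then recover the spatial $D^2$-bounds on $l_t$ from the elliptic identity \ef{rfvnm}; the pointwise bound on $l$ is obtained by the characteristic method. More precisely, applying $\partial_t$ to \ef{ltt} yields
\[
h^{-\fr{1}{2}} l_{ttt} - a_4 w^\nu (h^2+\eps^2)^{\fr{1}{4}}\triangle l_{tt} = \mathcal{R},
\]
where $\mathcal R$ collects second time-derivatives of the coefficients and source terms in $\ef{ln}_3$ (involving $h_{tt}, w_{tt}, v_{tt}, \psi_{tt}, g_{tt}$, etc.). Multiplying by $tw^{-\nu}l_{tt}$, integrating by parts, and using the identity $\fr{d}{dt}(t|f|^2)= t\fr{d}{dt}|f|^2+|f|^2$ produces
\[
\fr{1}{2}\fr{d}{dt}\!\bigl(t|w^{-\fr{\nu}{2}}h^{-\fr{1}{4}}l_{tt}|_2^2\bigr) + a_4 t|(h^2+\eps^2)^{\fr{1}{8}}\nabla l_{tt}|_2^2 = \fr{1}{2}|w^{-\fr{\nu}{2}}h^{-\fr{1}{4}}l_{tt}|_2^2 + t\!\int \mathcal R\cdot w^{-\nu}l_{tt}+ (\text{commutators}).
\]

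I would integrate this identity from $\tau$ to $t$. The boundary contribution $\tau|h^{-\fr{1}{4}}l_{tt}(\tau)|_2^2$ is eliminated by passing to a subsequence $\tau_n\to 0$ along which $\tau_n|h^{-\fr{1}{4}}l_{tt}(\tau_n)|_2^2\to 0$: such a sequence exists because $\int_0^T|w^{-\fr{\nu}{2}}h^{-\fr{1}{4}}l_{ss}|_2^2\,\dd s\leq M(c_0)$ by Lemma \ref{l3}. The term $\int_0^t|w^{-\fr{\nu}{2}}h^{-\fr{1}{4}}l_{ss}|_2^2\dd s$ is also bounded by $M(c_0)$ from the same lemma, while the contributions of $\mathcal R$ and commutators are estimated, exactly in the spirit of \ef{nabla3l2}--\ef{nabla3l2vbnm}, by pairing the singular weights against the time-weighted $L^2$ bounds in \ef{2.16} (in particular $t^{\fr{1}{2}}v_{tt}, t^{\fr{1}{2}}w_{tt}, t^{\fr{1}{2}}\sqrt{g}\nabla^2 v_t, \text{etc.}$); each such piece yields a factor of $t$ or $t^{\fr{1}{2}}$ multiplied by $M(c_0)c_5^{N}$, which is absorbed by Gronwall together with the smallness of $T_5$. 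This gives $t|h^{-\fr{1}{4}}l_{tt}(t)|_2^2+\int_0^t s|h^{\fr{1}{4}}\nabla l_{ss}|_2^2\dd s\leq M(c_0)c_1^\nu$, hence the first and last parts of $\ef{2.70SSS}_1$. Substituting the new bound $t^{\fr{1}{2}}|h^{-\fr{1}{4}}l_{tt}(t)|_2\leq M(c_0)c_1^{\fr{\nu}{2}}$ into the elliptic estimate \ef{mathcalBFnnnm} and shrinking $t^{\fr{1}{2}}\leq (1+M(c_0)c_5)^{-20-5\nu}$ in the $c_4^{5\nu+10}$-term produces $t^{\fr{1}{2}}|l_t|_{D^2}+t^{\fr{1}{2}}|\sqrt{h}\nabla^2 l_t|_2\leq M(c_0)c_1^{\fr{\nu}{2}}$. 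The $|h^{\fr{1}{4}}l_{ss}|_{D^1_*}^2$ part of the integral estimate then follows by decomposing $\nabla(h^{\fr{1}{4}}l_{ss})=h^{\fr{1}{4}}\nabla l_{ss}+\nabla h^{\fr{1}{4}}\,l_{ss}$ and using $|\nabla h^{\fr{1}{4}}|_\infty\leq M(c_0)$.

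For the pointwise bound on $l$, I would integrate $\ef{ln}_3$ along the characteristic flow $X(s;t,x)$ of $v$:
\[
l(t,x) - l_0(X(0;t,x)) = \int_0^t h^{\fr{1}{2}}\bigl[a_4 w^\nu(h^2+\eps^2)^{\fr{1}{4}}\triangle l + a_5 w^\nu n g^{\fr{3}{2}} H(v) + a_6 w^{\nu+1}h^{-\fr{1}{2}}\dv\psi + \Pi\bigr]\bigl(s,X(s;t,x)\bigr)\dd s.
\]
Each factor in the brackets is controlled in $L^\infty_x$ by a polynomial in $M(c_0)$ and $c_5$, via Sobolev embedding $H^2\hookrightarrow L^\infty$ applied to the bounds $\sqrt{h}\nabla^3 l\in L^\infty_t L^2_x$, $v\in L^\infty_t H^3$, $\psi\in L^\infty_t H^2$ from \ef{2.16} and Lemmas \ref{phiphi}--\ref{l3}. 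Since the initial data satisfies $c_0^{-1}\leq l_0\leq c_0$ by \ef{2.14}, choosing $T_5\leq (1+M(c_0)c_5)^{-40-10\nu}$ ensures $|l(t,x)-l_0(X(0;t,x))|\leq \fr{1}{2}c_0^{-1}$, giving the claimed two-sided bound $\fr{1}{2}c_0^{-1}\leq l\leq \fr{3}{2}c_0$.

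The main technical obstacle is bookkeeping the equation for $l_{ttt}$: many of the resulting RHS terms involve products such as $w_{tt}\triangle l$, $h_{tt}\dv\psi$, $\psi_{tt}\cdot\nabla l$, or $(\nabla l)\cdot v_{tt}$, where the time-second-derivative factors are only in $L^2_t L^2_x$ with an additional $t^{\fr{1}{2}}$ weight. Distributing the singular powers of $h$ (via $|\varphi|_\infty$, $|gh^{-1}|_\infty$ from Lemma \ref{gh}) against these factors requires interpolations of the form used for $|g^{\fr{3}{4}}\nabla v_t|_3\leq C|\sqrt{g}\nabla v_t|_2^{\fr{1}{2}}|g\nabla v_t|_6^{\fr{1}{2}}$ and the $\nabla h^{\fr{3}{4}}, \nabla h^{\fr{3}{8}}$ estimates from Lemma \ref{varphi}; the time-integration must produce only positive powers of $t$ after applying H\"older in $t$, so the $T_5$ smallness finally absorbs all such contributions into $M(c_0)$.
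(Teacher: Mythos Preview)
Your scheme for the time--weighted parabolic estimate is essentially the paper's: differentiate \ef{ltt} once more in $t$, multiply by $w^{-\nu}l_{tt}$, then multiply by $t$, integrate over $(\tau,t)$, and kill the boundary term by picking a sequence $\tau_k\to 0$ along which $\tau_k|w^{-\nu/2}h^{-1/4}l_{tt}(\tau_k)|_2^2\to 0$ (available from Lemma~\ref{l3}). The paper's $J_{13}$--$J_{20}$ bookkeeping matches your description of the $\mathcal R$ terms, and the recovery of $t^{1/2}|\sqrt{h}\nabla^2 l_t|_2$ and $t^{1/2}|l_t|_{D^2}$ from \ef{mathcalBFnnnm} is exactly what the paper does in \ef{2.75lll}.

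The gap is in your argument for the pointwise bound $\fr{1}{2}c_0^{-1}\le l\le \fr{3}{2}c_0$. Integrating $\ef{ln}_3$ along the $v$-characteristics requires a pointwise ($L^\infty_x$) bound on the bracket, and in particular on $\triangle l$. But the available regularity is only $l-\bar l\in D^1_*\cap D^3$ and $\sqrt{h}\nabla^2 l\in L^\infty_t(L^2\cap D^1_*)$; in three dimensions this gives $\nabla^2 l\in H^1\hookrightarrow L^6$ but \emph{not} $L^\infty$, and the paper explicitly remarks after \ef{lll13} that $l\in L^2_tD^4$ is out of reach in this framework. So the characteristic integral cannot be controlled pointwise.

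The paper's remedy is simpler and avoids this: write
\[
|l(t,\cdot)-l_0|_\infty\le \int_0^t|l_s|_\infty\,\dd s\le C\int_0^t|\nabla l_s|_2^{1/2}|\nabla^2 l_s|_2^{1/2}\,\dd s,
\]
and use the uniform bound $|\nabla l_s|_2\le M(c_0)$ from Lemma~\ref{l3} together with the freshly obtained $s^{1/2}|\nabla^2 l_s|_2\le M(c_0)c_1^{\nu/2}$. This makes the integrand $O(s^{-1/4})$, hence the integral is $O(t^{3/4})$, which is $\le \fr{1}{2}c_0^{-1}$ once $t\le T_5$. Thus the pointwise bound follows from Gagliardo--Nirenberg applied to $l_t$ rather than to the spatial part of the equation.
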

\begin{proof}
    Applying $\partial_t$ to $\ef{ltt}$ yields
\begin{equation}\label{lttbnm}\begin{split}
&h^{-\frac{1}{2}}l_{ttt}-a_4w^{\nu}  (h^2+\epsilon^2)^{\frac{1}{4}} \triangle l_{tt}+2(h^{-\frac{1}{2}})_tl_{tt}+(h^{-\frac{1}{2}})_{tt}l_t+(h^{-\frac{1}{2}}v\cdot\nabla l)_{tt}\\
=&2a_4(w^{\nu}  (h^2+\epsilon^2)^{\frac{1}{4}} )_t\triangle l_{t}+2a_4(w^{\nu} )_t ((h^2+\epsilon^2)^{\frac{1}{4}} )_t\triangle l\\
&+a_4(w^{\nu})_{tt}(h^2+\epsilon^2)^{\frac{1}{4}} \triangle l+a_4w^{\nu} ((h^2+\epsilon^2)^{\frac{1}{4}})_{tt} \triangle l\\
&+a_5( w^{\nu} ng^{\frac{3}{2}}H(v))_{tt}+a_6(w^{\nu+1} h^{-\frac{1}{2}} \text{div} \psi)_{tt}+\Pi(l,h,w,g)_{tt}.
\end{split}
\end{equation}
Multiplying \ef{lttbnm} by $w^{-\nu}l_{tt}$, integrating over $\mathbb{R}^3$ and integration by part lead to 
\begin{equation}\label{ltt1bnm}
\begin{split}
&\fr{1}{2}\fr{d}{dt}|w^{-\fr{\nu}{2}}h^{-\frac{1}{4}}l_{tt}|^2_2+a_4|(h^2+\epsilon^2)^{\frac{1}{8}} \nabla l_{tt}|^2_2\\
=&-\int\big(2(h^{-\frac{1}{2}})_tl_{tt}+(h^{-\frac{1}{2}})_{tt}l_t+(h^{-\frac{1}{2}}v\cdot\nabla l)_{tt}\big)w^{-\nu}l_{tt}\\
&+\int(2a_4(w^{\nu}  (h^2+\epsilon^2)^{\frac{1}{4}} )_t\triangle l_{t}+2a_4(w^{\nu})_t((h^2+\epsilon^2)^{\frac{1}{4}} )_t\triangle l)w^{-\nu}l_{tt}\\
&+\int a_4(w^{\nu})_{tt}(h^2+\epsilon^2)^{\frac{1}{4}} \triangle lw^{-\nu}l_{tt}\\
&+\int(a_4w^{\nu} ((h^2+\epsilon^2)^{\frac{1}{4}})_{tt}\triangle l+a_5( w^{\nu} ng^{\frac{3}{2}}H(v))_{tt})w^{-\nu}l_{tt}\\
&+\int\big(a_6(w^{\nu+1} h^{-\frac{1}{2}} \text{div} \psi)_{tt}+\Pi(l,h,w,g)_{tt}\big)w^{-\nu}l_{tt}\\
\end{split}
\end{equation}
\begin{equation}\label{ltt1bnmvbnm}\begin{split}
&-a_4\int\nabla(h^2+\epsilon^2)^{\frac{1}{4}} \cdot \nabla l_{tt} l_{tt}+\fr{1}{2}\int(w^{-\nu}h^{-\frac{1}{2}})_t|l_{tt}|^2=\sum^{20}_{i=13}J_i,
\end{split}
\end{equation}
where     $J_i$, $i=13,14,\cdots,20$, are   given and estimated as follows:
\begin{equation}\label{nabla4ll}\begin{split}
J_{13}=& -\int\big(2(h^{-\frac{1}{2}})_tl_{tt}+(h^{-\frac{1}{2}})_{tt}l_t+(h^{-\frac{1}{2}}v\cdot\nabla l)_{tt}\big)w^{-\nu}l_{tt} \\
\leq & C|\varphi|_\infty |h_t|_\infty|w^{-\frac{\nu}{2}}h^{-\fr{1}{4}}l_{tt}|_2^2+C\big(|\varphi|^{2}_\infty|h_t|^2_\infty|w^{-\frac{\nu}{2}}|_\infty|h^{-\fr{1}{4}}l_{t}|_2\\
&+|\varphi|_\infty|w^{-\frac{\nu}{2}}|_\infty|h_{tt}|_6|h^{-\frac{1}{4}}l_t|_3
+|\varphi|^{\frac{5}{2}}_\infty|h_t|^2_\infty|w^{-\frac{\nu}{2}}|_\infty|h^{\frac{1}{4}}\nabla l|_2|v|_\infty\\
&+|w^{-\frac{\nu}{2}}|_\infty|\nabla l|_3|\varphi|^{\frac{5}{4}}_\infty|v|_\infty|h_{tt}|_6
+|\varphi|^{\frac{3}{2}}_\infty|h_t|_\infty|w^{-\frac{\nu}{2}}|_\infty|v|_\infty|h^{\frac{1}{4}}\nabla l_t|_2\\
&+|w^{-\frac{\nu}{2}}|_\infty|\nabla l|_\infty(|\varphi|^{\frac{5}{4}}_\infty|h_t|_\infty|v_{t}|_2+|\varphi^{\frac{1}{4}}|_\infty|v_{tt}|_2)\big)|w^{-\fr{\nu}{2}}h^{-\frac{1}{4}}l_{tt}|_2\\
&+C|\varphi|_\infty|w^{-\nu}|_\infty|v_t|_3|h^{\frac{1}{4}}\nabla l_{t}|_2|h^{\frac{1}{4}}l_{tt}|_6\\
&+C|\varphi|^{\frac{1}{2}}_\infty|w^{-\frac{\nu}{2}}|_\infty|v|_\infty|h^{\frac{1}{4}}\nabla l_{tt}|_2|w^{-\fr{\nu}{2}}h^{-\frac{1}{4}}l_{tt}|_2,\\
J_{14}=& \int\big(2a_4(w^{\nu}  (h^2+\epsilon^2)^{\frac{1}{4}} )_t\triangle l_{t}+2a_4(w^{\nu})_t((h^2+\epsilon^2)^{\frac{1}{4}} )_t\triangle l\\
&+a_4(w^{\nu})_{tt}(h^2+\epsilon^2)^{\frac{1}{4}} \triangle l+a_4w^{\nu} ((h^2+\epsilon^2)^{\frac{1}{4}})_{tt} \triangle l\big) w^{-\nu}l_{tt}\\
\leq & C\big(|\varphi|^{\frac{3}{4}}_\infty|h_t|_\infty|w^{\frac{\nu}{2}}|_\infty|\sqrt{h}\nabla^2 l_t|_2+|w^{\fr{\nu}{2}-1}|_\infty|\varphi|^{\frac{1}{4}}_\infty|w_t|_6|h_t|_\infty|\nabla^2 l|_3\\
&+|w^{\frac{\nu}{2}}|_\infty(|\varphi|^{\frac{5}{4}}_\infty|h_t|^2_\infty|\nabla^2 l|_2+|\varphi|^{\frac{1}{4}}_\infty|h_{tt}|_6|\nabla^2 l|_3)\\
&+|w^{\fr{\nu}{2}-2}|_\infty|hg^{-1}|^{\frac{1}{4}}_\infty|g^{\frac{1}{4}}w_{t}|_6|w_{t}|_6|(h^2+\epsilon^2)^{\frac{1}{4}}\nabla^2 l|_6\big)|w^{-\fr{\nu}{2}}h^{-\frac{1}{4}}l_{tt}|_2\\
&+C|w^{-1}|_\infty(|gh^{-1}|^{\frac{1}{4}}_\infty|g^{-\frac{1}{4}}w_{tt}|_2|(h^2+\epsilon^2)^{\frac{1}{4}}\nabla^2 l|_3\\
&+|\varphi|_\infty^{\fr{1}{4}}|w_t|_3|(h^2+\epsilon^2)^{\frac{1}{4}}\nabla^2l_t|_2)|h^{\frac{1}{4}}l_{tt}|_6,\\
J_{15}=& \int a_5( w^{\nu} ng^{\frac{3}{2}}H(v))_{tt} w^{-\nu}l_{tt}\\
\leq & C\big(|n|_{\infty}|hg^{-1}|^{\frac{1}{4}}_{\infty}|g\nabla v|^2_{\infty}(|w^{\frac{\nu}{2}-2}|_{\infty}|w_t|_6|g^{-\frac{1}{4}}w_{t}|_3+|w^{\frac{\nu}{2}-1}|_{\infty}|g^{-\frac{1}{4}}w_{tt}|_2)\\
&+|w^{\frac{\nu}{2}}|_{\infty}|\varphi|^{\frac{1}{4}}_{\infty}|hg^{-1}|^{\frac{1}{2}}_{\infty}(|n_{tt}|_2|g\nabla v|_{\infty}^2+|n|_{\infty}|g_t|^2_{\infty}|\nabla v|^2_{4})\\
&+|w^{\frac{\nu}{2}}|_{\infty}|hg^{-1}|^{\frac{3}{2}}_{\infty}|g\nabla v|^2_{6}|\varphi|^{\frac{5}{4}}_{\infty}|n|_{\infty}|g_{tt}|_6\\
&+|\varphi|^{\frac{1}{4}}_{\infty}|w^{\fr{\nu}{2}-1}|_{\infty}|w_t|_6|n_t|_{3}|hg^{-1}|^{\frac{1}{2}}_{\infty}|g\nabla v|^2_\infty\\
&+|w^{\frac{\nu}{2}-1}|_{\infty}|g^{-\frac{1}{4}}w_{t}|_2|n|_\infty |hg^{-1}|^{\frac{1}{4}}_{\infty}|g_t|_\infty|g\nabla v|_{\infty}|\nabla v|_{\infty}\\
&+|w^{\frac{\nu}{2}}|_{\infty}|n_t|_{2}|\varphi|^{\frac{1}{4}}_{\infty}|hg^{-1}|^{\frac{1}{2}}_{\infty}|g_t|_\infty|g\nabla v|_{\infty}|\nabla v|_{\infty}\big)|w^{-\frac{\nu}{2}}h^{-\frac{1}{4}}l_{tt}|_2\\
&+\int a_5  ng^{\frac{3}{2}}H^t(v)l_{tt}+C\big(|n|_{\infty}|gh^{-1}|^{\frac{1}{4}}_{\infty}(|g^{\frac{3}{4}}\nabla v_t|_3|\sqrt{g}\nabla v_{t}|_2\\
&+|w^{-1}|_{\infty}|g^{-\frac{1}{4}}w_t|_3|g\nabla v|_{\infty}|\sqrt{g}\nabla v_{t}|_2)\\
&+|\varphi|^{\frac{1}{4}}_{\infty}|\sqrt{g}\nabla v_{t}|_2(|n_t|_{3}|g\nabla v|_{\infty}+|n|_\infty |g_t|_{\infty}|\nabla v|_{3})\big)|h^{\frac{1}{4}}l_{tt}|_6,\\
\end{split}
\end{equation}
\begin{equation}\label{nabla4lljmjm}\begin{split}
J_{16}=& \int a_6(w^{\nu+1} h^{-\frac{1}{2}} \text{div} \psi)_{tt}w^{-\nu}l_{tt}\\
\leq &-\int a_6 wh^{-\frac{1}{2}}\text{div}\psi_{tt} l_{tt}+ C\big(|w^{\frac{\nu}{2}-1}|_{\infty}|\varphi|^{\frac{1}{4}}_{\infty}|w_t|^2_6|\nabla \psi|_6\\
&+|w^{\frac{\nu}{2}+1}|_{\infty}|\nabla \psi|_3(|\varphi|^{\frac{5}{4}}_{\infty}|h_{tt}|_6+|\varphi|^{\frac{9}{4}}_{\infty}|h_t|_{\infty}|h_t|_6)\\
&+|\varphi|^{\frac{3}{2}}_{\infty}|w^{\frac{\nu}{2}}|_{\infty}|hg^{-1}|^{\frac{1}{4}}_{\infty}|g^{\frac{1}{4}}w_t|_6|h_t|_{\infty}
|\nabla \psi|_3\\
&+|\varphi|^{\frac{5}{4}}_{\infty}|w^{\frac{\nu}{2}+1}|_{\infty}|h_t|_{\infty}|\nabla \psi_t|_2\big)|w^{-\frac{\nu}{2}}h^{-\frac{1}{4}}l_{tt}|_2\\
&+C|gh^{-1}|^{\frac{1}{4}}_{\infty}(|\varphi|^{\frac{1}{2}}_{\infty}|g^{-\frac{1}{4}}w_{tt}|_2|\nabla \psi|_3+|\varphi|^{\frac{1}{2}}_{\infty}|g^{-\frac{1}{4}}w_{t}|_3|\nabla \psi_t|_2)|h^{\frac{1}{4}}l_{tt}|_6,\\
J_{17}=& a_7\int (w^{\nu+1} h^{-\frac{3}{2}}\psi\cdot \psi)_{tt}w^{-\nu}l_{tt}\\
\leq & C\big(|gh^{-1}|^{\frac{1}{4}}_{\infty}|\psi|^2_{\infty}(|w^{\frac{\nu}{2}}|_{\infty}|g^{-\frac{1}{4}}w_{tt}|_2|\varphi|_{\infty}+|w^{\frac{\nu}{2}-1}|_{\infty}|\varphi|_{\infty}|w_t|_6|g^{-\frac{1}{4}}w_t|_3)\\
&+|w^{\frac{\nu}{2}+1}|_{\infty}(|\varphi|_\infty^{\frac{7}{4}}|\nabla h^{\frac{3}{4}}|^2_6|h_{tt}|_6+|\varphi|_{\infty}^2|h_t|^2_{\infty}|\nabla h^{\fr{3}{8}}|_4^2)\\
&+|\varphi|_{\infty}^{\frac{5}{4}}|w^{\frac{\nu}{2}+1}|_{\infty}(|\psi_t|_3|\psi_t|_6+|\psi|_{\infty}|\psi_{tt}|_2)\\
&+|w^{\frac{\nu}{2}}|_{\infty}(|w_t|_6|\varphi|_{\infty}^{\frac{5}{4}}|\psi|_{\infty}|\psi_t|_3+|g^{-\frac{1}{4}}w_t|_2|\varphi|^2_{\infty}|h_t|_{\infty}|\psi|_{\infty}^2|gh^{-1}|^{\fr{1}{4}}_\infty)\\
&+|w^{\frac{\nu}{2}+1}|_{\infty}|\varphi|_{\infty}^{\frac{9}{4}}|h_t|_{\infty}|\psi|_{\infty}|\psi_t|_2\big)|w^{-\frac{\nu}{2}} h^{-\frac{1}{4}}l_{tt}|_2,\\
J_{18}=& a_8\int (w^\nu h^{-\frac{1}{2}} \nabla l\cdot   \psi)_{tt}w^{-\nu}l_{tt}\\
\leq &C\big(|\nabla l|_{\infty}|\psi|_{\infty}|gh^{-1}|_{\infty}^{\frac{1}{4}}(|w^{\frac{\nu}{2}-1}|_{\infty}|g^{-\fr{1}{4}}w_{tt}|_2+|w^{\frac{\nu}{2}-2}|_{\infty}|g^{-\frac{1}{4}}w_{t}|_3|w_t|_6)\\
&+|w^{\frac{\nu}{2}}|_{\infty}|\psi|_{\infty}(|\varphi|_{\infty}^{\frac{5}{4}}|h_{tt}|_6|\nabla l|_3+|\varphi|_{\infty}^{\frac{5}{2}}|h_t|_{\infty}^2|h^{\frac{1}{4}}\nabla l|_2)\\
&+|w^{\frac{\nu}{2}}|_{\infty}(|\varphi|_{\infty}^{\frac{1}{2}}|\psi|_{\infty}|h^{\frac{1}{4}}\nabla l_{tt}|_2+|\varphi|_{\infty}^{\frac{1}{4}}|\nabla l|_{\infty}|\psi_{tt}|_2)\\
&+|w^{\frac{\nu}{2}-1}|_{\infty}|g^{-\frac{1}{4}}w_{t}|_3|gh^{-1}|^{\frac{1}{4}}_{\infty}|\nabla l|_{\infty}(|\varphi|_{\infty}|h_t|_{6}|\psi|_{\infty}+|\psi_t|_6)\\
&+|w^{\frac{\nu}{2}}|_{\infty}|\varphi|_{\infty}^{\frac{3}{2}}|h_t|_{\infty}(|\psi|_{\infty}|h^{\frac{1}{4}}\nabla l_t|_2+|h^{\frac{1}{4}}\nabla l|_3|\psi_t|_6)\big)|w^{-\frac{\nu}{2}}h^{-\frac{1}{4}}l_{tt}|_2\\
&+C(|w^{-1}|_{\infty}|g^{-\frac{1}{4}}w_{t}|_3|gh^{-1}|^{\frac{1}{4}}_{\infty}|\varphi|_{\infty}^{\frac{3}{4}}|h^{\frac{1}{4}}\nabla l_t|_2|\psi|_{\infty}\\
&+|\varphi|_{\infty}|h^{\frac{1}{4}}\nabla l_t|_2|\psi_t|_3)|h^{\frac{1}{4}}l_{tt}|_6,\\
J_{19}=& a_9\int (w^{\nu-1} \sqrt{g}\nabla w\cdot \nabla w)_{tt}w^{-\nu}l_{tt}\\
\leq & C(|\sqrt{g}\nabla w|^2_{\infty}|hg^{-1}|^{\frac{1}{4}}_{\infty}(|w^{\frac{\nu}{2}-2}|_{\infty}|g^{-\frac{1}{4}}w_{tt}|_2+|w^{\frac{\nu}{2}-3}|_{\infty}|g^{-\frac{1}{4}}w_t|_3|w_t|_6)\\
&+|w^{\frac{\nu}{2}-1}|_{\infty}|hg^{-1}|^{\frac{1}{4}}_{\infty}(|g^{-\fr{1}{4}}|_{\infty}|g_{tt}|_6|\nabla w|_6^2+|g^{-1}|_{\infty}^{\frac{5}{4}}|g_t|_{\infty}^2|\nabla w|_3|\nabla w|_6)\\
&+|w^{\frac{\nu}{2}-1}|_{\infty}|hg^{-1}|^{\frac{1}{4}}_{\infty}|\sqrt{g}\nabla w|_{\infty}|g^{\frac{1}{4}}\nabla w_{tt}|_2\\
&+|w^{\frac{\nu}{2}-2}|_{\infty}|hg^{-1}|^{\frac{1}{4}}_{\infty}(|g^{-\frac{1}{4}} w_{t}|_2|g_t|_{\infty}|\nabla w|^2_{\infty}+|g^{\frac{1}{4}} w_{t}|_6|\nabla w_t|_3|\sqrt{g}\nabla w|_{\infty})\\
&+|w^{\frac{\nu}{2}-1}|_{\infty}|hg^{-1}|^{\frac{1}{4}}_{\infty}|g^{-1} |_{\infty}|g_t|_{\infty}|\sqrt{g}\nabla w|_{\infty}|g^\fr{1}{4}\nabla w_t|_2)|w^{-\frac{\nu}{2}}h^{-\frac{1}{4}}l_{tt}|_2\\
&+C |w^{-1}|_{\infty}|gh^{-1}|^{\frac{1}{4}}_{\infty}|g^{\frac{1}{4}}\nabla w_t|_2|\nabla w_t|_3 |h^{\frac{1}{4}}l_{tt}|_6,\\
\end{split}
\end{equation}
\begin{equation}\label{nabla4lljmjmvbnm}\begin{split}
J_{20}=& -a_4\int\nabla(h^2+\epsilon^2)^{\frac{1}{4}} \cdot \nabla l_{tt} l_{tt} + \fr{1}{2}\int(w^{-\nu}h^{-\frac{1}{2}})_t|l_{tt}|^2\\
\leq & C(|\varphi|^{\frac{1}{2}}_\infty|w^{\frac{\nu}{2}}|_\infty|\psi|_\infty|h^{\frac{1}{4}}\nabla l_{tt}|_2+ |\varphi|_\infty |h_t|_\infty|w^{-\fr{\nu}{2}}h^{-\frac{1}{4}}l_{tt}|_2\\
&+|gh^{-1}|^\fr{1}{4}_\infty|w^{-\fr{\nu}{2}-1}|_\infty|g^{-\fr{1}{4}}w_t|_3|\varphi|_\infty^{\fr{1}{4}}|h^{\fr{1}{4}}l_{tt}|_6)|w^{-\fr{\nu}{2}}h^{-\frac{1}{4}}l_{tt}|_2.
\end{split}
\end{equation}
To finish the estimates on $J_{15}$ and $J_{16}$, one can integrate by parts to get  
\begin{equation*}
\begin{aligned}
\int   ng^{\frac{3}{2}}H^t(v)l_{tt}\leq &C|w^{\frac{\nu}{2}}|_{\infty}|g\nabla v|_\infty|v_{tt}|_2( |n|_{\infty}|\nabla g|_{\infty}|hg^{-1}|^{\frac{1}{4}}_{\infty}
|g^{-1}|^{\frac{1}{4}}_{\infty}\\
&+|\psi|_\infty|\varphi|^{\frac{1}{4}-b}_\infty|gh^{-1}|^{\frac{1}{2}}_{\infty})|w^{-\frac{\nu}{2}}h^{-\frac{1}{4}}l_{tt}|_2\\
&+C|n|_\infty|g^{\frac{1}{4}}v_{tt}|_3|gh^{-1}|^{\frac{1}{4}}_{\infty}(|g\nabla^2 v|_2|h^{\frac{1}{4}}l_{tt}|_6+|g\nabla v|_6|h^{\frac{1}{4}}\nabla l_{tt}|_2),\\
\int  wh^{-\frac{1}{2}}\text{div}\psi_{tt} l_{tt}=&-\int (\nabla w h^{-\frac{1}{2}}+ w \nabla h^{-\frac{1}{2}})\cdot \psi_{tt}l_{tt}-\int w h^{-\frac{1}{2}}\psi_{tt}\cdot \nabla l_{tt}\\
\leq &C( |w^{\frac{\nu}{2}}|_{\infty}|\varphi|^{\frac{1}{4}}_{\infty}|\nabla w|_{\infty}+|w^{\frac{\nu}{2}+1}|_{\infty}|\varphi|^{\frac{5}{4}}_{\infty}|\psi|_{\infty})|\psi_{tt}|_2|w^{-\frac{\nu}{2}}h^{-\frac{1}{4}}l_{tt}|_2\\
&+C|w|_{\infty}|\varphi|^{\frac{3}{4}}_{\infty}|\psi_{tt}|_2|h^{\frac{1}{4}}\nabla l_{tt}|_2.
\end{aligned}
\end{equation*}

Multiplying $\ef{ltt1bnm}$ by $t$ and integrating over $(\tau,t)$, one can obtain from above estimates on $J_i$ ($i=13,...,20$), \eqref{2.16} and Lemmas \ref{psi}-\ref{l3} that 
\begin{equation}\label{2.73lll}
\begin{split}
&t|w^{-\fr{\nu}{2}}h^{-\frac{1}{4}}l_{tt}|^2_2+\fr{a_4}{4}\int^t_\tau s|(h^2+\epsilon^2)^{\frac{1}{8}} \nabla l_{ss}|^2_2\text{d}s\\
\leq & \tau|w^{-\fr{\nu}{2}}h^{-\frac{1}{4}}l_{tt}(\tau)|^2_2+M(c_0)(c^{14+8\nu}_5t+1)\\
&+M(c_0)c^{14+3\nu}_4\int^t_\tau s|w^{-\fr{\nu}{2}}h^{-\fr{1}{4}}l_{ss}|^2_2\text{d}s,
\end{split}
\end{equation}
where one has used the inequality
\begin{equation*}
\begin{split}
|g^{\fr{1}{4}} v_{tt}|_3\leq C| v_{tt}|_2^{\fr{1}{2}}|\sqrt{g} v_{tt}|_6^{\fr{1}{2}}.
\end{split}
\end{equation*}

Note that due to $\ef{nabla3l3}$,  there exists a sequence $s_k$ such that
\begin{equation*}
s_k\longrightarrow 0, \quad \text{and} \quad s_k|w^{-\fr{\nu}{2}}h^{-\fr{1}{4}}l_{tt}(s_k,x)|^2_2\longrightarrow 0,\quad \text{as}\quad k\longrightarrow \infty.
\end{equation*}
Taking $\tau=s_k$ and letting $k\rightarrow \infty$ in $\ef{2.73lll}$, one  gets   by  Gronwall's inequality that 
\begin{equation}\label{2.74lll}
\begin{split}
t|w^{-\fr{\nu}{2}}h^{-\frac{1}{4}}l_{tt}|^2_2+\fr{a_4}{4}\int^t_\tau s|(h^2+\epsilon^2)^{\frac{1}{8}} \nabla l_{ss}|^2_2\text{d}s+\int^t_0s|\nabla l_{ss}|^2_2\text{d}s\leq M(c_0),
\end{split}
\end{equation}
for $0\leq t\leq \min\{T_4,(1+Cc_5)^{-40-10\nu}\}$, which, along with  $\ef{mathcalBFnnnm}$, yields  that
\begin{equation}\label{2.75lll}
t^{\fr{1}{2}}|h^{-\fr{1}{4}}l_{tt}(t)|_2+t^{\fr{1}{2}}|\nabla^2l_t(t)|_2+t^{\fr{1}{2}}|\sqrt{h}\nabla^2l_t(t)|_2\leq  M(c_0)c_1^{\fr{\nu}{2}}.
\end{equation}

Due to  \ef{nabla3l5} and \ef{2.75lll}, $l$ can be bounded by 
\begin{equation}\label{luinfty}\begin{split}    
   |l|_\infty=& |l_0+\int^t_0 l_s\text{d}s|_\infty
   \leq  |l_0|_\infty+t|l_t|_\infty
   \leq  c_0+Ct|\nabla l_t|_2^{\fr{1}{2}}|\nabla^2 l_t|_2^{\fr{1}{2}} \leq \fr{3}{2} c_0,\\
   l=&l_0+\int^t_0 l_s\text{d}s\geq l_0-t|l_t|_\infty 
   \geq  c_0^{-1}-Ct|\nabla l_t|_2^{\fr{1}{2}}|\nabla^2 l_t|_2^{\fr{1}{2}} \geq \fr{1}{2} c_0^{-1},
   \end{split}
\end{equation}
  for  $0\leq t\leq T_5=\min\{T_4
,(1+M(c_0)c_5)^{-40-10\nu}\}$. Then the proof is complete.

\end{proof}
\subsubsection{A priori estimates for $u$}
Now one can now  give the  estimates for $u$.  Set
\begin{equation*}
\begin{split}
\mathcal{K}=&v\cdot \nabla v+a_1\phi\nabla l+l\nabla\phi+a_2\sqrt {h^2+\epsilon^2}l^\nu Lu -a_2\nabla l^\nu\cdot gQ(v)-a_3 l^\nu\psi\cdot Q(v),\\
\mathcal{H}=&-u_t-v\cdot\nabla v-l\nabla\phi-a_1\phi\nabla l+a_2g\nabla l^\nu \cdot Q(v)+a_3 l^\nu \psi\cdot Q(v).
\end{split}
\end{equation*}
\begin{lemma}\label{lu}
\rm {For} $t\in [0, T_5]$, it holds that 
	\begin{equation}\label{2.61}\begin{aligned}
	|\sqrt{h}\nabla u(t)|^2_2+\|u(t)\|_1^2+\int^t_0(\|\nabla u\|_1^2+|u_s|^2_2)\text{d}s\leq & M(c_0),\\
(|u|^2_{D^2}+|h\nabla^2u|^2_2+|u_t|^2_2)(t)+\int^t_0(|u|_{D^3}^2+|u_s|^2_{D^1_*})\text{d}s\leq & M(c_0).
	\end{aligned}\end{equation}
\end{lemma}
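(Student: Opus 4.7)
The plan is to adapt the strategy of Xin-Zhu \cite{zz2} for the degenerate isentropic case, treating $\eqref{ln}_2$ as a singular parabolic equation in $u$ with the known quantities $(\phi,l,\psi,h,v,w,g)$ already controlled by Lemmas \ref{phiphi}--\ref{shang5}. Suppressing the superscripts $(\epsilon,\eta)$, I proceed in three stages. First (basic energy), I take the $L^2$ inner product of $\eqref{ln}_2$ with $u$. Integration by parts on the Lam\'e term produces $\frac{1}{2}\frac{d}{dt}|u|_2^2$ together with the coercive term $\int a_2 l^\nu \sqrt{h^2+\epsilon^2}(\alpha|\nabla u|^2+(\alpha+\beta)(\text{div}u)^2)$, while the commutator $\int \nabla(l^\nu\sqrt{h^2+\epsilon^2})\cdot Q(u)\cdot u$ is absorbed using $\psi=\tfrac{a\delta}{\delta-1}\nabla h$, $\nabla l$, Lemmas \ref{psi}--\ref{shang5}, and H\"older's inequality. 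Combining with the pressure-type terms $a_1\phi\nabla l$, $l\nabla\phi$, the convection $v\cdot\nabla v$, and the singular source $a_3 l^\nu\psi\cdot Q(v)$, Gronwall's inequality gives $\|u(t)\|_1^2 + \int_0^t \|\nabla u\|_1^2\, \text{d}s\leq M(c_0)$ after a further shrinking of the time interval to some $T_5'\leq T_5$.

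Second (energy with $u_t$), I multiply $\eqref{ln}_2$ by $u_t$. Integration by parts on the Lam\'e term yields $|u_t|_2^2$ together with
\begin{equation*}
\frac{a_2}{2}\frac{d}{dt}\int l^\nu \sqrt{h^2+\epsilon^2}\bigl(\alpha|\nabla u|^2+(\alpha+\beta)(\text{div}u)^2\bigr)+R,
\end{equation*}
where the remainder $R$ collects $\int\partial_t(l^\nu\sqrt{h^2+\epsilon^2})|\nabla u|^2$, controlled by $|l^{\nu-1}l_t|_\infty$ and $|h_t|_\infty$ available from Lemmas \ref{psi} and \ref{shang5}, and $\int \nabla(l^\nu\sqrt{h^2+\epsilon^2})\cdot Q(u)\cdot u_t$, controlled via $|(h^2+\epsilon^2)^{1/4}\nabla u|_2$ and $|u_t|_2$ with Young's inequality. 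The initial value of the weighted Dirichlet energy is finite because of the compatibility $Lu_0=\phi_0^{-2\iota}g_2$ together with Remark \ref{r1}, so integration in time gives $|(h^2+\epsilon^2)^{1/4}\nabla u|_2^2(t) + \int_0^t |u_s|_2^2\,\text{d}s \leq M(c_0)$.

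Third (elliptic recovery of $D^2$), I rewrite $\eqref{ln}_2$ as a Lam\'e equation for the weighted unknown,
\begin{equation*}
a_2 L\bigl(l^\nu\sqrt{h^2+\epsilon^2}\,u\bigr)=l^{-\nu}(h^2+\epsilon^2)^{-1/2}\mathcal{H}-\frac{\delta-1}{a\delta}G(\psi,u)+\mathcal{R},
\end{equation*}
in the spirit of \eqref{incom1}, where $G$ is defined in \eqref{Gdingyi}, and $\mathcal{R}$ collects the commutator contributions generated by differentiating the weight $l^\nu\sqrt{h^2+\epsilon^2}$. Since $l^\nu\sqrt{h^2+\epsilon^2}\,u\to 0$ as $|x|\to\infty$, the standard $L^2$ elliptic regularity of $L$ yields $|l^\nu\sqrt{h^2+\epsilon^2}\,u|_{D^2}\leq C|\text{RHS}|_2$. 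Leibniz distribution of derivatives, together with the weighted bounds $|\psi|_\infty$, $|h^{-1/4}\nabla^2 h|_2$, $|\nabla h^{3/4}|_{D^1_*}$, $|\nabla h^{3/8}|_4$ from Lemma \ref{varphi}, transfers this to $|(h^2+\epsilon^2)^{1/2}\nabla^2 u|_2$, hence to $|h\nabla^2 u|_2$ and to $|u|_{D^2}$ (via the pointwise lower bound of $h$ in Lemma \ref{gh}). The $\int_0^t |u|_{D^3}^2\,\text{d}s$ bound follows by the analogous elliptic argument at one higher derivative, and $\int_0^t |u_s|_{D^1_*}^2\,\text{d}s$ by plugging these bounds back into $\eqref{ln}_2$ and applying $\nabla$.

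The main obstacle is the singular coefficient $\sqrt{h^2+\epsilon^2}$ in front of the Lam\'e operator, which becomes unbounded near the far-field vacuum. Its presence at principal order makes naive $H^k$ energy estimates inadequate; the Lam\'e reformulation in the third step is what makes the analysis close, but it produces commutators that must be absorbed using precisely the sharp singular-weighted bounds on $h$ built up in Lemma \ref{varphi}. A secondary but essential technical point is that the initial finiteness of the weighted Dirichlet energy and of $u_t(0,\cdot)$ in $L^2$ relies on the compatibility conditions $\nabla u_0=\phi_0^{-\iota}g_1$ and $Lu_0=\phi_0^{-2\iota}g_2$ together with the derived bounds in Remark \ref{r1}.
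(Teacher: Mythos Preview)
Your three stages, as described, recover at most $\eqref{2.61}_1$ but leave a genuine gap for $\eqref{2.61}_2$. The elliptic reformulation in Stage~3 yields only
\[
|h\nabla^2 u(t)|_2 \le C\bigl(|u_t(t)|_2 + M(c_0)\bigr),
\]
so a \emph{pointwise} bound on $|u_t(t)|_2$ is needed before you can conclude the pointwise bounds on $|u|_{D^2}$ and $|h\nabla^2 u|_2$. Stages~1--2 give only $\int_0^t|u_s|_2^2\,\mathrm{d}s\le M(c_0)$, and your proposed route to $\int_0^t|u_s|_{D^1_*}^2\,\mathrm{d}s$ (``plugging back into $\eqref{ln}_2$ and applying $\nabla$'') is circular: applying $\nabla$ to $\eqref{ln}_2$ forces you to control $|l^\nu\sqrt{h^2+\epsilon^2}\,\nabla Lu|_2\sim|h\nabla^3 u|_2$, while the $D^3$ elliptic estimate in turn requires $|\mathcal H|_{D^1_*}\gtrsim|u_t|_{D^1_*}$.

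The missing ingredient is the time-differentiated energy estimate. The paper applies $\partial_t$ to $\eqref{ln}_2$, multiplies by $l^{-\nu}u_t$, and integrates by parts to obtain
\[
\tfrac{1}{2}\tfrac{d}{dt}|l^{-\nu/2}u_t|_2^2 + a_2\alpha|(h^2+\epsilon^2)^{1/4}\nabla u_t|_2^2 \le \cdots,
\]
then uses the compatibility $Lu_0=\phi_0^{-2\iota}g_2$ (and Remark~\ref{r1}) to bound $\limsup_{\tau\to 0}|u_t(\tau)|_2\le M(c_0)$, closing via Gronwall. This simultaneously delivers $|u_t(t)|_2^2\le M(c_0)$ pointwise and $\int_0^t|\sqrt{h}\nabla u_s|_2^2\,\mathrm{d}s\le M(c_0)$, which then feed the elliptic machinery for the remaining bounds in $\eqref{2.61}_2$. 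Two smaller points: (i) your elliptic identity is miswritten---with weight $l^\nu\sqrt{h^2+\epsilon^2}$ the right-hand side is $\mathcal H$ itself, not $l^{-\nu}(h^2+\epsilon^2)^{-1/2}\mathcal H$; the paper keeps the weight $\sqrt{h^2+\epsilon^2}$ alone (giving $l^{-\nu}\mathcal H$), which avoids commutators with $\nabla l^\nu$; (ii) the paper multiplies by $l^{-\nu}u$ and $l^{-\nu}u_t$ rather than $u$, $u_t$, precisely so that the coercive Dirichlet form carries no $l^\nu$ factor and one avoids needing $|l_t|_\infty$ (cf.\ the footnote in the paper).
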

\begin{proof}
   First, one estimates $\|u\|_1$. Multiplying $\ef{ln}_2$ by $l^{-\nu} u$ and $l^{-\nu} u_t$ respectively,  and integrating over $\mathbb{R}^3$, one can obtain by integration by parts,  Lemmas \ref{phiphi}-\ref{shang5} and \ref{lem2as}, \text{H\"older's} and \text{Young's} inequalities  that  
\begin{equation}\label{2.61b}
\begin{split}
&\fr{1}{2}\fr{d}{dt}|l^{-\fr{\nu}{2}}u|_2^2+a_2\alpha|(h^2+\epsilon^2)^{\fr{1}{4}}\nabla u|^2_2+a_2(\alpha+\beta)|(h^2+\epsilon^2)^{\fr{1}{4}}\text{div} u|^2_2 \\
\leq& M(c_0)(1+|l_t|^{\frac{1}{2}}_{D^2})|l^{-\fr{\nu}{2}}u|^2_2+M(c_0)c_4^4+\fr{1}{2}a_2\alpha|\sqrt{h}\nabla u|^2_2,\\
& \fr{1}{2}\fr{d}{dt}(a_2\alpha|(h^2+\epsilon^2)^{\fr{1}{4}}\nabla u|^2_2+a_2(\alpha+\beta)|(h^2+\epsilon^2)^{\fr{1}{4}}\text{div} u|^2_2)+|l^{-\fr{\nu}{2}}u_t|^2_2 \\
\leq& M(c_0)c_4^2|\sqrt{h}\nabla u|_2^2+M(c_0)c_4^4+\fr{1}{2}|l^{-\fr{\nu}{2}}u_t|^2_2,
\end{split}
\end{equation}
which\footnote{When  $\daleth=0$ in \ef{ln}, in Lemma 3.5 of  \cite{dxz}, one can obtain the $L^\infty$ boundedness of $l_t$ by the transport structure of \eqref{transport}, while it fails here  due to the appearance of the singular dissipation. }, 
along with $\text{Gronwall's} $ inequalty and Lemma \ref{l3}, yields  that for $0\leq t\leq T_5$,
\begin{equation}\label{ul2}
\begin{split}
&\|u\|_1^2+|\sqrt{h}\nabla u|^2_2+\int^t_0(|\sqrt{h}\nabla u|^2_2+|u_s|^2_2\big)\text{d}s
\leq 
 M(c_0).
\end{split}
\end{equation}

Notice that $u$ solves the following elliptic equations
\begin{equation}\label{2.61h}
\begin{split}
&a_2L(\sqrt{h^2+\epsilon^2}u) =l^{-\nu}\mathcal{H}-a_2G(\nabla\sqrt{h^2+\epsilon^2},u).
\end{split}
\end{equation}

Thus to derive   the $L^2$ estimate of $\nabla^2 u$, it is sufficient to get  the $L^2$ estimates of $$(\mathcal{H},\tilde{G}=G(\nabla\sqrt{h^2+\epsilon^2},u)).$$
It follows  from \eqref{Gdingyi}, \eqref{2.16}, \ef{jichuxinxi}-\ef{jichuxinxivbnm}, \ef{ltt9vbnmm},  \eqref{ul2} and  Lemmas \ref{phiphi}-\ref{shang5} that 
\begin{equation}\label{2.61illl}
\begin{split}
|\mathcal{H}|_2\leq & C(|u_t|_2+|v|_6|\nabla v|_3+|l|_\infty|\nabla \phi|_2+|\phi|_\infty|\nabla l|_2
+|\nabla l|_3|l^{\nu-1}|_\infty | g \nabla v|_6\\
&+|l^\nu |_\infty|\psi|_\infty|\nabla v|_2)
\leq  M(c_0)(|u_t|_2+1),\\
|\tilde{G}|_2
\leq & C(|\nabla\sqrt{h^2+\epsilon^2}|_\infty |\nabla u|_2+|\nabla^2 \sqrt{h^2+\epsilon^2}|_3 |u|_6)\leq M(c_0),
\end{split}
\end{equation}
where one  also has used the facts that 
\begin{equation}\label{hl2}
\begin{split}
\|l\|_{D^2}
\leq  \|l_0\|_{ D^2}+t^{\fr{1}{2}}\Big(\int^t_0\|l_s\|^2_{ D^2}\text{d}s\Big)^{\fr{1}{2}}
\leq M(c_0)(1+c_1^{2\nu}t^{\frac{1}{2}})
\leq & M(c_0),\\
|\nabla^2\sqrt{h^2+\epsilon^2}|_3 \leq      C(|\varphi|^{\frac{1}{2}}_\infty|\nabla h^{\frac{3}{4}}|^2_6+|\nabla \psi|_3)\leq & M(c_0).
\end{split}
\end{equation}

Then it follows from \eqref{ul2}-\eqref{2.61illl},  Lemma \ref{zhenok} and  Lemmas \ref{psi}-\ref{gh}  that 
\begin{equation}\label{2.61i}
\begin{split}
|\sqrt{h^2+\epsilon^2}u|_{D^2}\leq& C(|l^{-\nu}\mathcal{H}|_2+|G(\nabla\sqrt{h^2+\epsilon^2},u)|_2)
\leq  M(c_0)(|u_t|_2+1),
\end{split}
\end{equation}
\begin{equation}\label{2.61ijjj}
\begin{split}
|\sqrt{h^2+\epsilon^2}\nabla^2u|_2\leq& C( |\sqrt{h^2+\epsilon^2}u|_{D^2} +|\nabla\psi|_3|u|_6+|\psi|_\infty|\nabla u|_2  \\
&+ |\psi|^2_\infty|u|_2|\varphi|_\infty) 
\leq  C |\sqrt{h^2+\epsilon^2}u|_{D^2} +M(c_0),
\end{split}
\end{equation}
which, along with \eqref{ul2}, yields $\eqref{2.61}_1$.

Next one estimates $|u|_{D^2}$. Applying $\partial_t$ to $\ef{ln}_2$ yields
\begin{equation}\label{2.61j}\begin{split}
 &u_{tt}+a_2l^{\nu}\sqrt{h^2+\epsilon^2}Lu_t+(v\cdot \nabla v)_t+(l\nabla\phi)_t+a_1(\phi\nabla l)_t\\
=&-a_2(l^{\nu}\sqrt{h^2+\epsilon^2})_tLu+(a_2g\nabla l^\nu\cdot Q(v)+a_3l^\nu\psi \cdot Q(v))_t.
\end{split}
\end{equation}
Multiplying $\ef{2.61j}$ by $l^{-\nu}u_t$, integrating over $\mathbb{R}^3$ and integration by parts lead to 
\begin{equation}\label{2.61l}\begin{split}
&  \fr{1}{2}\fr{d}{dt}|l^{-\fr{\nu}{2}}u_{t}|^2_2+a_2\alpha|(h^2+\epsilon^2)^{\fr{1}{4}}\nabla u_t|_2^2+a_2(\alpha+\beta)|(h^2+\epsilon^2)^{\fr{1}{4}}\text{div} u_t|_2^2\\
=&\int l^{-\nu}\Big(-(v\cdot \nabla v)_t-(l \nabla \phi)_t-a_1( \phi\nabla l)_t-a_2(l^\nu\sqrt{h^2+\epsilon^2})_tLu\\
&+(a_2g\nabla l^\nu\cdot Q(v)+a_3l^\nu\psi\cdot Q(v))_t\Big)\cdot u_t\\
&-\int a_2\nabla\sqrt{h^2+\epsilon^2}\cdot Q(u_t)\cdot u_t+\frac{1}{2}\int (l^{-\nu})_t|u_t|^2\\
\leq& C|l^{-\frac{\nu}{2}}|_\infty(|v|_\infty|\nabla v_t|_2+|v_t|_2|\nabla v|_\infty+|l_t|_6|\nabla\phi|_3+|l|_\infty|\nabla\phi_t|_2\\
&+|\nabla l_t|_2|\phi|_\infty+|\phi_t|_\infty|\nabla l|_2)|l^{-{\frac{\nu}{2}}}u_t|_2+C|l^{-1}|_\infty|l_t|_6|\sqrt{h^2+\epsilon^2}\nabla^2u|_2|u_t|_3\\
&+C|l^{\frac{\nu}{2}}|_\infty|h_t|_\infty|\nabla^2u|_2|l^{-\frac{\nu}{2}}u_t|_2
+C\Big(|l^{\frac{\nu}{2}-2}|_\infty|g\nabla v|_\infty|l_t|_6|\nabla l|_3\\
&+|l^{\frac{\nu}{2}-1}|_\infty(|g_t|_\infty|\nabla v|_\infty|\nabla l|_2+|g\nabla v|_\infty|\nabla l_t|_2+|\psi|_\infty|l_t|_6|\nabla v|_3)\\
&+|l^{\frac{\nu}{2}}|_\infty(|\psi_t|_2|\nabla v|_\infty+|\psi|_\infty
|\nabla v_t|_2)\Big)|l^{-\frac{\nu}{2}}u_t|_2\\
&+C|l^{-1}|_\infty|gh^{-1}|^{\frac{1}{2}}_\infty|\sqrt{g}\nabla v_t|_2|\nabla l|_3|\sqrt{h}u_t|_6\\
&+C|l^{\frac{\nu}{2}}|_\infty|\varphi|^{\fr{1}{2}}_\infty|\psi|_\infty|\sqrt{h}\nabla u_t|_2|l^{-\frac{\nu}{2}}u_t|_2+C|l^{-\frac{\nu}{2}-1}|_\infty|l_t|_6|u_t|_3|l^{-\frac{\nu}{2}}u_t|_2,
\end{split}
\end{equation}
Integrating $\ef{2.61l}$ over $(\tau,t)$ $(\tau\in(0,t))$,  one can get by using \ef{2.16}, Lemmas \ref{phiphi}-\ref{shang5}  and \text{Young's} inequality that
\begin{equation}\label{2.61n}
\begin{split}
&\fr{1}{2}|l^{-\fr{\nu}{2}}u_t(t)|^2_2+\fr{a_2\alpha}{2}\int^t_\tau|\sqrt{h}\nabla u_s|^2_2\text{d}s   \\
 \leq & \fr{1}{2}|l^{-\fr{\nu}{2}}u_t(\tau)|^2_2+M(c_0)c_4^{2}\int^t_0|l^{-\fr{\nu}{2}}u_s|^2\text{d}s+M(c_0)c_4^{4+2\nu}t+M(c_0).
\end{split}
\end{equation}
 It follows from  $\ef{ln}_2$, $\ef{4.1*}$, $\ef{2.14}$, $\ef{incc}$, \ef{incc*} and Lemma \ref{ls} that
\begin{equation*}\label{2.611p}\begin{split}
\lim\sup_{\tau\rightarrow 0}|u_t(\tau)|_2
\leq & C(|u_0|_\infty|\nabla u_0|_2+|\phi_0|_\infty|\nabla l_0|_2+|\nabla\phi_0|_2|l
_0|_\infty+|\psi_0|_\infty|l_0^\nu|_\infty|\nabla u_0|_2 \\
&+|l_0^\nu|_\infty(|g_2|_2+|Lu_0|_2)+|l_0^{\nu-1}|_\infty|\phi_0^{2\iota}\nabla u_0|_\infty|\nabla l_0|_2)\leq M(c_0).
\end{split}
\end{equation*}
Letting $\tau\rightarrow 0$ in \ef{2.61n},  according to  \text{Gronwall's} inequality,  Lemma \ref{shang5} and \ef{2.61i}-\ef{2.61ijjj}, one can obtain  that for $0\leq t\leq T_5$,
\begin{equation}\label{2.61q}
\begin{split}
|u_t(t)|_2^2+|\sqrt{h^2+\epsilon^2}u(t)|_{D^2}+ |h\nabla^2 u(t)|_2 
+\int^t_0|\sqrt{h}\nabla u_s|^2_2\text{d}s
\leq  M(c_0).
\end{split}
\end{equation}

Similarly, to estimate $|\nabla^3 u|_2$, one needs to derive  the $L^2$ estimates of 
$$(\nabla \mathcal{H},\nabla \tilde{G}=\nabla G(\nabla\sqrt{h^2+\epsilon^2},u)).$$
According to  \eqref{Gdingyi}, \eqref{2.16}, \eqref{ul2},  \eqref{hl2},  \eqref{2.61q} and    Lemmas \ref{phiphi}-\ref{shang5},
\begin{equation}\label{2.61t}\begin{split}
 |\mathcal{H}|_{D^1_*}
\leq & C(|u_t|_{D^1_*}+|v|_\infty|\nabla^2 v|_2+|\nabla v|_6|\nabla v|_3+|l|_\infty|\nabla^2 \phi|_2+|\nabla \phi|_3|\nabla l|_6\\
&+| \phi|_\infty|\nabla^2 l|_2+|\nabla g|_\infty |\nabla l^\nu|_\infty |\nabla v|_2+ |\nabla^2 l^\nu|_3|g\nabla v|_6\\
&+ |\nabla l^\nu|_\infty |g\nabla^2 v|_2+|\nabla l^\nu |_\infty|\psi|_\infty|\nabla v|_2+|l^\nu |_\infty|\nabla \psi|_3|\nabla v|_6\\
&+ |l^\nu |_\infty|\psi|_\infty|\nabla^2 v|_2)\leq M(c_0)(|u_t|_{D^1_*}+c_3^{2\nu+3}),\\
 |\tilde{G}|_{D^1_*}
\leq &C( |\nabla\sqrt{h^2+\epsilon^2}|_\infty |\nabla^2 u|_2+|\nabla^2 \sqrt{h^2+\epsilon^2}|_3 |\nabla u|_6\\
&+|\nabla^3 \sqrt{h^2+\epsilon^2}|_2 | u|_\infty)\leq  M(c_0),
\end{split}
\end{equation}
where one has used the fact that 
\begin{equation}\label{2.61rrnnn}
\begin{split}
|\nabla^3\sqrt{h^2+\epsilon^2}|_2 \leq &  M(c_0)(|\nabla h^{\fr{3}{4}}|^3_6|\varphi|_\infty^{\fr{5}{4}}+|h^{-\fr{1}{4}}\nabla^2 h |_2|\psi|_\infty|\varphi|_\infty^{\fr{3}{4}}+|\nabla^3 h|_2).
\end{split}
\end{equation}

Hence, by \eqref{ul2}-\eqref{2.61illl},  \eqref{2.61q}-\eqref{2.61t},   Lemmas \ref{psi}-\ref{gh} and Lemma \ref{zhenok}, one has
\begin{equation}\label{3jieu}\begin{split}
|\sqrt{h^2+\epsilon^2}u(t)|_{D^3}\leq& C|l^{-\nu}\mathcal{H}|_{D^1_*}+ C|G(\nabla\sqrt{h^2+\epsilon^2},u)|_{D^1_*}\\
\leq &M(c_0)(|u_t|_{D^1_*}+c_3^{2\nu+3}),\\
|\sqrt{h^2+\epsilon^2}\nabla^3u(t)|_2\leq& C( |\sqrt{h^2+\epsilon^2}u(t)|_{D^3} +\|\psi\|_{L^\infty \cap D^{1,3}\cap D^2}\|u\|_2)\\
&+C(1+\|\psi\|_{L^\infty \cap D^{1,3}\cap D^2}^3\| u\|_1)(1+|\varphi|^2_\infty)\\
\leq& M(c_0)(|\sqrt{h^2+\epsilon^2}u(t)|_{D^3}+c_3^{2\nu+3}),
\end{split}
\end{equation}
which, along with $\ef{2.61q}$ and Lemma \ref{varphi}, yields $\eqref{2.61}_2$.
Then the proof is complete.

\end{proof}
We now turn to estimate  the higher order derivatives of  $u$.
\begin{lemma}\label{hu}
\rm {For} $t\in [0, T_5]$, it holds that 
	\begin{equation}\label{2.62}\begin{aligned}
	|u(t)|_{D^3}+|h\nabla^2 u(t)|_{D^1_*}
	\leq M(c_0)c_3^{2\nu+3},&\\
	|u_t(t)|_{D^1_*}+|\sqrt{h}\nabla u_t(t)|_2+\int^t_0(|u_{ss}|^2_2+|u_s|^2_{D^2})\text{d}s\leq M(c_0),&\\
	\int^t_0(|h\nabla^2u_s|^2_2+|u|^2_{D^4}+|h\nabla^2u|_{D^2}^2+|(h\nabla^2u)_s|^2_2)\text{d}s\leq M(c_0).&
	\end{aligned}\end{equation}

\end{lemma}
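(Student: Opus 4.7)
The plan is to establish the three assertions of Lemma \ref{hu} in a bootstrap, paralleling closely the strategy of Lemma \ref{lu} but raising every regularity index by one. The proof proceeds in three steps.

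\textbf{Step 1 (Time-derivative energy estimate).} Start from the $t$-differentiated momentum equation $\eqref{2.61j}$, test against $l^{-\nu} u_{tt}$ and integrate by parts in $\mathbb{R}^3$ to produce
\begin{equation*}
\tfrac{1}{2}\tfrac{d}{dt}\Big(a_2\alpha|(h^2+\epsilon^2)^{\frac14}\nabla u_t|_2^2+a_2(\alpha+\beta)|(h^2+\epsilon^2)^{\frac14}\mathrm{div}\,u_t|_2^2\Big)+|l^{-\frac{\nu}{2}}u_{tt}|_2^2 = \mathrm{RHS}.
\end{equation*}
The right-hand side splits into six groups coming from $(v\cdot\nabla v)_t$, $(l\nabla\phi)_t$, $(\phi\nabla l)_t$, the dissipation commutator $(l^\nu\sqrt{h^2+\epsilon^2})_t Lu$, and $(g\nabla l^\nu\cdot Q(v))_t+(l^\nu\psi\cdot Q(v))_t$; each can be bounded by H\"older and Young's inequality using the estimates from \eqref{2.16} and Lemmas \ref{phiphi}--\ref{lu}, absorbing the highest-order pieces into $\tfrac14|l^{-\frac{\nu}{2}}u_{tt}|_2^2$ and $\tfrac14 a_2\alpha|\sqrt{h}\nabla u_t|_2^2$. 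The most delicate ingredient is the limit
\[
\limsup_{\tau\to 0^+}|\sqrt{h}\nabla u_t(\tau)|_2\le M(c_0),
\]
which is extracted by writing $u_t(0)=-\mathcal{K}(0)$ from $\eqref{ln}_2$ and estimating $\sqrt{h_0}\nabla$ of each term: the crucial contribution is $a_2\sqrt{h_0}\,\nabla(l_0^\nu\sqrt{h_0^2+\epsilon^2}\,Lu_0)$, which is controlled exactly by the compatibility condition $\nabla(\phi_0^{2\iota}Lu_0)=\phi_0^{-\iota}g_3\in L^2$ from $\ef{2.8*}$ together with $\ef{incc}$ and $\ef{incc*}$. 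Applying Gronwall's inequality on $[\tau,t]$ then letting $\tau\to 0^+$ delivers
\[
|u_t(t)|_{D^1_*}+|\sqrt{h}\nabla u_t(t)|_2 +\int_0^t\!\big(|u_{ss}|_2^2+|\sqrt{h}\nabla u_s|^2_{D^1_*}\big)\,\mathrm{d}s\le M(c_0).
\]

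\textbf{Step 2 (Spatial elliptic regularity).} Plug the Step 1 bound $|u_t|_{D^1_*}\le M(c_0)$ into the estimate $\ef{3jieu}$ already derived in Lemma \ref{lu} for the elliptic system $\ef{2.61h}$. With $|\mathcal H|_{D^1_*}\le M(c_0)(|u_t|_{D^1_*}+c_3^{2\nu+3})$ from $\ef{2.61t}$, the standard $D^3$-regularity theory for the Lam\'e operator (Lemma \ref{zhenok}) immediately gives
\[
|\sqrt{h^2+\epsilon^2}\,u(t)|_{D^3}+|h\nabla^2 u(t)|_{D^1_*}+|u(t)|_{D^3}\le M(c_0)c_3^{2\nu+3},
\]
which is the first assertion. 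The growth in $c_3$ is genuine since it appears through $|\nabla v|_\infty\cdot|\nabla^2 v|_2$-type source terms in $\nabla\mathcal H$.

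\textbf{Step 3 (Time-integral high-order estimates).} For $\int_0^t|u_s|^2_{D^2}\,\mathrm{d}s$, view the $t$-differentiated equation as the Lam\'e system
\[
a_2 L\big(\sqrt{h^2+\epsilon^2}\,u_t\big)=l^{-\nu}\mathcal{H}^*-a_2 G\big(\nabla\sqrt{h^2+\epsilon^2},u_t\big)-a_2\big(\sqrt{h^2+\epsilon^2}\big)_t Lu,
\]
where $\mathcal H^*$ collects the $\partial_t$ of the right-hand side of $\ef{2.61h}$. Estimating $|\mathcal H^*|_2$ in terms of $|u_{tt}|_2$ plus quantities already bounded, squaring, integrating in time, and using $\int_0^t|u_{ss}|_2^2\le M(c_0)$ from Step 1 yields $\int_0^t|u_s|^2_{D^2}\,\mathrm{d}s\le M(c_0)$ and then $\int_0^t|h\nabla^2 u_s|_2^2\,\mathrm{d}s\le M(c_0)$ by the same argument as $\ef{2.61ijjj}$. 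For $\int_0^t|u|^2_{D^4}\,\mathrm{d}s$ and $\int_0^t|h\nabla^2 u|^2_{D^2}\,\mathrm{d}s$, apply $\nabla^\varsigma$ ($|\varsigma|=2$) to $\ef{2.61h}$ to obtain
\[
a_2 L\big(\sqrt{h^2+\epsilon^2}\,\nabla^\varsigma u\big)=\nabla^\varsigma\big(l^{-\nu}\mathcal H\big)-a_2 G\big(\nabla\sqrt{h^2+\epsilon^2},\nabla^\varsigma u\big)+[\mbox{commutators}],
\]
and control $|\nabla^2\mathcal H|_2$ via $|u_t|_{D^2}$ plus lower order terms (using Lemmas \ref{phiphi}--\ref{lu} and the already established bounds for $\psi$, $\nabla\psi$, $\nabla^2\psi$, $\nabla^3 h$, etc.). Integrating in time and invoking $\int_0^t|u_s|^2_{D^2}\,\mathrm{d}s\le M(c_0)$ closes the estimate. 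Finally $\int_0^t|(h\nabla^2 u)_s|_2^2\,\mathrm{d}s$ follows by writing $(h\nabla^2 u)_s=h_s\nabla^2 u+h\nabla^2 u_s$ and combining the previous two bounds.

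\textbf{Main obstacle.} The subtle point is Step 1's initial layer: verifying that $|\sqrt{h}\nabla u_t(\tau)|_2$ has a finite limit as $\tau\to 0^+$ bounded by $M(c_0)$ alone (rather than by some $c_i$, $i\ge 1$). This requires exactly the third-tier compatibility condition $\nabla(\phi_0^{2\iota}Lu_0)=\phi_0^{-\iota}g_3\in L^2$, because the term $a_2 l_0^\nu\sqrt{h_0}\,\nabla\!\big(\sqrt{h_0^2+\epsilon^2}\,Lu_0\big)$ cannot be handled by the weaker compatibilities $\nabla u_0=\phi_0^\iota g_1$ or $Lu_0=\phi_0^{2\iota}g_2$: only $g_3$ yields an $\eps$-uniform $L^2$ bound on $\sqrt{h_0}\nabla(\phi_0^{2\iota}Lu_0)$ through $\ef{incc*}$.
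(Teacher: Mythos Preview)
Your proposal is correct and follows essentially the same route as the paper: test $\eqref{2.61j}$ against $l^{-\nu}u_{tt}$, handle the initial layer via the compatibility data $g_3$ and Gronwall, then feed $|u_t|_{D^1_*}\le M(c_0)$ into the elliptic estimate $\ef{3jieu}$ for the $D^3$ bound, and finally recast the $t$-differentiated and $\nabla^\varsigma$-differentiated momentum equations as Lam\'e systems (the paper's $\ef{2.64}$ and $\ef{2.67}$) to close the time-integrated $D^2$, $D^4$ and weighted estimates.

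One small correction: the conclusion of your Step~1,
\[
\int_0^t |\sqrt{h}\nabla u_s|^2_{D^1_*}\,\mathrm{d}s \le M(c_0),
\]
is not something the $u_{tt}$-test produces. Multiplying by $l^{-\nu}u_{tt}$ yields energy $|(h^2+\epsilon^2)^{1/4}\nabla u_t|_2^2$ and dissipation $|l^{-\nu/2}u_{tt}|_2^2$; no $D^1_*$-norm of $\sqrt{h}\nabla u_t$ appears. Since you never actually use this spurious bound later (Step~3 relies only on $\int_0^t|u_{ss}|_2^2$ and the pointwise $|\sqrt{h}\nabla u_t|_2$), the argument is unaffected---just drop that term from the displayed inequality.
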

\begin{proof}
Multiplying \ef{2.61j} by $l^{-\nu}u_{tt}$ and integrating over $\mathbb{R}^3$ lead to 
\begin{equation}\label{2.62a}
\begin{split}
& \fr{1}{2}\fr{d}{dt}(a_2\alpha|(h^2+\epsilon^2)^{\fr{1}{4}}\nabla u_t|^2_2+a_2(\alpha+\beta)|(h^2+\epsilon^2)^{\fr{1}{4}}\text{div}u_t|^2_2)+|l^{-\fr{\nu}{2}}u_{tt}|_2^2
=\sum^4_{i=1}I_i,
\end{split}
\end{equation}
where  $I_i$, $i=1,2,3,4$,  are given and  estimated as follows:
\begin{equation}\label{i11}
\begin{split}
I_1=&\int l^{-\nu}\Big(-(v\cdot \nabla v)_t-(l\nabla \phi)_t-a_1(\phi\nabla l)_t\\
&-a_2(l^\nu)_t\sqrt{h^2+\epsilon^2}Lu-a_2l^\nu\fr{h}{\sqrt{h^2+\epsilon^2}} h_tLu\Big)\cdot u_{tt}
\end{split}
\end{equation} 
\begin{equation}\label{i31jjj}
\begin{split}
\leq& C|l^{-\fr{\nu}{2}}|_\infty\big(|v|_\infty|\nabla v_t|_2+|v_t|_2|\nabla v|_\infty+|l_t|_6|\nabla\phi|_3+|\nabla l_t|_2|\phi|_\infty\\
&+|\phi_t|_\infty|\nabla l|_2+|l|_\infty|\nabla\phi_t|_2\\
&+|l^{\nu-1}|_\infty|l_t|_6|\sqrt{h^2+\epsilon^2}\nabla^2u|_3+|l^{\nu}|_\infty|h_t|_\infty|\nabla^2u|_2\big)|l^{-\fr{\nu}{2}}u_{tt}|_2,\\
I_2=&\int l^{-\nu} \big(a_2g\nabla l^\nu\cdot Q(v)+a_3l^\nu\psi\cdot Q(v)\big)_t\cdot u_{tt}\\
\leq &C|l^{-\fr{\nu}{2}}|_\infty\big(|(\nabla l^\nu)_t|_2|g\nabla v|_\infty+|g_t|_\infty|\nabla l^\nu|_3|\nabla v|_6\\
&+|\sqrt{h}\nabla l^\nu|_\infty|gh^{-1}|^{\fr{1}{2}}_\infty|\sqrt{g}\nabla v_t|_2+|l^\nu|_\infty|\psi|_\infty|\nabla v_t|_2\\
&+|l^\nu|_\infty|\psi_t|_2|\nabla v|_\infty+|(l^\nu)_t|_6|\psi|_\infty|\nabla v|_3\big)|l^{-\fr{\nu}{2}}u_{tt}|_2,\\
I_3+I_4=&-\int a_2\nabla\sqrt{h^2+\epsilon^2}Q(u_t)\cdot u_{tt}\\
&
+\frac{1}{2}\int a_2\fr{h}{\sqrt{h^2+\epsilon^2}}h_t(\alpha|\nabla u_t|^2+(\alpha+\beta)|\text{div} u_t|^2)\\
\leq &C(|l^{\fr{\nu}{2}}|_\infty|\varphi|^{\fr{1}{2}}_\infty|\psi|_\infty|\sqrt{h}\nabla u_t|_2|l^{-\fr{\nu}{2}}u_{tt}|_2
+|h_t|_\infty|\sqrt{h}\nabla u_t|^2_2|\varphi|_\infty).
\end{split}
\end{equation}

Integrating $\ef{2.62a}$ over $(\tau,t)$ and using \ef{i11}-\ef{i31jjj} yield that for $0\leq t\leq T_5$,
\begin{equation}\label{2.62b}
\begin{split}
&|\sqrt{h}\nabla u_t(t)|^2_2+\int^t_\tau|l^{-\fr{\nu}{2}}u_{ss}|^2_2\text{d}s\\
\leq& C|(h^2+\epsilon^2)^{\fr{1}{4}}\nabla u_t(\tau)|^2_2+M(c_0)c_4^2\int^t_0|\sqrt{h}\nabla u_s|^2_2\text{d}s +M(c_0)(c_4^{7\nu+6}t+1),
\end{split}
\end{equation}
where  \ef{2.16},  Lemmas \ref{phiphi}-\ref{gh} and \ref{l}-\ref{shang5} have been used.

It follows from the following fact
\begin{equation*}
  \begin{split}
&\sqrt{h_0}l_0^\nu \Big(\sqrt{h_0^2+\epsilon^2}\nabla Lu_0+\fr{h_0}{\sqrt{h_0^2+\epsilon^2}}Lu_0\otimes\nabla h_0 \Big) 
    =\fr{l_0^\nu\sqrt{h_0}}{\sqrt{h_0^2+\epsilon^2}}\Big(\sqrt{h_0}g_3+\epsilon^2\nabla Lu_0\Big),
 \end{split}
\end{equation*}
$\ef{ln}_2$, \ef{4.1*}, $\ef{2.14}$, Lemma 3.1  and Remark 3.1  that 
\begin{equation}\label{2.62d}
\begin{split}
&\lim\sup_{\tau\rightarrow 0}|\sqrt{h}\nabla u_t(\tau)|_2
\leq \lim\sup_{\tau\rightarrow 0}|\sqrt{h}\nabla \mathcal{K}(\tau)|_2\\
 \leq& C(|\phi_0^\iota u_0|_6|\nabla^2u_0|_3+|\nabla u_0|_\infty|\phi_0^\iota\nabla u_0|_2+|l_0|_\infty|\phi_0^\iota\nabla^2\phi_0|_2\\
 &+|\nabla^2l_0\phi_0^{\iota+1}|_2
 +|\nabla l_0|_3|\phi_0^\iota\nabla \phi_0|_6+|l_0^\nu|_\infty(|\nabla\psi_0|_3|\phi_0^\iota\nabla u_0|_6\\
 &+|\psi_0|_\infty|\phi^\iota_0\nabla^2u_0|_2)+|l^{\nu-1}_0|_\infty|\psi_0|_\infty|\phi^\iota_0\nabla u_0|_6|\nabla l_0|_3\\
 &+|l_0^\nu|_\infty|g_3|_2+|l_0^\nu|_\infty|\varphi_0|^{\fr{1}{2}}_\infty|\nabla^3u_0|_2
+|l_0^{\nu-1}|_\infty|h_0^{\fr{3}{2}}Lu_0|_6|\nabla l_0|_3\\
&+|g_2|_2|\phi_0^{-\iota}|_\infty|l_0^{\nu-1}|_\infty|\nabla l_0|_\infty+|\sqrt{h_0}\nabla^2l_0^\nu|_2| h_0
\nabla u_0|_\infty\\
&+|\sqrt{h_0}\nabla l_0^\nu|_6(|h_0\nabla^2 u_0|_3+|\psi_0|_\infty|\nabla u_0|_3))\leq M(c_0).
\end{split}
\end{equation}

Letting $\tau\rightarrow 0$ in  \eqref{2.62b}, one gets from Gronwall's inequality that for $0\leq t\leq T_5$,
\begin{equation}\label{2.62m}\begin{split}
&|\sqrt h\nabla u_t(t)|_2^2+|\nabla u_t(t)|^2_2+\int^t_0|u_{ss}|^2_2\text{d}s
\leq  M(c_0),
\end{split}
\end{equation}
which, along with   $\ef{3jieu}$, yields  
\begin{equation}\label{2.63}
|\sqrt{h^2+\epsilon^2}u|_{D^3}+|\sqrt{h^2+\epsilon^2}\nabla^3u|_2+|h\nabla^2u|_{D^1}+|\nabla^3u|_2\leq M(c_0)c_3^{2\nu+3}.
\end{equation}

Next, denote $\mathcal{G}=\mathcal{H}_{t}-a_2(l^\nu\sqrt{h^2+\epsilon^2})_tLu$. One notes that  $\ef{2.61j}$ gives
\begin{equation}\label{2.64}\begin{split}
a_2L(\sqrt{h^2+\epsilon^2}u_t)&=a_2\sqrt{h^2+\epsilon^2}Lu_t-a_2G(\nabla\sqrt{h^2+\epsilon^2},u_t)\\
&=l^{-\nu}\mathcal{G}-a_2G(\nabla\sqrt{h^2+\epsilon^2},u_t).
\end{split}
\end{equation}
For deriving   the $L^2$ estimates of $(\nabla^2 u_t,\nabla^4 u)$, one needs to estimate the  $L^2$ norm  of $(\mathcal{G},\widehat{G}=G(\nabla\sqrt{h^2+\epsilon^2},u_t),\nabla^2 \mathcal{H})$,
which  follows from \eqref{Gdingyi}, \eqref{2.16}, \eqref{hl2}, \eqref{2.62m}-\eqref{2.63} and  Lemmas \ref{phiphi}-\ref{lu} that  
\begin{equation}\label{2.61ccvv}
\begin{split}
|\mathcal{G}|_2\leq & C(|u_{tt}|_2+\|v\|_2|\nabla v_t|_2+\|l\|_{L^\infty\cap D^1\cap D^2}\| \phi_t\|_1+\|\phi\|_2|l_t|_{ D^1}\\
&+|(l^\nu)_t |_6|\sqrt{h^2+\epsilon^2} Lu|_3 +|l^\nu |_\infty |h_t|_\infty|\nabla^2u|_2+|g_t|_\infty|\nabla l^\nu|_2
|\nabla v|_\infty\\
&+|g\nabla v|_\infty|\nabla(l^\nu)_t|_2+|l^{\nu-1}|_\infty|\sqrt{h}\nabla l|_\infty|gh^{-1}|^{\fr{1}{2}}_\infty|\sqrt{g}\nabla v_t|_2\\
&+|(l^\nu)_t|_6|\psi|_\infty|\nabla v|_3+|l^\nu|_\infty|\psi_t|_2|\nabla v|_\infty+|l^\nu|_\infty|\psi|_\infty|\nabla v_t|_2)\\
\leq & M(c_0)(|u_{tt}|_2+c_4^{3\nu+3}),\\
|\mathcal{H}|_{D^2}\leq & C(|u_t|_{D^2}+\|v\|_2\|\nabla v\|_2+\|l\|_{L^\infty\cap D^1\cap D^3}\|\nabla \phi\|_2\\
&+\|\nabla l^\nu\|_2 (\|g\nabla v\|_{L^\infty\cap D^1\cap D^2}+\|\nabla g\|_{L^\infty\cap D^2 }\|\nabla v\|_2)\\
& +\|l^\nu \|_{L^\infty\cap D^1\cap D^2}\|\psi\|_{L^q\cap D^{1,3}\cap D^2}\|\nabla v\|_2)\\
\leq & M(c_0)(|u_t|_{D^2}+c_4^{6\nu+5}),\\
|\widehat{G}|_2
\leq & C(|\nabla\sqrt{h^2+\epsilon^2}|_\infty |\nabla u_t|_2+|\nabla^2 \sqrt{h^2+\epsilon^2}|_3 |u_t|_6)\leq M(c_0).
\end{split}
\end{equation}

It follows from \eqref{2.61h}-\eqref{2.61illl}, \eqref{2.61t}, \eqref{2.62m}-\eqref{2.61ccvv},   Lemmas \ref{phiphi}-\ref{lu} and Lemma \ref{zhenok} that
\begin{equation}\label{2.65}\begin{split}
\displaystyle
|\sqrt{h^2+\epsilon^2}u_t|_{D^2}\leq &  C|l^{-\nu}\mathcal{G}|_2+C|G(\nabla\sqrt{h^2+\epsilon^2},u_t)|_2\\
\displaystyle
\leq& M(c_0)(|u_{tt}|_2+c_4^{3\nu+3}),\\[2pt]
\displaystyle
|\sqrt{h^2+\epsilon^2}\nabla^2u_t|_2\leq & C(|\sqrt{h^2+\epsilon^2}u_t|_{D^2}+|\nabla u_t|_2(|\psi|_\infty+|\nabla\psi|_3)\\
&+|\psi|^2_\infty|u_t|_2|\varphi|_\infty)
\leq  M(c_0)(|u_{tt}|_2+c_4^{3\nu+3}),\\[2pt]
\displaystyle
|(h\nabla^2u)_t|_2\leq & C(|h\nabla^2u_t|_2+|h_t|_\infty|\nabla^2 u|_2)
\leq  M(c_0)(|u_{tt}|_2+c_4^{3\nu+3}),\\[2pt]
\displaystyle
|u|_{D^4}\leq& C|(h^2+\epsilon^2)^{-\fr{1}{2}}l^{-\nu}\mathcal{H}|_{D^2}
\leq   M(c_0)(|u_{t}|_{D^2}
+c_4^{6\nu+5})\\
\leq & M(c_0)(|u_{tt}|_2+c_4^{6\nu+5}).
\end{split}
\end{equation}

Due to  $\ef{ln}_2$, it holds that  for multi-index $\varsigma\in\mathbb{Z}^3_+$ with $|\varsigma|=2$,  
\begin{equation}\label{2.67}
\begin{split}
&a_2L(\sqrt{h^2+\epsilon^2}\nabla^\varsigma u)=a_2\sqrt{h^2+\epsilon^2}\nabla^\varsigma Lu-a_2G(\nabla\sqrt{h^2+\epsilon^2},\nabla^\varsigma u)\\
=& \sqrt{h^2+\epsilon^2}\nabla^\varsigma\big[\big(\sqrt{h^2+\epsilon^2})^{-1}l^{-\nu}\mathcal{H}\big]
-a_2G(\nabla\sqrt{h^2+\epsilon^2},\nabla^\varsigma u),\\
\end{split}
\end{equation}
which, along with \eqref{2.61illl}-\eqref{hl2}, \eqref{2.61t}, \eqref{2.62m}-\eqref{2.63}, \eqref{2.61ccvv}-\eqref{2.65},   Lemmas \ref{phiphi}-\ref{lu} and Lemma \ref{zhenok},  implies that 
\begin{equation}\label{2.68}
\begin{split}
|\sqrt{h^2+\epsilon^2}\nabla^2u(t)|_{D^2}
\leq& C|\sqrt{h^2+\epsilon^2}\nabla^\varsigma\big[\big(\sqrt{h^2+\epsilon^2})^{-1}l^{-\nu}\mathcal{H}\big]|_{2}\\
&+C(|\psi|_\infty|u|_{D^3}+|\nabla\psi|_3|\nabla^2u|_6+|\nabla^2u|_2|\psi|^2_\infty|\varphi|_\infty)\\
\leq & M(c_0)(|u_{tt}|_2+c_4^{6\nu+5}).
\end{split}
\end{equation}

At last,   \eqref{2.62m}-\eqref{2.63}, \eqref{2.65}, \eqref{2.68} and Lemma \ref{varphi} yield \eqref{2.62}.

The proof  of this lemma is complete.
\end{proof}
\smallskip
\smallskip
\smallskip
Finally,  the following  time weighted estimates for the velocity $u$ hold.
\begin{lemma}\label{hu2}For $t\in [0,T_5]$, it holds that 
	\begin{equation}\label{2.70}
\begin{aligned}
	t|u_t(t)|^2_{D^2}+t|h\nabla^2u_t(t)|^2_2+t|u_{tt}(t)|^2_2+t|u(t)|^2_{D^4} & \leq  M(c_0)c^{6\nu+4}_4,\\
	\int^t_0s(|u_{ss}|_{D^1_*}^2+|u_s|^2_{D^3}+|\sqrt{h} u_{ss}|_{D^1_*}^2)\text{\rm d}s & \leq  M(c_0)c^{6\nu+4}_4.
\end{aligned}
	\end{equation}
\end{lemma}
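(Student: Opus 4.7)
The plan is to follow the time-weighted strategy used for the entropy in Lemma \ref{shang5}, adapted to the momentum equation. Apply $\partial_t$ to \eqref{2.61j} to obtain an evolution equation for $u_{tt}$:
\begin{equation*}
u_{ttt} + a_2 l^\nu \sqrt{h^2+\epsilon^2}\,Lu_{tt} = -2a_2(l^\nu\sqrt{h^2+\epsilon^2})_t Lu_t - a_2(l^\nu\sqrt{h^2+\epsilon^2})_{tt} Lu + R,
\end{equation*}
where $R$ collects the second time derivatives of $v\cdot\nabla v$, $l\nabla\phi$, $a_1\phi\nabla l$ and the two right-hand side source terms in $\eqref{ln}_2$. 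Pairing with $l^{-\nu}u_{tt}$ in $L^2(\mathbb{R}^3)$ and using Lemma \ref{lem2as}, one arrives at a differential inequality of the form
\begin{equation*}
\tfrac{1}{2}\tfrac{d}{dt}\bigl|l^{-\nu/2}u_{tt}\bigr|^2_2 + a_2\alpha\bigl|(h^2+\epsilon^2)^{1/4}\nabla u_{tt}\bigr|^2_2 + a_2(\alpha+\beta)\bigl|(h^2+\epsilon^2)^{1/4}\text{div}\,u_{tt}\bigr|^2_2 \le \sum_{i} K_i,
\end{equation*}
analogous to \eqref{ltt1bnm}. Each $K_i$ is then controlled using \eqref{2.16}, Lemmas \ref{phiphi}--\ref{hu} (in particular the $L^2$ bounds for $\phi_{tt},\psi_{tt}, l_{tt}, v_{tt}$, the time-weighted $L^2$ bounds for $\sqrt{h}\nabla^2 v_t$ and $\sqrt{g}v_{tt}|_{D^1_*}$, and the $L^\infty$-in-time bound $|\sqrt{h}\nabla u_t|_2\le M(c_0)$).

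After multiplying by $t$ and integrating over $(\tau,t)$, one obtains an inequality
\begin{equation*}
t\bigl|l^{-\nu/2}u_{tt}(t)\bigr|^2_2 + \int_\tau^t s\bigl|(h^2+\epsilon^2)^{1/4}\nabla u_{ss}\bigr|^2_2\,\text{d}s \le \tau\bigl|l^{-\nu/2}u_{tt}(\tau)\bigr|^2_2 + M(c_0)c_4^{6\nu+4}(1+t) + M(c_0)\int_\tau^t s\bigl|l^{-\nu/2}u_{ss}\bigr|^2_2\,\text{d}s,
\end{equation*}
with the critical quadratic contributions absorbed into the dissipation. Since Lemma \ref{hu} gives $u_{tt}\in L^2([0,T_5];L^2)$, one selects a sequence $\tau_k\to 0$ with $\tau_k|u_{tt}(\tau_k)|^2_2\to 0$; letting $\tau_k\to 0$ and applying Gronwall's inequality produces the bound $t|u_{tt}(t)|^2_2 + \int_0^t s|\sqrt{h}\nabla u_{ss}|^2_2\,\text{d}s \le M(c_0)c_4^{6\nu+4}$, and by Lemma \ref{gh} also the bound for $\int_0^t s|u_{ss}|^2_{D^1_*}\,\text{d}s$. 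The remaining estimates on $t|u_t|^2_{D^2}$, $t|h\nabla^2 u_t|^2_2$, $t|u|^2_{D^4}$ and $\int_0^t s|u_s|^2_{D^3}\,\text{d}s$ then follow from the elliptic identities \eqref{2.64} and \eqref{2.67} exactly as in Lemma \ref{hu}, using the just-obtained $t|u_{tt}|^2_2$ bound in place of the uniform $|u_{tt}|_2$ bound.

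The principal obstacle is the term $-a_2(l^\nu\sqrt{h^2+\epsilon^2})_{tt}Lu$, whose expansion contains $h_{tt}$, $(h_t)^2$ and mixed factors such as $h_t l_t$. By Lemmas \ref{psi} and \ref{shang5} these quantities are bounded only by powers of $c_4$, while $Lu$ is controlled in $L^2$ only with the strongest singular weight $h$. Thus after pairing with $l^{-\nu}u_{tt}$, one must carefully redistribute the singular weight via interpolation inequalities of the type $|\phi^{3\iota/2}\nabla u_t|_3\le C|\phi^\iota\nabla u_t|^{1/2}_2|\phi^{2\iota}\nabla u_t|^{1/2}_6$, so that a fractional portion of the dissipation $|(h^2+\epsilon^2)^{1/4}\nabla u_{tt}|_2$ absorbs the dangerous contributions while the remainder, uniformly in $(\epsilon,\eta)$, fits within $M(c_0)c_4^{6\nu+4}$. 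Verifying this weight balance and tracking the exact $\nu$-powers to land on the stated exponent $6\nu+4$ will be the principal technical labor.
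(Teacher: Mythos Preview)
Your proposal is correct and follows essentially the same approach as the paper: differentiate \eqref{2.61j} in time, pair with $l^{-\nu}u_{tt}$, multiply the resulting differential inequality by $t$, use the sequence argument (enabled by $u_{tt}\in L^2([0,T_5];L^2)$ from Lemma \ref{hu}) to kill the initial term, apply Gronwall, and then recover the remaining bounds from the elliptic representations \eqref{2.64}--\eqref{2.65}. One small remark: your last paragraph overstates the difficulty of the term $a_2(l^\nu\sqrt{h^2+\epsilon^2})_{tt}Lu$; in the paper it is handled directly via products such as $|h_{tt}|_6|\nabla^2 u|_3$ and $|l_{tt}|_6|\sqrt{h^2+\epsilon^2}\nabla^2 u|_2|u_{tt}|_3$ (the latter using $|u_{tt}|_3\le C|u_{tt}|_2^{1/2}|\nabla u_{tt}|_2^{1/2}$), without the weighted interpolation on $\nabla u_t$ that you anticipate, and the Gronwall coefficient actually picks up a factor $c_5^{2\nu+8}$ (absorbed since $T_5$ is chosen accordingly) rather than just $M(c_0)$.
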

\begin{proof} Differentiating  $\ef{2.61j}$ with respect to $t$ yields 
\begin{equation}\label{2.71}
\begin{split}
&u_{ttt}+a_2\sqrt{h^2+\epsilon^2}l^\nu Lu_{tt}\\
=& -(v\cdot\nabla v)_{tt}-a_1(\phi\nabla l)_{tt}-(l\nabla\phi)_{tt}
+a_3(l^\nu\psi\cdot Q(v))_{tt}
+a_2(g\nabla l^\nu\cdot Q(v))_{tt}\\
&-a_2( \sqrt{h^2+\epsilon^2}l^\nu)_{tt}Lu-2a_2(l^\nu)_{t}\sqrt{h^2+\epsilon^2}L u_t-2a_2\frac{h}{\sqrt{h^2+\epsilon^2}}h_{t} l^\nu Lu_t.
\end{split}
\end{equation}
Multiplying $\ef{2.71}$ by $l^{-\nu}u_{tt}$ and integrating over $\mathbb{R}^3$ give
\begin{equation}\label{2.72}
\begin{split}
&\fr{1}{2}\fr{d}{dt}|l^{-\fr{\nu}{2}}u_{tt}|^2_2+a_2\alpha|(h^2+\epsilon^2)^{\fr{1}{4}}\nabla u_{tt}|^2_2+a_2(\alpha+\beta)|(h^2+\epsilon^2)^{\fr{1}{4}}\text{div}u_{tt}|^2_2=\sum^{8}_{i=5}I_i,
\end{split}
\end{equation}
where  $I_i$, $i=5,6,7,8$, are given and estimated as follows.
\begin{equation}\label{h1}
\begin{split}
I_5=&\int l^{-\nu}\big(-(v\cdot\nabla v)_{tt}-a_1(\phi\nabla l)_{tt}-(l\nabla\phi)_{tt}\big)\cdot u_{tt}\\
\leq& C|l^{-\fr{\nu}{2}}|_\infty\big(|\nabla v_t|_6|v_t|_3+|\nabla v|_\infty|v_{tt}|_2+|v|_\infty|\nabla v_{tt}|_2+|\phi|_\infty|\nabla l_{tt}|_2\\
&+
|\phi_{tt}|_2|\nabla l|_\infty+|\phi_t|_\infty|\nabla l_t|_2+|l_t|_6|\nabla\phi_t|_3\\
&+|l_{tt}|_6|\nabla\phi|_3+|l|_\infty|\nabla\phi_{tt}|_2)|l^{-\fr{\nu}{2}}u_{tt}|_2,\\
I_6=&a_3\int l^{-\nu} (l^\nu\psi\cdot Q(v))_{tt}\cdot u_{tt}\\
\leq& C|l^{-\fr{\nu}{2}}|_\infty(|l^\nu|_\infty|\psi_{tt}|_2|\nabla v|_\infty+|l^{\nu-1}|_\infty|l_{tt}|_6|\psi|_\infty|\nabla v|_3
\\
&+|l^{\nu-2}|_\infty|l_{t}|^2_6|\psi|_\infty|\nabla v|_6+|l^\nu|_\infty|\psi|_\infty|\nabla v_{tt}|_2+|\psi_t|_3|(l^\nu)_t|_6|\nabla v|_\infty
\\
&+|\psi|_\infty|(l^\nu)_t|_6|\nabla v_t|_3
+|l^\nu|_\infty|\psi_t|_3|\nabla v_t|_6
)|l^{-\fr{\nu}{2}}u_{tt}|_2,\\
I_7=&a_2\int l^{-\nu}(g\nabla l^\nu\cdot Q(v))_{tt}\cdot u_{tt}\\
\leq& C|l^{-\fr{\nu}{2}}|_\infty\Big(|l^{\nu-3}|_\infty|g\nabla v|_\infty|l_{t}|^2_6|\nabla l|_6+|l^{\nu-2}|_\infty|g\nabla v|_\infty|l_{tt}|_6|\nabla l|_3\\
&+ l^{\nu-2}|_\infty|g\nabla v|_\infty|l_{t}|_6|\nabla l_t|_3+|l^{\nu-1}|_\infty|\nabla l_{t}|_2|g_t|_\infty|\nabla v |_\infty\\
&+|l^{\nu-2}|_\infty| l_{t}|_6(|\nabla l|_3|g_t|_\infty|\nabla v |_\infty+|\nabla l |_6|g\nabla v_t|_6)\Big)|l^{-\fr{\nu}{2}}u_{tt}|_2\\
&+C|l^{-1}|_\infty| \nabla l_{t}|_2|g\nabla v_t|_6|u_{tt}|_3+C|l^{\frac{\nu}{2}-1}|_\infty|\nabla l|_3|g_{tt}|_6|\nabla v|_\infty|l^{-\fr{\nu}{2}}u_{tt}|_2\\
&+C|l^{\frac{\nu}{2}-1}|_\infty|gh^{-1}|^{\frac{1}{2}}_\infty|\sqrt{h}\nabla l|_\infty|\sqrt{g}\nabla v_{tt}|_2|l^{-\fr{\nu}{2}}u_{tt}|_2\\
&+C|l^{\frac{\nu}{2}-1}|_\infty(|\nabla l|_\infty|g_t|_\infty|\nabla v_t|_2+|g\nabla v|_\infty|\nabla l_{tt}|_2)|l^{-\fr{\nu}{2}}u_{tt}|_2,
\end{split}
\end{equation}
\begin{equation}\label{2hjkl}
\begin{split}
I_8=&-a_2\int l^{-\nu}\big(( \sqrt{h^2+\epsilon^2}l^\nu)_{tt}Lu+2(l^\nu)_{t}\sqrt{h^2+\epsilon^2}L u_t\\
&+\frac{2h}{\sqrt{h^2+\epsilon^2}}h_{t} l^\nu Lu_t+l^{\nu}\nabla\sqrt{h^2+\epsilon^2}\cdot Q(u_{tt}) \big) \cdot u_{tt}+\fr{1}{2}\int (l^{-\nu})_t|u_{tt}|^2\\
\leq& C|l^{-\fr{\nu}{2}}|_\infty
\big(|(l^\nu)_t|_6|h_t|_\infty|\nabla^2 u|_3
+|l^{\nu-2}|_\infty|l_t|^2_6|\sqrt{h^2+\epsilon^2}\nabla^2u|_6\\
&+|l^\nu|_\infty|h_{tt}|_6|\nabla^2u|_3+|l^\nu|_\infty|h_t|^2_\infty|\varphi|_\infty|\nabla^2 u|_2\\
&+|l^\nu|_\infty|h_t|_\infty|\varphi|_\infty|h\nabla^2 u_{t}|_2+|l^\nu|_\infty|\psi|_\infty|\sqrt{h}\nabla u_{tt}|_2|\varphi|^{\fr{1}{2}}_\infty\big)|l^{-\fr{\nu}{2}}u_{tt}|_2\\
&+C|l^{-1}|_\infty|l_t|_6|\sqrt{h^2+\epsilon^2}\nabla^2u_t|_2|u_{tt}|_3
\\
&+C|l^{-1}|_\infty|l_{tt}|_6|\sqrt{h^2+\epsilon^2}\nabla^2u|_2|u_{tt}|_3 +C|l^{-\fr{\nu}{2}-1}|_\infty|l_t|_6|l^{-\fr{\nu}{2}}u_{tt}|_2|u_{tt}|_3.
\end{split}
\end{equation}

Multiplying $\ef{2.72}$ by $t$ and integrating over $(\tau,t)$, one can obtain from the  estimates on $I_i$ ($i=5,...,8$), \eqref{2.16} and Lemmas \ref{phiphi}-\ref{hu} that 
\begin{equation}\label{2.73}
\begin{split}
&t|l^{-\fr{\nu}{2}}u_{tt}(t)|^2_2+\fr{a_2\alpha}{4}\int^t_\tau s|\sqrt{h}\nabla u_{ss}|^2_2\text{d}s\\
\leq & \tau|l^{-\fr{\nu}{2}}u_{tt}(\tau)|^2_2+M(c_0)c^{6\nu+4}_4(1+t)+M(c_0)c^{2\nu+8}_5\int^t_\tau s|l^{-\fr{\nu}{2}}u_{ss}|^2_2\text{d}s.
\end{split}
\end{equation}
Due to $\ef{2.62m}$,    there exists a sequence $s_k$ such that
\begin{equation*}
s_k\longrightarrow 0, \quad \text{and} \quad s_k|u_{tt}(s_k,x)|^2_2\longrightarrow 0,\quad \text{as}\quad k\longrightarrow \infty.
\end{equation*}
Taking $\tau=s_k$ and letting $k\rightarrow \infty$ in $\ef{2.73}$, one has  by  Gronwall's inequality that
\begin{equation}\label{2.74}
\begin{split}
t|u_{tt}(t)|^2_2+&\int^t_0s|\sqrt{h}\nabla u_{ss}|^2_2 \text{d}s+\int^t_0s|\nabla u_{ss}|^2_2\text{d}s\leq M(c_0)c^{6\nu+4}_4,
\end{split}
\end{equation}
for $0\leq t\leq T_5$, which, along with  $\ef{2.65}$  and $\ef{2.74}$, yields  that
\begin{equation}\label{2.75}
t^{\fr{1}{2}}|\nabla^2u_t(t)|_2+t^{\fr{1}{2}}|h\nabla^2u_t(t)|_2+ t^{\fr{1}{2}}|\nabla^4u(t)|_2\leq  M(c_0)c^{3\nu+2}_4.
\end{equation}

Next, to derive the $L^2$ estimate of $\nabla^3 u_t$, one deals with  the $L^2$ estimates of 
$$(\nabla \mathcal{G},\nabla \widehat{G}=\nabla G(\nabla\sqrt{h^2+\epsilon^2},u_t)).$$
It follows from  \eqref{Gdingyi},  \eqref{hl2}, \eqref{2.61rrnnn} and Lemmas \ref{phiphi}-\ref{hu} that 
\begin{equation}\label{2.61ccvvmmmm}
\begin{split}
|\mathcal{G}|_{D^1_*}\leq & C(|u_{tt}|_{D^1_*}+\|\nabla v\|_2|\nabla v_t|_2+|v|_\infty|\nabla^2 v_t|_2\\
&+ \|l\|_{L^\infty\cap D^1\cap D^3}\| \phi_t\|_2+\|l_t\|_{ D^1_*\cap D^2}\|\nabla \phi\|_2\\
&+\|l^{\nu-1} \|_{1,\infty}\|l_t\|_{L^\infty\cap D^2}(\|\sqrt{h^2+\epsilon^2} Lu\|_1+|\psi|_\infty|\nabla^2 u|_2)\\
&+(1+|\psi|_\infty)(1+|\varphi|_\infty)\|h_t\|_{L^\infty\cap D^2}\|l^{\nu} \|_{1,\infty} \|\nabla^2 u\|_1\\
&+\|g_t\|_{L^\infty\cap D^1}\|\nabla l^\nu\|_2\|\nabla v\|_2+\|\nabla l^\nu\|_2(|\nabla g|_\infty|\nabla v_t|_2+|g\nabla^2 v_t|_2)\\
&+(|g\nabla v|_\infty+|\nabla g|_\infty\|\nabla v\|_2+\|g\nabla^2 v\|_1)\|l_t\|_{{ D^1_*\cap  D^2}}\|l^{\nu-1} \|_{ L^\infty\cap D^1\cap  D^3}\\
&+\|l^{\nu-1} \|_{1,\infty}\|l_t\|_{  D^1_*}\|\psi\|_{L^\infty \cap  D^{1,3}}\|\nabla v\|_2\\
&+\|l^{\nu} \|_{1,\infty}\|\psi_t\|_1\|\nabla v\|_2+\|l^{\nu} \|_{1,\infty}\|\psi\|_{L^\infty \cap  D^{1,3}}\|\nabla v_t\|_1)\\
\leq &M(c_0)(|\nabla u_{tt}|_2+c^{4\nu+3}_4|g\nabla ^2 v_t|_{2}+c_4^{5\nu+5}|l_t|_{D^2}+c_4^{5\nu+7}),
\end{split}
\end{equation}
\begin{equation}\label{2.61ccvvmmmmjjj}
\begin{split}
|\widehat{G}|_{D^1_*}\leq &C( |\nabla\sqrt{h^2+\epsilon^2}|_\infty |\nabla^2 u_t|_2+|\nabla^2 \sqrt{h^2+\epsilon^2}|_3 |\nabla u_t|_6\\
&+|\nabla^3 \sqrt{h^2+\epsilon^2}|_2 | u_t|_\infty)\leq  M(c_0)(|u_t|_{D^2}+c^{2\nu+3}_4).
\end{split}
\end{equation}
Hence $\ef{2.64}$, \eqref{2.61ccvvmmmm}-\eqref{2.61ccvvmmmmjjj}, Lemmas \ref{zhenok} and   \ref{phiphi}-\ref{hu} yield that  for $0\leq t\leq T_5$,
\begin{equation*}
\begin{split}
|\sqrt{h^2+\epsilon^2}u_t|_{D^3}\leq&  C|l^{-\nu}\mathcal{G}|_{D^1_*}+C|G(\nabla\sqrt{h^2+\epsilon^2},u_t)|_{D^1_*}\\
\leq &M(c_0)(\| u_{tt}\|_1+|u_t|_{D^2}+c_4^{6\nu+7}(|g\nabla ^2 v_t|_{2}+|l_t|_{D^2}+1)),\\
|\sqrt{h^2+\epsilon^2}\nabla^3u_t(t)|_2\leq& C(|\sqrt{h^2+\epsilon^2}u_t|_{D^3}+|u_t|_\infty|\nabla^2\psi|_2+|\nabla u_t|_6|\nabla \psi|_3\\
&+|\nabla^2u_t|_2|\psi|_\infty
+|\nabla u_t|_2\|\psi\|^2_{L^\infty \cap D^{1,3}\cap D^2}|\varphi|_\infty+|u_t|_2|\psi|_\infty^3|\varphi|^2_\infty)\\
\leq & C|\sqrt{h^2+\epsilon^2}u_t|_{D^3}+M(c_0)(|u_t|_{D^2}+c_4^{2\nu+3}),
\end{split}
\end{equation*}
which, along with $\ef{2.62}$, $\ef{2.74}$-$\ef{2.75}$ and Lemma \ref{varphi}, yields $\eqref{2.70}_2$.

The proof of Lemma \ref{hu2} is complete.
\end{proof}
It follows from Lemmas \ref{phiphi}-\ref{hu2}  that for 
 $0\leq t\leq T_5=\min\{T^*,(1+M(c_0)c_5)^{-40-10\nu}\}$,
\begin{equation*}\begin{aligned}
\|(\phi-\eta)(t)\|^2_{D^1_*\cap D^3}+\|\phi_t(t)\|^2_2+
|\phi_{tt}(t)|^2_2+\int^t_0\|\phi_{ss}\|^2_1\text{d}s\leq &Cc_4^6,\\
\|\psi(t)\|^2_{L^q\cap D^{1,3}\cap D^2}\leq M(c_0),\hspace{2mm}|\psi_t(t)|_2\leq Cc_3^2,\quad 
|h_t(t)|_\infty^2\leq & Cc_3^3c_4,\\ 
h(t,x)>\fr{1}{2c_0}, \ \fr{2}{3}\eta^{-2\iota}<\varphi, \ 
|\psi_t(t)|^2_{D^1_*}+\int^t_0(|\psi_{ss}|^2_2+|h_{ss}|^2_6)\text{d}s\leq & Cc_4^4,\\ 
\tilde{C}^{-1}\leq gh^{-1}(t,x)\leq  \tilde{C},\ \  \ |\xi(t)|_{D^1_*}+|\zeta(t)|_4+|h^{-\frac{1}{4}}\nabla^2 h(t)|_2\leq& M(c_0),\\
 	\|n(t)\|_{ L^\infty\cap D^{1,q}\cap D^{1,4}\cap D^{1,6} \cap D^{2} \cap D^3}\leq  M(c_0),\quad |n_t(t)|_2\leq & M(c_0)c_1,\\
|n_t(t)|_\infty+	|\nabla n_t(t)|_2+ |\nabla n_t(t)|_6\leq M(c_0)c_4^2,\ \  |n_{tt}(t)|_2\leq& M(c_0)c_4^3,\\
|u|^2_\infty+|\sqrt{h}\nabla u(t)|^2_2+\|u(t)\|^2_1+\int^t_0\big(\|\nabla u\|^2_1+|u_s|^2_2\big)\text{d}s\leq & M(c_0),\\
|\nabla l(t)|^2_2+|h^{\fr{1}{4}}\nabla l(t)|^2_2+
\int^t_0(|h^{-\fr{1}{4}}l_s|_2^2+|\sqrt{h} \nabla^2l|^2_2+|\nabla^2l|^2_2)\text{d}s\leq & M(c_0)c^{3\nu}_1,\\
|h^{-\frac{1}{4}}l_{t}(t)|^2_2+|\sqrt{h}\nabla^2l(t)|^2_2+\int^t_0(|h^{\frac{1}{4}} \nabla l_s|^2_2+|\sqrt{h}\nabla^3l |_2^2)\text{d}s
   \leq &M(c_0)c^{4\nu+2}_1,\\
	|h^{\fr{1}{4}}\nabla l_t(t)|^2_2+
  |\sqrt{h}\nabla^3l(t) |_2^2+ \int^t_0(|h^{-\fr{1}{4}} l_{ss}|_2^2+|\sqrt{h}\nabla^2 l_s|_2^2)\text{d}s\leq &M(c_0)c_1^{8\nu+6},\\
  t|l_t(t)|^2_{D^2}+t|\sqrt{h} \nabla^2l_t(t)|^2_2+t|h^{-\frac{1}{4}}l_{tt}(t)|^2_2  \leq & M(c_0)c^{\nu}_1,\\
  \int^t_0s(|l_{ss}|_{D^1_*}^2+|h^{\frac{1}{4}} l_{ss}|_{D^1_*}^2)\text{d}s  \leq  M(c_0),\quad  \fr{1}{2}c_0^{-1}\leq l(x,t)\leq & \fr{3}{2}c_0,\\
  (|u|_{D^2}^2+|h\nabla^2u|^2_2+|u_t|^2_2)(t)+\int^t_0(|u|^2_{D^3}+|h\nabla^2u|_{D^1_*}^2+|u_s|^2_{D^1_*})\text{d}s\leq & M(c_0),  \\
  (|u_t|^2_{D^1_*}+|\sqrt{h}\nabla u_t|^2_2+|u|^2_{D^3}+|h\nabla^2u|^2_{D^1_*})(t)+\int^t_0|u_s|^2_{D^2}\text{d}s\leq &  M(c_0)c^{2\nu+3}_3, \\
  \int^t_0(|u_{ss}|^2_2+|u|^2_{D^4}+|h\nabla^2u|^2_{D^2}+|(h\nabla^2u)_s|^2_2)\text{d}s\leq & M(c_0),  \end{aligned}\end{equation*}
\begin{equation*}\begin{aligned}
t|u_t(t)|^2_{D^2}+t|h\nabla^2u_t(t)|^2_2+t|u_{tt}(t)|^2_2+t|u(t)|^2_{D^4}  \leq & M(c_0)c^{6\nu+4}_4,\\
\int^t_0s(|u_{ss}|_{D^1_*}^2+|h\nabla^3 u_s|^2_{2}+|\sqrt{h} u_{ss}|_{D^1_*}^2)\text{d}s  \leq & M(c_0)c^{6\nu+4}_4.
\end{aligned}\end{equation*}
Then  setting  
 \begin{equation*}
\begin{split}
T^*=&
\min\{T,(1+M(c_0))^{36\nu^3+104\nu^2+102\nu+36})^{-40-10\nu}\},\\
c_1^2=& M(c_0)^2,\quad 
c_2^2=c_3^2=M(c_0)^{8\nu+7},\\
c_4^2=&M(c_0)^{24\nu^2+45\nu+23},\quad  c_5^2=M(c_0)^{72\nu^3+207\nu^2+204\nu+70},
\end{split}
\end{equation*}
one can arrive at  the following desired  estimates:
\begin{equation}\begin{aligned}\label{key1kk}
\|(\phi-\eta)(t)\|^2_{D^1_*\cap D^3}+\|\phi_t(t)\|^2_2+
|\phi_{tt}(t)|^2_2+\int^t_0\|\phi_{ss}\|^2_1\text{d}s\leq &c^{2}_5,\\
\|\psi(t)\|^2_{L^q\cap D^{1,3}\cap D^2}\leq c_1,\hspace{2mm}|\psi_t(t)|_2\leq c^{2}_4,\quad 
|h_t(t)|_\infty^2\leq & c^2_4,\\ 
h(t,x)>\fr{1}{2c_0}, \ \fr{2}{3}\eta^{-2\iota}<\varphi, \ 
|\psi_t(t)|^2_{D^1_*}+\int^t_0(|\psi_{ss}|^2_2+|h_{ss}|^2_6)\text{d}s\leq & c^{2}_5,\\ 
\|n(t)\|_{ L^\infty\cap D^{1,q}\cap D^{1,4}\cap D^{1,6} \cap D^{2} \cap D^3}+|\xi(t)|_{D^1_*}+|\zeta(t)|_4+|h^{-\frac{1}{4}}\nabla^2 h(t)|_2\leq& c_1,\\
|n_t(t)|_2\leq  c^{2}_4,\quad |n_t(t)|_\infty+	|\nabla n_t(t)|_2+ |\nabla n_t(t)|_6+ |n_{tt}(t)|_2\leq& c^{2}_5,\\
|u|^2_\infty+|\sqrt{h}\nabla u(t)|^2_2+\|u(t)\|^2_1+\int^t_0\big(\|\nabla u\|^2_1+|u_s|^2_2\big)\text{d}s\leq & c^{2}_1,\\
	|\nabla l(t)|^2_2+|h^{\fr{1}{4}}\nabla l(t)|^2_2+
\int^t_0(|h^{-\fr{1}{4}}l_s|_2^2+|\sqrt{h} \nabla^2l|^2_2+|\nabla^2l|^2_2)\text{d}s\leq & c^{2}_2,\\
 |h^{-\frac{1}{4}}l_{t}(t)|^2_2+|\sqrt{h}\nabla^2l(t)|^2_2+\int^t_0(|h^{\frac{1}{4}} \nabla l_s|^2_2+|\sqrt{h}\nabla^3l |_2^2)\text{d}s
   \leq &c^{2}_2,\\
    |h^{\fr{1}{4}}\nabla l_t(t)|^2_2+
  |\sqrt{h}\nabla^3l (t)|_2^2+ \int^t_0(|h^{-\fr{1}{4}} l_{ss}|_2^2+|\sqrt{h}\nabla^2 l_s|_2^2)\text{d}s\leq &c^{2}_2,\\
  	t|l_t(t)|^2_{D^2}+t|\sqrt{h} \nabla^2l_t(t)|^2_2+t|h^{-\frac{1}{4}}l_{tt}(t)|^2_2  \leq &  c^{2}_2,\\
\int^t_0s(|l_{ss}|_{D^1_*}^2+|h^{\frac{1}{4}} l_{ss}|_{D^1_*}^2)\text{d}s  \leq  c_1^2,\quad  c_1^{-1}\leq l(t,x)\leq & c_1,\\
(|u|_{D^2}^2+|h\nabla^2u|^2_2+|u_t|^2_2)(t)+\int^t_0(|u|^2_{D^3}+|h\nabla^2u|_{D^1_*}^2+|u_s|^2_{D^1_*})\text{d}s\leq & c^{2}_3,\\
(|u_t|^2_{D^1_*}+|\sqrt{h}\nabla u_t|^2_2+|u|^2_{D^3}+|h\nabla^2u|^2_{D^1_*})(t)+\int^t_0|u_s|^2_{D^2}\text{d}s\leq &  c^{2}_4,\\
\int^t_0(|u_{ss}|^2_2+|u|^2_{D^4}+|h\nabla^2u|^2_{D^2}+|(h\nabla^2u)_s|^2_2)\text{d}s\leq & c^{2}_4,\\
	t|u_t(t)|^2_{D^2}+t|h\nabla^2u_t(t)|^2_2+t|u_{tt}(t)|^2_2+t|u(t)|^2_{D^4} \leq & c^{2}_5,\\	\int^t_0s(|u_{ss}|_{D^1_*}^2+|h\nabla^3 u_s|^2_{2}+|\sqrt{h} u_{ss}|_{D^1_*}^2)\text{d}s  \leq & c^{2}_5
\end{aligned}\end{equation}
for $0\leq t\leq T^*$, which are uniformly bounded with respect to both $\epsilon$ and $\eta$.

\subsection{Vanishing of the artificial dissipations}

By the uniform estimates   \ef{key1kk}, one can now obtain  the local well-posedness of    \ef{ln} with  $\epsilon=0$ and $\phi^\eta_0\geq \eta$ for any   constant $\eta>0$. For simplicity, denote $B_R$ a ball centered at origin with radius $R$.
\begin{lemma}\label{epsilon0}
	Let $\ef{can1}$ hold. Assume that $(\phi_0,u_0,l_0,h_0)$ satisfy
	\eqref{a}-\eqref{2.8*}, and there exists a  constant $c_0>1$ independent of $\eta$ such that \ef{2.14} holds. Then there exist a time $T^*>0$, independent of $\eta$, and a unique strong  solution $
	(\phi^\eta,u^\eta,l^\eta, h^\eta)$
	  in $[0,T^*]\times\mathbb{R}^3$ to \ef{ln} with  $\epsilon=0$ satisfying \ef{2.13} with $T$ replaced by $T^*$. Moreover,  \ef{key1kk} hold for $
	(\phi^\eta,u^\eta,l^\eta, h^\eta)$ uniformly (independent of $\eta$).
\end{lemma}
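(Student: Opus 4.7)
The plan is to obtain the $\epsilon=0$ solution as the weak-* limit of the sequence $(\phi^{\epsilon,\eta}, u^{\epsilon,\eta}, l^{\epsilon,\eta}, h^{\epsilon,\eta})$ produced by Lemma \ref{ls}, using the uniform bounds \eqref{key1kk} which are independent of $\epsilon$. Fix $\eta>0$ and the associated $c_0=c_0(\eta)$, so that $T^*$ and the constants $c_1,\ldots,c_5$ in \eqref{key1kk} do not degrade as $\epsilon\to 0$. In particular, the estimates give the quantitative lower bounds $h^{\epsilon,\eta}(t,x)\geq\tfrac{1}{2c_0}$ and $l^{\epsilon,\eta}(t,x)\geq c_1^{-1}$ on $[0,T^*]\times\mathbb{R}^3$, which are the crucial ingredients allowing one to pass to the limit in the nonlinear, degenerate coefficients.

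First I would extract limits: from the $\epsilon$-independent bounds and standard weak-* compactness in reflexive spaces, there is a subsequence (not relabelled) and a limit $(\phi^\eta,u^\eta,l^\eta,h^\eta)$ such that the convergences hold in the natural topologies associated with each norm in \eqref{key1kk} (for instance $u^{\epsilon,\eta}\rightharpoonup^* u^\eta$ in $L^\infty([0,T^*];H^3)$, $l^{\epsilon,\eta}-\bar l\rightharpoonup^* l^\eta-\bar l$ in $L^\infty([0,T^*];D^1_*\cap D^3)$, $\phi^{\epsilon,\eta}-\eta\rightharpoonup^*\phi^\eta-\eta$ in $L^\infty([0,T^*];D^1_*\cap D^3)$, etc.). Combining the spatial bounds with the uniform bounds on the time derivatives provided by \eqref{key1kk}, the Aubin--Lions--Simon lemma applied on each ball $B_R$ upgrades these to strong convergence in $C([0,T^*];H^{s}_{\mathrm{loc}})$ for any $s$ strictly below the top-order regularity of each component, which will be sufficient for the nonlinearities in \eqref{ln}.

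Next I would pass to the limit in each equation. Because $h^{\epsilon,\eta}\geq\tfrac{1}{2c_0}$ uniformly, the functions $\sqrt{(h^{\epsilon,\eta})^2+\epsilon^2}$ and $((h^{\epsilon,\eta})^2+\epsilon^2)^{1/4}$ converge to $h^\eta$ and $(h^\eta)^{1/2}$ pointwise and in $L^p_{\mathrm{loc}}$ for any $p<\infty$, and similarly $(l^{\epsilon,\eta})^\nu\to (l^\eta)^\nu$ strongly since $z\mapsto z^\nu$ is smooth on $[c_1^{-1},c_1]$. Pairing these strong convergences with the weak-* convergence of the highest-order derivatives (using Lions' lemma on products of one strongly and one weakly convergent factor in $L^2_{\mathrm{loc}}$) lets one pass to the distributional limit in products such as $((h^{\epsilon,\eta})^2+\epsilon^2)^{1/4}\triangle l^{\epsilon,\eta}$ and $\sqrt{(h^{\epsilon,\eta})^2+\epsilon^2}\,Lu^{\epsilon,\eta}$. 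By lower semicontinuity of norms under weak-* convergence, all the bounds \eqref{key1kk} are inherited by the limit $(\phi^\eta,u^\eta,l^\eta,h^\eta)$; the weak continuity in time then upgrades to strong continuity via the standard argument comparing energy identities.

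Uniqueness for the limit problem is treated by differences. For two solutions $(\phi_i,u_i,l_i,h_i)$, $i=1,2$, with the same $(v,g,w)$ and the same initial data, the differences $(\bar\phi,\bar u,\bar l,\bar h)$ satisfy a linear system whose structure is precisely the one analysed for \eqref{ln}; mimicking the weighted $L^2$ estimates of Lemmas \ref{phiphi}, \ref{l}, \ref{lu}, \ref{hu} on the differences (which requires no new ideas, only book-keeping with zero initial data and already-controlled coefficients) closes a Gronwall inequality forcing $(\bar\phi,\bar u,\bar l,\bar h)\equiv 0$. The main technical obstacle I anticipate is step two: justifying the limit in the singular--degenerate coupled products $((h^{\epsilon,\eta})^2+\epsilon^2)^{1/4}\triangle l^{\epsilon,\eta}$ and $\sqrt{(h^{\epsilon,\eta})^2+\epsilon^2}\,Lu^{\epsilon,\eta}$, because the highest-order factors are only weakly convergent while one must also retain compatibility with the top-order uniform bounds. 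This is overcome by testing against smooth compactly supported functions, transferring derivatives off the high-order factor, and invoking the uniform lower bound on $h^{\epsilon,\eta}$ to remove the regularisation $\epsilon$ before taking the limit.
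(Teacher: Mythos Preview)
Your proposal is correct and follows essentially the same approach as the paper: extract weak limits from the $\epsilon$-uniform bounds \eqref{key1kk}, upgrade to strong local convergence via Aubin--Lions on balls $B_R$, and pass to the limit in the equations using the uniform lower bound $h^{\epsilon,\eta}\ge \tfrac{1}{2c_0}$ to handle the regularised coefficients; the paper records one additional $\eta$-dependent bound \eqref{related} on $|h^{\epsilon,\eta}|_\infty+|\nabla h^{\epsilon,\eta}|_2+|h^{\epsilon,\eta}_t|_2$ to run Aubin--Lions for $h$, which you implicitly recover on $B_R$ from the bounds already in \eqref{key1kk}. One small remark: the ``main technical obstacle'' you flag is in fact mild here, since $h^{\epsilon,\eta}\ge \tfrac{1}{2c_0}$ forces $\sqrt{(h^{\epsilon,\eta})^2+\epsilon^2}\to h^\eta$ and $((h^{\epsilon,\eta})^2+\epsilon^2)^{1/4}\to (h^\eta)^{1/2}$ uniformly, so a strong--weak product argument suffices without integrating by parts; also, because \eqref{ln} with fixed $(v,g,w)$ is genuinely linear in $(\phi,h,l,u)$ in a triangular way, uniqueness is immediate from standard transport/parabolic theory and does not actually require the weighted difference estimates you sketch.
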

\begin{proof} First, it follows from Lemmas \ref{ls}--\ref{hu2} that  for every $\epsilon>0$ and $\eta>0$,   there exist a time $T^*>0$, independent of $(\epsilon,\eta)$, and   a unique strong solution $(\phi^{\epsilon,\eta}, u^{\epsilon,\eta}, l^{\epsilon,\eta}, h^{\epsilon,\eta})(t,x)$ in $[0,T^*]\times \mathbb{R}^3$  to (\ref{ln}) satisfying  the estimates in  \ef{key1kk} which are independent of  $(\epsilon,\eta)$.
Then  by  using of the characteristic method and the  standard energy estimates for $(\ref{ln})_4$,  one can show that for $0\leq t\leq T^*$,
\begin{equation}\label{related}
|h^{\epsilon,\eta}(t)|_{\infty}+|\nabla h^{\epsilon,\eta}(t)|_2+ |h^{\epsilon,\eta}_t(t)|_{2}\leq C(A, R, c_v,\digamma, \eta, \alpha, \beta, \gamma, \delta, T^*, c_0).
\end{equation}

Thus, it follows from  \ef{key1kk}-\ef{related}  and   Lemma \ref{aubin}  that for any $R> 0$,  there exists a subsequence of solutions (still denoted by) $(\phi^{\epsilon,\eta}, u^{\epsilon,\eta},l^{\epsilon,\eta},  h^{\epsilon,\eta} )$, which  converges to  a limit $(\phi^\eta,  u^\eta, l^\eta, h^\eta) $ as $\epsilon \rightarrow 0$ in the following  strong sense:
\begin{equation}\label{ert1}\begin{split}
&(\phi^{\epsilon,\eta}, u^{\epsilon,\eta}, l^{\epsilon,\eta}, h^{\epsilon,\eta}) \rightarrow (\phi^\eta,  u^\eta, l^\eta, h^\eta ) \ \ \text{ in } \ C([0,T^*];H^2(B_R)).
\end{split}
\end{equation}
Second,  via the classical  weak convergence   arguments, one can easy show that  $(\phi^\eta,  u^\eta, l^\eta, h^\eta) $   is the unique strong  solution   to \ef{ln}   with  $\epsilon=0$ satisfying \ef{2.13} with $T$ replaced by $T^*$, and the uniform estimates \ef{key1kk}.

Thus the proof of Lemma  \ref{epsilon0} is complete.

\end{proof}

\subsection{Nonlinear approximation solutions away from vacuum}

In this subsection, we will prove the local well-posedness of the classical solution to the following Cauchy problem under the assumption that $\phi^\eta_0\geq \eta$:
\begin{equation}\label{nl}\left\{\begin{aligned}
&\phi^{\eta}_t+u^{\eta}\cdot\nabla\phi^{\eta}+(\gamma-1)\phi^{\eta} \text{div}u^{\eta}=0,\\[2pt]
&u^{\eta}_t+u^{\eta}\cdot \nabla u^{\eta}+a_1\phi^{\eta}\nabla l^{\eta}+l^{\eta}\nabla\phi^{\eta}+a_2(l^\eta)^\nu h^{\eta} Lu^{\eta}\\[2pt]
=& a_2 h^{\eta} \nabla (l^\eta)^\nu  \cdot Q(u^{\eta})+a_3(l^\eta)^\nu  \psi^{\eta} \cdot Q(u^{\eta}),\\[2pt]
& (\phi^{\eta})^{-\iota}(l^{\eta}_t+u^{\eta}\cdot\nabla l^{\eta})-a_4(\phi^{\eta})^{\iota}(l^{\eta})^\nu \triangle l^{\eta}\\
=&a_5(l^\eta)^\nu  n^{\eta}(\phi^\eta)^{3\iota} H(u^{\eta})+a_6(l^{\eta})^{\nu+1} (\phi^{\eta})^{-\iota}\text{div} \psi^{\eta}+\Theta(\phi^{\eta},l^{\eta},\psi^{\eta}),\\
&h^{\eta}_t+u^\eta\cdot\nabla h^\eta+(\delta-1) (\phi^\eta)^{2\iota}\text{div} u^{\eta}=0,\\[2pt]
&(\phi^{\eta},u^{\eta},l^{\eta},h^{\eta})|_{t=0}=(\phi^\eta_0,u^\eta_0,l^\eta_0,h_0^\eta)
=(\phi_0+\eta,u_0,l_0,(\phi_0+\eta)^{2\iota}) \ \text{in} \ \mathbb{R}^3,\\[2pt]
&(\phi^{\eta},u^{\eta},l^{\eta},h^{\eta})\rightarrow (\eta,0,\bar{l},\eta^{2\iota}) \quad \text{as} \hspace{2mm}|x|\rightarrow \infty \quad \rm for\quad t\geq 0,
\end{aligned}\right.\end{equation}
where $\psi^\eta=\fr{a\delta}{\delta-1}\nabla h^\eta$ and $n^{\eta}=(ah^{\eta})^b$. For simplicity, in the rest of this subsection,  $C$  will denote a positive  generic constant   independent of $\eta$ and $k$.
\begin{theorem}\label{nltm}
Let $\ef{can1}$ hold and $\eta>0$. Assume that the initial data $(\phi_0,u_0,l_0,h_0)$ satisfy
	\eqref{a}-\eqref{2.8*}, and \ef{2.14} holds with a  constant $c_0>0$ independent of $\eta$. Then there exist a time $T_*>0$, independent of $\eta$, and a unique strong solution 
	$
	(\phi^\eta, u^\eta,l^\eta, h^\eta =\phi^{2\iota})
	$
	 in $[0,T_*]\times\mathbb{R}^3$ to \ef{nl} satisfying \ef{2.13}, and  the uniform estimates (independent of $\eta$) \ef{key1kk}   hold for $(\phi^\eta,u^\eta,l^\eta,h^\eta)$ with $T^*$ replaced by $T_*$. 
\end{theorem}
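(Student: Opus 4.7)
The plan is to construct the solution of \eqref{nl} via a classical Picard-type iteration based on the linear theory of Lemma \ref{epsilon0}. Set the zeroth iterate $(v^0,g^0,w^0)=(u_0,h^\eta_0,l_0)$ (extended trivially in time), and define inductively $(\phi^{\eta,k+1},u^{\eta,k+1},l^{\eta,k+1},h^{\eta,k+1})$ as the unique strong solution, on some common time interval $[0,T^*]$, to the linearized problem \eqref{ln} with $\epsilon=0$ and $(v,g,w)=(u^{\eta,k},h^{\eta,k},l^{\eta,k})$. By Lemma \ref{epsilon0} and the uniform bounds \eqref{key1kk}, each iterate satisfies the same a priori estimates \eqref{key1kk} on $[0,T^*]$ with constants $(c_1,\ldots,c_5)$ and a lifespan depending only on $c_0$, so the iteration scheme is self-consistent and uniformly bounded in the strong topology of \eqref{2.13}.

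The next step is to establish that the sequence is Cauchy in a suitably weak norm. Set $(\bar\phi^{k+1},\bar u^{k+1},\bar l^{k+1},\bar h^{k+1})=(\phi^{\eta,k+1}-\phi^{\eta,k},u^{\eta,k+1}-u^{\eta,k},l^{\eta,k+1}-l^{\eta,k},h^{\eta,k+1}-h^{\eta,k})$ and subtract the $k$-th from the $(k+1)$-th linearized system. The differences satisfy a linear problem whose left-hand side has the same parabolic/hyperbolic structure as \eqref{ln} at level $k+1$, while the right-hand side is made of terms containing the previous differences $(\bar u^{k},\bar h^{k},\bar l^{k})$ multiplied by coefficients that are controlled uniformly by \eqref{key1kk}. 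Performing basic energy estimates in the low-regularity norm
\[
\Phi_{k+1}(t):=|\bar\phi^{k+1}|_2^2+|\bar u^{k+1}|_2^2+|(h^{\eta,k+1})^{-\iota/2}\bar l^{k+1}|_2^2+|\bar h^{k+1}|_2^2,
\]
supplemented by the dissipations $\int_0^t\bigl(|\sqrt{h^{\eta,k+1}}\nabla\bar u^{k+1}|_2^2+|(h^{\eta,k+1})^{\iota/2}\nabla\bar l^{k+1}|_2^2\bigr)\,\mathrm{d}s$, and making use of the singular weighted structure selected in \S\ref{s2} together with Gronwall's inequality, the plan is to obtain, on a possibly shorter lifespan $T_*\in(0,T^*]$ depending only on $c_0$,
\[
\sup_{0\leq t\leq T_*}\Phi_{k+1}(t)+\int_0^{T_*}\!\!D_{k+1}(s)\,\mathrm{d}s\leq \tfrac{1}{2}\Bigl(\sup_{0\leq t\leq T_*}\Phi_{k}(t)+\int_0^{T_*}\!\!D_{k}(s)\,\mathrm{d}s\Bigr),
\]
which implies convergence of $(\phi^{\eta,k},u^{\eta,k},l^{\eta,k},h^{\eta,k})$ to a limit $(\phi^\eta,u^\eta,l^\eta,h^\eta)$ in this low-regularity norm.

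By the uniform higher-order bounds \eqref{key1kk} and standard interpolation, this convergence can be upgraded (via Lemma \ref{aubin}-type compactness and weak-$*$ lower semicontinuity) to strong convergence in $C([0,T_*];H^2(B_R))$ for every $R>0$, which is enough to pass to the limit in every nonlinear term of \eqref{ln}. The limit therefore solves the full nonlinear system \eqref{nl} and inherits the bounds \eqref{key1kk}. The identification $h^\eta=(\phi^\eta)^{2\iota}$ (and consequently $\psi^\eta=\frac{a\delta}{\delta-1}\nabla(\phi^\eta)^{2\iota}$) follows from the observation that, in the limit $(v,g)=(u^\eta,h^\eta)$, both $h^\eta$ and $(\phi^\eta)^{2\iota}$ satisfy the same linear transport equation $f_t+u^\eta\cdot\nabla f+(\delta-1)(\phi^\eta)^{2\iota}\mathrm{div}\,u^\eta=0$ with the same datum, so uniqueness for this transport equation forces their equality. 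Uniqueness of the nonlinear solution itself is then obtained by applying exactly the same difference estimate used in the Cauchy argument to the difference of two hypothetical solutions.

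The main technical obstacle is the derivation of the contraction estimate for $\bar l^{k+1}$: the differenced equation inherits the degenerate weight $(h^{\eta,k+1})^{-\iota/2}$ in front of the time evolution and the singular coefficient $a_4(\phi^{\eta,k+1})^\iota(l^{\eta,k+1})^\nu$ in front of $\triangle\bar l^{k+1}$, while the right-hand side contains the commutator term coming from $a_6(l^{\eta,k+1})^{\nu+1}(\phi^{\eta,k+1})^{-\iota}\mathrm{div}(\psi^{\eta,k+1}-\psi^{\eta,k})$ as well as $\Theta$-type quadratic source terms; this forces a careful rebalancing of weights exactly in the spirit of \eqref{qiyi}, using the uniformly controlled ratio $gh^{-1}\in[\tilde C^{-1},\tilde C]$ from Lemma \ref{gh} and the quantities $\nabla h^{3\iota/2},\nabla h^{3\iota/4}$ bounded in \eqref{key1kk} to absorb the singularities, so that the contraction factor $\tfrac{1}{2}$ is reached by choosing $T_*$ small in a quantitative way depending only on $c_0$ (hence uniformly in $\eta$).
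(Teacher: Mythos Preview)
Your macro-level strategy matches the paper: Picard iteration via Lemma \ref{epsilon0}, a Cauchy/contraction estimate on the differences, passage to the limit using the uniform bounds \eqref{key1kk}, and the identification $h^\eta=(\phi^\eta)^{2\iota}$ by transport uniqueness. The gap is in the contraction step: the norm $\Phi_{k+1}$ you propose is too weak for the differenced system to close.

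Concretely, three obstructions appear. First, the differenced entropy equation contains the term $a_6(l^{k})^{\nu+1}(h^{k+1})^{-1/2}\,\text{div}\,\bar\psi^{k+1}$, i.e.\ a second derivative of $\bar h^{k+1}$; even after one integration by parts you need $|\bar\psi^{k+1}|_2=|\nabla\bar h^{k+1}|_2$, not merely $|\bar h^{k+1}|_2$. Second, the differenced momentum equation contains $a_2\bar h^{k+1}Lu^{k}$, and since $Lu^{k}\notin L^\infty$ uniformly, this term is controlled via $|\bar h^{k+1}|_6\lesssim|\bar\psi^{k+1}|_2$, again forcing gradient information on $\bar h$. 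Third, once you track $\|\bar\psi^{k+1}\|_1$, its equation picks up the source $a\delta\,h^{k}\nabla\text{div}\,\bar u^{k}$, which requires $|h^{k}\nabla^2\bar u^{k}|_2$ and $|h^{k}\nabla^3\bar u^{k}|_2$; these are only available through elliptic regularity \emph{after} you have estimated $\bar u_t^{k+1}$ and $\bar l_t^{k+1}$ in weighted $L^2$. Thus the bare $L^2$ norms of $(\bar\phi,\bar u,\bar l,\bar h)$ cannot absorb the right-hand side.

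The paper closes the iteration in the much richer quantity
\[
\varGamma^{k+1}=\|\bar\phi^{k+1}\|_2^2+\|\bar\psi^{k+1}\|_1^2+|(h^{k+1})^{\frac14}(l^{k})^{\frac{\nu}{2}}\nabla\bar l^{k+1}|_2^2+|(h^{k+1})^{-\frac14}\bar l_t^{k+1}|_2^2+|(l^{k+1})^{-\frac{\nu}{2}}\bar u^{k+1}|_2^2+|\sqrt{h^{k+1}}\nabla\bar u^{k+1}|_2^2+|(l^{k+1})^{-\frac{\nu}{2}}\bar u_t^{k+1}|_2^2,
\]
testing the $\bar l$-equation against $\bar l_t^{k+1}$ and the $\bar u$-equation against both $\bar u^{k+1}$ and $\bar u_t^{k+1}$. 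Two further points you will need: (i) the resulting inequality couples \emph{three} consecutive levels $\varGamma^{k},\varGamma^{k-1},\varGamma^{k-2}$ (because the linearization of $\Pi$ and of $H(v)$ reaches back to $u^{k-1},l^{k-1},h^{k-1}$), so one proves summability $\sum_k\varGamma^{k}<\infty$ rather than a one-step $\tfrac12$-contraction; and (ii) a preliminary lemma (the paper's Lemma \ref{cancel}) showing $\bar h^{k+1}\in L^\infty_tH^3$ is needed to give meaning to the $\infty-\infty$ expression $\bar h^{k+1}Lu^{k}$ before estimating it.
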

The proof is given  by an iteration scheme described below. \\

 Let $(\phi^0,u^0,l^0,h^0)$ be the solution to the following Cauchy problem
\begin{equation}\label{xyz}\left\{\begin{aligned}
\displaystyle
&U_t+u_0\cdot \nabla U=0,\ \ \text{in} \ \ (0,\infty)\times\mathbb{R}^3, \\[2pt]
&Y_t-W\triangle Y=0,\ \ \text{in} \ \ (0,\infty)\times\mathbb{R}^3, \\[2pt]
\displaystyle
&W^{-\frac{1}{2}}Z_t-W^{\frac{1}{2}}\triangle Z=0,\ \ \text{in} \ \ (0,\infty)\times\mathbb{R}^3, \\[2pt]
\displaystyle
&W_t+u_0\cdot \nabla W=0,\ \ \text{in} \ \ (0,\infty)\times\mathbb{R}^3, \\[2pt]
\displaystyle
&(U,Y,Z,W)|_{t=0}=(\phi^\eta_0,u^\eta_0,l^\eta_0,h^\eta_0)
= (\phi_0+\eta,u_0,l_0,(\phi_0+\eta)^{2\iota})  \ \ \text{in} \ \ \mathbb{R}^3, \\[2pt]
\displaystyle
&(U,Y,Z,W)\rightarrow (\eta,0,\bar{l},\eta^{2\iota}) \quad \text{as} \hspace{2mm}|x|\rightarrow \infty \quad \rm for\quad t\geq 0.
\end{aligned}\right.\end{equation}
Choose a time $\bar{T}\in (0,T^*]$ small enough such that the uniform estimates (independent of $\eta$) \ef{key1kk} hold for $(\phi^0,u^0,l^0,h^0,\psi^0=\fr{a\delta}{\delta-1}\nabla h^0)$ with $T^*$ replaced by $\bar{T}$.

\begin{proof} \textbf{Step 1:} Existence. One starts with the initial  iteration $(v,w,g)=(u^0,l^0, h^0)$, and  can obtain a classical solution $(\phi^1,u^1,l^1,h^1)$ to   \ef{ln} with $\epsilon=0$. Inductively, 
given $(u^k,l^k, h^k)$ for $k\geq 1$, define  $(\phi^{k+1},u^{k+1},l^{k+1},h^{k+1})$ by solving the following problem:
\begin{equation}\label{k+1}\left\{\begin{aligned}
\displaystyle
&\phi^{k+1}_t+u^k\cdot\nabla\phi^{k+1}+(\gamma-1)\phi^{k+1}\text{div}u^k=0,\\[2pt]
\displaystyle
&(l^{k+1})^{-\nu}(u_t^{k+1}+u^k\cdot\nabla u^k+a_1\phi^{k+1}\nabla l^{k+1}+l^{k+1}\nabla\phi^{k+1})\\[2pt]
\displaystyle
&+a_2h^{k+1}Lu^{k+1}
=a_2(l^{k+1})^{-\nu}h^k\nabla(l^{k+1})^{\nu}\cdot Q(u^k)+a_3\psi^{k+1}\cdot Q(u^k),\\[2pt]
\displaystyle
& (h^{k+1})^{-\frac{1}{2}}(l^{k+1}_t+u^k\cdot\nabla l^{k+1})-a_4(h^{k+1})^{\frac{1}{2}}(l^k)^\nu \triangle l^{k+1}\\
=&a_5(l^k)^\nu n^{k+1}(h^k)^{\frac{3}{2}}H(u^k)+a_6(l^k)^{\nu+1} (h^{k+1})^{-\frac{1}{2}}\text{div} \psi^{k+1}+\Pi^{k+1},\\[2pt]
\displaystyle
&h^{k+1}_t+u^k\cdot \nabla h^{k+1}+(\delta-1)h^k\text{div}u^k=0,\\[2pt]
\displaystyle
&(\phi^{k+1},u^{k+1},l^{k+1},h^{k+1})|_{t=0}=(\phi^\eta_0,u^\eta_0,l^\eta_0,h^\eta_0)\\[2pt]
=& (\phi_0+\eta,u_0,l_0,(\phi_0+\eta)^{2\iota}) \ \ \text{in} \ \ \mathbb{R}^3, \\[2pt]
\displaystyle
&(\phi^{k+1},u^{k+1},l^{k+1},h^{k+1})\longrightarrow (\eta,0,\bar{l},\eta^{2\iota}) \quad \text{as} \hspace{2mm}|x|\rightarrow \infty \quad \rm for\quad t\geq 0,
\end{aligned}\right.\end{equation}
where $\psi^{k+1}=\fr{a\delta}{\delta-1}\nabla h^{k+1}$, $ n^{k+1}=(a h^{k+1})^b$ and 
\begin{equation}\label{2.1mmm}
\begin{split}
\Pi^{k+1}=&a_7(l^k)^{\nu+1} (h^{k+1})^{-\frac{3}{2}}\psi^{k+1}\cdot \psi^{k+1}
+a_8(l^k)^\nu(h^{k+1})^{-\frac{1}{2}} \nabla l^{k+1}\cdot   \psi^{k+1} \\
&+a_9(l^k)^{\nu-1} (h^k)^{\frac{1}{2}}\nabla l^k\cdot \nabla l^k.
\end{split}
\end{equation}

It follows from Lemma \ref{epsilon0} with $(v,w, g)$ replaced by  $(u^k,l^k, h^k)$ and  mathematical induction that  one  can solve $\ef{k+1}$ locally in time to get  $(\phi^{k+1},u^{k+1},l^{k+1},\\ h^{k+1})$  satisfying   the uniform estimates \ef{key1kk}. Moreover, $\psi^{k+1}$ solves
\begin{equation}\label{k+2}
\psi^{k+1}_t+\nabla(u^k\cdot \psi^{k+1})+(\delta-1)\psi^k\text{div}u^k+a\delta h^k\nabla\text{div}u^k=0.
\end{equation}

To show the strong convergence of $(\phi^k,u^k,l^k,\psi^k)$, we set
\begin{equation*}
\begin{split}
&\bar{\phi}^{k+1}=\phi^{k+1}-\phi^k,\ \ \bar{u}^{k+1}=u^{k+1}-u^k,\ \ \bar{l}^{k+1}=l^{k+1}-l^k,\ \ \\
& \bar{\psi}^{k+1}=\psi^{k+1}-\psi^k,\ \  \bar{h}^{k+1}=h^{k+1}-h^k,\ \  \bar{n}^{k+1}=n^{k+1}-n^k.
\end{split}
\end{equation*}
Then \eqref{k+1} and \ef{k+2} yield 
\begin{equation}\label{k+3}\left\{\begin{aligned}
\displaystyle
&\bar{\phi}^{k+1}_t+u^k\cdot\nabla\bar{\phi}^{k+1}+\bar{u}^k\cdot\nabla\phi^k+(\gamma-1)(\bar{\phi}^{k+1}\text{div}u^k+\phi^k\text{div}\bar{u}^k)=0,\\[4pt]
\displaystyle
&(l^{k+1})^{-\nu}\bar{u}_t^{k+1}
+a_2h^{k+1}L\bar{u}^{k+1}+a_2\bar{h}^{k+1}Lu^{k}
= \sum_{i=1}^4 \bar{\mathcal{U}}^{k+1}_i,\\
\displaystyle
\displaystyle
&(h^{k+1})^{-\frac{1}{2}}\bar{l}^{k+1}_t-a_4\sqrt{h^{k+1}}(l^k)^\nu \triangle \bar{l}^{k+1}
=\sum_{i=1}^4 \bar{\mathcal{L}}^{k+1}_i+\bar{\Pi}^{k+1},\\
\displaystyle
\displaystyle
&\bar{\psi}^{k+1}_t+\nabla(u^k\cdot \bar{\psi}^{k+1}+\bar{u}^k\cdot\psi^k)+(\delta-1)(\bar{\psi}^k\text{div}u^k+\psi^{k-1}\text{div}\bar{u}^k)\\[2pt]
\displaystyle
&+a\delta(h^k\nabla\text{div}\bar{u}^k+\bar{h}^k\nabla\text{div}u^{k-1})=0,\\[4pt]
\displaystyle
&(\bar{\phi}^{k+1},\bar{u}^{k+1},\bar{l}^{k+1},\bar{\psi}^{k+1})|_{t=0}=(0,0,0,0)\ \  \text{in}\ \ \mathbb{R}^3,
\\[4pt]
\displaystyle
&(\bar{\phi}^{k+1},\bar{u}^{k+1},\bar{l}^{k+1},\bar{\psi}^{k+1})\longrightarrow (0,0,0,0) \quad \text{as} \ \ |x|\rightarrow \infty \quad \rm for\quad t\geq 0,
\end{aligned}\right.\end{equation}
where 
\begin{equation*}\label{hbar}
\begin{split}
\bar{\mathcal{U}}^{k+1}_1=&-(l^{k+1})^{-\nu}(u^k\cdot\nabla \bar{u}^k+\bar{u}^k\cdot\nabla u^{k-1})\\
&-\big((l^{k+1})^{-\nu}-(l^k)^{-\nu}\big)(u_t^{k}+u^{k-1}\cdot\nabla u^{k-1}), \\
\bar{\mathcal{U}}^{k+1}_2=&-(l^{k+1})^{-\nu}(a_1\bar{\phi}^{k+1}\nabla l^{k+1}+a_1\phi^k\nabla\bar{l}^{k+1}+\bar{l}^{k+1}\nabla\phi^{k+1}+l^k\nabla\bar{\phi}^{k+1})
\\[2pt]
\displaystyle
&-\big((l^{k+1})^{-\nu}-(l^k)^{-\nu}\big)(a_1\phi^{k}\nabla l^{k}+l^{k}\nabla\phi^{k}), \\
\bar{\mathcal{U}}^{k+1}_3=& a_2(l^{k+1})^{-\nu}\Big(h^k\big(\nabla(l^{k+1})^{\nu}-\nabla(l^{k})^{\nu}\big)\cdot Q(u^k)
+h^k\nabla(l^k)^\nu\cdot Q(\bar{u}^k)\\[2pt]
\displaystyle
&+\bar{h}^k\nabla (l^k)^\nu\cdot Q(u^{k-1})\Big)
+a_3\bar{\psi}^{k+1}\cdot Q(u^k)+a_3\psi^{k}\cdot Q(\bar{u}^k), \\
\bar{\mathcal{U}}^{k+1}_4=&a_2\big((l^{k+1})^{-\nu}-(l^k)^{-\nu}\big)h^{k-1}\nabla(l^k)^\nu\cdot Q(u^{k-1}),  \\
\bar{\mathcal{L}}^{k+1}_1=&-(h^{k+1})^{-\frac{1}{2}}(u^k\cdot\nabla \bar{l}^{k+1}+ \bar{u}^{k}\cdot\nabla l^k)\\
&-((h^{k+1})^{-\frac{1}{2}}-(h^{k})^{-\frac{1}{2}})(l^{k}_t+u^{k-1}\cdot\nabla l^{k}),\\ 
\bar{\mathcal{L}}^{k+1}_2=&a_4\big(\sqrt{h^{k+1}}((l^k)^\nu-(l^{k-1})^\nu) +(\sqrt{h^{k+1}}-\sqrt{h^{k}}) (l^{k-1})^\nu\big)\triangle l^{k},\\ 
\bar{\mathcal{L}}^{k+1}_3=&a_5(l^k)^\nu n^{k+1}\big((h^k)^{\frac{3}{2}}(H(u^k)-H(u^{k-1}))+((h^k)^{\frac{3}{2}}-(h^{k-1})^{\frac{3}{2}})H(u^{k-1})\big)\\
\displaystyle
&+a_5(h^{k-1})^{\frac{3}{2}}H(u^{k-1})\big((l^k)^\nu\bar{n}^{k+1}+((l^k)^\nu-(l^{k-1})^\nu)n^{k}\big),\\
\bar{\mathcal{L}}^{k+1}_4=&a_6(l^k)^{\nu+1}\big( (h^{k+1})^{-\frac{1}{2}}\text{div} \bar{\psi}^{k+1}+ ((h^{k+1})^{-\frac{1}{2}}-(h^{k})^{-\frac{1}{2}})\text{div} \psi^{k}\big)\\
&+a_6((l^k)^{\nu+1}-(l^{k-1})^{\nu+1}) (h^{k})^{-\frac{1}{2}}\text{div} \psi^{k},\\
\displaystyle
\bar{\Pi}^{k+1}=&a_7(l^k)^{\nu+1}( (h^{k+1})^{-\frac{3}{2}}\bar{\psi}^{k+1}\cdot (\psi^{k+1}+\psi^{k})+((h^{k+1})^{-\frac{3}{2}}-(h^{k})^{-\frac{3}{2}})\psi^{k}\cdot \psi^{k})\\
&+a_7((l^k)^{\nu+1}-(l^{k-1})^{\nu+1}) (h^{k})^{-\frac{3}{2}}\psi^{k}\cdot \psi^{k}\\
&+a_8(l^k)^\nu(h^{k+1})^{-\frac{1}{2}} (\nabla l^{k+1}\cdot   \bar{\psi}^{k+1}+\nabla \bar{l}^{k+1}\cdot   \psi^{k})
\end{split}
\end{equation*}
\begin{equation*}\label{hbar}
\begin{split}
&+a_8\big((l^k)^\nu((h^{k+1})^{-\frac{1}{2}}-(h^{k})^{-\frac{1}{2}})+((l^k)^\nu-(l^{k-1})^\nu)(h^{k})^{-\frac{1}{2}}\big) \nabla l^{k}\cdot   \psi^{k}\\
&+a_9(l^k)^{\nu-1} \sqrt{h^{k}}\nabla \bar{l}^k\cdot (\nabla l^k+\nabla l^{k-1})\\
&+a_9\big((l^k)^{\nu-1}(\sqrt{h^{k}}-\sqrt{h^{k-1}})+\sqrt{h^{k-1}}((l^k)^{\nu-1}-(l^{k-1})^{\nu-1}))|\nabla l^{k-1}|^2.
\end{split}
\end{equation*}

Next, starting from  \eqref{k+3},  one will show  that  $\{(\phi^k,u^k,l^k,\psi^k)\}_{k=1}^\infty$ is actually a  Cauchy sequence in proper functional spaces, which requires  some  
 estimates for  
$\bar{\phi}^{k+1}\in H^2$, $\bar{\psi}^{k+1}\in H^1$, and $(\bar{u}^{k+1},\bar{l}^{k+1})$ in some suitable weighted $H^2$ spaces. For this purpose,   one first needs the following lemma. 
\begin{lemma}\label{cancel}
\begin{equation*}\label{hbar}
\begin{split}
(\bar{h}^{k+1},\ \bar{\phi}^{k+1}) \in L^\infty([0,\bar{T}];H^3)\quad \text{and } \quad \bar{\psi}^{k+1}\in L^\infty([0,\bar{T}];H^2) \quad \text{for} \quad k=1,2,....
\end{split}
\end{equation*}
\end{lemma}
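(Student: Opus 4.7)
The plan is to argue by induction on $k$, establishing first that $(\bar{\phi}^{k+1},\bar{h}^{k+1})\in L^\infty([0,\bar{T}];H^3)$, from which $\bar{\psi}^{k+1}=\frac{a\delta}{\delta-1}\nabla \bar{h}^{k+1}\in L^\infty([0,\bar{T}];H^2)$ follows immediately. The central observation is that even though the individual iterates $\phi^{k+1}$ and $h^{k+1}$ only lie in homogeneous/weighted spaces (since they tend to the non-zero constants $\eta$ and $\eta^{2\iota}$ at infinity), the successive differences vanish at infinity because every iterate shares the same far-field limits $(\eta,0,\bar{l},\eta^{2\iota})$ and the same initial data. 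Hence the differences are legitimately treated in the standard inhomogeneous Sobolev spaces $H^3$ on $\mathbb{R}^3$, with zero initial data.

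Concretely, I would view $\eqref{k+3}_1$ and $\eqref{k+3}_5$ as linear transport equations
\begin{equation*}
\partial_t f + u^k\cdot \nabla f = \mathcal{S}^{k+1},\qquad f\big|_{t=0}=0,
\end{equation*}
for $f=\bar{\phi}^{k+1}$ or $f=\bar{h}^{k+1}$. Since $u^k\in L^\infty([0,\bar{T}];H^3)$ uniformly by \eqref{key1kk}, the standard $H^3$ energy method for transport equations—testing with $\partial^\alpha f$ for $|\alpha|\le 3$ and controlling the commutator $[u^k\cdot\nabla,\partial^\alpha]f$ through Moser-type estimates—reduces the claim to showing $\mathcal{S}^{k+1}\in L^\infty([0,\bar{T}];H^3)$. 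All source terms involving gradients of $\phi^k$ or $h^k$ cause no trouble, because $\nabla\phi^k$, $\nabla h^k$, $\psi^k$ and their derivatives are controlled in $H^2$-type norms by \eqref{key1kk}, and by the inductive hypothesis $\bar{\phi}^k,\bar{h}^k\in H^3$, $\bar{u}^k\in H^3$ (the latter being automatic from the uniform $H^3$-bound on $u^k$).

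The main obstacle will be the terms containing the undifferentiated quantities $\phi^k$ and $h^{k-1}$, namely $(\gamma-1)\phi^k\mathrm{div}\,\bar{u}^k$ in the equation for $\bar{\phi}^{k+1}$ and $(\delta-1)h^{k-1}\mathrm{div}\,\bar{u}^k$ in the equation for $\bar{h}^{k+1}$, since neither $\phi^k$ nor $h^{k-1}$ lies in $L^2$. The remedy is to split
\begin{equation*}
\phi^k=(\phi^k-\eta)+\eta,\qquad h^{k-1}=(h^{k-1}-\eta^{2\iota})+\eta^{2\iota}.
\end{equation*}
The constant pieces give $\eta\,\mathrm{div}\,\bar{u}^k$ and $\eta^{2\iota}\mathrm{div}\,\bar{u}^k$, which belong to $H^2$ as soon as $\bar{u}^k\in H^3$. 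The fluctuation pieces $\phi^k-\eta\in D^1_*\cap D^3$ and $h^{k-1}-\eta^{2\iota}$ (whose gradient $\psi^{k-1}$ belongs to $L^q\cap D^{1,3}\cap D^2\cap L^\infty$ uniformly) vanish at infinity and, together with the uniform $L^\infty$-bound on $\psi$ from \eqref{key1kk}, furnish the $L^\infty\cap H^3$-control needed to place these products in $H^3$ via Leibniz and Sobolev embedding $H^2\hookrightarrow L^\infty$.

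For the base case, note that $\bar{h}^1=h^1-h^0$ and $\bar{\phi}^1=\phi^1-\phi^0$ share the same transport structure, the sources now involving only the smooth approximation $(\phi^0,u^0,l^0,h^0)$ built in \eqref{xyz}, which lies in the very regular class \eqref{4.1*} by construction; the same splitting argument then applies. Iterating up, the inductive step closes and Lemma \ref{cancel} follows, with $\bar{\psi}^{k+1}\in L^\infty([0,\bar{T}];H^2)$ being an immediate consequence of $\bar{h}^{k+1}\in L^\infty([0,\bar{T}];H^3)$.
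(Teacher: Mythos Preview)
Your proposal has a genuine gap. You correctly isolate the difficulty caused by the undifferentiated factors $\phi^k$ and $h^{k-1}$ and resolve it by splitting off the far-field constant; that part is fine. However, your assertion that ``all source terms involving gradients of $\phi^k$ or $h^k$ cause no trouble'' is incorrect at the top order. In the equation for $\bar{\phi}^{k+1}$ the source contains $\bar{u}^k\cdot\nabla\phi^k$ (and analogously $\bar{u}^k\cdot\nabla h^k$ for $\bar{h}^{k+1}$), and this term is \emph{not} in $H^3$: applying $\partial^\alpha$ with $|\alpha|=3$ via Leibniz produces $\bar{u}^k\cdot\nabla^4\phi^k$ (resp.\ $\bar{u}^k\cdot\nabla^4 h^k$), and neither $\nabla^4\phi^k$ nor $\nabla^4 h^k$ is available. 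The uniform estimates \eqref{key1kk} give only $\phi^k-\eta\in D^1_*\cap D^3$ and $\nabla^2\psi^k\in L^2$, i.e.\ three spatial derivatives in $L^2$, and the transport equations for $\phi^{k+1}$, $h^{k+1}$ cannot propagate a fourth derivative because their sources would then require $\nabla^5 u^k$. Integrating by parts does not help either: the extra derivative then lands on $\partial^\alpha\bar{h}^{k+1}$, producing $\nabla^4\bar{h}^{k+1}$, which lies outside the $H^3$ framework you are trying to close.

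The fix (and presumably what Lemma~4.4 of \cite{GG} does) is \emph{not} to run the full $H^3$ energy estimate on the difference equation. For the gradient part one simply uses the regularity of the individual iterates: each $\nabla\phi^k\in H^2$ directly from $\phi^k-\eta\in D^1_*\cap D^3$, and each $\psi^k=\frac{a\delta}{\delta-1}\nabla h^k\in H^2$ once one establishes $\psi^k\in L^2$ by an $\eta$-dependent energy estimate on the $\psi^{k+1}$ transport equation (the initial datum $\psi_0^\eta=\frac{a\delta}{\delta-1}\nabla(\phi_0+\eta)^{2\iota}$ lies in $L^2$ because $\phi_0+\eta\ge\eta>0$). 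Subtraction then gives $\nabla\bar{\phi}^{k+1},\nabla\bar{h}^{k+1}\in H^2$ with no derivative-counting obstruction. The only piece that genuinely requires the difference equation is the $L^2$ norm of $\bar{\phi}^{k+1}$ and $\bar{h}^{k+1}$, and there the source is easily placed in $L^1_tL^2$ (your splitting argument is relevant precisely at this step). Combining the two yields $\bar{\phi}^{k+1},\bar{h}^{k+1}\in L^\infty_t H^3$ and hence $\bar{\psi}^{k+1}\in L^\infty_t H^2$.
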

 The proof follows from the same argument for    Lemma 4.4 of \cite{GG}. This  lemma   helps to deal with some singular terms of  type  $\infty-\infty$ such as $a_2\bar{h}^{k+1}Lu^{k} $ in $\eqref{k+3}_2$.

\textbf{Step 1.1:} Estimates on $(\bar{\phi}^{k+1},\bar{\psi}^{k+1})$.
Actually, by  the standard energy estimates for  hyperbolic equations,   $\ef{k+3}_1$ and$\ef{k+3}_4 $ yield that 
\begin{equation}\label{phibar2bbb}
\begin{split}
\fr{d}{dt}\|\bar{\phi}^{k+1}\|^2_2\leq &C\sigma^{-1}\|\bar{\phi}^{k+1}\|^2_2+\sigma\|\nabla\bar{u}^k\|^2_1
+C| \nabla^3 \bar{u}^k |_2 |\bar{\phi}^{k+1} |_{D^2},\\
\fr{d}{dt}\|\bar{\psi}^{k+1}\|^2_1
\leq & C\sigma^{-1}\|\bar{\psi}^{k+1}\|^2_1+\sigma(|\sqrt{h^k}\nabla\bar{u}^k|^2_2+\|\bar{\psi}^{k}\|_1^2\\
&+|h^k\nabla^2\bar{u}^k|^2_2)+C|h^k \nabla^3 \bar{u}^k |_2|\nabla \bar{\psi}^{k+1} |_2,
\end{split}
\end{equation}
where $\sigma\in \big(0,\min\{1,\frac{a_4}{32},\frac{a_2 \alpha}{32}\}\big)$ is a constant   to be determined later.

\textbf{Step 1.2:} Estimates on $\bar{l}^{k+1}$.
 Multiplying $\ef{k+3}_3$ by $\bar{l}^{k+1}_t$ and integrating over $\mathbb{R}^3$ yield that
\begin{equation}\label{lcha1}
\begin{split}
\frac{a_4}{2} \frac{d}{dt}|(h^{k+1})^{\frac14} (l^k)^{\frac{\nu}{2}} \nabla \bar{l}^{k+1}|_2^2+|(h^{k+1})^{-\frac14}   \bar{l}_t^{k+1}|_2^2=\sum_{i=1}^5
N_i,
\end{split}
\end{equation}
where $N_i, i=1,2, \cdots, 5$ are defined and estimated as follows.
\begin{align}
\label{lcha2}
N_1=&\int\Big( \bar{\mathcal{L}}^{k+1}_1-a_4 \big( (l^k )^{\nu} \nabla \sqrt{h^{k+1}}+\sqrt{h^{k+1}} \nabla (l^k )^{\nu} \big) \cdot \nabla \bar{l}^{k+1} \Big) \bar{l}_t^{k+1} \nonumber\\
&+\int \frac{a_4}{2} \Big(  \big(\sqrt{h^{k+1}} \big)_t(l^k )^{\nu}+ \sqrt{h^{k+1}} \big( (l^k )^{\nu}\big)_t \Big) \nabla \bar{l}^{k+1} \cdot \nabla \bar{l}^{k+1} \nonumber\\
\le & C\Big(|(h^{k+1})^{\frac{1}{4}}(l^k)^{\frac{\nu}{2}} \nabla \bar{l}^{k+1} |_2
+|\bar{\psi}^{k+1}|_2
+|(l^k)^{-\frac{\nu}{2}} \bar{u}^k |_2 \Big)
\big|  (h^{k+1})^{-\frac{1}{4}} \bar{l}^{k+1}_t\big|_2\nonumber\\
&+ C(1+|l^k_t|^{\frac{1}{2}}_{D^2})  | (h^{k+1} )^{\frac{1}{4}} (l^k )^{\frac{\nu}{2}} \nabla \bar{l}^{k+1} |_2^2,\\
N_2=&\int \bar{\mathcal{L}}^{k+1}_2 \bar{l}^{k+1}_t
 \le   C \big(   |(l^{k-1})^{\frac{\nu}{2}} (h^k)^{\frac{1}{4}} \nabla\bar{l}^k |_2|\bar{l}^{k+1}_t|_3+ |\bar{\psi}^{k+1}|_2  |(h^{k+1})^{-\frac{1}{4}}\bar{l}^{k+1}_t |_2\big),\nonumber\\
 N_3=& \int \bar{\mathcal{L}}^{k+1}_3 \bar{l}_t^{k+1} 
\leq C  \big( |h^k \nabla \bar{u}^k  |_6+ | \bar{\psi}^k  |_2\nonumber\\
&+ | \bar{\psi}^{k+1} |_2+ |  (l^{k-1} )^{\frac{\nu}{2}}  (h^k )^{\frac{1}{4}} \nabla \bar{l}^k |_2 \big)| (h^{k+1} )^{-\frac{1}{4}} \bar{l}_t^{k+1} |_ 2,\nonumber\\
N_4=&\int\bar{\mathcal{L}}^{k+1}_4\bar{l}^{k+1}_t
\leq C \Big( \| \bar{\psi}^{k+1}  \|_1+
 |  (l^{k-1} )^{\frac{\nu}{2}}  (h^k )^{\frac{1}{4}} \nabla \bar{l}^{k} |_2 \Big) | (h^{k+1} )^{-\frac{1}{4}} \bar{l}_t^{k+1} |_2,\nonumber
\end{align}
 \begin{equation}
 \label{lcha2ujm}
 \begin{split}
 N_5=& \int \bar{\Pi}^{k+1} \cdot \bar{l}_t^{k+1} 
\le  C | (h^{k+1} )^{-\frac{1}{4}} \bar{l}_t^{k+1} |_2 ( | (l^{k-1} )^{\frac{\nu}{2}} (h^k )^{\frac{1}{4}} \nabla \bar{l}^k |_2\\
&+ | (l^k )^{\frac{\nu}{2}} (h^{k+1} )^{\frac{1}{4}} \nabla \bar{l}^{k+1} |_2+ |\bar{\psi}^{k+1} |_2+ |\bar{\psi}^k |_2 ),
\end{split}
\end{equation}

where one has used the  facts that
\begin{equation}\label{errorjiqiao}
\begin{split}
 &|\sqrt{h^{k+1}}-\sqrt{h^{k}}|_6 \le C|\bar{\psi}^{k+1}|_2,\quad  |h^{k+1} (h^k)^{-1} |_{\infty}+ |h^k (h^{k+1})^{-1}|_{\infty} \le  C,\\
 &|\bar{l}_t^{k+1}|_3\leq C
|(h^{k+1})^{-\frac{1}{4}}\bar{l}_t^{k+1}|^{\frac{1}{2}}_2|(h^{k+1})^{\frac{1}{4}}\bar{l}_t^{k+1}|^{\frac{1}{2}}_6,\\
& | (h^{k+1} )^{\frac{1}{4}} \big( (h^k )^{\frac{3}{2}}- (h^{k-1} )^{\frac{3}{2}} \big) \nabla u^{k-1}\cdot \nabla u^{k-1}  |_2\\
\le &   |  (h^{k+1} )^{\frac{1}{4}} \big( (h^k )^{\frac{3}{4}}- (h^{k-1} )^{\frac{3}{4}} \big) \big( (h^k )^{\frac{3}{4}}+ (h^{k-1} )^{\frac{3}{4}} \big) \nabla u^{k-1} \cdot \nabla u^{k-1}  |_2 \\
\le & C |\bar{h}^k |_6  |h^{k-1}\nabla u^{k-1}  |_ 6  |\nabla u^{k-1}  |_6,\\
 &a^{-b}\nabla\bar{n}^{k+1}
=b((h^{k+1})^{b-1}-(h^{k})^{b-1})\psi^{k+1}+b(h^{k})^{b-1}\bar{\psi}^{k+1},\\
&a^{-b}|\bar{n}^{k+1}|_6=|(h^{k+1})^b-(h^{k})^b|_6\leq C|\bar{\psi}^{k+1}|_2,\\
& (h^k )^{-\frac{3}{2}}- (h^{k-1} )^{-\frac{3}{2}}=((h^k )^{-\frac{3}{4}}- (h^{k-1} )^{-\frac{3}{4}})((h^k )^{-\frac{3}{4}}+ (h^{k-1} )^{-\frac{3}{4}}).
\end{split}
\end{equation}
It is noted  that the second estimate in $\ef{errorjiqiao}_2$ follows from Lemma \ref{gh}.
Then according to  \eqref{lcha1}-\eqref{lcha2ujm}, one has  
\begin{equation}\label{nbarl}
\begin{split}
&\frac{a_4}{2}\frac{d}{dt}|(h^{k+1})^{\frac14} (l^k)^{\frac{\nu}{2}} \nabla \bar{l}^{k+1}|_2^2+|(h^{k+1})^{-\frac14}   \bar{l}_t^{k+1}|_2^2\\
\leq &  C\big((1+|\sqrt{h^k}\nabla^2 l^k_t|_{2})  | (h^{k+1} )^{\frac{1}{4}} (l^k )^{\frac{\nu}{2}} \nabla \bar{l}^{k+1} |_2^2+\sigma^{-3}| (h^{k+1} )^{-\frac{1}{4}} \bar{l}_t^{k+1} |^2_2\\
&+ \|\bar{\psi}^{k+1} \|^2_1\big) +\sigma(|\bar{\psi}^{k}|^2_2
+|(l^k)^{-\frac{\nu}{2}} \bar{u}^k |^2_2+|\sqrt{h^k}\nabla\bar{u}^k|^2_2+|h^k\nabla^2\bar{u}^{k}|^2_2\\
&+|  (l^{k-1} )^{\frac{\nu}{2}}  (h^k )^{\frac{1}{4}} \nabla \bar{l}^k |^2_2+|(h^{k+1})^{\frac{1}{4}}(l^k)^{\frac{\nu}{2}}\nabla\bar{l}_t^{k+1}|_2^2).
\end{split}
\end{equation}

Next, applying $\partial_t$ to $\ef{k+3}_3$ yields 
\begin{equation*}
\begin{aligned}
&(h^{k+1})^{-\frac{1}{2}}\bar{l}^{k+1}_{tt}-a_4(h^{k+1})^{\frac{1}{2}}(l^k)^\nu \triangle \bar{l}^{k+1}_t\\
=&-(h^{k+1})^{-\frac{1}{2}}_t\bar{l}^{k+1}_t+a_4 (\sqrt{h^{k+1}}(l^k)^{\nu})_t\triangle\bar{l}^{k+1}+\sum_{i=1}^4 (\bar{\mathcal{L}}^{k+1}_i)_t+\bar{\Pi}_t^{k+1}.
\end{aligned}
\end{equation*}
Then multiplying the above equation  by $\bar{l}^{k+1}_{t}$ and  integrating over $\mathbb{R}^3$, one has 
\begin{equation}\label{barl2}
\frac{1}{2}\frac{d}{dt}|(h^{k+1})^{-\frac{1}{4}}\bar{l}_t^{k+1}|^2_2+a_4|(h^{k+1})^{\frac{1}{4}}(l^k)^{\frac{\nu}{2}}\nabla\bar{l}_t^{k+1}|_2^2=\sum\limits_{i=6}^{11} N_i.
\end{equation}
Here  $N_i$, $i=6,7,\cdots,11$ are given and estimated as follows:
\begin{equation}\label{barl21}
\begin{aligned}
N_6=&\int\Big( -\frac{1}{2} ((h^{k+1})^{-\frac{1}{2}})_t(\bar{l}_t^{k+1})^2
+(\bar{\mathcal{L}}^{k+1}_1)_t \Big) \bar{l}^{k+1}_t\\
\leq & C|(h^{k+1})^{-\frac{1}{4}}\bar{l}^{k+1}_t|_2^2+C\Big( |(l^k)^{\frac{\nu}{2}}(h^{k+1})^{\frac{1}{4}}\nabla\bar{l}^{k+1}|_2+|(l^k)^{-\frac{\nu}{2}}\bar{u}^k|_2\\
&+|(h^{k+1})^{\frac{1}{4}}(l^k)^{\frac{\nu}{2}}\nabla\bar{l}^{k+1}_t|_2+|(l^k)^{-\frac{\nu}{2}}\bar{u}_t^k|_2+|\bar{u}^k|_\infty+|\bar{\psi}^{k+1}_t|_2\\
&+|\bar{\psi}^{k+1}|_2\Big)|(h^{k+1})^{-\frac{1}{4}}\bar{l}^{k+1}_t|_2+C\Big(|(h^{k+1})^{\frac{1}{4}}(l^k)^{\frac{\nu}{2}}\nabla \bar{l}^{k+1}|_2\\
&+|\bar{\psi}^{k+1}|_2(1+|(h^{k})^{-\frac{1}{4}}l^k_{tt}|_2)\Big)| (h^{k+1})^{-\frac{1}{4}}\bar{l}^{k+1}_t|_3,
\end{aligned}
\end{equation}
\begin{equation}\label{barl21bng}
\begin{aligned}
N_7=&\int \Big((\bar{\mathcal{L}}^{k+1}_2)_t+a_4\big(\sqrt{h^{k+1}}(l^k)^{\nu}\big)_t\triangle\bar{l}^{k+1}\Big)\bar{l}^{k+1}_t\\
\leq &  C\Big( (1+|\sqrt{h^k}\nabla^2 l^k_t|_2)\big(|(h^k)^{\frac{1}{4}}(l^{k-1})^{\frac{\nu}{2}}\nabla\bar{l}^{k}|_2
\\
&+|\sqrt{h^{k}}\nabla^2\bar{l}^{k}|_2+\|\bar{\psi}^{k+1}\|_1\big)+|(h^k)^{-\frac{1}{4}}\bar{l}^k_t|_2+|\bar{\psi}^{k+1}_t|_2\\
&+|\sqrt{h^{k+1}} \nabla^2 \bar{l}^{k+1}|_2+| (h^k)^{\frac{1}{4}}\nabla\bar{l}^k_t|_2\Big)|(h^{k+1})^{-\frac{1}{4}}\bar{l}^{k+1}_t|_2\\
&+C(|\sqrt{h^k}\nabla^2 l^k_t|_2|\nabla\bar{l}^{k}|_2+|\sqrt{h^{k+1}}\nabla^2 \bar{l}^{k+1}|_2)|\bar{l}^{k+1}_t|_3,\\
N_{8}=&\int (\bar{\mathcal{L}}^{k+1}_3)_t\bar{l}^{k+1}_t
\leq  C\Big(|h^k\nabla \bar{u}^k|_6+|\nabla \bar{u}^k|_2+|\bar{\psi}^{k}_t|_2+|\bar{\psi}^{k+1}_t|_2\\
&+|(h^k)^{\frac{1}{4}}\nabla\bar{l}^{k}_t|_2
+\big(1+|(h^{k-1}\nabla^2 u^{k-1}_t,h^{k}\nabla^2 u^{k}_t)|_2\big)\big(|(h^{k})^{\frac{3}{4}}\nabla \bar{u}^k|_3\\
&+|(h^k)^{\frac{1}{4}}(l^{k-1})^{\frac{\nu}{2}}\nabla\bar{l}^{k}|_2+|\bar{\psi}^{k+1}|_2+|\bar{\psi}^{k}|_2\big)\Big) |(h^{k+1})^{-\frac{1}{4}}\bar{l}^{k+1}_t|_2\\
&+C|\sqrt{h^k}\nabla \bar{u}^k_t|_2|\bar{l}_t^{k+1}|_3,\\
N_{9}=& \int (\bar{\mathcal{L}}^{k+1}_4)_t\bar{l}^{k+1}_t
\leq  C\Big(\| \bar{\psi}^{k+1}\|_1+|\bar{\psi}^{k+1}_t|_2+|(h^k)^{\frac{1}{4}}(l^{k-1})^{\frac{\nu}{2}}\nabla\bar{l}^{k}|_2\\
&+|\sqrt{h^{k}}\nabla^2\bar{l}^{k}|_2+| (h^k)^{\frac{1}{4}}\nabla\bar{l}^k_t|_2\Big) |(h^{k+1})^{-\frac{1}{4}}\bar{l}^{k+1}_t|_2\\
&+C\| \bar{\psi}^{k+1}\|_1|(h^{k+1})^{\frac{1}{4}}(l^k)^{\frac{\nu}{2}}\nabla\bar{l}_t^{k+1}|_2+N_*,\\
N_{10}=&\int \bar{\Pi}_t^{k+1}\bar{l}^{k+1}_t
\leq  C\Big(\| \bar{\psi}^{k+1}\|_1+|\bar{\psi}^{k+1}_t|_2+|\bar{\psi}^{k}_t|_2+\|\bar{\psi}^{k}\|_1\\
&+(1+|\sqrt{h^{k}}\nabla^2 l^k_t|_2)(|(h^k)^{\frac{1}{4}}(l^{k-1})^{\frac{\nu}{2}}\nabla\bar{l}^{k}|_2+|\sqrt{h^{k}}\nabla^2\bar{l}^{k}|_2)
\\
&+| (h^k)^{\frac{1}{4}}\nabla\bar{l}^k_t|_2+(1+|\sqrt{h^k}\nabla^2 l^k_t|_2)|(h^{k+1})^{\frac14} (l^k)^{\frac{\nu}{2}} \nabla \bar{l}^{k+1}|_2\\
&+|(h^{k})^{-\frac{1}{4}}\bar{l}^{k}_t|_2\Big) |(h^{k+1})^{-\frac{1}{4}}\bar{l}^{k+1}_t|_2
+C(\|\bar{\psi}^{k+1}\|_1+|(h^{k+1})^{-\frac{1}{4}}\bar{l}^{k+1}_t|_2\\
&+|(h^{k+1})^{\frac14} (l^k)^{\frac{\nu}{2}} \nabla \bar{l}^{k+1}|_2)|(h^{k+1})^{\frac{1}{4}}(l^k)^{\frac{\nu}{2}}\nabla\bar{l}_t^{k+1}|_2,\\
N_{11}=&-a_4\int\big(\nabla\sqrt{h^{k+1}}(l^k)^{\nu}\cdot \nabla \bar{l}_t^{k+1}+\sqrt{h^{k+1}}\nabla(l^k)^{\nu} \nabla \bar{l}_t^{k+1}\big)\bar{l}_t^{k+1}\\
&\leq C |(h^{k+1})^{\frac{1}{4}}(l^k)^{\frac{\nu}{2}} \nabla \bar{l}_t^{k+1}|_2|(h^{k+1})^{-\frac{1}{4}}\bar{l}_t^{k+1}|_2,
\end{aligned}
\end{equation}
where one has used \ef{errorjiqiao}. By $\eqref{k+3}_4$,  the remaining term  $N_*$ in $N_{9}$ can be treated  as follows by the integration by parts,  
\begin{equation}\label{yuxiang11}
\begin{split}
N_*=&a_6\int (l^k)^{\nu+1}(h^{k+1})^{-\frac{1}{2}} \text{div} \bar{\psi}^{k+1}_t\bar{l}^{k+1}_t \\
=&-a_6\int (l^k)^{\nu+1}(h^{k+1})^{-\frac{1}{2}}\bar{l}^{k+1}_t  \text{div} \Big(\nabla(u^k\cdot \bar{\psi}^{k+1})+\nabla(\bar{u}^k\cdot\psi^k)\\
&+(\delta-1)(\bar{\psi}^k\text{div}u^k+\psi^{k-1}\text{div}\bar{u}^k)+a\delta(h^k\nabla\text{div}\bar{u}^k+\bar{h}^k\nabla\text{div}u^{k-1})\Big)\\
\leq & C\Big(\| \bar{\psi}^{k+1}\|_1+\| \bar{\psi}^{k}\|_1+|\sqrt{h^k}\nabla\bar{u}^k|_2+|h^k\nabla^2\bar{u}^{k}|_2\\
&+|h^k\nabla^3\bar{u}^{k}|_2\Big)|(h^{k+1})^{-\frac{1}{4}}\bar{l}^{k+1}_t|_2+C\| \bar{\psi}^{k+1}\|_1|(h^{k+1})^{\frac{1}{4}}(l^k)^{\frac{\nu}{2}}\nabla\bar{l}_t^{k+1}|_2.
\end{split}
\end{equation}

Moreover, $\eqref{k+3}_4$ implies that  
\begin{equation}\label{psishijian}
\begin{split}
|\bar{\psi}^{k+1}_t|_2\leq &C\big(\|\bar{\psi}^{k+1}\|_1+\|\bar{u}^k\|_1+|\bar{\psi}^k|_2+|h^k\nabla^2 \bar{u}^k|_2\big).
\end{split}
\end{equation}
Then collecting estimates  \eqref{barl2}-\eqref{psishijian} yields that 
\begin{equation}\label{barl26}
\begin{split}
&\frac{1}{2}\frac{d}{dt}|(h^{k+1})^{-\frac{1}{4}}\bar{l}_t^{k+1}|^2_2+\frac{a_4}{2}|(h^{k+1})^{\frac{1}{4}}(l^k)^{\frac{\nu}{2}}\nabla\bar{l}_t^{k+1}|_2^2\\
\leq &  C\sigma^{-2}(1+|\sqrt{h^k}\nabla^2 l^k_t|^2_{2}+|(h^{k})^{-\frac{1}{4}}l^k_{tt}|^2_2+|h^{k-1}\nabla^2 u^{k-1}_t|^2_2+|h^{k}\nabla^2 u^{k}_t|^2_2) \\
&( \|\bar{\psi}^{k+1} \|^2_1+|(h^{k+1})^{-\frac{1}{4}}\bar{l}_t^{k+1}|^2_2+ | (h^{k+1} )^{\frac{1}{4}} (l^k )^{\frac{\nu}{2}} \nabla \bar{l}^{k+1} |_2^2) +\sigma(\|\bar{\psi}^{k}\|^2_1\\
&+|(l^k)^{-\frac{\nu}{2}} \bar{u}^k |^2_2+|\sqrt{h^k}\nabla\bar{u}^k|^2_2+|(l^k)^{-\frac{\nu}{2}}\bar{u}_t^k|^2_2+|h^k\nabla^2\bar{u}^{k}|^2_2
+|(h^k)^{-\frac{1}{4}}\bar{l}^k_t|^2_2\\
&+(1+|\sqrt{h^k}\nabla^2 l^k_t|_2)|  (l^{k-1} )^{\frac{\nu}{2}}  (h^k )^{\frac{1}{4}} \nabla \bar{l}^k |^2_2+|\sqrt{h^{k}}\nabla^2\bar{l}^{k}|^2_2
+|\sqrt{h^{k+1}} \nabla^2 \bar{l}^{k+1}|^2_2\\
&+\|\bar{u}^{k-1}\|^2_1+|\bar{\psi}^{k-1}|^2_2+|h^{k-1}\nabla^2 \bar{u}^{k-1}|^2_2)+C\tilde{\epsilon}^{-2}|(h^{k+1})^{-\frac{1}{4}}\bar{l}_t^{k+1}|^2_2\\
&+\tilde{\epsilon} (| (h^k)^{\frac{1}{4}}\nabla\bar{l}^k_t|^2_2+|\sqrt{h^k}\nabla \bar{u}^k_t|^2_2)+C|h^k\nabla^3\bar{u}^{k}|_2|(h^{k+1})^{-\frac{1}{4}}\bar{l}^{k+1}_t|_2,
\end{split}
\end{equation}
where  $\tilde{\epsilon}\in (0,1)$ is a  constant to  be determined later.

\textbf{Step 1.3:} Estimates on $\bar{u}^{k+1}$.
 First, multiplying $\ef{k+3}_2$ by $2\bar{u}^{k+1}$ and integrating over $\mathbb{R}^3$ yield   that
\begin{equation}\label{ubar}
\begin{split}
&\fr{d}{dt}|(l^{k+1})^{-\fr{\nu}{2}}\bar{u}^{k+1}|^2_2+a_2\alpha|\sqrt{h^{k+1}}\nabla\bar{u}^{k+1}|^2_2\\
\leq& C\sigma^{-1}(1+|\nabla^2 l^{k+1}_t|_2)|(l^{k+1})^{-\fr{\nu}{2}}\bar{u}^{k+1}|^2_2+\sigma(|\sqrt{h^{k}}\nabla\bar{u}^{k}|^2_2
+|h^k\nabla^2\bar{u}^k|^2_2\\
&+|\bar{\psi}^{k}|^2_2)+C(\|\bar{\phi}^{k+1}\|^2_1+|\bar{\psi}^{k+1}|^2_2+|\nabla \bar{l}^{k+1}|^2_2).
\end{split}
\end{equation}

Second, multiplying $\ef{k+3}_2$ by $2\bar{u}_t^{k+1}$ and integrating over $\mathbb{R}^3$ give  that
\begin{equation}\label{utbar1}
\begin{split}
&|(l^{k+1})^{-\fr{\nu}{2}}\bar{u}^{k+1}_t|_2^2+\fr{d}{dt}a_2\alpha|\sqrt{h^{k+1}}\nabla\bar{u}^{k+1}|^2_2\\
\leq& C(|\sqrt{h^{k+1}}\nabla\bar{u}^{k+1}|^2_2+\|\bar{\phi}^{k+1}\|^2_1+|(h^{k+1})^{\frac14} (l^k)^{\frac{\nu}{2}} \nabla \bar{l}^{k+1}|^2_2+|\bar{\psi}^{k+1}|^2_2\\
&+\sigma^{-1} |(l^{k+1})^{-\fr{\nu}{2}}\bar{u}^{k+1}_t|^2_2  )
+\sigma(|\sqrt{h^k}\nabla\bar{u}^k|^2_2+|\bar{\psi}^k|^2_2).
\end{split}
\end{equation}

Next, applying $\partial_t$ to $\ef{k+3}_2$ gives 
\begin{equation*}
\begin{aligned}
&(l^{k+1})^{-\nu} \bar{u}_{tt}^{k+1}+a_2h^{k+1}L\bar{u}_t^{k+1}\\
=&-((l^{k+1})^{-\nu})_t\bar{u}_{t}^{k+1}-a_2h^{k+1}_t L\bar{u}^{k+1}-a_2 (\bar{h}^{k+1}Lu^{k} )_t
+ \sum_{i=1}^4 (\bar{\mathcal{U}}^{k+1}_i)_t.
\end{aligned}
\end{equation*}
Then multiplying  above system  by $2\bar{u}_{t}^{k+1}$ and integrating over $\mathbb{R}^3$ lead to 
\begin{equation}\label{erjieuu}
\begin{aligned}
&  \frac{d}{dt}|(l^{k+1})^{-\frac{\nu}{2}}\bar{u}_t^{k+1}|^2_2+ 2a_2\alpha |\sqrt{h^{k+1}}\nabla\bar{u}_t^{k+1}|_2^2\\
&\qquad +2a_2(\alpha+\beta)|\sqrt{h^{k+1}}\text{div}\bar{u}_t^{k+1}|_2^2= \sum\limits_{i=1}^{5} O_i,
 \end{aligned}
\end{equation}
  where $O_i, i=1,2,\cdots,5$ are given and estimated as follows:
\begin{equation}\label{j7ooouuu}
\begin{aligned}
 O_1=&\int (-(l^{k+1})^{-\nu})_t  (\bar{u}_t^{k+1} )^2+(2\bar{\mathcal{U}}^{k+1}_1)_t\cdot\bar{u}_{t}^{k+1})\\
 \le &  C |l^{k+1}_t|^{\frac{1}{2}}_{D^2}|(l^{k+1})^{-\frac{\nu}{2}}\bar{u}_t^{k+1}|^2_2 +  C \big( |\sqrt{h^{k}}\nabla\bar{u}_t^{k}|_2+ |(h^{k+1})^{-\frac{1}{4}}\bar{l}_t^{k+1}|_2\\
   & +\|\nabla \bar{u}^k\|_1+  (1+|u^k_{tt}|_2)| (l^k)^{\frac{\nu}{2}} (h^{k+1})^{\frac14}\nabla\bar{l}^{k+1} |_2 \big) |(l^{k+1})^{-\frac{\nu}{2}}\bar{u}_t^{k+1}|_2\\
 &+C(1+|u^k_{tt}|_2)| (l^k)^{\frac{\nu}{2}} (h^{k+1})^{\frac14}\nabla\bar{l}^{k+1} |_2 |\sqrt{h^{k+1}}\nabla\bar{u}_t^{k+1}|_2\\
 &+C|(h^{k+1})^{-\frac{1}{4}}\bar{l}_t^{k+1}|_2|\sqrt{h^{k+1}} \nabla\bar{u}_t^{k+1}|_2,\\
  \displaystyle
 O_{2}=&\int - 2a_2 \big(  \nabla h^{k+1} \cdot Q(\bar{u}^{k+1}_t)+ h^{k+1}_t L\bar{u}^{k+1} + (\bar{h}^{k+1}Lu^{k} )_t\big)\cdot \bar{u}_{t}^{k+1}\\
 \displaystyle
 \le &  C \big(  |\sqrt{h^{k+1}} \nabla\bar{u}_t^{k+1}|_2+ |\nabla^2 \bar{u}^{k+1}|_2 + |\bar{\psi}^{k+1}_t|_2\\
 &+|\nabla^2 u^k_t|_{2}\|\bar{\psi}^{k+1}\|_1\big) |(l^{k+1})^{-\frac{\nu}{2}}\bar{u}_t^{k+1}|_2,\\
  O_{3}=&\int 2 (\bar{\mathcal{U}}^{k+1}_2)_t\cdot \bar{u}_{t}^{k+1} 
 \le   C\big(  \| \bar{\phi}^{k+1}\|_2  + \|  \bar{\phi}^{k+1}_t \|_1+ |(h^{k+1})^{-\frac{1}{4}}\bar{l}_t^{k+1}|_2   \\
 \displaystyle
 &  +(1+|l^k_t|^{\frac{1}{2}}_{D^2}+|l^{k+1}_t|^{\frac{1}{2}}_{D^2}) | (l^k)^{\frac{\nu}{2}} (h^{k+1})^{\frac14}\nabla\bar{l}^{k+1} |_2   \big) |(l^{k+1})^{-\frac{\nu}{2}}\bar{u}_t^{k+1}|_2\\
 \displaystyle
 & +C|(h^{k+1})^{-\frac{1}{4}}\bar{l}_t^{k+1}|_2|\sqrt{h^{k+1}} \nabla\bar{u}_t^{k+1}|_2,\\
 \displaystyle
 O_{4}=&\int 2(\bar{\mathcal{U}}^{k+1}_3)_t\cdot \bar{u}_{t}^{k+1}
 \le  C \big( (1+|l^{k+1}_t|_{D^2}\\
 &+|h^k \nabla^2 u^k_t|_2)|(h^{k+1})^{\frac{1}{4}}(l^{k})^{\frac{\nu}{2}}\nabla\bar{l}^{k+1}|_2+ |(h^{k+1})^{-\frac{1}{4}}\bar{l}_t^{k+1}|_2 
 \\
 &+ (1+|l^{k}_t|^{\frac{1}{2}}_{D^2})(\|\nabla \bar{u}^k\|_1+|h^k \nabla^2 \bar{u}^k|_2+\|\bar{\psi}^{k}\|_1)\\
 \displaystyle
&+ | \sqrt{h^k}\nabla \bar{u}^k_t|_2+|\bar{\psi}^k_t|_2+|\bar{\psi}^{k+1}_t|_2+ \| \bar{\psi}^{k+1}\|_1 \big)|(l^{k+1})^{-\frac{\nu}{2}}\bar{u}_t^{k+1}|_2\\
\displaystyle
&+C((1+|h^k \nabla^2 u^k_t|_2)|(h^{k+1})^{\frac{1}{4}}(l^{k})^{\frac{\nu}{2}}\nabla\bar{l}^{k+1}|_2\\
\displaystyle
&+|(h^{k+1})^{-\frac{1}{4}}\bar{l}_t^{k+1}|_2+ \| \bar{\psi}^{k+1}\|_1)|\sqrt{h^{k+1}}\nabla\bar{u}_t^{k+1}|_2,\\
\displaystyle
 O_{5}=&\int 2(\bar{\mathcal{U}}^{k+1}_4)_t \cdot \bar{u}_{t}^{k+1}
\le C|(h^{k+1})^{\frac{1}{4}}(l^{k})^{\frac{\nu}{2}}\nabla\bar{l}^{k+1}|_2|\sqrt{h^{k+1}} \nabla\bar{u}_t^{k+1}|_2\\
&+C \big(|(h^{k+1})^{-\frac{1}{4}}\bar{l}_t^{k+1}|_2 + |(h^{k+1})^{\frac{1}{4}}(l^{k})^{\frac{\nu}{2}}\nabla\bar{l}^{k+1}|_2 \big) |(l^{k+1})^{-\frac{\nu}{2}}\bar{u}_t^{k+1}|_2,
\end{aligned}
\end{equation}
where one has used  integration by parts in $O_{3}$ and $O_{4}$ to deal with the corresponding  terms related to  $\nabla\bar{l}_t^{k+1}$.

It follows from \ef{erjieuu}-\ef{j7ooouuu}, \ef{psishijian},  $\ef{k+3}_1$
and Young's inequality that
\begin{equation}\label{utbar1llll}
\begin{split}
&\frac{d}{dt}|(l^{k+1})^{-\frac{\nu}{2}}\bar{u}_t^{k+1}|^2_2+ a_2\alpha |\sqrt{h^{k+1}}\nabla\bar{u}_t^{k+1}|_2^2\\
\le &  C\sigma^{-1}  (1+|l^{k+1}_t|^2_{D^2}+|l^{k}_t|_{D^2}+|h^k \nabla^2 u^k_t|^2_2+|u^k_{tt}|^2_2)( \| \bar{\psi}^{k+1}\|^2_1
 +\| \bar{\phi}^{k+1}\|^2_2\\
&+|(h^{k+1})^{\frac{1}{4}}(l^{k})^{\frac{\nu}{2}}\nabla\bar{l}^{k+1}|^2_2+ |(h^{k+1})^{-\frac{1}{4}}\bar{l}_t^{k+1}|^2_2 +|(l^{k+1})^{-\frac{\nu}{2}}\bar{u}_t^{k+1}|^2_2)\\
 &+C\sigma( |(l^k)^{-\frac{\nu}{2}} \bar{u}^k |^2_2+|\sqrt{h^{k}}\nabla\bar{u}^{k}|^2_2+|h^k \nabla^2 \bar{u}^k|^2_2
 +|h^{k+1} \nabla^2 \bar{u}^{k+1} |^2_2\\
 &+\|\bar{\psi}^{k}\|^2_1+|\bar{\psi}^{k-1}|^2_2+\|\bar{u}^{k-1}\|^2_1+|h^{k-1}\nabla^2 \bar{u}^{k-1}|^2_2 \big)\\
 &+C\tilde{\epsilon}^{-1}|(l^{k+1})^{-\frac{\nu}{2}}\bar{u}_t^{k+1}|^2_2+\tilde{\epsilon}|\sqrt{h^k}\nabla \bar{u}^k_t|^2_2.
\end{split}
\end{equation}

\textbf{Step 1.4:} Strong convergences of the approximation solutions. For simplicity, we denote
 $\mathcal{T}^k=(| (h^k)^{\frac{1}{4}}\nabla\bar{l}^k_t|^2_2+|\sqrt{h^k}\nabla \bar{u}^k_t|^2_2)$ and 
\begin{equation*}
\begin{split}
\mathcal{I}^k (t)=&
C\sigma^{-3}(1+|\sqrt{h^k}\nabla^2 l^k_t|^2_{2}+|\sqrt{h^{k+1}}\nabla^2 l^{k+1}_t|^2_{2}+|(h^{k})^{-\frac{1}{4}}l^k_{tt}|^2_2\\
&+|h^{k-1}\nabla^2 u^{k-1}_t|^2_2+|h^{k}\nabla^2 u^{k}_t|^2_2+|u^k_{tt}|^2_2).
\end{split}
\end{equation*}
By the same arguments used in  the derivations of \ef{nabla2hl}, \ef{2.61i} and \ef{3jieu}, it follows directly from  $\ef{k+3}_2$-$\ef{k+3}_3$ and Lemma \ref{zhenok} that 
\begin{equation*}
\begin{split}
|\sqrt{h^{k+1}}\nabla^2\bar{l}^{k+1}|_2\leq& C(|(h^{k+1})^{-\fr{1}{4}}\bar{l}^{k+1}_t|_2+|\sqrt{h^{k}}\nabla\bar{u}^{k}|_2+| (l^{k} )^{\frac{\nu}{2}}  (h^{k+1} )^{\frac{1}{4}} \nabla \bar{l}^{k+1}|_2\\
&+| (l^{k-1} )^{\frac{\nu}{2}}  (h^{k} )^{\frac{1}{4}} \nabla \bar{l}^{k}|_2+|\bar{\psi}^{k}|_2+\|\bar{\psi}^{k+1}\|_1),\\
|h^{k+1}\nabla^2\bar{u}^{k+1}|_2\leq& C(|(l^{k+1})^{-\fr{\nu}{2}}\bar{u}^{k+1}_t|_2+|\sqrt{h^{k+1}}\nabla\bar{u}^{k+1}|_2+|\sqrt{h^{k}}\nabla\bar{u}^{k}|_2\\
&+\|\bar{\phi}^{k+1}\|_1+|\bar{\psi}^{k}|_2+|\bar{\psi}^{k+1}|_2
+| (l^{k} )^{\frac{\nu}{2}}  (h^{k+1} )^{\frac{1}{4}} \nabla \bar{l}^{k+1}|_2),\\
|h^{k+1}\nabla^3\bar{u}^{k+1}|_2\leq& C(|(l^{k+1})^{-\fr{\nu}{2}}\bar{u}^{k+1}_t|_2+|\sqrt{h^{k+1}}\nabla\bar{u}^{k+1}_t|_2+|\sqrt{h^{k+1}}\nabla\bar{u}^{k+1}|_2\\
&+|\sqrt{h^{k}}\nabla\bar{u}^{k}|_2+|(h^{k+1})^{-\fr{1}{4}}\bar{l}^{k+1}_t|_2+| (l^{k-1} )^{\frac{\nu}{2}}  (h^{k} )^{\frac{1}{4}} \nabla \bar{l}^{k}|_2\\
&+\|\bar{\phi}^{k+1}\|_2+\|\bar{\psi}^{k}\|_1+\|\bar{\psi}^{k+1}\|_1
+| (l^{k} )^{\frac{\nu}{2}}  (h^{k+1} )^{\frac{1}{4}} \nabla \bar{l}^{k+1}|_2\\
&+|(l^{k})^{-\fr{\nu}{2}}\bar{u}^{k}_t|_2+|\sqrt{h^{k-1}}\nabla\bar{u}^{k-1}|_2+\|\bar{\phi}^{k}\|_1+|\bar{\psi}^{k-1}|_2),
\end{split}
\end{equation*}
which, along with  \ef{phibar2bbb},    \ef{nbarl}, \ef{barl26}, \ef{ubar}, \ef{utbar1} and \ef{utbar1llll}, yields  that 
\begin{equation}\label{allbar}
\begin{split}
&\fr{d}{dt}(\|\bar{\phi}^{k+1}\|^2_2+\|\bar{\psi}^{k+1}\|^2_1+|(h^{k+1})^{\frac14} (l^k)^{\frac{\nu}{2}} \nabla \bar{l}^{k+1}|_2^2+|(h^{k+1})^{-\frac{1}{4}}\bar{l}_t^{k+1}|^2_2\\
&+|(l^{k+1})^{-\fr{\nu}{2}}\bar{u}^{k+1}|^2_2+|\sqrt{h^{k+1}}\nabla\bar{u}^{k+1}|^2_2+|(l^{k+1})^{-\frac{\nu}{2}}\bar{u}_t^{k+1}|^2_2)\\
&+|(h^{k+1})^{-\frac14}   \bar{l}_t^{k+1}|_2^2+|(h^{k+1})^{\frac{1}{4}}(l^k)^{\frac{\nu}{2}}\nabla\bar{l}_t^{k+1}|_2^2+|\sqrt{h^{k+1}}\nabla\bar{u}^{k+1}|^2_2\\
&+|(l^{k+1})^{-\fr{\nu}{2}}\bar{u}_t^{k+1}|^2_2+ |\sqrt{h^{k+1}} \nabla\bar{u}_t^{k+1}|_2^2\\
\leq & \mathcal{I}^k (t)(\|\bar{\phi}^{k+1}\|^2_2+\|\bar{\psi}^{k+1}\|^2_1+|(h^{k+1})^{\frac14} (l^k)^{\frac{\nu}{2}} \nabla \bar{l}^{k+1}|_2^2+|(h^{k+1})^{-\frac{1}{4}}\bar{l}_t^{k+1}|^2_2\\
&+|(l^{k+1})^{-\fr{\nu}{2}}\bar{u}^{k+1}|^2_2+|\sqrt{h^{k+1}}\nabla\bar{u}^{k+1}|^2_2+|(l^{k+1})^{-\frac{\nu}{2}}\bar{u}_t^{k+1}|^2_2)\\
&+C\sigma\big(|\sqrt{h^k}\nabla\bar{u}^k|^2_2+\|\bar{\phi}^k\|^2_1+\|\bar{\psi}^k\|^2_1
+|(l^{k})^{-\fr{\nu}{2}}\bar{u}^{k}|^2_2+|\sqrt{h^{k-1}}\nabla\bar{u}^{k-1}|^2_2\\
&+(1+|\sqrt{h^k}\nabla^2 l^k_t|_2)|  (l^{k-1} )^{\frac{\nu}{2}}  (h^k )^{\frac{1}{4}} \nabla \bar{l}^k |^2_2+|(l^{k})^{-\fr{\nu}{2}}\bar{u}_t^{k}|^2_2+|(h^k)^{-\frac{1}{4}}\bar{l}^k_t|^2_2
\\
&+|\bar{\psi}^{k-1}|_2^2+\|\bar{u}^{k-1}\|^2_1+|  (l^{k-2} )^{\frac{\nu}{2}}  (h^{k-1} )^{\frac{1}{4}} \nabla \bar{l}^{k-1} |^2_2+|(l^{k-1})^{-\fr{\nu}{2}}\bar{u}^{k-1}_t|^2_2\\
&+|\sqrt{h^{k-2}}\nabla\bar{u}^{k-2}|^2_2+\|\bar{\phi}^{k-1}\|^2_1+|\bar{\psi}^{k-2}|^2_2\big)+C\tilde{\epsilon}^{-2}(|(h^{k+1})^{-\frac{1}{4}}\bar{l}_t^{k+1}|^2_2\\
&+|(h^{k+1})^{-\frac{1}{4}}\bar{u}_t^{k+1}|^2_2+\|\bar{\phi}^{k+1}\|^2_2+\|\bar{\psi}^{k+1}\|^2_1)+\tilde{\epsilon}\mathcal{T}^k.
\end{split}
\end{equation}

Now, define 
\begin{equation*}
\begin{split}
\varGamma^{k+1}(t)=&\sup_{0\leq s \leq t}\|\bar\phi^{k+1}\|_2^2+\sup_{0\leq s \leq t}\|\bar\psi^{k+1}\|_1^2+\sup_{0\leq s \leq t}|(h^{k+1})^{\frac14} (l^k)^{\frac{\nu}{2}} \nabla \bar{l}^{k+1}|_2^2\\
&+\sup_{0\leq s \leq t}|(h^{k+1})^{-\frac{1}{4}}\bar{l}_t^{k+1}|^2_2+\sup_{0\leq s \leq t}|(l^{k+1})^{-\fr{\nu}{2}}\bar{u}^{k+1}|^2_2\\
&+\sup_{0\leq s \leq t}|\sqrt{h^{k+1}}\nabla\bar{u}^{k+1}|_2^2
+\sup_{0\leq s \leq t}|(l^{k+1})^{-\frac{\nu}{2}}\bar{u}_t^{k+1}|^2_2.
\end{split}
\end{equation*}

Then it follows from \ef{allbar}  and Gronwall's inequality  that 
\begin{equation}\label{Gamma}
\begin{split}
&\varGamma^{k+1}(t)+\int^t_0\Big(|(h^{k+1})^{-\frac14}   \bar{l}_s^{k+1}|_2^2+|(h^{k+1})^{\frac{1}{4}}(l^k)^{\frac{\nu}{2}}\nabla\bar{l}_s^{k+1}|_2^2\\
&+|\sqrt{h^{k+1}}\nabla\bar{u}^{k+1}|^2_2+|(l^{k+1})^{-\fr{\nu}{2}}\bar{u}_s^{k+1}|^2_2+ |\sqrt{h^{k+1}}\nabla\bar{u}_s^{k+1}|_2^2\Big)\text{d}s\\
\leq&C\Big(\int_0^t\tilde{\epsilon}(|\sqrt{h^k}\nabla \bar{u}^k_s|^2_2
+| (h^k)^{\frac{1}{4}}\nabla\bar{l}^k_s|^2_2)\text{d}s+(t+\sqrt{t})\sigma\varGamma^{k}(t)\\
&+t\sigma\varGamma^{k-1}(t)+t\sigma\varGamma^{k-2}(t)\Big)\exp{(C\sigma^{-3}t+C\sigma^{-3}+C\tilde{\epsilon}^{-2}t)}.
\end{split}
\end{equation}
One can  choose  $\sigma \in \big(0,\min\{1,\frac{a_4}{32},\frac{a_2 \alpha}{32}\}\big)$,  $T_*\in (0,\min\{1,\bar{T}\}]$ and $\tilde{\epsilon} \in (0,1)$ such that
\begin{equation*}
\begin{split}
C\tilde{\epsilon}\exp{(C\sigma^{-3}T_*+C\sigma^{-3}+C\tilde{\epsilon}^{-2}T_*)}\leq & \fr{1}{32},\\
C\sqrt{T_*} \sigma \exp{(C\sigma^{-3}T_*+C\sigma^{-3}+C\tilde{\epsilon}^{-2}T_*)}\leq & \fr{1}{32}.
\end{split}
\end{equation*}
We can get finally that
\begin{equation*}\label{Gamma1}
\begin{split}
\sum_{k=1}^{\infty}\Big(\varGamma^{k+1}(T_*)+\int^{T_*}_0(|(h^{k+1})^{-\frac14}   \bar{l}_t^{k+1}|_2^2+|(h^{k+1})^{\frac{1}{4}}(l^k)^{\frac{\nu}{2}}\nabla\bar{l}_t^{k+1}|_2^2&\\
+|\sqrt{h^{k+1}}\nabla\bar{u}^{k+1}|^2_2+|(l^{k+1})^{-\fr{\nu}{2}}\bar{u}_t^{k+1}|^2_2+ |\sqrt{h^{k+1}} \nabla\bar{u}_t^{k+1}|_2^2)\text{d}t\Big)&<\infty,
\end{split}
\end{equation*}
which, along with   the $k$-independent estimate \ef{key1kk}, yields that 
\begin{equation}\label{barpsi}
\begin{split}
\lim_{k\rightarrow \infty}(\|\bar\phi^{k+1}\|_{s'}+\|\bar u^{k+1}\|_{s'}+\|\bar l^{k+1}\|_{L^\infty\cap D^1\cap D^{s'}})=&0,\\
\lim_{k\rightarrow \infty}(|\bar u^{k+1}_t|_2+|\bar l^{k+1}_t|_2+\|\bar\psi^{k+1}\|_{L^\infty\cap L^q}+|\bar h^{k+1}|_\infty)=&0,
\end{split}
\end{equation}
 for any $s'\in [1,3)$.
Then   there exist a subsequence (still denoted by $(\phi^k,u^k, l^k, \psi^k)$) and   limit functions  $(\phi^\eta,u^\eta,l^\eta,\psi^\eta)$ such that
\begin{equation}\label{key0}
\begin{split}
&(\phi^k-\eta,u^k)\rightarrow  (\phi^\eta-\eta, u^\eta) \ \ \text{in}\ \ L^\infty([0,T_*];H^{s'}),\\
 & l^k-\bar{l}\rightarrow   l^\eta-\bar{l}\ \ \text{in}\ \ L^\infty([0,T_*];L^\infty\cap D^1\cap D^{s'}),\\
 & (u^k_t,l^k_t)\rightarrow  (u^\eta_t,l^\eta_t)\ \ \text{in}\ \ L^\infty([0,T_*];L^2),\\
 &\psi^k\rightarrow  \psi^\eta \ \ \text{in}\ \ L^\infty([0,T_*];L^\infty\cap L^q),\\
&h^k\rightarrow  h^\eta \ \ \text{in}\ \ L^\infty([0,T_*];L^\infty).
\end{split}
\end{equation}
 Again due to   \ef{key1kk},  there exists a subsequence (still denoted by $(\phi^k,u^k, l^k, \psi^k)$) converging  to $(\phi^\eta,u^\eta,l^\eta,\psi^\eta)$ in the weak or weak* sense.
According to the lower semi-continuity of norms,  the  estimates in \ef{key1kk} still hold  for  $(\phi^\eta,u^\eta, 
l^\eta,\psi^\eta)$,  which are independent of $\eta$. Moreover, 
by the initial assumption that $
 \psi^\eta_0=\frac{a\delta}{\delta-1}\nabla (\phi^\eta_0)^{2\iota}$, one can easily verify  that 
$
 \psi^\eta=\frac{a\delta}{\delta-1}\nabla (\phi^\eta)^{2\iota}$.
Thus,  one easily gets that $(\phi^\eta,u^\eta, l^\eta,\psi^\eta)$ is a weak solution in the sense of distributions to \eqref{nl} satisfying \ef{key1kk}.

\textbf{Step 2:} Uniqueness.  Let $(\phi_1,  u_1, l_1,\psi_1)$ and $(\phi_2, u_2, l_2,\psi_2)$ be two strong solutions to the   Cauchy problem (\ref{nl}) satisfying the  estimates in \ef{key1kk}. Set
\begin{equation*}
\begin{split}
h_i=&\phi^{2\iota}_i,\quad n_i=(ah_i)^b,\quad  i=1,2; \quad \bar{h}=h_1-h_2,\\
\bar{\phi}=& \phi_1-\phi_2,\  \    \bar{u}=u_1-u_2, \ \  \bar{l}=l_1-l_2,\ \  \bar{\psi}=\psi_1-\psi_2.
\end{split}
\end{equation*}
Then  (\ref{nl}) implies that
 \begin{equation}
\label{eq:1.2wcvb}
\begin{cases}
  \displaystyle
\quad \bar{\phi}_t+u_1\cdot \nabla\bar{\phi}+\bar{u} \cdot\nabla\phi_2+(\gamma-1)(\bar{\phi}\text{div}u_1 +\phi_2\text{div}\bar{u})=0,\\[8pt]
 \displaystyle
\quad  \bar{u}_t+ u_1\cdot\nabla \bar{u}+l_1\nabla \bar{\phi}+a_1\phi_1\nabla \bar{l}+ a_2l^\nu_1 h_1L\bar{u} \\[8pt]
\displaystyle
=- \bar{u} \cdot \nabla u_2-a_1\bar{\phi} \nabla l_2-\bar{l} \nabla \phi_2- a_2(l^\nu_1 h_1-l^\nu_2 h_2)Lu_2\\[8pt]
\displaystyle
\quad +a_2( h_1 \nabla l^\nu_1 \cdot Q(u_1)-h_2\nabla l^\nu_2 \cdot Q(u_2))\\[8pt]
\displaystyle
\quad +a_3(l^\nu_1 \psi_1 \cdot Q(u_1)-l^\nu_2 \psi_2 \cdot Q(u_2)),\\[8pt]
\quad \phi^{-\iota}_1(\bar{l}_t+u_1\cdot\nabla \bar{l}+\bar{u}\cdot \nabla l_2)-a_4\phi^{\iota}_1l^\nu_1 \triangle \bar{l}\\[8pt]
\displaystyle
=-(\phi^{-\iota}_1-\phi^{-\iota}_2)((l_2)_t+u_2\cdot\nabla l_2)+ a_4(\phi^{\iota}_1l^\nu_1 \triangle l_2-\phi^{\iota}_2l^\nu_2 \triangle l_2)\\[8pt]
\displaystyle
\quad +a_5(l^\nu_1 n_1\phi^{3\iota}_1 H(u_1)-l^\nu_2 n_2\phi^{3\iota}_2 H(u_2))\\[8pt]
\displaystyle
\quad +a_6(l^{\nu+1}_1 \phi^{-\iota}_1\text{div} \psi_1-l^{\nu+1}_2 \phi^{-\iota}_2\text{div} \psi_2)+\Theta(\phi_1,l_1,\psi_1)-\Theta(\phi_2,l_2,\psi_2),\\[8pt]
\quad \bar{h}_t+u_1\cdot \nabla\bar{h}+\bar{u}\cdot\nabla h_2+(\delta-1)(\bar{h} \text{div}u_2 +h_1\text{div}\bar{u})=0,\\[8pt]
 \displaystyle
\quad \bar{\psi}_t+\sum_{k=1}^3 A_k(u_1)
\partial_k\bar{\psi}+B(u_{1})\bar{\psi}+a\delta(\bar{h} \nabla\text{div}u_2 +h_1\nabla \text{div}\bar{u})\\[8pt]
\displaystyle
=-\sum_{k=1}^3A_k(\bar{u})
\partial_k\psi_{2}-B(\bar{u}) \psi_{2},\\[2pt]
\displaystyle
\quad
(\bar{\phi},\bar{u},\bar{l},\bar{h},\bar{\psi})|_{t=0}=(0,0,0,0,0) \quad \text{in}\quad \mathbb{R}^3,\\[4pt]
\displaystyle
\quad
(\bar{\phi},\bar{u},\bar{l},\bar{h},\bar{\psi})\longrightarrow (0,0,0,0,0) \quad \text{as} \ \ |x|\rightarrow \infty \quad \rm for\quad t\geq 0.
\end{cases}
\end{equation}

Set
\begin{equation*}
\begin{split}
\Phi(t)=&\|\bar\phi\|_2^2+\|\bar\psi\|_1^2+|h_1^{\frac14} l_1^{\frac{\nu}{2}} \nabla \bar{l}|_2^2+|h_1^{-\frac{1}{4}}\bar{l}_t|^2_2+|l_1^{-\fr{\nu}{2}}\bar{u}|^2_2
+a_2\alpha|\sqrt{h_1}\nabla\bar{u}|_2^2
+|l_1^{-\frac{\nu}{2}}\bar{u}_t|^2_2.
\end{split}
\end{equation*}
In a similar way as for  (\ref{Gamma}), one  can show that
\begin{equation*}\begin{split}
&\frac{d}{dt}\Phi(t)+C\big( |h_1^{-\frac14}   \bar{l}_t|_2^2+|h_1^{\frac{1}{4}}l_1^{\frac{\nu}{2}}\nabla\bar{l}_t|_2^2+|\nabla \bar{u}|^2_2+ |l_1^{-\fr{\nu}{2}}\bar{u}_t|^2_2
+ |\sqrt{h_1}\nabla\bar{u}_t|_2^2\big)\leq \tilde{H}(t)\Phi (t),
\end{split}
\end{equation*}
with a continuous  function $\tilde{H}(t)$ satisfying 
$$ \int_{0}^{t}\tilde{H}(s)\ \text{\rm d}s\leq C \quad \text{for} \quad 0\leq t\leq T_*.$$ It follows from  Gronwall's inequality that
$\bar{\phi}=\bar{l}=0$ and $\bar{\psi}=\bar{u}=0$,
which shows  the uniqueness.

\textbf{Step 3.} The time-continuity  follows from  the same  arguments as in Lemma \ref{ls}. 

Thus the proof of Theorem \ref{nltm} is completed.

\end{proof}

\subsection{Limit  to the flow with far field vacuum}
Based on the uniform estimates  \ef{key1kk}, we are ready to prove Theorem \ref{3.1}.

\begin{proof} \textbf{Step 1:} The locally uniform positivity of $\phi$. For any $\eta\in (0,1)$, set 
\begin{equation*}
\phi_0^\eta=\phi_0+\eta,\ \ \psi^\eta_0=\fr{a\delta}{\delta-1}\nabla (\phi_0+\eta)^{2\iota},\ \ h_0^\eta=(\phi_0+\eta)^{2\iota}.
\end{equation*}
Then the corresponding initial compatibility conditions can be written as 
\begin{equation}\label{cc2}
\begin{split}
&\nabla u_0=(\phi_0+\eta)^{-\iota}g^\eta_1,\ \  Lu_0=(\phi_0+\eta)^{-2\iota}g^\eta_2,\\
&\nabla((\phi_0+\eta)^{2\iota} Lu_0)=(\phi_0+\eta)^{-\iota}g^\eta_3,\ \ \nabla l_0=(\phi_0+\eta)^{-\frac{\iota}{2}}g^\eta_4, \\
&\triangle l_0=(\phi_0+\eta)^{-\frac{3}{2}\iota}g^\eta_5,\ \ \nabla((\phi_0+\eta)^{\iota}\triangle l_0)=(\phi_0+\eta)^{-\frac{3}{2}\iota}g^\eta_6,
\end{split}
\end{equation}
where $g_i^\eta (i=1,2,3,4)$ are given as
\begin{equation*}
\begin{cases}
\displaystyle
g_1^\eta=\fr{\phi_0^{-\iota}}{(\phi_0+\eta)^{-\iota}}g_1,\ \ g_2^\eta=\fr{\phi_0^{-2\iota}}{(\phi_0+\eta)^{-2\iota}}g_2,\\[4pt]
\displaystyle
g_3^\eta=\fr{\phi_0^{-3\iota}}{(\phi_0+\eta)^{-3\iota}}(g_3-\fr{\eta\nabla\phi_0^{2\iota}}{\phi_0+\eta}\phi_0^\iota Lu_0),\\[4pt]
\displaystyle
g_4^\eta=\fr{\phi_0^{-\frac{\iota}{2}}}{(\phi_0+\eta)^{-\frac{\iota}{2}}}g_4, \ \ g_5^\eta=\fr{\phi_0^{-\frac{3}{2}\iota}}{(\phi_0+\eta)^{-\frac{3}{2}\iota}}g_5,\\
\displaystyle
g_6^\eta=\fr{\phi_0^{-\frac{5}{2}\iota}}{(\phi_0+\eta)^{-\frac{5}{2}\iota}}(g_6-\fr{\eta\nabla\phi_0^{\iota}}{\phi_0+\eta}\phi_0^{\frac{3}{2}\iota} \triangle l_0).
\end{cases}
\end{equation*}
It follows from  \ef{a}-\ef{2.8*} that there exists a $\eta_1>0$ such that if $0<\eta<\eta_1$, then 
\begin{equation}\label{inda}
\begin{split}
2+\eta+\bar{l}+\|\phi^\eta_0-\eta\|_{D^1_*\cap D^3}+\|u_0\|_{3}+\|\nabla h^\eta_0\|_{L^q\cap D^{1,3}\cap D^2}&\\
+|(h^\eta_0)^{\frac{1}{4}}\nabla^3h^\eta_0|_2+\|\nabla (h^\eta_0)^{\frac{3}{4}}\|_{D^1_*}
+|\nabla (h^\eta_0)^{\frac{3}{8}}|_{4}+|(h^\eta_0)^{-1}|_\infty+|g^\eta_1|_2&\\
+|g^\eta_2|_2+|g^\eta_3|_2+|g^\eta_4|_2+|g^\eta_5|_2
+|g^\eta_6|_2+\|l_0-\bar{l}\|_{D^1_*\cap D^3}+|l^{-1}_0|_\infty &\leq  \bar{c}_0,
\end{split}
\end{equation}
where $\bar{c}_0$ is a positive constant independent of $\eta$. Therefore, it follows from Theorem \ref{nltm} that for the initial data $(\phi^\eta_0,u^\eta_0,l^\eta_0,\psi^\eta_0)$, the problem \ef{nl} admits a unique strong solution  $(\phi^\eta,u^\eta,l^\eta, \psi^\eta)$ in $[0,T_*]\times \mathbb{R}^3$ satisfying the local estimate in \ef{key1kk} with $c_0$ replaced by $\bar{c}_0$, and the life span $T_*$ is also independent of $\eta$.

Moreover,  $\phi^\eta$ is positive locally (independent of $\eta$) as shown below.
\begin{lemma}\label{phieta}
For any $R_0>0$ and $\eta\in (0,1]$, there esists a constant $a_{R_0}$ independent of  $\eta$ such that
\begin{equation}\label{phieta1}
\phi^\eta(t,x)\geq a_{R_0}>0,\ \ \ \forall (t,x)\in [0,T_*]\times B_{R_0}.
\end{equation}
\end{lemma}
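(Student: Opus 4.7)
The plan is to prove the locally uniform lower bound by integrating the transport equation $\eqref{nl}_1$ for $\phi^\eta$ along characteristics and tracking both the characteristic displacement and the damping exponential in a way that is independent of $\eta$. Writing $\eqref{nl}_1$ as
$$\phi^\eta_t + u^\eta\cdot\nabla\phi^\eta = -(\gamma-1)\phi^\eta\,\mathrm{div}\,u^\eta,$$
and defining the particle path $X^\eta(t;x)$ by $\tfrac{d}{ds}X^\eta(s;x)=u^\eta(s,X^\eta(s;x))$, $X^\eta(0;x)=x$, integration along $X^\eta$ gives the representation
$$\phi^\eta(t,X^\eta(t;x)) \;=\; \phi_0^\eta(x)\,\exp\!\Big(-(\gamma-1)\!\!\int_0^t \mathrm{div}\,u^\eta(s,X^\eta(s;x))\,\mathrm{d}s\Big).$$

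Next, I would extract two uniform (in $\eta$) pieces of information from the a priori bound \eqref{key1kk}. First, since $\|u^\eta\|_3\le c_3$ on $[0,T_*]$, the embedding $H^3\hookrightarrow W^{1,\infty}$ yields a uniform bound
$$\|u^\eta\|_{L^\infty([0,T_*];W^{1,\infty})} \le C_*,$$
with $C_*$ depending only on $\bar c_0$. Consequently the exponential factor above satisfies the uniform two-sided bound
$$\exp\!\Big(-(\gamma-1)\!\!\int_0^t \mathrm{div}\,u^\eta(s,X^\eta(s;x))\,\mathrm{d}s\Big) \ge e^{-(\gamma-1)T_*C_*} =: \kappa_* > 0.$$
Second, the uniform bound $|u^\eta|_\infty\le c_1$ gives $|X^\eta(t;x)-x|\le T_*c_1$ for all $t\in[0,T_*]$, so for any fixed $R_0>0$ the inverse characteristic map sends $[0,T_*]\times\overline{B_{R_0}}$ into $\overline{B_{R_1}}$ with $R_1:=R_0+T_*c_1$. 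In particular, every $(t,y)\in[0,T_*]\times B_{R_0}$ can be written as $y=X^\eta(t;x)$ for some $x\in\overline{B_{R_1}}$.

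Finally I would use the initial positivity. Because $\phi_0\in D^1_*\cap D^3$ with $\phi_0>0$ everywhere, $\phi_0$ is continuous on $\mathbb{R}^3$, hence
$$m_{R_1} := \min_{x\in \overline{B_{R_1}}}\phi_0(x) > 0,$$
and $\phi_0^\eta(x)=\phi_0(x)+\eta\ge m_{R_1}$ on $\overline{B_{R_1}}$ for every $\eta\in(0,1]$. Combining the three facts, for any $(t,y)\in[0,T_*]\times B_{R_0}$,
$$\phi^\eta(t,y) = \phi_0^\eta(X^\eta(t;\cdot)^{-1}(y))\,\exp(\cdots) \;\ge\; m_{R_1}\,\kappa_* \;=:\; a_{R_0} \;>\;0,$$
which is the claimed bound. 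The argument is essentially routine once the uniform estimates of \eqref{key1kk} are in hand; the only point requiring mild care is justifying that the $\eta$-independent bound $C_*$ on $\|u^\eta\|_{W^{1,\infty}}$ and the $\eta$-independent displacement bound $T_*c_1$ both come from the same uniform estimate family, so that $R_1$, $\kappa_*$ and therefore $a_{R_0}$ depend only on $R_0$ and on $\bar c_0$, not on $\eta$.
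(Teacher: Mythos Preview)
Your proof is correct and follows essentially the same characteristic-method argument that the paper invokes (the paper defers to Lemma 3.9 in \cite{zz2}, which is precisely this route: integrate $\eqref{nl}_1$ along particle paths, use the $\eta$-independent $W^{1,\infty}$ bound on $u^\eta$ from \eqref{key1kk} to control both the exponential damping and the characteristic displacement, and then use continuity and strict positivity of $\phi_0$ on the compact ball $\overline{B_{R_1}}$). The only cosmetic point is that instead of inverting the forward flow, one can equivalently run the characteristic backward from $(t,y)$ to time $0$; this avoids any appeal to the diffeomorphism property, though that property does hold here since $u^\eta\in C([0,T_*];H^3)$.
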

The proof  follows from the same argument for   Lemma 3.9 in \cite{zz2}.

\textbf{Step 2:} Taking limit $\eta\rightarrow 0$.
It follows from the uniform estimates in  \ef{key1kk}, Lemma \ref{phieta} and   Lemma \ref{aubin}  that for any $R> 0$,  there exist a subsequence of solutions (still denoted by) $(\phi^{\eta}, u^{\eta},l^{\eta},  h^{\eta} )$ such that   as $\eta \rightarrow 0$, the convergences in the weak or weak* sense in  corresponding spaces  and the strong convergence in \eqref{ert1} hold  with $(\phi^{\epsilon,\eta},u^{\epsilon,\eta},l^{\epsilon,\eta},h^{\epsilon,\eta}, 
\psi^{\epsilon,\eta})$ replaced by $(\phi^{\eta},u^{\eta},l^{\eta},h^{\eta},\psi^{\eta})$, and $(\phi^{\eta},u^{\eta},l^{\eta},h^{\eta},\psi^{\eta})$ replaced by $(\phi,u^{},l^{},h^{},\psi^{})$.
Then by  lower semi-continuity of weak convergences, $(\phi,u,l,  \psi)$ satisfies the  estimates  in \ef{key1kk}.
Moreover,   one can easily verify the following relations: 
\begin{equation}\label{rela}
h=\phi^{2\iota}, \ \ \ \psi=\fr{a\delta}{\delta-1}\nabla h=\fr{a\delta}{\delta-1}\nabla\phi^{2\iota},
\end{equation}
 by the initial assumptions 
 $h_0=\phi^{2\iota}_0$ and $\psi_0=\fr{a\delta}{\delta-1}\nabla h_0=\fr{a\delta}{\delta-1}\nabla\phi^{2\iota}_0$.
Then one has that  $(\phi,u,l,\psi)$ is a weak solution to  \ef{2.3}-\ef{2.5} in the sense of distributions.

\textbf{Step 3.}  The uniqueness follows  from  the same argument as for      Theorem \ref{nltm}.

\textbf{Step 4:} Time continuity. First, the time continuity of $(\phi,\psi)$ can be obtained by a  similar argument as for Lemma \ref{ls}.

Next, note that   \ef{key1kk} and Sobolev embedding theorem imply that
\begin{equation}\label{zheng1}
\begin{split}
 u\in C([0,T_*]; H^2)\cap  C([0,T_*]; \text{weak-}H^3) \quad \text{and} \quad   \phi^{\iota}\nabla u\in  C([0,T_*]; L^2).
 \end{split}
\end{equation}
It then follows from $\ef{2.3}_2$ that
$$
\phi^{-2\iota} u_t \in L^2([0,T_*];H^2),\quad (\phi^{-2\iota} u_t)_t \in L^2([0,T_*];L^2),
$$
which implies that
$
\phi^{-2\iota} u_t \in C([0,T_*];H^1)
$.
This and the regularity  estimates for
\begin{equation*}
\begin{split}
a_2Lu&=-l^{-\nu}\phi^{-2\iota}(u_t+u\cdot\nabla u +a_1\phi \nabla l+l\nabla\phi-a_2\phi^{2\iota}\nabla l^\nu \cdot Q(u)-a_3l^\nu \psi  \cdot Q(u))
\end{split}
\end{equation*}
show that   $
 u\in C([0,T_*]; H^{3})$ immediately.

Moreover,  since
$$
\phi^{2\iota}\nabla^2 u \in L^\infty([0,T_*]; H^1)\cap L^2([0,T_*] ; D^2) \quad \text{and} \quad   (\phi^{2\iota}\nabla^2 u)_t \in  L^2([0,T_*] ; L^2),
$$
  the classical Sobolev embedding theorem implies that
$$
\phi^{2\iota}\nabla^2 u\in C([0,T_*]; H^1).
$$
Then the  time continuity of $u_t$ follows easily.

Similarly, \ef{key1kk}  and Sobolev embedding theorem imply that
\begin{equation}\label{shangzheng1}
\begin{split}
&\nabla l \in C([0,T_*]; H^1) \cap  C([0,T_*]; \text{weak-}H^2), \quad   \phi^{\frac{1}{2}\iota}\nabla l\in  C([0,T_*]; L^2),\\
&l^{-\nu}\phi^{-2\iota} l_t \in L^2([0,T_*];H^2),\quad (l^{-\nu}\phi^{-2\iota} l_t)_t \in L^2([0,T_*];L^2),
 \end{split}
\end{equation}
which implies 
$
l^{-\nu}\phi^{-2\iota} l_t \in C([0,T_*];H^1)
$.
This and the regularity  estimates for
\begin{equation*}
\begin{split}
-a_4 \triangle l
=\phi^{-\iota}l^{-\nu}\big( -\phi^{-\iota}(l_t+u\cdot\nabla l) +a_5l^\nu n\phi^{3\iota}H(u)+a_6l^{\nu+1} \phi^{-\iota}\text{div} \psi+\Theta(\phi,l,\psi)\big)
\end{split}
\end{equation*}
show that   
$
 l-\bar{l}\in C([0,T_*]; D^1_*\cap D^3)$ immediately.
Then the  time continuity of $l_t$ follows easily. Thus \ef{b} holds.

In summary,  $(\phi,u,l,\psi)$  is the unique strong  solution in $[0,T_*]\times \mathbb{R}^3$ to the Cauchy problem \ef{2.3}-\ef{2.5}. Hence  the proof of  Theorem \ref{3.1} is complete.
\end{proof}

\subsection{The proof for Theorem \ref{th21}.} Now we are ready to prove Theorem \ref{th21}.

\begin{proof} \textbf{Step 1.} 
It follows from the initial assumptions (\ref{2.7})-(\ref{2.8}) and Theorem \ref{3.1} that there exists  a time $T_{*}> 0$ such that the problem \ef{2.3}-\ef{2.5} has a unique strong  solution $(\phi,u,l,\psi)$ satisfying the regularity (\ref{b}), which implies that
\begin{equation*}
\phi\in C^1([0,T_*]\times\mathbb{R}^3), \ \ (u,\nabla u)\in C([0,T_*]\times\mathbb{R}^3), \ \ (l,\nabla l)\in C([0,T_*]\times\mathbb{R}^3).
\end{equation*}

Set $\rho=(\fr{\gamma-1}{A\gamma}\phi)^{\fr{1}{\gamma-1}}$ with $\rho(0,x)=\rho_0$. It follows from   the relations between $(\varphi, \psi)$  and $\phi$ that 
\begin{equation*}
\varphi=a\rho^{1-\delta}, \ \ \psi=\fr{\delta}{\delta-1}\nabla\rho^{\delta-1}.
\end{equation*}

Then multiplying $\ef{2.3}_1$ by $\fr{\partial\rho}{\partial\phi}$, 
$\ef{2.3}_2$ by $\rho$, and $\ef{2.3}_3$ by $Ac_v\Big(\frac{A\gamma}{\gamma-1}\Big)^{\iota}\rho^{\gamma-\frac{1-\delta}{2}}$ respectively
shows that the equations in \ef{8} are satisfied.

Hence, we have shown  that the triple  $(\rho,u,S)$ satisfied the Cauchy problem \ef{8} with \ef{2} and \eqref{QH}-\eqref{7}  in the sense of distributions and the regularities in Definition 1.1. Moreover, it follows  from the continuity equation that $\rho(t,x)>0$ for $(t,x)\in [0,T_*]\times \mathbb{R}^3$. In summary, the Cauchy problem \ef{8} with \ef{2} and \eqref{QH}-\eqref{7} has a unique regular solution $(\rho,u,S)$.

\textbf{Step 2.}  Now we  show that the regular solution obtained above  is  also a classical one to   the problem   \eqref{1}-\eqref{3} with \eqref{4} and \eqref{6}-\eqref{7}   within its life span.

First, according to the regularities of $(\rho,u,S)$ and the fact that
$$\rho(t,x)>0\quad \text{for}\quad (t,x)\in [0,T_*]\times \mathbb{R}^3,$$
one can obtain  
\begin{equation*}
(\rho,\nabla\rho,\rho_t,u,\nabla u, S,  \nabla S)\in C([0,T_*]\times\mathbb{R}^3).
\end{equation*}

Second, by the  Sobolev embedding theorem:
\begin{equation}\label{qianru1}
L^2([0,T_*];H^1)\cap W^{1,2}([0,T_*];H^{-1})\hookrightarrow C([0,T_*];L^2),
\end{equation}
and the regularity \ef{2.9}, one gets that
\begin{equation*}
tu_t\in C([0,T_*];H^2), \ \ \text{and}\ \ u_t\in C([\tau,T_*]\times \mathbb{R}^3).
\end{equation*}

Next, note that the following elliptic system holds
\begin{equation*}
\begin{split}
a_2Lu=&-l^{-\nu}\phi^{-2\iota}(u_t+u\cdot\nabla u +a_1\phi \nabla l+l\nabla\phi-a_2\phi^{2\iota}\nabla l^\nu \cdot Q(u)\\
&-a_3l^\nu\psi \cdot  Q(u))\equiv l^{-\nu}\phi^{-2\iota}\mathbb{M}.
\end{split}
\end{equation*}

It follows from the definition of regular solutions and  \ef{2.9} directly  that
\begin{equation*}
t l^{-\nu}\phi^{-2\iota}\mathbb{M}\in L^\infty([0,T_*];H^2),
\end{equation*}
and 
\begin{equation*}\begin{split}
(t l^{-\nu}\phi^{-2\iota}\mathbb{M})_t
=& l^{-\nu}\phi^{-2\iota}\mathbb{M}+t(l^{-\nu})_t\phi^{-2\iota}\mathbb{M}+t l^{-\nu}(\phi^{-2\iota})_t\mathbb{M}\\
&+t l^{-\nu}\phi^{-2\iota}\mathbb{M}_t\in L^2([0,T_*];L^2),
\end{split}
\end{equation*}
which, along with  the  Sobolev embedding theorem:
\begin{equation}\label{qianru2}
L^\infty([0,T_*];H^1)\cap W^{1,2}([0,T_*];H^{-1})\hookrightarrow C([0,T_*];L^r),
\end{equation}
for any $ r \in [2,6)$, yields  that 
\begin{equation*}
t l^{-\nu}\phi^{-2\iota}\mathbb{M}\in C([0,T_*];W^{1,4}), \ \ t\nabla^2u \in  C([0,T_*];W^{1,4}).
\end{equation*}
These and the standard elliptic regularity theory  yield  that $\nabla^2 u\in C((0,T_*]\times \mathbb{R}^3)$. 

Moreover, it follows from the regularities of $S_t$ and \eqref{qianru2} that 
\begin{equation*}
t S_t\in C([0,T_*];W^{1,4}_{loc}),
\end{equation*}
which, along with $\theta=AR^{-1}\rho^{\gamma-1}e^{\frac{S}{c_v}}$, yields  that 
$$S_t\in C((0,T_*]\times \mathbb{R}^3)\quad \text{and}\quad \theta_t\in C((0,T_*]\times \mathbb{R}^3).$$

Finally, it remains to show that $\nabla^2 \theta \in C((0,T_*]\times \mathbb{R}^3)$. It follows from the  $(1.1)_3$,  (1.2)  and (1.6) that
\begin{equation*}
c_v\rho(\theta_t+u\cdot \nabla \theta)+P\text{div}u=\nabla u : \mathbb{T}+\frac{\daleth}{\nu+1}\triangle \theta^{\nu+1},
\end{equation*}
which implies that
\begin{equation}\label{wendutuoyuan}
\frac{\daleth}{\nu+1}\triangle \theta^{\nu+1}=c_v\rho(\theta_t+u\cdot \nabla\theta)+P\text{div}u-\nabla u: \mathbb{T}=\beth.
\end{equation}

It follows from 
 $\theta
 =\frac{\gamma-1}{R\gamma}\phi l$ and direct calculations that 
\begin{equation}\label{*}
t\beth\in L^{\infty}([0,T_*];H^2),\quad (t\beth)_t\in L^2([0,T_*];H^1).
\end{equation}
Then it follows from \eqref{qianru2} that
\begin{equation*}
t\beth\in C([0,T_*];W^{1,4}),
\end{equation*}
which, together with  Lemma \ref{zhenok} and \ef{wendutuoyuan}, shows  that $\nabla^2 \theta\in C((0,T_*]\times \mathbb{R}^3)$.

\textbf{Step 3.} We  show finally  that
if   $m(0)<\infty$, then $(\rho,u,S)$ preserves the conservation of   $(m(t),\mathbb{P}(t),E(t))$.
First, we  show that  $(m(t),\mathbb{P}(t),E(t))$ are all finite.
\begin{lemma}
\label{lemmak-1} 
Under the additional assumption,   $0<m(0)<\infty$, it holds that
$$  m(t)+| \mathbb{P}(t)|+E(t)<\infty \quad \text{for} \quad t\in [0,T_*]. $$
\end{lemma}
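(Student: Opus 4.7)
The plan is to verify the finiteness of $m(t)$, $|\mathbb{P}(t)|$, $E_k(t)$ and $E_p(t)$ separately, by combining the regularity of the regular solution $(\rho,u,S)$ listed in Definition \ref{def21} with the new hypothesis $0<m(0)<\infty$.

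First I would control $m(t)$ via the Lagrangian flow of $u$. Since $u\in C([0,T_*];H^3)\hookrightarrow C([0,T_*];C^1)$, the characteristic ODE $\frac{d}{ds}X(s;x)=u(s,X(s;x))$, $X(0;x)=x$, defines a $C^1$-diffeomorphism of $\mathbb{R}^3$ for every $s\in[0,T_*]$, with Jacobian $J(t,x)=\det(\nabla_x X(t;x))$ satisfying $J_t=(\mathrm{div}\,u)(t,X)J$. Integrating the continuity equation $\eqref{8}_1$ along characteristics yields $\rho(t,X(t;x))\,J(t,x)=\rho_0(x)$, so the change of variables $y=X(t;x)$ gives $m(t)=\int\rho_0(x)\,dx=m(0)<\infty$.

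With $m(t)<\infty$ in hand, the bounds on $E_k(t)$, $|\mathbb{P}(t)|$ and $E_p(t)$ follow readily from estimates already available. Sobolev embedding $H^3\hookrightarrow L^\infty$ together with $u\in C([0,T_*];H^3)$ yield $|u(t)|_\infty\le C$ on $[0,T_*]$, hence $E_k(t)\le\tfrac12|u(t)|_\infty^2 m(t)<\infty$, and Cauchy-Schwarz gives $|\mathbb{P}(t)|\le\sqrt{2m(t)E_k(t)}<\infty$. For
\[
E_p(t)=\frac{1}{\gamma-1}\int P\,dx=\frac{A}{\gamma-1}\int\rho\cdot\rho^{\gamma-1}e^{S/c_v}\,dx,
\]
Gagliardo-Nirenberg applied to $\rho^{\gamma-1}\in D^1_*\cap D^3$ controls $|\rho^{\gamma-1}|_\infty$, while the uniform pointwise upper bound on $l=e^{S/c_v}$ from Lemma \ref{shang5} supplies $|e^{S/c_v}|_\infty<\infty$, so $E_p(t)\le C|\rho^{\gamma-1}|_\infty|l|_\infty\, m(t)<\infty$.

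The only genuinely delicate point is the global use of the Lagrangian representation on all of $\mathbb{R}^3$ with a density that decays only at infinity: the change-of-variables step requires a true $C^1$-diffeomorphism property of $X(t;\cdot)$ together with uniform two-sided bounds on $J(t,x)$, both of which follow from $\nabla u\in L^\infty([0,T_*]\times\mathbb{R}^3)$ (itself a consequence of $u\in C([0,T_*];H^3)$). If one prefers to avoid characteristics altogether, an equivalent route is to test $\eqref{8}_1$ against a radial cut-off $\chi_R\in C_c^\infty(\mathbb{R}^3)$ with $\chi_R\nearrow 1$, use $|\rho u|\le|u|_\infty\rho$ together with the pointwise bound $\rho(t,x)\le\rho_0(X^{-1}(t;x))\exp(\int_0^t|\mathrm{div}\,u|_\infty\,ds)$, and apply dominated convergence to conclude $m(t)\le e^{CT_*}m(0)<\infty$; the sharp conservation identities $\frac{d}{dt}m(t)=\frac{d}{dt}\mathbb{P}(t)=\frac{d}{dt}E(t)=0$ are presumably recorded separately in the subsequent lemma.
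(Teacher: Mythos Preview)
Your argument is correct and is essentially what the paper intends: the paper does not spell out a proof here but simply refers to Lemma~3.13 of \cite{dxz}, where the same scheme is used---control $m(t)$ from the continuity equation via the characteristic flow of $u\in C([0,T_*];H^3)$, then bound $E_k(t)$ and $|\mathbb{P}(t)|$ by $|u|_\infty m(t)$, and finally estimate $E_p(t)$ by $|\rho^{\gamma-1}|_\infty|l|_\infty m(t)$. One small remark: rather than invoking Lemma~\ref{shang5} (which, as stated, concerns the linearized approximants), it is cleaner to obtain $|l|_\infty<\infty$ directly from $l-\bar l\in C([0,T_*];D^1_*\cap D^3)$ and Sobolev embedding, or from the final uniform bounds \eqref{key1kk} after passing to the limit.
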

This lemma  can be proved by  the same argument used in  Lemma 3.13 of \cite{dxz}.

Now we   prove the conservation of  total mass, momentum and total energy.
\begin{lemma}
\label{lemmak}Under the additional assumption,    $0<m(0)<\infty$,  it holds that 
$$  m(t)=m(0),\quad  \mathbb{P}(t)=\mathbb{P}(0),\quad  E(t)=E(0) \quad \text{for} \quad t\in [0,T_*]. $$

\end{lemma}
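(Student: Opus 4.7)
The plan is to derive the three conservation laws by a standard cutoff-and-pass-to-the-limit procedure, exploiting the regularity already established in Theorem \ref{3.1} together with the finiteness of $(m(0),\mathbb{P}(0),E(0))$ guaranteed by Lemma \ref{lemmak-1}. Fix a smooth cutoff $\chi\in C_c^\infty(\mathbb{R}^3)$ with $\chi\equiv 1$ on $B_1$, $\operatorname{supp}\chi\subset B_2$, $0\le\chi\le 1$, and set $\chi_R(x)=\chi(x/R)$, so that $|\nabla^k\chi_R|\le C_k R^{-k}$ and $\operatorname{supp}\nabla\chi_R\subset B_{2R}\setminus B_R$. I will multiply each of the equations in \eqref{1} by $\chi_R$, integrate over $\mathbb{R}^3$, integrate by parts, and verify that the resulting ``boundary'' integrals over the annulus $\{R\le|x|\le 2R\}$ vanish as $R\to\infty$ uniformly on $[0,T_*]$. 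Integrating the resulting identities in $t$ will then yield the three conservation laws.

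For the mass, testing $\eqref{1}_1$ against $\chi_R$ produces $\frac{d}{dt}\int\rho\chi_R=\int\rho u\cdot\nabla\chi_R$, and the right-hand side is bounded by $CR^{-1}|u|_\infty\int_{R\le|x|\le 2R}\rho\,dx$, which tends to $0$ since $\rho\in L^1$ (by $m(0)<\infty$ together with Lemma \ref{lemmak-1}) and $u\in L^\infty$ (from $u\in C([0,T_*];H^3)$). For the momentum, the analogous test of $\eqref{1}_2$ yields
\[
\frac{d}{dt}\int\rho u\,\chi_R=\int\rho u\otimes u:\nabla\chi_R+\int P\,\nabla\chi_R\cdot\mathbf{1}-\int\mathbb{T}:\nabla\chi_R.
\]
Each of the three pieces is bounded by $CR^{-1}$ times the integral over the annulus of $\rho|u|^2$, $P=A\rho^\gamma e^{S/c_v}$, and $|\mathbb{T}|\le C\rho^\delta e^{\nu S/c_v}|\nabla u|$ respectively. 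The first is controlled by $E_k(t)<\infty$, the second by $\rho\in L^1\cap L^\infty$, $\gamma>1$, and the uniform bound on $S$ in Lemma \ref{shang5}, and the third by the fact that $\rho^\delta|\nabla u|=\rho^{\delta-1}\cdot\rho|\nabla u|$ and the regularity $\rho^{\frac{\delta-1}{2}}\nabla u\in L^\infty([0,T_*];L^2)$ combined with $\rho\in L^\infty\cap L^2$. Hence all three contributions vanish as $R\to\infty$.

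For the total energy, test $\eqref{1}_3$ against $\chi_R$ to obtain
\[
\frac{d}{dt}\int\rho\mathcal{E}\,\chi_R=\int(\rho\mathcal{E}u+Pu)\cdot\nabla\chi_R-\int(u\mathbb{T})\cdot\nabla\chi_R-\int\kappa\nabla\theta\cdot\nabla\chi_R.
\]
The convective, pressure, and viscous pieces are handled as above using $\rho\mathcal{E}\in L^1$ (Lemma \ref{lemmak-1}), $\rho^\gamma\in L^1$, and the control of $\rho^\delta|\nabla u|$ in $L^2$ combined with $u\in L^\infty$. The delicate piece is the heat flux: writing $\kappa\nabla\theta=\frac{\daleth}{\nu+1}\nabla\theta^{\nu+1}$ and integrating by parts once more produces
\[
-\int\kappa\nabla\theta\cdot\nabla\chi_R=\frac{\daleth}{\nu+1}\int\theta^{\nu+1}\triangle\chi_R,
\]
whose magnitude is bounded by $CR^{-2}\int_{R\le|x|\le 2R}\theta^{\nu+1}\,dx$. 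Since $\theta^{\nu+1}=(AR^{-1})^{\nu+1}\rho^{(\nu+1)(\gamma-1)}e^{(\nu+1)S/c_v}=(AR^{-1})^{\nu+1}\rho^{\delta+\gamma-1}e^{(\nu+1)S/c_v}$, and because $\rho^{\delta+\gamma-1}$ is dominated by $\rho$ (using $\delta+\gamma\le 2$ from \eqref{can1} together with $\rho\in L^\infty$) while $S$ is uniformly bounded on $[0,T_*]\times\mathbb{R}^3$, this integral is $O(1)$ and the prefactor $R^{-2}$ forces the term to $0$.

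The main obstacle I anticipate is precisely this last step: controlling the heat-flux contribution in the energy identity requires the interplay between the constraint $\gamma+\delta\le 2$ (which ensures $\rho^{\delta+\gamma-1}\le C\rho$ for bounded $\rho$), the uniform boundedness of $S$ furnished by Lemma \ref{shang5}, and the choice to integrate by parts once more so as to transfer a full derivative onto $\chi_R$ and gain the decisive $R^{-2}$ factor. With these three conservation identities obtained in their differential-in-$t$ form, integrating in time over $[0,t]$ for $t\in[0,T_*]$ and invoking the time continuity of $(\rho,u,S)$ from Definition \ref{def21} will conclude the proof.
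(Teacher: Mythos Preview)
Your cutoff approach is essentially equivalent to the paper's proof, which simply asserts that the fluxes $\rho u\otimes u$, $P$, $\mathbb{T}$, $\rho\mathcal{E}u+Pu$, $u\mathbb{T}$, and $\kappa\nabla\theta$ all lie in $W^{1,1}(\mathbb{R}^3)$ so that the divergence terms integrate to zero. The mass and momentum steps, and most of the energy step, are fine.

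There is, however, a genuine gap in your treatment of the heat flux. Your claim that ``$\rho^{\delta+\gamma-1}$ is dominated by $\rho$ (using $\delta+\gamma\le 2$ together with $\rho\in L^\infty$)'' is false: the condition $\delta+\gamma\le 2$ gives the exponent $\delta+\gamma-1\le 1$, and for such exponents and $0<\rho<1$ (which is the situation near infinity) one has $\rho^{\delta+\gamma-1}\ge\rho$, not $\le$. Consequently $\theta^{\nu+1}\sim\rho^{\delta+\gamma-1}$ need not lie in $L^1$, and your bound $R^{-2}\int_{R\le|x|\le 2R}\theta^{\nu+1}$ need not vanish; indeed, via H\"older with $\rho^{\delta+\gamma-1}\in L^{1/(\delta+\gamma-1)}$ one gets only $R^{1-3(\delta+\gamma-1)}$, which tends to zero only when $\gamma+\delta>4/3$, a restriction not implied by \eqref{can1}.

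The paper avoids this by \emph{not} integrating by parts a second time. It writes
\[
\kappa\nabla\theta\;\sim\;\rho^{\delta}l^{\nu+1}\nabla\rho^{\gamma-1}+\rho^{\delta+\gamma-1}l^{\nu}\nabla l
\]
and exploits the specific regularity of the solution: the first piece equals $\frac{\gamma-1}{\delta-1}\,l^{\nu+1}\rho^{\gamma}\nabla\rho^{\delta-1}$, which is in $L^1$ because $\rho^{\gamma}\in L^1$ (from $\rho\in L^1\cap L^\infty$, $\gamma>1$) and $\nabla\rho^{\delta-1}=\frac{\delta-1}{\delta}\psi\in L^\infty$; the second piece is handled with the weighted bound $\rho^{(\delta-1)/4}\nabla l\in L^2$. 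The key structural point you missed is rewriting $\rho^{\delta}\nabla\rho^{\gamma-1}$ as $\rho^{\gamma}$ times the \emph{bounded} quantity $\nabla\rho^{\delta-1}$; this is exactly where the regularity $\psi\in L^\infty$ (built into Definition~\ref{def21}) does the work, and no extra factor of $R^{-1}$ from a second integration by parts is needed.
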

\begin{proof}
First, $(\ref{1})_2$ and the regularity of the solution imply  that
\begin{equation}\label{deng1}
\mathbb{P}_t=-\int \text{div}(\rho u \otimes u)-\int \nabla P+\int \text{div}\mathbb{T}=0,
\end{equation}
where one has used the fact that
$$
\rho u^{(i)}u^{(j)},\quad \rho^\gamma e^{\frac{S}{c_v}} \quad \text{and} \quad \rho^\delta e^{\frac{S}{c_v}\nu} \nabla u \in W^{1,1}(\mathbb{R}^3)\quad \text{for} \quad i,\ j=1,\ 2,\ 3.
$$

Second,  the energy equation $(\ref{1})_3$ implies  that
\begin{equation}\label{dengn1}
\begin{split}
E_t=&-\int \text{div}(\rho \mathcal{E} u+ Pu-u\mathbb{T}-\kappa(\theta)\nabla \theta)=0,
\end{split}
\end{equation}
where the following facts have been used:
$$
\frac{1}{2}\rho |u|^2u,\quad \rho^\gamma e^{\frac{S}{c_v}} u, \quad \rho^\delta e^{\frac{S}{c_v}\nu} u \nabla u  \quad \text{and} \quad \rho^\delta e^{\frac{S}{c_v}\nu}  \nabla (\rho^{\gamma-1}e^{\frac{S}{c_v}}) \in W^{1,1}(\mathbb{R}^3).
$$

Similarly, one  can show  the conservation of the total mass.
\end{proof}

Hence the proof of Theorem \ref{th21} is complete.
\end{proof}

\section{Remarks on the asymptotic behavior of  $u$}

First  one concerns the non-existence of global in time solution  in   Theorem \ref{th25}. Let $T>0$ be any constant, and    $(\rho,u,\theta)\in D(T)$. One easily has 
$$
 |\mathbb{P}(t)|\leq \int \rho(t,x)|u(t,x)|\leq  \sqrt{2m(t)E_k(t)},
$$
which, together with the definition of the solution class $D(T)$, implies that
$$
0<\frac{|\mathbb{P}(0)|^2}{2m(0)}\leq E_k(t)\leq \frac{1}{2} m(0)|u(t)|^2_\infty \quad \text{for} \quad t\in [0,T].
$$
Then one obtains that there exists a positive constant $C_u=\frac{|\mathbb{P}(0)|}{m(0)}$ such that
$$
|u(t)|_\infty\geq C_u  \quad \text{for} \quad t\in [0,T].
$$
Thus one obtains     the desired conclusion  in Theorem \ref{th25}.

Second, let  $ (\rho,u,S)$ be the  regular  solution to the Cauchy problem \eqref{8} with \eqref{2} and \eqref{QH}-\eqref{7}   in $[0,T]\times\mathbb{R}^3$ obtained   in Theorem  \ref{th21}. It follows from Theorem  \ref{th21} that
$(\rho,u,\theta)$ is a classical solution to   the Cauchy problem   \eqref{1}-\eqref{3} with \eqref{4} and \eqref{6}-\eqref{7} in $[0,T]\times\mathbb{R}^3$, and also  preserves the conservation of   $(m(t),\mathbb{P}(t),E(t))$. Then one has $(\rho,u,\theta)\in D(T)$, which, along with  Theorem \ref{th25}, yields   Corollary \ref{co23}.

\section*{Appendix: some basic lemmas}

For convenience of readers, we  list some basic facts which have been  used frequently in this paper.

\bigskip

The first one is the  well-known Gagliardo-Nirenberg inequality.

\begin{lemma}\cite{oar}\label{lem2as}\
Assume that  $f\in L^{q_1}\cap D^{i,r}(\mathbb{R}^d)$ for   $1 \leq q_1,  r \leq \infty$.  Suppose also that  real numbers $\Xi$ and $q_2$,  and  natural numbers $m$, $i$  and $j$ satisfy
	$$\frac{1}{{q_2}} = \frac{j}{d} + \left( \frac{1}{r} - \frac{i}{d} \right) \Xi + \frac{1 - \Xi}{q_1} \quad \text{and} \quad 
	\frac{j}{i} \leq \Xi \leq 1.
	$$
	Then $f\in D^{j,{q_2}}(\mathbb{R}^d)$, and  there exists a constant $C$ depending only on $i$, $d$, $j$, $q_1$, $r$ and $\Xi$ such that
	\begin{equation}\label{33}
	\begin{split}
	\| \nabla^{j} f \|_{L^{{q_2}}} \leq C \| \nabla^{i} f \|_{L^{r}}^{\Xi} \| f \|_{L^{q_1}}^{1 - \Xi}.
	\end{split}
	\end{equation}
	Moreover, if $j = 0$, $ir < d$ and $q_1 = \infty$, then it is necessary to make the additional assumption that either f tends to zero at infinity or that f lies in $L^s(\mathbb{R}^d)$ for some finite $s > 0$;
	if $1 < r < \infty$ and $i -j -d/r$ is a non-negative integer, then it is necessary to assume also that $\Xi \neq 1$.
\end{lemma}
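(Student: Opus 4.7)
The plan is to establish the Gagliardo--Nirenberg inequality by first proving a foundational Sobolev-type estimate and then bootstrapping through H\"older interpolation. First I would prove the elementary $L^1$-based Sobolev inequality: for $f\in C_c^\infty(\mathbb{R}^d)$, one has $\|f\|_{L^{d/(d-1)}}\le C\|\nabla f\|_{L^1}$, obtained by writing $|f(x)|\le \int_{-\infty}^{x_k}|\partial_kf|\,dy_k$ in each coordinate direction, multiplying the $d$ resulting pointwise bounds, taking the $(d-1)$-th root, and integrating via iterated H\"older's inequality on the slices. This estimate is then upgraded to $\|f\|_{L^{r^*}}\le C\|\nabla f\|_{L^r}$ for $1\le r<d$ with $r^*=dr/(d-r)$ by applying the $L^1$ version to $|f|^{\gamma}$ for $\gamma=(d-1)r/(d-r)$, together with H\"older's inequality on $\nabla|f|^{\gamma}$. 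For $r>d$, Morrey's inequality produces the corresponding embedding into H\"older spaces.

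Second, I would obtain the case $j=0$ of the lemma by combining the above Sobolev bound with the interpolation $\|f\|_{L^{q_2}}\le \|f\|_{L^{r^{*}_i}}^{\Xi}\|f\|_{L^{q_1}}^{1-\Xi}$, where $r^{*}_i$ is the Sobolev exponent associated with $i$ derivatives in $L^r$ (so $1/r^{*}_i=1/r-i/d$), valid whenever $1/q_2=\Xi/r^{*}_i+(1-\Xi)/q_1$. Substituting the value of $1/r^{*}_i$ reproduces precisely the exponent relation in the statement. A density/truncation argument (mollifying $f$ and multiplying by a cutoff, which vanishes in the limit under the assumed far-field decay) extends the estimate from test functions to the prescribed class. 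For $j\ge 1$, I would apply the $j=0$ case with $f$ replaced by $\nabla^{j}f$ and $i$ replaced by $i-j$, then argue by induction on $i-j$ (or invoke the standard derivative-interpolation inequality $\|\nabla^{j}f\|_{L^{p_j}}\le C\|\nabla^i f\|_{L^r}^{j/i}\|f\|_{L^{q_1}}^{1-j/i}$ for the balanced intermediate exponent) to rewrite the right-hand side in the stated form.

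The main obstacle will lie in the two endpoint regimes highlighted in the statement. In the case $j=0$, $ir<d$, $q_1=\infty$, the $L^\infty$ norm carries no information at infinity, so nonzero constants would satisfy $\|\nabla^i f\|_{L^r}=0$ while $\|f\|_{L^{q_2}}=\infty$; the extra hypothesis that $f$ decays at infinity or lies in some $L^s$ is exactly what rules this out and is needed to justify the cutoff in the density argument. In the critical integer case $1<r<\infty$ with $i-j-d/r\in\mathbb{Z}_{\ge 0}$, the underlying Sobolev embedding $W^{i-j,r}\hookrightarrow L^\infty$ barely fails (the classical counterexamples being essentially $\log\log$-type bumps on the critical Sobolev exponent), so the interpolation cannot be pushed all the way to $\Xi=1$ and must be restricted to $\Xi\in[j/i,1)$. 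Handling these two borderline obstructions rigorously, rather than the essentially algebraic interpolation of exponents, is where the delicacy of the proof lies; the remaining bookkeeping is routine.
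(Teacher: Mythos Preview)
The paper does not prove this lemma: it is stated in the appendix as a ``basic fact'' and attributed to the reference \cite{oar} (Lady\v{z}enskaja--Solonnikov--Ural'ceva), with no argument supplied. There is therefore no paper proof to compare your proposal against.

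For what it is worth, your sketch follows the standard Nirenberg route (the $L^1$ Sobolev inequality via slicing, then bootstrapping to general $r$ by applying it to $|f|^\gamma$, then interpolating with H\"older) and correctly identifies the two endpoint obstructions and why the extra hypotheses are needed. One small point: your reduction from general $j$ to $j=0$ by applying the $j=0$ case to $\nabla^j f$ with $i$ replaced by $i-j$ does not directly yield the stated inequality, since the low norm on the right becomes $\|\nabla^j f\|_{L^{q_1}}$ rather than $\|f\|_{L^{q_1}}$; you then need a further interpolation step (the ``derivative-interpolation'' you allude to) to pass back to $\|f\|_{L^{q_1}}$, and tracking the exponents through that step is part of the argument rather than an afterthought. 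But this is routine bookkeeping, and since the paper simply imports the lemma, any correct proof suffices.
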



The second  lemma is on  compactness theory obtained via the Aubin-Lions Lemma.
\begin{lemma}\cite{jm}\label{aubin} Let $X_0\subset X\subset X_1$ be three Banach spaces.  Suppose that $X_0$ is compactly embedded in $X$ and $X$ is continuously embedded in $X_1$. Then the following statements hold.
	
	\begin{enumerate}
		\item[i)] If $J$ is bounded in $L^r([0,T];X_0)$ for $1\leq r < +\infty$, and $\frac{\partial J}{\partial t}$ is bounded in $L^1([0,T];X_1)$, then $J$ is relatively compact in $L^r([0,T];X)$;
		
		\item[ii)] If $J$ is bounded in $L^\infty([0,T];X_0)$  and $\frac{\partial J}{\partial t}$ is bounded in $L^r([0,T];X_1)$ for $r>1$, then $J$ is relatively compact in $C([0,T];X)$.
	\end{enumerate}
\end{lemma}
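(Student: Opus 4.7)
The plan is to prove the lemma in the classical style due to Lions, Aubin and Simon, using the Ehrling--Lions interpolation inequality as the central lever. First I would establish the following auxiliary fact: under the compact embedding $X_0 \hookrightarrow\hookrightarrow X$ and continuous embedding $X \hookrightarrow X_1$, for every $\eta>0$ there exists $C_\eta>0$ such that
\begin{equation*}
\|v\|_X \leq \eta\|v\|_{X_0}+C_\eta\|v\|_{X_1}\qquad\text{for all } v\in X_0.
\end{equation*}
This is proved by a standard contradiction argument: if the inequality failed for some $\eta_0>0$, one would obtain a sequence $\{v_n\}\subset X_0$ with $\|v_n\|_X=1$, $\|v_n\|_{X_0}\leq 1/\eta_0$, and $\|v_n\|_{X_1}\to 0$. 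Compactness of $X_0\hookrightarrow X$ would extract a subsequence converging in $X$ to some $v$ with $\|v\|_X=1$, while continuity of $X\hookrightarrow X_1$ would force $v=0$, a contradiction.

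Next I would prove part (i) via the Kolmogorov--Riesz--Fréchet compactness criterion in $L^r([0,T];X)$. Let $\mathcal{F}\subset J$ be the family to be shown relatively compact. For $h>0$ small and $f\in\mathcal F$, write $f(t+h)-f(t)=\int_t^{t+h}\partial_s f(s)\,ds$, so that
\begin{equation*}
\|f(t+h)-f(t)\|_{X_1}\leq \int_t^{t+h}\|\partial_s f(s)\|_{X_1}\,ds.
\end{equation*}
Applying the Ehrling inequality with parameter $\eta$ to $f(t+h)-f(t)\in X_0$, integrating in $t\in[0,T-h]$, and using the bound of $f$ in $L^r([0,T];X_0)$ together with the uniform $L^1$-bound on $\partial_t f$, one obtains
\begin{equation*}
\int_0^{T-h}\|f(t+h)-f(t)\|_X^r\,dt \leq C\eta^r + C_\eta h^r \sup_{f\in\mathcal F}\|\partial_t f\|_{L^1([0,T];X_1)}^r.
\end{equation*}
Choosing first $\eta$ small and then $h$ small makes the right-hand side arbitrarily small, giving equi-integrability of translates. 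Combined with the uniform $L^r([0,T];X)$ bound (which follows from the continuous embedding $X_0\hookrightarrow X$) and a truncation/tightness argument near $t=0$ and $t=T$, the Kolmogorov--Riesz--Fréchet theorem applied in the Bochner space yields relative compactness of $\mathcal F$ in $L^r([0,T];X)$.

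For part (ii), the strategy is an Ascoli--Arzel\`a argument in $C([0,T];X)$. The uniform bound on $\partial_t f$ in $L^r([0,T];X_1)$ with $r>1$ gives, by H\"older's inequality,
\begin{equation*}
\|f(t)-f(s)\|_{X_1}\leq |t-s|^{1-1/r}\|\partial_t f\|_{L^r([0,T];X_1)},
\end{equation*}
so the family is equi-H\"older-continuous with values in $X_1$. Combining this with the $L^\infty([0,T];X_0)$ bound and applying the Ehrling inequality pointwise in $t$ produces
\begin{equation*}
\|f(t)-f(s)\|_X \leq \eta\cdot 2\sup_{f\in\mathcal F}\|f\|_{L^\infty(X_0)}+C_\eta|t-s|^{1-1/r}\|\partial_t f\|_{L^r(X_1)},
\end{equation*}
which gives uniform equicontinuity in $C([0,T];X)$ after choosing $\eta$ then $|t-s|$ small. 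Pointwise relative compactness in $X$ (for each fixed $t$) comes from the compact embedding $X_0\hookrightarrow X$ applied to the bounded set $\{f(t):f\in\mathcal F\}$. The Ascoli--Arzel\`a theorem in $C([0,T];X)$ then yields the claim. The main obstacle, and the only nontrivial ingredient, is the Ehrling--Lions inequality: once this interpolation between $X_0$ and $X_1$ through $X$ is in hand, both parts reduce to well-known abstract compactness criteria, with the distinction between (i) and (ii) coming purely from whether $r=1$ (only time-translation equi-integrability, giving $L^r$-compactness) or $r>1$ (giving actual H\"older continuity in time, hence $C([0,T];X)$-compactness).
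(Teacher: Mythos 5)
The paper itself does not prove this lemma: it is quoted verbatim from Simon's compactness paper \cite{jm}, so there is no in-paper argument to compare against. Your proposal reconstructs the classical Aubin--Lions--Simon proof — Ehrling's interpolation inequality, then a Kolmogorov-type translation criterion for (i) and Ascoli--Arzel\`a for (ii) — which is the standard and correct route, and your proof of the Ehrling inequality and the overall reductions are sound.

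Two points need tightening. In part (i), as written, your appeal to the Kolmogorov--Riesz--Fr\'echet theorem would fail if taken literally: in a Bochner space $L^r([0,T];X)$ with $X$ infinite-dimensional, boundedness plus smallness of time-translates is \emph{not} sufficient for relative compactness. The vector-valued criterion (Simon, Theorem 1 in \cite{jm}) additionally requires that the time-averages $\int_{t_1}^{t_2} f(t)\,dt$, $f\in\mathcal{F}$, form a relatively compact subset of $X$ for every $0<t_1<t_2<T$; this is precisely where the compact embedding $X_0\hookrightarrow X$ must enter a second time (beyond Ehrling), since by H\"older these averages are bounded in $X_0$ and hence relatively compact in $X$. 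You have the ingredients, but the step must be stated — translate-smallness and the $L^r(X)$ bound alone are what you invoked. A minor quantitative slip in the same computation: from $\|f(t+h)-f(t)\|_{X_1}\le\int_t^{t+h}\|\partial_s f\|_{X_1}\,ds$ one obtains, after raising to the power $r$ and integrating in $t$, a bound of the form $C_\eta\, h\,\|\partial_t f\|_{L^1([0,T];X_1)}^r$ (one factor estimated by Fubini, $r-1$ factors by the total $L^1$ norm), not $C_\eta h^r(\cdot)^r$; this is harmless since it still vanishes as $h\to 0$. In part (ii), before applying Ascoli--Arzel\`a you should record that each $f$ has a representative continuous with values in $X_1$ and that the $L^\infty([0,T];X_0)$ bound controls $f(t)$ for \emph{every} $t$ (approximate $f(t)$ by time-averages, which remain bounded in $X_0$ and converge in $X_1$), so that the pointwise compactness of $\{f(t):f\in\mathcal{F}\}$ in $X$ and the equicontinuity estimate hold at all times rather than almost everywhere. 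With these repairs the argument is complete and coincides with the standard proof of the cited result.
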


Finally, one needs  the following regularity theory  for 
\begin{equation}\label{ok}
-\alpha\triangle u-(\alpha+\beta)\nabla \text{div}u =Lu=F, \quad u\rightarrow 0 \quad \text{as} \quad |x|\rightarrow \infty.
\end{equation}
\begin{lemma}\cite{harmo}\label{zhenok}
If $u\in D^{1,r}_*$ with $1< r< +\infty$ is a weak solution to  (\ref{ok}), then
$$
|u|_{D^{k+2,r}} \leq C |F|_{D^{k,r}},
$$
where $C$ depends only on $\alpha$,  $\beta$  and $r$.
\end{lemma}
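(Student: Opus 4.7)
The plan is to establish the estimate via Fourier analysis and the Mihlin multiplier theorem, with reduction by differentiation. First I would observe that since $L$ has constant coefficients, it commutes with $\partial^\gamma$ for any multi-index $\gamma$; so applying $\partial^\gamma$ with $|\gamma|=k$ to $Lu=F$ yields $L(\partial^\gamma u)=\partial^\gamma F$, with $\partial^\gamma u \to 0$ at infinity still available from the $D^{1,r}_*$ regularity. Hence by iteration it suffices to treat the base case: whenever $u\in D^{1,r}_*$ solves $Lu=F$ with $F\in L^r$, one has $|u|_{D^{2,r}}\le C|F|_{L^r}$.

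For the base case I would take Fourier transforms to obtain $\widehat{L}(\xi)\widehat{u}(\xi)=\widehat{F}(\xi)$ with the matrix symbol
\begin{equation*}
\widehat{L}(\xi)=\alpha|\xi|^2 I_3+(\alpha+\beta)\,\xi\otimes\xi.
\end{equation*}
Under the structural conditions $\alpha>0$ and $2\alpha+3\beta\ge 0$ (implying $2\alpha+\beta\ge 4\alpha/3>0$), the orthogonal decomposition $I_3=P_\parallel(\xi)+P_\perp(\xi)$ with $P_\parallel=\xi\otimes\xi/|\xi|^2$ diagonalises the symbol and gives
\begin{equation*}
\widehat{L}(\xi)^{-1}=\frac{1}{\alpha|\xi|^2}P_\perp(\xi)+\frac{1}{(2\alpha+\beta)|\xi|^2}P_\parallel(\xi),
\end{equation*}
so that the symbol of the composite operator $\partial_i\partial_j L^{-1}$ sending $F$ to the $(i,j)$-th second derivative of $u$ is
\begin{equation*}
M_{ij}(\xi)=-\xi_i\xi_j\Bigl(\frac{I_3}{\alpha|\xi|^2}-\frac{(\alpha+\beta)\,\xi\otimes\xi}{\alpha(2\alpha+\beta)|\xi|^4}\Bigr).
\end{equation*}
Each component of $M_{ij}$ is homogeneous of degree zero and $C^\infty$ on $\mathbb{R}^3\setminus\{0\}$, so satisfies the Mihlin--H\"ormander condition $|\xi|^{|\beta|}|\partial^\beta M_{ij}(\xi)|\le C_\beta$ for every multi-index $\beta$.

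I would then invoke the Mihlin multiplier theorem (equivalently, the classical Calder\'on--Zygmund theory applied to the explicit Kelvin-type fundamental solution of the Lam\'e system in $\mathbb{R}^3$) to conclude $|\partial_i\partial_j u|_{L^r}\le C(\alpha,\beta,r)|F|_{L^r}$ for every $1<r<\infty$, where $C$ is independent of $u$ and $F$. Combined with the reduction in the first paragraph this yields the stated estimate $|u|_{D^{k+2,r}}\le C|F|_{D^{k,r}}$.

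The main technical point to justify is that the Fourier calculation is legitimate for $u\in D^{1,r}_*$ (so that $u$ is only assumed to have one derivative in $L^r$, with $u\in L^6$ by Sobolev embedding in the case $r=2$). This is where the homogeneous setting and the decay condition $u\to 0$ at infinity are used: the second-derivative estimate should be interpreted as an a priori bound on the unique distributional solution whose first derivatives lie in $L^r$, and uniqueness follows from Liouville-type considerations applied to $L$ (any tempered solution of $Lu=0$ with $\nabla u\in L^r$ is a polynomial of degree $\le 1$ by taking Fourier transforms supported at the origin, hence vanishes under the far-field condition). I would therefore outsource these uniqueness/density details to the reference \cite{harmo}, where the classical Agmon--Douglis--Nirenberg $L^r$ elliptic theory for constant-coefficient systems is carried out in full.
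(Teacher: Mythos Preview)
Your approach is correct and is precisely the standard route: the paper does not supply its own proof but simply cites the result from Stein \cite{harmo}, and the Mihlin multiplier / Calder\'on--Zygmund argument you outline is exactly the machinery in that reference. The symbol computation, the ellipticity check $2\alpha+\beta\ge 4\alpha/3>0$ under the standing assumption $2\alpha+3\beta\ge 0$, and the reduction from general $k$ to $k=0$ by commuting $\partial^\gamma$ through the constant-coefficient operator are all sound.
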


\bigskip

{\bf Acknowledgement:} 
This research is partially supported by National Key R$\&$D Program of China (No. 2022YFA1007300),  the Fundamental Research Funds for the Central Universities,  Zheng Ge Ru Foundation, Hong Kong RGC Earmarked Research Grants CUHK-14301421, CUHK-14301023, CUHK-14302819, and CUHK-14300819. Duan's research is also supported in part by National Natural Science Foundation of China under Grants  12271369 and 12371235. Xin's research is also supported in part by the key projects of National Natural Science Foundation of China (No.12131010 and No.11931013) and Guangdong Province Basic and Applied Basic Research Foundation 2020B1515310002. Zhu's research is also  supported in part by  National Natural Science Foundation of China under Grants 12101395 and 12161141004, The Royal Society-Newton International Fellowships Alumni AL/201021 and AL/211005.

\bigskip

{\bf Conflict of Interest:} 
The authors declare that they have no conflict of interest. The authors also   declare that this manuscript has not been previously  published, and will not be submitted elsewhere before your decision. 

\bigskip

{\bf Data availability:}
Data sharing is  not applicable to this article as no data sets were generated or analysed during the current study. 

\bigskip

\end{document}